\numberwithin{equation}{section}
\newtheorem{theorem}{Theorem}[section]
\newtheorem{conjecture}[theorem]{Conjecture}
\newtheorem{lemma}[theorem]{Lemma}
\newtheorem{proposition}[theorem]{Proposition}
\newtheorem{prop}[theorem]{Proposition}
\newtheorem{corollary}[theorem]{Corollary}
\theoremstyle{definition}
\newtheorem{definition}[theorem]{Definition}
\newtheorem{remark}[theorem]{Remark}
\def\Lie{{\mathrm {Lie}}}
\DeclareMathOperator{\tr}{tr}
\DeclareMathOperator{\SO}{SO}
\DeclareMathOperator{\Spin}{Spin}
\DeclareMathOperator{\Sp}{Sp}
\DeclareMathOperator{\SU}{SU}
\DeclareMathOperator{\SL}{SL}
\DeclareMathOperator{\GL}{GL}
\DeclareMathOperator{\Span}{Span}
\def\g{{\mathfrak g}}
\def\h{{\mathfrak h}}
\def\k{{\mathfrak k}}
\def\p{{\mathfrak p}}
\def\so{{\mathfrak {so}}}
\def\sl{{\mathfrak {sl}}}
\def\G{{\mathbf G}}
\def\C{{\mathbf C}}
\def\R{{\mathbf R}}
\def\Z{{\mathbf Z}}
\def\Q{{\mathbf Q}}
\def\A{{\mathbf A}}
\def\Q{{\mathbf Q}}
\def\V{{\mathbf V}}
\def\linspan{\textnormal{-span}}
\begin{document}

\title{On the Vanishing and Cuspidality of $D_4$ Modular Forms}

\author{Finn McGlade}
\address{Department of Mathematics, The University of Oklahoma, Norman, OK 73072, USA}
\email{finn.mcglade@ou.edu}

\date{\today}

\subjclass[2020]{Primary 11F03; Secondary 11F30}
\keywords{Quaternionic modular forms, Fourier coefficients, theta lifting}

\begin{abstract} We develop vanishing and cuspidality criteria for quaternionic modular forms on $G=\Spin(4,4)$ using a theory of scalar Fourier coefficients.  By analyzing a Fourier-Jacobi expansion for these forms,  we prove that a level one quaternionic modular form on $G$ vanishes if and only if its primitive Fourier coefficients are zero.  Using this criterion, we characterize Pollack's quaternionic Saito-Kurokawa subspace by imposing a system of linear relation among certain primitive Fourier coefficients. This characterization strengthens earlier work of the author with Johnson-Leung, Negrini, Pollack, and Roy.  We also study quaternionic modular forms in the more general setting of a group $G_J$ associated to a cubic norm structure $J$. Here we establish a new relationship between the degenerate Fourier coefficients of quaternionic modular forms, and the Fourier coefficients of the holomorphic modular forms associated to their constant terms.  As a consequence,  we prove that in weights $\ell\geq 5$,  a level one quaternionic modular form on $G$ is cuspidal if and only if its non-degenerate Fourier coefficients satisfy a polynomial growth condition.
\end{abstract}

\maketitle

\goodbreak 

\tableofcontents

\goodbreak
\section{Introduction}
\setcounter{page}{1}
Let $G$ be the split spin group of rank $4$ over the rational numbers.   Given an automorphic function $\varphi$ on $G$,   one can ask the following two fundamental questions; (i) does  $\varphi$ vanish identically, and (ii) is $\varphi$ cuspidal? The purpose of this paper is to apply a theory of scalar Fourier coefficients to address these questions in the case when $\varphi$ is a quaternionic modular form on $G$.   Before we can formulate our results, we need to review some preliminary ideas.  
\subsection{Quaternionic Modular Forms on $G$} 
In \cite{weissmanD4},  Weissman studies $D_4$ modular forms, which are certain automorphic functions on $G$.  Weissman's work is influenced by that of Gan-Gross-Savin \cite{ganGrossSavin}, who studied an analogous class of modular forms on $G_2$.  Pollack \cite{pollackQDS} later studied a generalization of this class of automorphic forms in the setting of a reductive group $G_J$ associated to a cubic norm structure $J$.   In Pollack's conventions,  a weight $\ell\in \Z_{>0}$ quaternionic modular form on $G_J$ is a vector-valued automorphic function that is annihilated by the Schmid differential \cite{Schmid89} associated to a particular $G_J(\R)$-representation.  One notable feature of a $D_4$ modular form $\varphi$ is the set of  scalar Fourier coefficients,
$$
\{\Lambda_{\varphi}[B]\in \C\colon B=[T_1,T_2]\in \mathrm{M}_2(\Z)^{\oplus 2}\backslash\{[0,0]\}\},
$$
which are indexed by non-zero pairs $B$ consisting of $2\times 2$ integral matrices; $T_1$ and $T_2$.
The coefficients $\Lambda_{\varphi}[B]$ arise from taking Fourier coefficients of $\varphi$ along a Heisenberg group $N_P$, which is the unipotent radical of a maximal parabolic subgroup $P$ in $G$. The existence of $\Lambda_{\varphi}[B]$ as a scalar is non-trivial, and relies on results regarding the generalized Whittaker vectors of quaternionic $G(\R)$-representations (\cite[Theorem 16]{wallach} and \cite[Theorem 1.2]{pollackQDS}).  
Another important feature of $\varphi$ is the structure of the constant term $\varphi_{N_P}$.  In \cite[Theorem 11.1.1]{pollackQDS}, it is shown that $\varphi_{N_P}$ gives rise to a holomorphic modular form $\Phi$ on a Levi factor $M_P$ of $P$.  More specifically,  the derived subgroup $M_P^{\mathrm{der}}$ is isomorphic to $\SL_2^3$, and $\Phi$ is the automorphic form associated to a holomorphic modular form on three copies of the upper half plane.
\subsection{Primitivity in the Fourier Expansion of $D_4$ Modular Forms}
Given an automorphic form $\varphi$, it is generally important to develop criteria to assess whether $\varphi$ is identically zero.  When $\varphi$ has scalar Fourier coefficients,  these may be applied to study the vanishing of $\varphi$ in unique ways. For example,  let $\varphi$ be the automorphic function on $\Sp_4(\A_\Q)$ associated to a Siegel modular form $F_{\varphi}$ of level one, and genus $2$.   Then $F_{\varphi}$ admits a Fourier expansion
$$
F_{\varphi}(Z)=\sum_{T\geq 0}A_{\varphi}[T]\exp(2\pi i \mathrm{tr}(TZ)).
$$Here $Z$ is an element in the Siegel upper half space of genus $2$,  and $T=[a,b,c]$ runs over the set of positive semi-definite, integral, binary quadratic forms. The complex scalars $A_{\varphi}[T]$ are the  Fourier coefficients of $\varphi$.  \\
\indent Say that $A_{\varphi}[T]$ is primitive if $\gcd(a,b,c)=1$.  
Zagier \cite[pg.  387]{ZaFrench} proves that $\varphi$ vanishes identically if and only if each of the primitive coefficients $A_{\varphi}[T]$ is $0$.  The interplay between the vanishing of the coefficients $A_{\varphi}[T]$, and the vanishing properties of $\varphi$ has since developed into an area of significant interest.  For example,  Saha \cite{MR3004586} proves a refinement of Zagier's Theorem,  which in the case of cuspidal $\varphi$, characterizes the vanishing of $\varphi$ using a sparser subset of \textit{fundamental} Fourier coefficients.  The result of (loc. cit.) is  generalized to vector valued, possibly non-cuspidal,  Siegel modular forms of arbitrary genus in \cite{MR4515287}. We refer the reader to the introductions in \cite{MR3908796,MR3004586} for a survey of similar vanishing results.
Our first main theorem is an analogue of Zagier's Theorem for $D_4$ modular forms.  Below,  we say that $B\in \mathrm{M}_2(\Z)^{\oplus 2}$ is primitive if 
$
\Q\linspan\{B\}\cap \mathrm{M}_2(\Z)^{\oplus 2}=\Z\linspan\{B\}.
$
\begin{theorem}
\label{thm-primitivity-intro-1}
Let $\varphi$ be quaternionic modular forms on $G$ of weight $\ell>0$ and level one such that $\Lambda_{\varphi}[B]=0$ for all primitive $B\in \mathrm{M}_2(\Z)^{\oplus 2}$.  Then $\varphi=0$.  
\end{theorem}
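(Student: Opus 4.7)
The plan is to combine a Fourier--Jacobi expansion of $\varphi$ along $N_P$ with the explicit $\SL_2^3$-action on the abelianization of $N_P$, whose character lattice has the tensor-product structure $\Q^2\otimes \Q^2\otimes \Q^2\simeq \mathrm{M}_2(\Q)^{\oplus 2}$, in order to reduce the statement to a classical theta-decomposition argument for vector-valued holomorphic modular forms.

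First, I would expand $\varphi$ partially along the one-dimensional center $Z(N_P)$, writing $\varphi = \varphi_0 + \sum_{n\neq 0}\varphi_n$, where $\varphi_n$ denotes the $n$-th Fourier coefficient along $Z(N_P)$. Each non-trivial $\varphi_n$ is a Jacobi-like form on the stabiliser of the $n$-th central character inside $M_P$, and its further Fourier expansion along $N_P/Z(N_P)$ recovers exactly those $\Lambda_\varphi[B]$ whose ``central'' invariant matches $n$.

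Next, I would apply a theta decomposition to each $\varphi_n$: since the abelianization of $N_P$ is isomorphic as an $M_P^{\mathrm{der}}$-module to $V_1\otimes V_2\otimes V_3$, with each $V_i$ the defining representation of the $i$-th $\SL_2$-factor, one can write $\varphi_n$ as a finite sum of vector-valued modular forms paired against theta series in the three $V_i$-variables. The hypothesis that all primitive $\Lambda_\varphi[B]$ vanish should translate, via this decomposition, into the vanishing of the accompanying vector-valued modular forms, yielding $\varphi_n=0$ for each $n\neq 0$. The remaining constant-in-center piece $\varphi_0$ would be handled by a similar but simpler expansion along the abelianization, using $\SL_2^3(\Z)$-equivariance to propagate vanishing of primitive coefficients to all degenerate ones, and then invoking \cite[Theorem 11.1.1]{pollackQDS} to force the ultimate constant term $\varphi_{N_P}$ itself to vanish.

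The main obstacle is the theta-decomposition step: one needs an Eichler--Zagier-style decomposition that is simultaneously compatible with the three independent $\SL_2$-actions on $V_1\otimes V_2\otimes V_3$ and with the primitivity condition $\Q\linspan\{B\}\cap \mathrm{M}_2(\Z)^{\oplus 2}=\Z\linspan\{B\}$. The classical case treats a single $\SL_2$ acting on a lattice of rank $r$; here we have a three-fold product acting on a rank-$8$ lattice, and organising the combinatorics so that the primitive coefficients correspond to an identifiable subset of coset data in the theta decomposition is where I expect the bulk of the technical work to lie.
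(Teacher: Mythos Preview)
Your proposal has a structural gap at the very first step. The characters $\varepsilon_{[T_1,T_2]}$ of $N_P$ are all \emph{trivial} on the center $Z=Z(N_P)$: by definition $\varphi_{[T_1,T_2]}$ is an integral over $[N_P]$ against a character of $N_P^{\mathrm{ab}}$, so every $\Lambda_\varphi[B]$ is a Fourier coefficient of the single piece $\varphi_0=\varphi_Z$ in your decomposition. There is no ``central invariant'' of $B$ distributing the $\Lambda_\varphi[B]$ among the $\varphi_n$ with $n\neq 0$; those pieces carry genuine Heisenberg (Weil--representation) data that the scalars $\Lambda_\varphi[B]$ simply do not see. Consequently the hypothesis gives you no direct leverage on $\varphi_n$ for $n\neq 0$, and even if you could force $\varphi_Z=0$ this does not obviously yield $\varphi=0$.

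The paper avoids this by expanding $\varphi$ along a \emph{different} maximal parabolic $R$, the one attached to an outer node of the $D_4$ diagram, whose unipotent radical $N_R$ is abelian. This gives a genuine Fourier--Jacobi expansion $\varphi=\varphi_{N_R}+\sum_{y\in V_{3,3}(\Z)\setminus\{0\}}\mathcal F(\varphi;y)$ of the full form, not merely of $\varphi_Z$. An unfolding computation (Lemmas~\ref{lemma-degenerate-FCs} and~\ref{lemma-non-degerate FCs}) then expresses each $\mathcal F(\varphi;y)$ in terms of the Heisenberg coefficients $\varphi_{[T_1,T_2]}$. For anisotropic $y$ the resulting object is a holomorphic modular form on $\mathrm{Spin}(2,3)$, to which a Zagier--type primitivity theorem (Theorem~\ref{thm-3-yamana}) applies; for isotropic $y$ one lands on the constant term $\varphi_{N_P}$, hence on a holomorphic form on $\SL_2^3$, handled by the elementary Lemma~\ref{primitivity-holomorphic-modular-forms-on-MP}. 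In both cases a short manipulation (equation~\eqref{eqn-manipulation-of-a-lifetime} and~\eqref{final-step-proof-main-thm}) shows the relevant holomorphic Fourier coefficient is a finite sum of \emph{primitive} $\Lambda_\varphi[B]$'s, giving the contradiction. The passage through $R$ rather than $P$ is precisely what lets the argument reach all of $\varphi$, and is the idea your outline is missing.
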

In the setting of quaternionic modular forms on $G_2$,  \cite[Theorem 16.12]{ganGrossSavin} gives an analogue of Theorem~\ref{thm-primitivity-intro-1},  showing that a level one cuspidal quaternionic Hecke eigenform on $G_2$ is zero if and only if its primitive Fourier coefficients are zero. 
Our proof of Theorem \ref{thm-primitivity-intro-1}, which does not generalize to modular forms on $G_2$,  utilizes certain \textit{Fourier-Jacobi coefficients} for $D_4$ modular forms.   To define these coefficients, let $R=M_RN_R$ be a maximal parabolic subgroup of $G$ corresponding to an outer vertex in the Dynkin diagram of $G$, and write $N_R$ for the abelian unipotent radical of $R$.  Given a character $\chi\colon N_R(\Q)\backslash N_R(\A) \to \C$,  the $\chi$ Fourier-Jacobi coefficient of $\varphi$ is 
$$
\mathcal{F}(\varphi;\chi)(g)= \int_{N_R(\Q)\backslash N_R(\A)}\varphi(ng)\chi^{-1}(n)\, dn. 
$$ 
The coefficient $\mathcal{F}(\varphi;\chi)$ is studied for a specific character $\chi$ in \cite[Corollary 7.6]{JMNPR24}.  When $\varphi$ is a cuspidal quaternionic modular on certain exceptional groups,  an analogue of $\mathcal{F}(\varphi;\chi)$ is analyzed in \cite[Theorem 7.12]{pollack2024}.  The results of (loc. cite. ) apply only to the case when the character is non-degenerate. Here we say that a character of a unipotent radical is non-degenerate if it has an open orbit under the action of a Levi subgroup. In this paper, we study $\mathcal{F}(\varphi;\chi)$ for a general non-trivial character $\chi$, without assuming $\varphi$ is cuspidal. We apply the coefficients $\mathcal{F}(\varphi;\chi)$ to adapt Zagier's original proof of his theorem into the setting of $D_4$ modular form.  The implementation of this strategy on $G$ presents several novelties.   
\\
\indent Firstly, the analytic properties of the non-degenerate coefficients $\mathcal{F}(\varphi;\chi)$ are more difficult to analyze in the case of $D_4$-modular forms.  Indeed, the classical Fourier-Jacobi coefficients of a Siegel modular form $F$ easily inherent their holomorphy properties from those of $F$.  Similarly, the non-degenerate Fourier-Jacobi coefficients $\mathcal{F}(\varphi;\chi)$  obey certain holomorphy properties, however,  these properties are harder to prove since $\varphi$ itself is not holomorphic (see Proposition \ref{prop-existence-holomorphic-FJ-coefficient}).  In particular,  our analysis of the non-degenerate coefficients $\mathcal{F}(\varphi;\chi)$ depends on the explicit formula for the generalized Whittaker function of quaternionic $G(\R)$-representation proven in \cite[Theorem 1.2]{pollackQDS}.  \\
\indent Secondly, the unipotent radical $N_R$ supports non-trivial degenerate characters, and the degenerate coefficients $\mathcal{F}(\varphi;\chi)$ must be analyzed separately from the non-degenerate coefficients.  This makes the proof of Theorem \ref{thm-primitivity-intro-1} more challenging in the case when $\varphi$ is non-cuspidal.  More specifically, to prove Theorem \ref{thm-primitivity-intro-1} for non-cuspidal $\varphi$, we establish a new relationship between the degenerate Fourier coefficients $\Lambda_{\varphi}[B]$ of $\varphi$, and the Fourier coefficients of the holomorphic modular form $\Phi$ (see Proposition~\ref{cor-FCs-constant-degenerate-Heisenberg-FCS-D4}).  In Subsection~\ref{subsec-The Degenerate Coefficients of a Quaternionic Modular Form}, we show that this relationship  is true in the generality of  quaternionic modular forms on the group $G_J$ associated to a cubic norm structure $J$.   
\subsection{An Application to The Quaternionic Maass Spezialschar} Let $\SO_8$ denote the split special orthogonal group of rank $4$ over $\Q$. Then $\SO_8$ supports a theory of quaternionic modular forms which is closely related to the theory of modular forms on $G$.  In fact, if $\varphi$ is a quaternionic modular form on $\SO_8$ of level $1$, then $\varphi$ is uniquely determined by its pull back to $G$ (see Lemma~\ref{lemma-unique-determination}),  which is a quaternionic modular form on $G$.  As such,  a level one quaternionic modular form $\varphi$ on $\SO_8$ has an associated set of Fourier coefficients $\{\Lambda_{\varphi}[B]\colon B\in \mathrm{M}_2(\Z)^{\oplus 2}\backslash \{[0,0]\}\}$, for which the statement of Theorem \ref{thm-primitivity-intro-1} holds true. \\ 
\indent In \cite{pollackCuspidal},  Pollack studies the theta correspondence arising from the dual pair $\Sp_4\times \mathrm{O}_8$, and describes the lifting of a level one holomorphic modular form $F$ on $\Sp_4$, to a level one quaternionic modular form $\theta^{\ast}(F)$ on $\SO_8$.  The Fourier coefficients of the \textit{quaternionic Saito-Kurokawa lift} $\theta^{\ast}(F)$ are given as linear combinations of the Fourier coefficients of $F$ \cite[Theorem 4.1.1]{pollackCuspidal}. In \cite{JMNPR24},  the authors characterize the quaternionic modular forms $\theta^{\ast}(F)$ using a system of linear equations.  This result is analogous to the characterization of the classical Saito-Kurokawa subspace via the \textit{Maass Relations}; see for example \cite[\S 6, (9)]{EichZag}.\\
\indent In Section \ref{sec-The Quaternionic Maass Spezialschar}, we refine \cite[Theorem 1.3]{JMNPR24}, to characterize the modular forms $\theta^{\ast}(F)$ in the style of  \cite[Theorem 1 (iii)]{ZaFrench}, which states that a level one,  genus $2$ Siegel modular form $F_{\varphi}$ is a Saito-Kurokawa lift if and only if each primitive Fourier coefficient $A_{\varphi}[T]$ only depends on the discriminant $\mathrm{disc}(T)$. Our characterization uses a construction of \cite{MR2051392}, which associates $B=[T_1,T_2]$ with the binary quadratic form in variables $x$ and $y$ given by,
$$
T(B)=\det(xT_1-yT_2).
$$  
We also require a more refined notion of primitivity (see \cite[Definition 5.4]{JMNPR24}).
\begin{definition}
Say that $[T_1,T_2]\in \mathrm{M}_2(\Z)^{\oplus 2}$ is \textit{strongly primitive} or  \textit{slice primitive} if 
$$
\Q\linspan\{T_1,T_2\}\cap \mathrm{M}_2(\Z)=\Z\linspan\{T_1,T_2\}. 
$$
\end{definition}
In Corollary~\ref{slice-primitivity-corollary},  we show a cuspidal $D_4$ modular form $\varphi$ vanishes if and only if the slice primitive Fourier coefficients of $\varphi$ are zero.  Combining \cite[Theorem 1.3]{JMNPR24} and Corollary~\ref{slice-primitivity-corollary} yields the following application.
\begin{theorem} 
\label{Slice-Primitive-Lifts}
Suppose $\ell\geq 16$ is even and let $\varphi$ be a level one, cuspidal, quaternionic modular form on $\SO_8$ of weight $\ell$. The following are equivalent:
\begin{compactenum}[(i)]
    \item There exists a level one, weight $\ell$, Siegel modular form $F$ on $\Sp_4$ such that $\varphi=\theta^*(F)$. 
    \item If $B,B'\in \mathrm{M}_2(\Z)^{\oplus 2}$ are slice primitive and satisfy $T(B)=T(B')$,  then $\Lambda_{\varphi}[B]=\Lambda_{\varphi}[B']$. 
\end{compactenum}
\end{theorem}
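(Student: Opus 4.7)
The plan is to deduce Theorem~\ref{Slice-Primitive-Lifts} by combining the linear-equation characterization of the quaternionic Saito-Kurokawa subspace from \cite[Theorem 1.3]{JMNPR24} with the slice-primitive vanishing criterion of Corollary~\ref{slice-primitivity-corollary}. The overall strategy mirrors Zagier's treatment of the classical Maass Spezialschar: use the vanishing criterion to reduce an a priori statement about all Fourier coefficients to a statement about slice primitive coefficients only.

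For the direction (i) $\Rightarrow$ (ii), I would start from the explicit formula for the Fourier coefficients of $\theta^{\ast}(F)$ given in \cite[Theorem 4.1.1]{pollackCuspidal}. That formula expresses $\Lambda_{\theta^\ast(F)}[B]$ as a sum indexed by divisors of $B$ in $\mathrm{M}_2(\Z)$ of elliptic Fourier coefficients attached to binary quadratic forms built from $T(B) = \det(xT_1-yT_2)$. When $B$ is slice primitive, this indexing set is trivialized, so the sum collapses to a quantity depending only on $T(B)$; this gives (ii) directly.

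For the substantive direction (ii) $\Rightarrow$ (i), suppose $\varphi$ satisfies (ii). Using (ii), the assignment $T \mapsto a(T) := \Lambda_\varphi[B]$ for any slice primitive $B$ with $T(B) = T$ is well-defined on the range of $T(-)$ (which by \cite{MR2051392} exhausts integral binary quadratic forms of the appropriate discriminant type). The classical Saito-Kurokawa correspondence, together with the assumption $\ell \geq 16$ even guaranteeing the existence of an elliptic cusp form of weight $2\ell-2$ whose Shimura-style lift to $\Sp_4$ has Maass coefficients prescribed by $a$, then allows us to assemble a level one, weight $\ell$ Siegel modular form $F$ on $\Sp_4$ such that $\theta^\ast(F)$ shares its slice primitive Fourier coefficients with $\varphi$ (by the formula used in the previous paragraph). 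The difference $\psi = \varphi - \theta^{\ast}(F)$ is then a cuspidal, level one, weight $\ell$ quaternionic modular form on $\SO_8$ whose slice primitive Fourier coefficients vanish, and Corollary~\ref{slice-primitivity-corollary} forces $\psi = 0$, giving (i).

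The main obstacle I anticipate is the construction of $F$ in the second direction: one must verify that the function $a$ arising from $\varphi$ actually comes from an elliptic cusp form via the Eichler-Zagier correspondence, that is, that $a$ satisfies the Maass relations needed to lie in the Saito-Kurokawa subspace. A cleaner alternative, if this normalization turns out to be delicate, is to bypass $F$ entirely and instead verify that condition (ii) implies the full system of linear equations of \cite[Theorem 1.3]{JMNPR24}, propagating the equalities from slice primitive $B$ to arbitrary $B$ via the explicit action on Fourier coefficients and then applying Corollary~\ref{slice-primitivity-corollary} to conclude.
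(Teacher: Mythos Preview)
Your overall strategy is right, and it is essentially the paper's: reduce to \cite[Theorem 1.3]{JMNPR24} plus the slice-primitive vanishing criterion of Corollary~\ref{slice-primitivity-corollary}. The direction (i) $\Rightarrow$ (ii) is exactly as you say.

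The gap is in your construction of $F$ for (ii) $\Rightarrow$ (i). You propose to define $a(T) := \Lambda_{\varphi}[B]$ for slice primitive $B$ with $T(B)=T$ and then realize $a$ as the Fourier coefficients of some Siegel cusp form via the classical Saito--Kurokawa machinery. But nothing in hypothesis (ii) tells you that $a$ is attached to an elliptic cusp form or a Jacobi form; you have no a priori modularity for $a$ as a function of $T$, so the Eichler--Zagier / Shimura route does not get off the ground. You flag this obstacle yourself, but your proposed workaround (``propagate the equalities from slice primitive $B$ to arbitrary $B$'') is precisely what needs to be proved and cannot be done directly.

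The paper resolves this by constructing $F$ from $\varphi$ rather than from $a$. The Fourier--Jacobi machinery (Proposition~\ref{prop-existence-holomorphic-FJ-coefficient} here, or \cite[Corollary 7.7]{JMNPR24}) produces from $\varphi$ a genuine holomorphic Siegel modular form $F_{\xi}$ on $\Sp_4$, with the explicit identification $\overline{B_{\xi}[T]} = \Lambda_{\varphi}[T_1,T_2]$ for one specific slice-primitive $[T_1,T_2]$ with $T([T_1,T_2])=T$. Set $\varphi' = \theta^{\ast}(F_{\xi})$. By the explicit formula \eqref{fouriercoefficients}, $\Lambda_{\varphi'}[B] = \overline{B_{\xi}[T(B)]}$ for every slice primitive $B$; by hypothesis (ii), $\Lambda_{\varphi}[B] = \overline{B_{\xi}[T(B)]}$ as well. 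So $\varphi$ and $\varphi'$ agree on all slice primitive coefficients, and Corollary~\ref{slice-primitivity-corollary} (in the form of Corollary~\ref{cor-slice-primitive-SO_8}) gives $\varphi = \varphi'$. This is the content of Lemma~\ref{equivalence-of-definition}, after which \cite[Theorem 1.3]{JMNPR24} finishes the argument.
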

\indent The problem of characterizing theta lifts is well studied within the general theory of automorphic forms. For example, in the case when $\varphi$ is a certain type of quaternionic Hecke eigenform on $G$, condition (a) above is equivalent to the condition that $\varphi$ admits a non-zero period along an embedded copy of $\SO_6$ in $\SO_8$ \cite[Corollary 9.8]{JMNPR24}. As such, Theorem \ref{Slice-Primitive-Lifts} posits an indirect connection between the slice primitive Fourier coefficients $\Lambda_{\varphi}[B]$, and the study of automorphic period integrals. 
\subsection{Characterizing Quaternionic Cusp Forms via a Hecke Bound} 
A central question in the theory of automorphic forms is the problem of determining whether a given automorphic function is cuspidal.  This question interacts in fascinating ways with theories of scalar Fourier coefficients. For example, assume $F_{\varphi}$ is a level one, genus $2$ Siegel modular form of weight $\ell\geq 4$.  Then,  Kohnen-Martin \cite[Theorem 2.1]{KoMa14} prove ${\varphi}$ is cuspidal if and only if $A_\varphi[T]\ll_F|\mathrm{disc}(T)|^{\ell/2}$ for all positive definite forms $T$.  The aforementioned theorem has an antecedent in \cite{MR2683571}, where it shown that certain elliptic cusp forms are similarly characterized by the sizes of their Fourier coefficients.  These results relate two fundamental properties of modular forms; the growth of their Fourier coefficients, and cuspidality. As such, they are of broad interest. For example, in \cite{BoSo14}, the authors reprove  \cite[Theorem 2.1]{KoMa14} by a different method, and extend the result to Siegel modular forms of higher genus.  We recommend the introduction to \cite{das2025fouriercoefficientscuspidalitymodular} for an overview of related work. In Section \ref{sec-The Hecke Bound Characterization of Cusp Form on $G$}, we establish an analogue of \cite[Theorem 2.1]{KoMa14}.
\begin{theorem}
\label{thm-intro-Hecke-IFF-Cuspidal-Quaternionic}
Suppose $\varphi$ is a level one, quaternionic modular form on $G$ of weight $\ell \geq 5$. Then $\varphi$ is cuspidal if and only if, for all $B\in \mathrm{M}_2(\Z)^{\oplus 2}$ satisfying $\mathrm{disc}(T(B))<0$, 
\begin{equation}
\label{eqn-Hecke-Bound-intro-quaternionic}
\Lambda_{\varphi}[B]\ll_{\varphi} |\mathrm{disc}(T(B))|^{\frac{\ell+1}{2}}. 
\end{equation}
\end{theorem}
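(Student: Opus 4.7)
My approach is to prove each implication separately: the forward direction via an analytic estimate resting on Pollack's explicit Whittaker function, and the reverse direction by showing that any non-cuspidal contribution to $\varphi$ injects non-degenerate Fourier coefficients exceeding the Hecke bound.

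For the forward direction, I would realize $\Lambda_{\varphi}[B]$ as the pairing of $\varphi$ against the generalized Whittaker function $\mathcal W_B$ attached to the character of $N_P$ indexed by $B$. The function $\mathcal W_B$ is given explicitly in \cite[Theorem 1.2]{pollackQDS} by a formula involving Bessel-type special functions that decay exponentially away from a distinguished height in the Siegel-like coordinate on $G(\R)$. Because $\varphi$ is cuspidal, it is bounded on $G(\A)$, so $\Lambda_{\varphi}[B]$ is controlled by the supremum of $\mathcal W_B$ at its peak. Optimizing the height parameter to minimize the exponential factor yields the polynomial bound $|\Lambda_{\varphi}[B]|\ll_\varphi |\mathrm{disc}(T(B))|^{(\ell+1)/2}$, with the exponent determined by the weight-$\ell$ normalization of $\mathcal W_B$ and the hypothesis $\ell\geq 5$ ensuring the requisite integrability of the underlying quaternionic discrete series.

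For the reverse direction, I would argue by contraposition. Suppose $\varphi$ is non-cuspidal, so that its constant term along some proper parabolic is non-zero. In the principal case, the constant term along $P$ gives a non-zero holomorphic modular form $\Phi$ on $M_P^{\mathrm{der}}\cong\SL_2^3$ via \cite[Theorem 11.1.1]{pollackQDS}. I would construct an explicit quaternionic Eisenstein-type lift $E(\Phi)$ whose constant term along $N_P$ equals $\Phi$; subtracting $E(\Phi)$ and iterating for the other standard parabolics yields a cuspidal remainder to which the forward direction applies. This reduces the task to exhibiting non-degenerate Fourier coefficients of $E(\Phi)$ that grow strictly faster than $|\mathrm{disc}(T(B))|^{(\ell+1)/2}$, contradicting \eqref{eqn-Hecke-Bound-intro-quaternionic} for $\varphi$.

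The main obstacle is precisely this last growth estimate. In the Siegel setting, Kohnen-Martin rely on documented Fourier expansions of Klingen and Siegel Eisenstein series; on $\Spin(4,4)$ no such expansion is immediately available, and the degenerate characters of $N_R$ make the non-degenerate/degenerate bookkeeping more delicate. I would address this by invoking Proposition~\ref{cor-FCs-constant-degenerate-Heisenberg-FCS-D4}, which ties the degenerate Fourier coefficients of a quaternionic form directly to the Fourier coefficients of the attached holomorphic form $\Phi$, and combining it with the Fourier-Jacobi coefficients $\mathcal F(\varphi;\chi)$ developed earlier in the paper to transport information from degenerate to non-degenerate characters. Classical Hecke-type growth estimates for the Fourier coefficients of holomorphic forms on $\SL_2^3$ should then yield non-degenerate coefficients of $E(\Phi)$ with the required super-Hecke behavior, closing the contraposition.
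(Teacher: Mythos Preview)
Your forward direction is essentially the paper's argument (Proposition~\ref{hecke-bound-quaternionic-modular-forms}): pair against the explicit Whittaker function, use that a cusp form is bounded, and choose an element of $M_P(\R)$ that scales $B$ to a fixed reference $B_0$.  One small correction: the restriction $\ell\geq 5$ plays no role here; the Hecke bound holds for all $\ell\geq 1$, and the constraint on $\ell$ enters only in the reverse implication.

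Your reverse direction, however, has a genuine gap and also diverges from the paper's route.  The Eisenstein-subtraction strategy you outline requires two ingredients you have not supplied: an explicit construction of a quaternionic lift $E(\Phi)$ with prescribed Heisenberg constant term, and a lower bound on its \emph{non-degenerate} Fourier coefficients.  You point to Proposition~\ref{cor-FCs-constant-degenerate-Heisenberg-FCS-D4} as the bridge, but that proposition relates the Fourier coefficients of $\Phi$ to \emph{degenerate} Heisenberg coefficients $\Lambda_{\varphi}[B]$ with $\mathrm{disc}(T(B))=0$, which lie outside the hypothesis \eqref{eqn-Hecke-Bound-intro-quaternionic}.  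Transporting growth from $\Phi$ to non-degenerate coefficients of a yet-to-be-constructed Eisenstein series is the whole difficulty, and it is not addressed.

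The paper avoids Eisenstein subtraction entirely.  Instead, it feeds the Hecke bound on $\varphi$ into the holomorphic Fourier--Jacobi coefficients $\xi^{\varphi}(y_{\alpha})$ on $M'\simeq\Spin(2,3)$ (Proposition~\ref{prop-existence-holomorphic-FJ-coefficient} and the identity \eqref{manipulation-of-a-lifetime-2}), obtaining a Hecke bound for each $\xi^{\varphi}(y_{\alpha})$.  The Kohnen--Martin style criterion on $M'$ (Theorem~\ref{thm-hecke-bound-IFF-cuspidal-holomorphic}, where $\ell\geq 5$ is used) then forces each $\xi^{\varphi}(y_{\alpha})$ to be cuspidal, and hence $\Lambda_{\varphi}[B]=0$ for every primitive rank~$3$ index $B$ (Lemma~\ref{lemma-orbit-rank3}).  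The conclusion that $\varphi$ is cuspidal then follows from Proposition~\ref{prop-non-vanishing-rank-3}, which rests on the general criterion Theorem~\ref{thm-into-cuspidality-criteria}.  So the reduction runs through the smaller orthogonal group $M'$ rather than through Eisenstein series on $G$ itself.
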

To the author's knowledge,  Theorem \ref{thm-intro-Hecke-IFF-Cuspidal-Quaternionic} is the first instance of a result characterizing the cuspidality of quaternionic modular forms, on any group, via the growth of their Fourier coefficients.  To prove Theorem~\ref{thm-intro-Hecke-IFF-Cuspidal-Quaternionic},  the Fourier-Jacobi coefficients $\mathcal{F}(\varphi;y)$ are of indispensable use,  as are the ideas underlying the proof of \cite[Theorem 2.1]{KoMa14}.  The proof of Theorem \ref{thm-intro-Hecke-IFF-Cuspidal-Quaternionic} also requires a more general cuspidality criterion, Theorem \ref{thm-into-cuspidality-criteria}.  \\
\indent To set up Theorem \ref{thm-into-cuspidality-criteria},  let $C$ denote a composition algebra over $\Q$, and write $H_3(C)$ for the cubic norm structure consisting of $3\times 3$ Hermitian matrices with entries in $C$.  Let $J=\mathbb{G}_a^3$ or $J=H_3(C)$, and write $G_J$ for the $\Q$-rational algebraic group associated to $J$ in \cite{pollackQDS}. So, $G_J$ is an adjoint group of real rank $4$ and type $D_4$, $F_4$, $E_6$, $E_7$, or $E_8$.  Let $P_J$ be the Heisenberg parabolic subgroup of $G_J$, and write $N_{J}$ for the unipotent radical of $P_J$. 
\begin{theorem}
\label{thm-into-cuspidality-criteria}
Suppose $\varphi$ is a quaternionic modular form on $G_J$. Write $\varphi_{N_{J}}$ for the constant term of $\varphi$ along the unipotent radical $N_{J}$.  If $\varphi_{N_{J}}\equiv 0$, then $\varphi$ is cuspidal. 
\end{theorem}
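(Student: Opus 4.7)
The plan is to establish cuspidality by verifying that the constant term of $\varphi$ along the unipotent radical of every proper standard parabolic subgroup of $G_J$ vanishes. By the Harish-Chandra criterion, this reduces to checking vanishing along the (four, since $G_J$ has $\Q$-rank $4$) maximal proper standard parabolic subgroups: indeed, if $P\subseteq M$ with $M$ maximal, then $N_M\subseteq N_P$ and
$$
\varphi_{N_P}(g)=\int_{(N_P/N_M)(\Q)\backslash (N_P/N_M)(\A)}\varphi_{N_M}(ng)\,dn,
$$
so $\varphi_{N_M}\equiv 0$ forces $\varphi_{N_P}\equiv 0$. The Heisenberg parabolic $P_J$ is one of the four conjugacy classes of maximal parabolics and is handled by hypothesis, leaving three other maximal parabolics $Q$ to analyze.

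For each such $Q$, the strategy is to Fourier expand $\varphi$ along $N_J$ first, and then integrate over $N_Q$. Since $\varphi_{N_J}\equiv 0$, the trivial character on $N_J$ contributes zero, so
$$
\varphi(g)=\sum_{\chi\neq 1}\mathcal{F}(\varphi;\chi)(g),
$$
where $\chi$ runs over non-trivial characters of $N_J(\Q)\backslash N_J(\A)$. These split into non-degenerate (non-trivial on $Z_J=[N_J,N_J]$) and degenerate (trivial on $Z_J$, and hence indexed by non-zero points of the cubic norm space $W_J(\Q)$) contributions, and the task reduces to showing
$$
\int_{N_Q(\Q)\backslash N_Q(\A)}\mathcal{F}(\varphi;\chi)(ng)\,dn=0
$$
for every such $\chi$.

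For non-degenerate $\chi$, one invokes the explicit generalized Whittaker function from \cite[Theorem 1.2]{pollackQDS} together with the observation that no $M_J(\Q)$-conjugate of a non-degenerate character of $N_J$ is $N_Q$-invariant; after unfolding the sum over characters into an $M_J(\Q)$-orbit sum, each non-degenerate contribution to $\varphi_{N_Q}$ vanishes term by term. For degenerate $\chi$, the key input is Proposition~\ref{cor-FCs-constant-degenerate-Heisenberg-FCS-D4}, which expresses these degenerate Fourier coefficients in terms of the Fourier coefficients of the holomorphic modular form $\Phi$ on $M_{P_J}^{\mathrm{der}}$ attached to the constant term $\varphi_{N_J}$. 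Since $\varphi_{N_J}\equiv 0$ forces $\Phi=0$, all degenerate Fourier coefficients vanish and contribute nothing to $\varphi_{N_Q}$.

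The principal obstacle is the orbit-by-orbit analysis required for each of the three non-Heisenberg maximal parabolics $Q$, which demands a precise description of $N_Q\cap N_J$, of the quotient $N_Q/(N_Q\cap N_J)$, and of the way $N_Q$ acts on characters of $N_J$. Achieving a uniform treatment across the cases $D_4$, $F_4$, $E_6$, $E_7$, $E_8$ should exploit the shared structural features of the Heisenberg parabolic, in particular the identification $N_J^{\mathrm{ab}}\cong W_J$ together with the standard action of $M_J$ on $W_J$ via its cubic norm representation.
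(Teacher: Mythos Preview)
Your overall architecture is reasonable, but there is a genuine gap in the degenerate step, and a terminological confusion that obscures it.

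First, the minor point: every character of $N_J(\Q)\backslash N_J(\A)$ is automatically trivial on $Z_J=[N_J,N_J]$, so your ``non-degenerate'' class (characters non-trivial on $Z_J$) is empty. The correct dichotomy is by rank of the parameter $w\in W_J$: rank $4$ (non-degenerate in the paper's sense) versus rank $<4$ (degenerate). Your ``non-degenerate'' argument, reinterpreted as ``rank-$4$ coefficients do not contribute to $\varphi_{N_Q}$,'' is essentially correct and is what Corollary~\ref{FE-CUSPIDAL-QMF} (and its $F_4$ analogue) encode.

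The real gap is in the degenerate step. You invoke Proposition~\ref{cor-FCs-constant-degenerate-Heisenberg-FCS-D4} as expressing the degenerate Heisenberg coefficients $a_w$ in terms of the Fourier coefficients of $\Phi$, and then conclude from $\Phi=0$ that all degenerate $a_w$ vanish. But that proposition goes the other way: it writes each coefficient $A_C$ of $\Phi$ as an integral (a finite sum) of various $a_{w_C}$. Knowing $A_C=0$ for all $C$ does \emph{not} a priori force each $a_{w_C}=0$, since the map $a_{w_C}\mapsto A_C$ is an averaging, not a bijection. The paper closes this gap with two additional ingredients you do not mention: Lemma~\ref{lemma-auto-convergence-13.?}, which inverts the averaging (an explicit Fourier inversion over $\A_f$ recovers $a_{w_C}$ from the family $\overline{\varphi}_C$), and Lemma~\ref{orbit-lemma}, which shows that every non-zero $w\in W_J(\Q)$ of rank $<4$ lies in the $H_J(\Q)$-orbit of some $w_C$. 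Together these give $\varphi_{N_J}\equiv 0\Rightarrow A_C=0\Rightarrow \overline{\varphi}_C=0\Rightarrow a_{w_C}=0\Rightarrow a_w=0$ for all rank-$<4$ $w$. Without the inversion and the orbit lemma, your argument does not close.

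Finally, the paper does not carry out a parabolic-by-parabolic analysis of the three non-Heisenberg maximal $Q$'s. Once all degenerate Heisenberg coefficients vanish, the paper computes the constant term along the single subgroup $U=\bigcap N_{Q_i}$ (the intersection of the unipotent radicals of the standard maximal parabolics) and observes that only rank-$<4$ characters contribute to $\varphi_U$, whence $\varphi_U\equiv 0$ and cuspidality follows. This is more economical than your proposed three separate orbit analyses and avoids the case-by-case bookkeeping you anticipated.
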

For the proof of Theorem~\ref{thm-into-cuspidality-criteria}, we assume $J$ is any cubic norm structure with a positive definite trace pairing. As a consequence of \cite[Proposition 11.1.1.]{pollackQDS},  if $\varphi$ is a quaternionic modular form on $G_J$,  then $\varphi_{N_{J}}$ gives rise to a holomorphic modular form $\Phi$ on a Levi subgroup of $P_J$.  In Proposition \ref{prop-FCs-of-Heisenberg-constant-term}, we show that the Fourier coefficients of $\Phi$ are given as finite sums of the degenerate Fourier coefficients of $\varphi$.  Conversely,  if $J$ is a cubic norm structure of the type appearing in Theorem \ref{thm-into-cuspidality-criteria},  then Lemma \ref{lemma-auto-convergence-13.?} and Lemma \ref{orbit-lemma} imply that every degenerate Fourier coefficient of $\varphi$ is a finite sum of Fourier coefficients of $\Phi$.  Hence, in the setting of Theorem \ref{thm-into-cuspidality-criteria}, the vanishing of $\varphi_{N_J}$ implies that all degenerate Fourier coefficients of $\varphi$ are zero, which can used to deduce the cuspidality of $\varphi$ (see Subsection~\ref{subsec-The General Cuspidality Criterion}). 
\begin{remark} Our proof of Theorem \ref{thm-intro-Hecke-IFF-Cuspidal-Quaternionic} does not directly generalize to quaternionic modular forms in other Dynkin types.  This is because we use a unique features of $D_4$ modular form. Namely,  in Proposition \ref{prop-non-vanishing-rank-3}, we show that if $\varphi$ is a level one, non-cuspidal,  $D_4$ modular form,  then $\varphi$ has a non-zero, primitive, rank $3$ Fourier coefficient.   We refer the reader to  \cite[Definition 4.3.2]{pollackLL} for the definition of rank in this setting.  One benefit of Proposition \ref{prop-non-vanishing-rank-3} is that it implies a stronger version of Theorem \ref{thm-intro-Hecke-IFF-Cuspidal-Quaternionic}. More specifically, in Theorem \ref{main-theorem-cuspidal-IF-Hecke} we show that a level one $D_4$ modular form $\varphi$ is cuspidal if,   for all primitive $B\in \mathrm{M}_2(\Z)^{\oplus 2}$ satisfying $\mathrm{disc}(T(B))<0$,  $\Lambda_{\varphi}[B]\ll_{\varphi} |\mathrm{disc}(T(B))|^{\frac{\ell+1}{2}}$.  In other words,  the cuspidality of $\varphi$ is characterized by the growth of the primitive, non-degenerate Fourier coefficients $\Lambda_{\varphi}[B]$.
\end{remark}
\subsection{The structure of paper} In Section~\ref{sec:Notation} we fix notation regarding the Lie algebra of $G$, certain parabolic subgroups,  and a maximal compact subgroup $K_{\infty}\leq G(\R)$.  In Section~\ref{Sec-Modular-Forms-on-Spin(8)-and-Sp4}, we define holomorphic modular forms on various groups,  review the Fourier expansions of these modular forms, and prove several vanishing and cuspidality criteria for holomorphic modular forms.  In Section~\ref{Sec-Prelimns-on-Quat-Modular-Forms}, we define quaternionic modular forms on \(G\),  their Fourier coefficients, and prove that the Fourier coefficients of cuspidal quaternionic modular forms on $G$ satisfy the bound \eqref{eqn-Hecke-Bound-intro-quaternionic}. The orthogonal Fourier-Jacobi expansion for quaternionic modular forms on \(G\) is developed in Section~\ref{chap:THE FOURIER-JACOBI EXPANSION OF MODULAR FORMS ON $G$}, where the vanishing criterion of Theorem~\ref{thm-primitivity-intro-1} is proven.  In Section~\ref{sec-The Quaternionic Maass Spezialschar}, we define the quaternionic Saito-Kurokawa lifts $\theta^{\ast}(F)$, and prove Theorem~\ref{Slice-Primitive-Lifts}.  In Section~\ref{sec-The Hecke Bound Characterization of Cusp Form on $G$}, we analyze the degenerate Fourier coefficients of quaternionic modular forms on $G_J$,  proving Theorem \ref{thm-into-cuspidality-criteria} and Theorem~\ref{thm-intro-Hecke-IFF-Cuspidal-Quaternionic}  .

\section{Acknowledgments}
The author thanks Aaron Pollack for valuable discussions and guidance related to this work. 
Part of this work was initiated during the 2022 Research Innovations and Diverse Collaborations workshop at the University of Oregon, and further developed during a SQUaREs workshop at the American Institute of Mathematics. We thank the organizers of these programs for providing a stimulating research environment.
The author is also grateful to Benedict Gross,  Bryan Hu, Jennifer Johnson-Leung,  Isabella Negrini,  Ameya Pitale, and Manami Roy for helpful conversations\\
\section{Algebraic Preliminaries}
\label{sec:Notation}
\subsection{Standard Notation}
The symbols $\Z$, $\Q$, $\C$, and $\R$ denote the rings of integers, rational numbers, complex numbers, and real numbers respectively.  Given a prime number $p$,  $\Z_p$ and $\Q_p$ are the rings of $p$-adic integers and $p$-adic numbers respectively. 
Let $\A$ denote the adele ring of $\Q$, and $\psi\colon \A/\Q\to \C^{\times}$ be a fixed non-trivial additive character of $\A/\Q$.
\subsection{The Quadratic Space $V$}
\label{subsec:The underlying quadratic space $V$}
Throughout, $(V,q)$ denotes a non-degenerate split quadratic space over $\Q$ of dimension $8$.  Write $(x,y) = q(x+y)-q(x)-q(y)$ for the bilinear form associated to $q$ and let
$$
\{b_1, b_2, b_3, b_4, b_{-4}, b_{-3}, b_{-2}, b_{-1}\}
$$
be a basis of $V$ consisting of isotropic vector $b_{\pm i}$, $i=1,\ldots 4$.  We choose this basis so that $(b_{\pm i}, b_{\pm j})=0$ and $(b_{\pm i}, b_{\mp j})=\delta_{i,j}$ for all $i,j=1,\ldots, 4$.  Here $\delta_{i,j}$ denotes the Kronecker delta symbol. Next, we let $U\subseteq V$ be the isotropic two-plane spanned by $\{b_1, b_2\}$, and $U^{\vee}$ denote the isotropic two plane spanned by $\{b_{-1}, b_{-2}\}$.  Write $V_{2,2}$ to denote the orthogonal complement of $U+ U^{\vee}$ in $V$. So $(V_{2,2},q)$ is a split quadratic space of signature $(2,2)$ and 
\begin{equation}
\label{decomposition of V stabilized by MP}
V=U\oplus V_{2,2}\oplus U^{\vee}.
\end{equation} 
Later, it will be convenient to identify $V_{2,2}$ with the space of $2\times 2$ matrices $\mathrm{M}_2$ over $\Q$ with $q\rvert_{V_{2,2}}$ equal to the determinant of $\mathrm{M}_2$.  We form this identification $V_{2,2}\xrightarrow{\sim} \mathrm{M}_2$ via the map
\begin{equation}
\label{identification-of-V_22}
b_3\mapsto \begin{pmatrix} 1 &0 \\ 0 &0 \end{pmatrix}, \quad b_{-3}\mapsto \begin{pmatrix} 0 &0 \\ 0&1 \end{pmatrix}, \quad b_4\mapsto \begin{pmatrix} 0 &0 \\ -1 &0 \end{pmatrix}, \quad b_{-4}\mapsto \begin{pmatrix} 0 &1 \\ 0 &0 \end{pmatrix}. 
\end{equation}
Similarly,  we let $V_{3,3}$ denote the orthogonal complement of $\Q b_1+\Q b_{-1}$ inside $V$. So $V$ admits an orthogonal decomposition
\begin{equation}
\label{decomposition of V stabilized by MQ}
V=\Q b_1\oplus V_{3,3}\oplus \Q b_{-1},
\end{equation} 
where $V_{3,3}=(\Q b_1+\Q b_{-1})^{\perp}$ is a split quadratic space of signature $(3,3)$. 
\subsection{The Group $G$}
\label{subsec-the-Lie-Algebra-of-G}
Throughout the paper,  $G=\Spin(V)$
denotes the spin group associated to the split $8$-dimensional quadratic space $V$.  We fix a covering homomorphism $\pi \colon G\to \SO(V)$. 
Consider embedding of $\SO(V)$-modules,
\begin{equation}
\label{eqn-embedding-exterior-square-map}
    \bigwedge^2V\to \mathrm{End}(V)
\end{equation}
sending $v_1\wedge v_2\in \bigwedge^2V$ to the endomorphism $v_1\wedge v_2\cdot x=(x,v_2)v_1-(x,v_1)v_2$ where $x\in V$. Then \eqref{eqn-embedding-exterior-square-map} induces an identification between $\wedge^2V$ and $\Lie(\SO(V))$.  Furthermore,   \eqref{eqn-embedding-exterior-square-map} promotes to an identification of Lie algebras where the bracket on $\wedge^2V$ is defined by
\begin{equation}
\label{defn-Lie-bracket}
[v\wedge w, v'\wedge w']=(v\wedge w\cdot v')\wedge w'+v'\wedge(v\wedge w\cdot w').
\end{equation}
As such,  $\wedge^2V$ gives a model for the Lie algebra of $G$,  in which the adjoint action is described by $g\cdot (v\wedge w)=gv\wedge gw$ for $g\in G$ and $v,w\in V$.  \\
\indent The Lie algebra $\Lie(G)$ admits another model $\g_E$,  defined via an algebraic structure $E$ known as a cubic norm structure.  We refer the reader to \cite[\S 4.2]{pollackLL} for the general definition of cubic norm structures.  For our purposes, an important example will be the cubic norm structure, $E=\G_a^3$, which consists of $3$ by $3$ diagonal matrices $(z_1,z_2,z_2)$.  \\
\indent As a subset of the space of $3$ by $3$ matrices, $E$ naturally carries a multiplicative structure.  The cubic norm structure on $E$ is specified by the choice of base point $1_E=(1,1,1)$, the norm $N_E\colon E\to \G_a$,  given by $N_E(Z) =\det(Z)$, and the adjoint on $E$, which is the quadratic map $(\cdot)^{\#}\colon E\to E$ given by $(z_1,z_2,z_3)^\# = (z_2 z_3, z_3 z_1,z_1 z_2)$. For $Z,Z' \in E$ one sets $Z \times Z' = (Z+Z')^\#-Z^\#-(Z')^\#$.  We define a trace pairing $(Z,Z')_E=z_1z_1'+z_2z_2'+z_3z_3'$, which induces an identification $E^{\vee}=E$. \\
\indent Let $V_3=\Span\{e_1,e_2,e_3\}$ be the standard representation of $\SL_3$, and $V_3^\vee=\Span\{\delta_1, \delta_2,\delta_3\}$ its dual.  Then $V_3$ and $V_3^{\vee}$ are endowed with an action of the Lie algebra $\sl_3$.  Let $E^0$ be the trace $0$ subspace of $E$. So $E^0$ consists of elements $Z\in E$ such that $(Z , 1_E)_E=0$.  Equivalently, we view elements of $E^0$ as endomorphism of $E$, where an element $u\in E^0$ corresponds to the endomorphism $\Psi_u\colon E\to E$, which is given by $x\mapsto ux$. \\
\indent  As a vector space, $\g_E$ is given as 
\begin{equation}
\label{eqn-defn-of-gE}
\g_E = (\sl_3 \oplus E^0) \oplus (V_3 \otimes E) \oplus (V_3^\vee \otimes E^\vee)
\end{equation}
We refer the reader to \cite[\S 4]{pollackQDS} for the definition of the Lie bracket on $\g_E$.  
\subsection{The Heisenberg Parabolic Subgroup  $P$}
\label{subsec:The Heisenber Type Parabolic P} 
Let $U$ (resp. $U^{\vee}$) denote the isotropic subspace of $V$ with basis $\{b_1,b_2\}$ (resp. $\{b_{-2},b_{-1}\}$). The Heisenberg parabolic subgroup $P\leq G$ is defined as the stabilizer
$$
P=\mathrm{Stab}_G(U). 
$$
The Levi decomposition of $P$ takes the form $P=M_PN_P$ . Here $N_P$ is the unipotent radical of $P$ and $M_P$ is the Levi factor $M_P=\mathrm{Stab}_P(U^{\vee})$.
Equivalently,  $P$ is the parabolic subgroup associated to the grading on $\Lie(G)$ defined by the element $h_P:=b_1\wedge b_{-1}+b_2\wedge b_{-2}$. That is, $\mathrm{ad}(h_P)$ has eigenvalues $-2,-1,0,1,2$ on $\Lie(G)$,  $\Lie(M_P)$ is the $0$ eigenspace of $\mathrm{ad}(h_P)$, and $\Lie(N_P)$ is the direct sum of the $1$ and $2$ eigenspaces of $\mathrm{ad}(h_P)$.  \\
\indent Using the definition \eqref{defn-Lie-bracket}, $\Lie(M_P)=U\wedge U^{\vee}+V_{2,2}\wedge V_{2,2}$, and so
$$
\Lie(M_P)=\Lie(\GL(U))\oplus \Lie(\SO(V_{2,2})).
$$
Since $G$ is semi-simple and simply connected, the same is true of the derived subgroup $M_P^{\mathrm{der}}$ \cite[8.4.6(6)]{MR1642713}.  Therefore, since $\Lie(M_P^{\mathrm{der}})= \Lie(\SL(U))\oplus \Lie(\SO(V_{2,2})$,  
$$M_P^{\mathrm{der}}= \SL_2\times \SL_2\times \SL_2. 
$$
Since $\Lie(N_P)=U\wedge U+U\wedge V_{2,2}$, the center $Z\leq N_P$ equals the $\lambda=2$ eigenspace of $h_P$. Thus $\Lie(Z)=\Q\linspan\{b_1\wedge b_2\}$, $[N_P, N_P]=Z$,  and the Lie algebra of $N_P^{\mathrm{ab}}:=N_P/Z$ is $\Lie(N_P^{\mathrm{ab}})=\{b_1\wedge v+b_2\wedge v' +\Lie(Z) \colon v,v'\in V_{2,2}\}$.
If $T_1,T_2\in V_{2,2}$,  let 
\begin{equation}
    \label{eqn defn epsilon}
    \varepsilon_{[T_1,T_2]}\colon N_P(\Q)\backslash N_P(\A)\to \C^{\times} 
\end{equation}
be the character associated to the functional $b_1\wedge v+b_2\wedge v'\mapsto (T_1,v)+(T_2,v')$. \\
\indent As a module over $M_P(\Q)$, the character lattice of $N_P(\Q)\backslash N_P(\A)$ is 
\begin{equation}
\label{defn-characters-NP} 
U^{\vee}\otimes V_{2,2} \xrightarrow{\sim} \mathrm{Hom}(N_P(\Q) \backslash N_P(\A), \C^1), \qquad b_{-1}\otimes T_1+b_{-2}\otimes T_2\mapsto \varepsilon_{[T_1,T_2]}.
\end{equation}
Via the isomorphism \cite[Theorem A.8]{JMNPR24}, $\Lie(N_P^{\mathrm{ab}})$ is identified with the subalgebra of $\g_E$ given by
$
\Lie(N_P^{\mathrm{ab}})=\Q E_{12}+e_1\otimes E+\delta_3\otimes E^{\vee}+\Q E_{21}, 
$
and $\Lie(Z)= \Q E_{13}$.  \\
\indent Consider, 
$W_E:=\Q\otimes E\oplus E^{\vee}\oplus \Q$
as a symplectic vector via the form 
$$
\langle (a,b,c,d), (a',b',c',d')\rangle_{W_E}=ad'-(b,c')_E+(c,b')_E-a'd, 
$$
where $a,a',d,d'\in \Q$, $b,b'\in E$, and $c,c'\in E^{\vee}$.  Given $w\in W_E$, let $\varepsilon_w$ denote the character of $N_P$ associated to the functional 
$
aE_{12}+e_1\otimes b+\delta_3\otimes c+dE_{21}\mapsto\langle  w, (a,b,c,d)\rangle_{W_E}. 
$
Then $w\mapsto \varepsilon_w$ induces a second identification 
\begin{equation}
\label{defn-characters-NP2} 
W_E \xrightarrow{\sim} \mathrm{Hom}(N_P(\Q) \backslash N_P(\A), \C^1).
\end{equation}
By \cite[\S 7.4]{JMNPR24}, under the isomorphism $W_E\simeq U^{\vee}\otimes V_{2,2}$ induced by \eqref{defn-characters-NP} and \eqref{defn-characters-NP2}, if $w=(a, \mathrm{diag}(\beta_1, \beta_2, \beta_3), \mathrm{diag}(\gamma_1, \gamma_2, \gamma_3), d)\in W_E$ then, $\varepsilon_w$ is identified with $\varepsilon_{[T_1,T_2]}$ where
\begin{equation}
\label{eqn-character-lattice isom}
[T_1, T_2]= [\gamma_1 b_{-4} -\beta_2 b_{-3} + db_{3} + \gamma_3 b_{4},-\beta_3 b_{-4} + a b_{-3} -\gamma_2 b_{3}  - \beta_1 b_{4}].
\end{equation}
\subsection{The Orthogonal Parabolic Subgroup $R$}
\label{subsec:The Klingen Type Parabolic Q}
Let $R$ denote the parabolic subgroup of $G$ that stabilizes the line $\Q b_1$. Then $R$ admits a Levi decomposition $R=M_RN_R$ where $M_R$ is the subgroup of $R$ stabilizing the line $\Q b_{-1}$, and $N_R\trianglelefteq R$ denotes the unipotent radical of $R$. Equivalently,  $R$ is the parabolic subgroup of $G$ associated to the element $h_R=b_1\wedge b_{-1}$, which acts on $\Lie(G)$ with eigenvalues $-1,0,1$. Since $\Lie(M_R)=b_1\wedge b_{-1}+\bigwedge^2V_{3,3}$ equals to the $\lambda=0$ eigenspace of $\mathrm{ad}(h_R)$,
\begin{equation}
\label{Lie(M_R) identified} 
\Lie(M_R)=\Lie(\GL(\Q b_1))\oplus \Lie(\SO(V_{3,3})).
\end{equation}
Hence, $M_R^{\mathrm{der}}\simeq \Spin(V_{3,3})$, and the identification $\Lie(N_R)=b_1\wedge V_{3,3}$ gives an $M_R^{\mathrm{der}}$-module isomorphism between $V_{3,3}$ and the character lattice of $N_R(\Q)\backslash N_R(\A)$.  Precisely, we have
\begin{equation}
\label{eqn-character-lattice-R}
V_{3,3}(\Q) \xrightarrow{\sim} \mathrm{Hom}(N_R(\Q)\backslash N_R(\A), \C^{\times}), \quad y\mapsto \chi_y
\end{equation}
where $\chi_y(\exp(b_1\wedge v))=\psi((v,y))$ for all $v\in V_{3,3}(\A)$ and $y\in V_{3,3}(\Q)$. 
\subsection{The Siegel Parabolic Subgroup $Q'$}
\label{subsec-The-Siegel-Parabolic-$Q'$} 
In this subsection we assume that $y\in V_{3,3}$ is orthogonal to the  plane $\Q\linspan\{b_2,b_{-2}\}$ and satisfies $(y,y)\neq 0$.  Define $V_{2,3}'$ to be the complement of $\Q y$ inside $V_{3,3}$,  so that $V_{2,3}'$ is a rational quadratic space of signature $(2,3)$. The group $M'=M'_y$ is defined as
\begin{equation}
M'=\mathrm{Stab}_{\Spin(V_{3,3})}(y)\simeq \mathrm{Spin}(V_{2,3}'). 
\end{equation} 
 Since $y$ is orthogonal to $\Q b_2$,  $M'$ contains a parabolic subgroup $Q'$ defined as the stabilizer in $M'$ of the line $\Q b_2$.  Write $Q'=M_{Q'}N_{Q'}$ for the Levi decomposition of $Q'$ where
$$
M_{Q'}=\mathrm{Stab}_{Q'}(\Q b_{-2}).
$$ We let $V_{1,2}'$ denote the orthogonal complement of $\Q b_2+\Q y+ \Q b_{-2}$ inside of $V_{3,3}$. Then $M_{Q'}$ acts on $V_{2,3}'$ preserving the decomposition $V_{2,3}' =\Q b_2 +V_{1,2}'+ \Q b_{-2}$.  \\
\indent The unipotent radical $N_{Q'}\trianglelefteq Q'$ is abelian, and may be identified as a module over the derived subgroup $M_{Q'}^{\mathrm{der}}\simeq \Spin(V_{1,2}')$ according to the map $V_{1,2}'\xrightarrow{\sim} N_Q$,  $v \mapsto \exp(b_2\wedge v)$. As such,  $\mathrm{Hom}(N_{Q'}(\Q) \backslash N_{Q'}(\A), \C^1)$ is identified with $V_{1,2}'(\Q)$ via 
\begin{equation}
\label{defn-characters-Q'}
V_{1,2}'(\Q) \xrightarrow{\sim} \mathrm{Hom}(N_{Q'}(\Q) \backslash N_{Q'}(\A), \C^1), \qquad S\mapsto \varepsilon_{[0,S]}.
\end{equation}
Here the character $\varepsilon_{[0,S]}$ is defined in \eqref{eqn defn epsilon}. 
\subsection{Compact Subgroups}
\label{subsec-compact-subgroups}
Let $V^+$ be the definite $4$ plane in $V(\R)$ spanned by the vectors 
$$
 u_1=\frac{1}{\sqrt{2}}(b_1+b_{-1}),\quad u_2=\frac{1}{\sqrt{2}}(b_2+b_{-2}), \quad v_1=\frac{1}{\sqrt{2}}(b_3+b_{-3}), \quad v_2=\frac{1}{\sqrt{2}}(b_4+b_{-4})
$$
Set $V^{-}:=(V^+)^{\perp}$ so that $V^-$ is the negative definite subspace spanned by
$$
u_{-1}=\frac{1}{\sqrt{2}}(b_1-b_{-1}), \quad u_{-2}=\frac{1}{\sqrt{2}}(b_2-b_{-2}), \quad v_{-1}=\frac{1}{\sqrt{2}}(b_3-b_{-3}), \quad v_{-2}=\frac{1}{\sqrt{2}}(b_4-b_{-4}).
$$ The maximal compact subgroup $K_{\infty}\leq G(\R)$ is defined as 
$$K_{\infty}=\mathrm{Stab}_{G(\R)}(V^+).$$Then the Lie algebra $\k_0=\Lie(K_{\infty})$ is the $1$ eigenspace of the Cartan involution $\theta\colon \wedge^2 V\to \wedge^2V$ given by $\theta(b_i\wedge b_{\pm j})=b_{-i}\wedge b_{\mp j}$.  Set $\p_0$ to be the $-1$ eigenspace of $\theta$. Then if $\k=\k_0\otimes \C$ and $\p=\p_0\otimes \C$, we obtain a Cartan decomposition of $\g=\Lie(G)\otimes_{\Q} \C$ as $
\g=\k\oplus \p$. \\ 
\indent At the level of groups, we have a homomorphism  
\begin{equation}
\label{eqn-2-copies of Spin}
\Spin(V^+)\times \Spin(V^-)\to K_{\infty}, \quad (g,h)\mapsto gh,
\end{equation}
which is induced by the inclusions of Clifford algebras $\mathrm{Cl}(V^{\pm})\hookrightarrow \mathrm{Cl}(V(\R))$.  The kernel of \eqref{eqn-2-copies of Spin} is the diagonally embedded $\mu_2=\{(1,1), (-1,-1)\}$.  By comparing dimensions, it follows that the derivative of \eqref{eqn-2-copies of Spin} is an isomorphism.  Hence, $K_{\infty}^0=(\Spin(V^+)\times \Spin(V^-))/\mu_2$. Since $G$ is simply connected, and simple,  $G(\R)$ is connected.  Hence,  $K_{\infty}=K_{\infty}^0$, and $K_{\infty}=(\Spin(V^+)\times \Spin(V^-))/\mu_2$.\\
\indent When we define quaternionic modular forms on $G$ in Subsection \ref{subsec: Quaternionic Modular Forms on G}, it will be necessary to speak about a distinguished three dimensional representation $\V$ of $K_{\infty}$. To define $\V$, we make a Lie algebra argument.  Following \cite[\S 3.4]{JMNPR24}, consider the $\SL_2$ triple in $\g$:
\begin{compactenum}[(i)]
	\item $e^+ = \frac{1}{2}(u_1-iu_2) \wedge (v_1 - iv_2)$
	\item $h^+ = i (u_1 \wedge u_2 + v_1 \wedge v_2) = \frac{1}{2} (u_1-iu_2) \wedge (u_1+iu_2) + \frac{1}{2}(v_1-iv_2) \wedge (v_1+iv_2)$
	\item $f^+ = -\frac{1}{2} (u_1+iu_2) \wedge (v_1 + iv_2)$.
\end{compactenum}
Then $K_{\infty}$ acts on $\sl_2=\C\linspan\{e^+, h^+, f^+\}$ via the adjoint representation, which defines the representation $\V$.  We choose a basis $x,y$ of $\C^2 = V_2$ so that $\V$ is identified with $\mathrm{Sym}^2V_2$ via $e^+ = -x^2$, $h^+ = 2xy$, $f^+ = y^2$.  For $\ell\in \Z_{\geq 1}$, we write $\mathbf{V}_{\ell}$ for the $\ell^{th}$ symmetric power of $\V$.  So \begin{equation}
\label{defn-of-Vell}
\mathbf{V}_{\ell} = \mathrm{Sym}^{2\ell}(V_2),
\end{equation} 
which has a basis $x^{2\ell}, x^{2\ell-1}y, \ldots, xy^{2\ell-1}, y^{2\ell}$. 
\indent The Lie algebra $\so(V^+)$ contains a second $\SL_2$ triple $\sl_2'$,  which is obtained by replace $v_2$ with $-v_2$ in the definition of $\sl_2$.  Then $\so(V^+)=\sl_2\oplus \sl_2'$.  Similarly, $\so(V^-)$ admits an orthogonal decomposition as $\so(V^-)=\sl_2''+\sl_2'''$ where $\sl_2''$ (resp. $\sl_2'''$) is obtain by replacing $u_i$ with $u_{-i}$ and $v_{i}$ with $v_{-i}$ in the definition of $\sl_2$ (resp.  $\sl_2'$).  In this way,  we present $\k=\k_0\otimes_{\R} \C$ as
\begin{equation}
\label{eqn-presentation-of-Lie(k)}
\k=\sl_2\oplus \sl_2'\oplus \sl_2''\oplus \sl_2'''.
\end{equation}
Write $L\leq K_{\infty}$ to denote the subgroup of $K_{\infty}$ with Lie algebra $ \sl_2'+\sl_2''+\sl_2'''$.  Then 
$$
K_{\infty}=(\SU(2)\times L)/\mu_2. 
$$
and $L=\SU(2)\times \SU(2)\times \SU(2)$.  The embedding $\mu_2\hookrightarrow \SU(2)\times L$ sends $-1$ to $(-1,-1,-1,-1)$. Thus if $V_2$ denotes the standard two dimensional representation of $\SU(2)$ and $W=V_2\otimes V_2\otimes V_2$,  then $V_2\boxtimes W$ gives a representation representation of $K_{\infty}$, and as $K_{\infty}$-module, we have 
\begin{equation}
\label{eqn-tensor-decomposition-W}
\p=V_2\boxtimes W.  
\end{equation}
\section{Preliminaries on Holomorphic Modular Forms}
\label{Sec-Modular-Forms-on-Spin(8)-and-Sp4}
\subsection{Holomorphic modular forms on $M'$}
\label{subsec:HMFS} Suppose $y\in V_{3,3}$ satisfies $(y,y)>0$.  Recall the orthogonal parabolic subgroup $R=M_RN_R$ and the subgroup $M'=\Spin(V_{2,3}')$ of Subsection \ref{subsec:The Klingen Type Parabolic Q}.   Let $\widehat{y}=y/\sqrt{(y,y)}$ and let $g_{y}\in G(\R)$ be any element satisfying 
$$
g_y\cdot v_1=\widehat{y}, \quad g_y\cdot u_{\pm 1}=u_{\pm 1}, \quad  g_y\cdot u_{\pm 2}=u_{\pm 2}, \quad g_y\cdot v_{\pm 2}=v_{\pm 2}.
$$
Then $g_yK_{\infty}g_y^{-1}$ is the maximal compact subgroup in $G(\R)$ stabilizing the $4$-plane spanned by $\{u_1, u_2, \widehat{y}, v_2\}$.  Let $K_y:=(g_yK_{\infty}g_y^{-1})\cap M'(\R)$ be the maximal compact subgroup of $M'(\R)$ stabilizing the subspace $\R\linspan\{u_2, v_2\}$ and write $\V_{\ell}^{y}$ to denote the representation of $K_y$ on the vector space $\V_{\ell}$ obtained by $k\cdot v= g_y^{-1}kg_y\cdot v$.  \\
\indent Recall that $V_{1,2}'$ denotes the complement of $\Q b_2+\Q y+\Q b_{-2}$ in $V_{3,3}$.  Then the symmetric domain $M'(\R)/K_y$ is identified with the complex manifold
\[\mathfrak{h}_{y}' = \{X+ iY \in V_{1,2}' \otimes_{\Q} \C: (Y,-\sqrt{2}v_2) > 0 \text{ and } (Y,Y) > 0\}.\]
To identify $M'(\R)/K_y$ with $\h_y'$,  recall that $V_{2,3}'$ denotes the complement of $\Q\linspan\{y\}$ in $V_{3,3}$. Then $\mathfrak{h'}_{y}$ maps into the subspace of isotropic elements in $V_{2,3}'\otimes_{\Q}\C$ via the map
\begin{equation}
\label{eqn-defn-action-on-h}
\mathfrak{h}_{y}'\to V_{2,3}'\otimes_{\Q}\C, \qquad Z \mapsto r(Z):= -q(Z) b_2 + Z + b_{-2}.
\end{equation}
This yields an action of the identity component $M'(\R)$ on $\mathfrak{h}_{y}'$ as follows: If $g \in M'(\R)$, then there exists a unique element $j_{y}(g,Z)\in \C^{\times}$ and a unique element $gZ \in \mathfrak{h}_{y}'$ so that 
$$
g r(Z) = j_{y}(g,Z) r(gZ).
$$Observe that $j_{y}(g,Z) = (g r(Z), b_2)$ and $K_y$ is the stabilizer of $i(-\sqrt{2}v_2)$ in $M'(\R)$. Moreover, $j_y(\cdot, -i\sqrt{2}v_2): K_{y} \rightarrow \C^\times$ is a character. \\
\indent Later, we will specialize to the case when $y=b_3+\alpha b_{-3}/2$ with $\alpha\in 2\Z_{>0}$. In this case we have explicit coordinates on $\h_{y}'$ given as follows.  Let  $y_{\alpha}^{\vee}=b_3-\alpha b_{-3}/2$ so that 
$$
V_{1,2}'=\Z\linspan\{b_4, y_{\alpha}^{\vee}, b_{-4}\}. $$ Then a general vector $Z\in V_{1,2}'\otimes_{\Q}\C$ takes the form 
\begin{equation}
\label{eqn-coordinates-on-h}
Z=-\tau'b_4+zy_{\alpha}^{\vee}-\tau b_{-4}
\end{equation}
where $\tau',\tau,z\in \C$.  A short computation shows that with notation as in \eqref{eqn-coordinates-on-h},  $Z\in \h_{y}'$ if and only if $\mathrm{Im}(\tau)>0$,  $\mathrm{Im}(\tau')>0$,  and $\mathrm{Im}(\tau)\cdot \mathrm{Im}(\tau')-\dfrac{\alpha}{2} \mathrm{Im}(z)^2>0$.
\begin{definition}
\label{defn-HMF-on-Sp(4)}
Suppose $\ell \in \Z$, and $\Gamma \subseteq M'(\R)$ is an arithmetic subgroup.  We say that $f: \mathfrak{h}_y \rightarrow \C$ is a holomorphic modular form of weight $\ell$ and level $\Gamma$ if:
	\begin{compactenum}
		\item $f$ is holomorphic,
		\item$f(\gamma Z) = j_y(\gamma, Z)^{\ell} f(Z)$ for all $Z \in \mathfrak{h}_y$ and $\gamma \in \Gamma$, and
		\item $\xi(g):=j_y(g,-i\sqrt{2}v_2)^{-\ell} f(g \cdot (-i\sqrt{2}v_2)): M'(\R) \rightarrow \C$ is of moderate growth.
	\end{compactenum}
\end{definition}
\begin{definition}
\label{defn-automorphic-function-associated-to-holomorphic-mf}
Suppose $\ell \in \Z$, and $\Gamma \subseteq M'(\R)$ is an arithmetic subgroup. We say that a function $\xi: M'(\R) \rightarrow \C$ is the automorphic function associated to a holomorphic modular form of weight $\ell$ and level $\Gamma$ if $\xi$ is of moderate growth and satisfies the following conditions: 
\begin{compactenum}
\item if $g_{\infty}\in M'(\R)$ and $\gamma\in \Gamma$ then $\xi(\gamma g_{\infty}) =\xi(g_{\infty})$,
\item if $g_{\infty}\in M'(\R)$ and $k\in K_y$ then $\xi(g_{\infty}k) = j_y(k,-i\sqrt{2}v_2)^{-\ell}\xi(g_{\infty})$, and 
\item   if $g_{\infty}\in M'(\R)$ and $Z:=g_{\infty} \cdot (-i\sqrt{2}v_2)\in \h_y$ then the formula 
$$
f_{\xi}(Z)=j_y(g_{\infty}, -i\sqrt{2}v_2)^{\ell}\xi(g_{\infty})
$$ 
is holomorphic in $Z$. 
\end{compactenum}
\end{definition}
\subsection{The Fourier Expansions of Modular Forms on $M'$} 
\label{subsec:FE-HMFS} 
We continue with the notation of the previous subsection.  Thus $y\in V_{3,3}$ denotes a vector  in the orthogonal complement to $\R\linspan\{u_2, v_2\}$ satisfying $(y,y)>0$.  Furthermore,  we assume that $y$ is orthogonal to $b_2$,  in which case $M'$ contains the parabolic subgroup $Q'=M_{Q'}N_{Q'}$ of Subsection \ref{subsec-The-Siegel-Parabolic-$Q'$}.  

\indent Since the unipotent radical $N_{Q'}$ is abelian,  we may apply the identification \eqref{defn-characters-Q'} to Fourier expand an automorphic function $\xi \colon M'(\Q)\backslash M'(\A)\to \C$ in characters of $N_{Q'}(\Q)\backslash N_{Q'}(\A)$ as 
\begin{equation}
\label{FE-GENERAL-AUT-FORM-M'} 
\xi(g)=\xi_{N_{Q'}}(g)+\sum_{S\in V_{1,2}'(\Q) \colon S\neq 0} \xi_S(g). 
\end{equation}
Here $g\in M'(\A)$ and $ \xi_S(g)=\displaystyle{\int_{N_{Q'}(\Q)\backslash N_{Q'}(\A)}}\xi(ng)\varepsilon_{[0,S]}(n)^{-1}\,dn$. \\
\indent For the remainder of this subsection, we specialize to the case when $y=nb_3+n\alpha b_{-3}/2$, where $n\in \Z_{\geq 1}$ and $\alpha \in 2\Z_{>0}$.  For this choice of $y$,  the complement of $\Q y$ in $V_{3,3}$ is 
$$
V_{2,3}'=\Q\linspan\left\{b_2, b_4, b_{-4}, y_{\alpha}^{\vee}, b_{-2}\right\}
$$
where $$
y_{\alpha}^{\vee}=b_3-\dfrac{\alpha}{2}b_{-3}.
$$Let $M'(\Z)=M'(\Q) \cap G(\widehat{\Z})$ viewed as a discrete subgroup of $M'(\R)$.  Regarding the stucture of $M'(\Z)$, we record the following elementary result. 
\begin{lemma} Let $V_{1,2}'(\Z)=\Z\linspan\left\{b_4, y_{\alpha}^{\vee}, b_{-4}\right\}$. Then 
\begin{equation}
\label{eqn-identification-N_Q'(Z)}
N_{Q'}(\Q)\cap M'(\Z)=\{\exp(b_2\wedge v) \colon v\in V_{1,2}'(\Z)\}. 
\end{equation}
\end{lemma}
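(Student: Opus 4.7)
The approach is to identify $N_{Q'}(\Q)$ with $V_{1,2}'(\Q)$ via the parameterization $v \mapsto \exp(b_2\wedge v)$, and then at each finite place $p$ to determine precisely when $\exp(b_2\wedge v)$ lies in $G(\Z_p)$. Since $N_{Q'}$ is a connected unipotent subgroup of $G$, the covering $\pi\colon G\to \SO(V)$ restricts to an isomorphism onto its image, and the integral structure on $N_{Q'}$ inherited from $G(\Z_p)$ coincides with the condition that the image under $\pi$ preserve the self-dual lattice $V(\Z_p)=\Z_p\linspan\{b_{\pm i}\}$.

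The key computation is an explicit formula for this action. Using $(b_2,b_2)=0$ and $(b_2,v)=0$ for $v\in V_{1,2}'$, one checks $(b_2\wedge v)^3=0$ on $V$, so that
\begin{equation*}
\exp(b_2\wedge v)\cdot x \;=\; x + (x,v)b_2 - (x,b_2)v - \tfrac{1}{2}(x,b_2)(v,v)b_2.
\end{equation*}
Writing $v=a\,b_4 + b\,y_{\alpha}^{\vee} + c\,b_{-4}$ with $a,b,c\in\Q_p$, evaluation of this operator on $b_{-4}$, $b_4$, and $b_{-3}$ yields coefficients $a$, $c$, $b$ of $b_2$ respectively, so lattice preservation forces $a,b,c\in\Z_p$. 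For the converse, note that when $a,b,c\in\Z_p$ and $\alpha\in 2\Z_{>0}$, the scalar $(v,v)/2 = ac - (\alpha/2)b^2$ is in $\Z_p$; hence the image of $b_{-2}$, namely $b_{-2} - v - \tfrac{1}{2}(v,v)b_2$, also lies in $V(\Z_p)$, and all other basis vectors are easily seen to remain integral.

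Combining over all primes gives $N_{Q'}(\Q)\cap G(\widehat{\Z})=\{\exp(b_2\wedge v)\colon v\in V_{1,2}'(\Z)\}$, which proves the lemma since $N_{Q'}\subseteq M'$ yields $N_{Q'}(\Q)\cap M'(\Z)=N_{Q'}(\Q)\cap G(\widehat{\Z})$. The main subtlety is simply to align the integral structure on $G$ with the chosen lattice $V(\Z)$; once that identification is in place the verification is an elementary calculation. The hypothesis $\alpha\in 2\Z_{>0}$ enters only to ensure the integrality of $(v,v)/2$, which is what prevents the $b_{-2}$-check from imposing an additional $2$-adic condition beyond $a,b,c\in\Z_p$.
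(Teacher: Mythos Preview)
Your proof is correct and complete. The paper itself does not include a proof of this lemma, merely calling it an ``elementary result,'' so there is no paper proof to compare against; your direct computation via the explicit action of $\exp(b_2\wedge v)$ on the basis of $V$ is exactly the natural verification one would expect. The one point you flag---that the Chevalley integral structure on $G(\Z_p)$ agrees, on this unipotent subgroup, with the lattice-preservation condition in $\SO(V)$---is genuine but standard: since $\pi$ restricts to an isomorphism on unipotent root subgroups and the Chevalley $\Z$-form is defined so that root-subgroup parameters are integral exactly when the corresponding matrices stabilize the admissible lattice $V(\Z)$, the identification holds.
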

Suppose $\xi\colon M'(\R)\to \C$ is the automorphic functions associated to a holomorphic modular form of weight $\ell$ and level $M'(\Z)$.  Writing $N_{Q'}(\Z)=N_{Q'}(\R)\cap M'(\Z)$,  the character lattice of $N_{Q'}(\Z)\backslash N_{Q'}(\R)$ is identified with the $\Z$-linear dual of $V_{1,2}'(\Z)$, i.e.
$$
V_{1,2}'(\Z)^{\vee}=\Z\linspan\left\{b_4, \dfrac{1}{\alpha}y_{\alpha}^{\vee}, b_{-4}\right\}.
$$
More precisely,  we have an $M_{Q'}(\Z)$ equivariant identification 
\begin{equation}
\label{eqn-defn-characters-of-N_Q(Z)}
V_{1,2}'(\Z)^{\vee} \xrightarrow{\sim}\mathrm{Hom}(N_{Q'}(\Z)\backslash N_{Q'}(\R), \C^1), \quad S\mapsto \varepsilon_{[0,S]}. 
\end{equation}
Thus if $g_{\infty}\in M'(\R)$, $Z=g_{\infty} \cdot (-i\sqrt{2}v_2)$,  and 
$$
f_{\xi}(Z)=j_y(g_{\infty}, -i\sqrt{2}v_2)^{\ell}\xi(g_{\infty}),
$$then $f_{\xi}$ Fourier expands along $N_{Q'}(\Z)\backslash N'(\R)$ as 
\begin{equation}
\label{FE-HMF-on-M'}
f_{\xi}(Z)=\sum_{S\in V_{1,2}'(\Z)^{\vee}_{\geq 0}} A_{\xi}[S]\exp(2 \pi i (S,Z)). 
\end{equation}
Here the scalars $A_{\xi}[S]$ are the Fourier coefficients of $\xi$ and
$$
V_{1,2}'(\Z)^{\vee}_{\geq 0} =\{S\in V_{1,2}'(\Z)^{\vee}\colon (S,S)\geq 0,  (S,-\sqrt{2}v_2)\geq 0\}.$$The condition $S\in V_{1,2}'(\Z)^{\vee}_{\geq 0}$ is a consequence of the Koecher principle.  \\
\indent Using \eqref{eqn-coordinates-on-h},  and writing 
$$S=-nb_4-mb_{-4}-\dfrac{r}{\alpha}y_{\alpha}^{\vee}$$ with $n,r,m\in \Z$,  the expansion \eqref{FE-HMF-on-M'} takes the form
\begin{equation}
\label{FE-HMF-EXPLICIT-COORDINATES}
f_{\xi}(\tau', z,\tau)=\sum_{\tiny \begin{array}{c} n,r,m\in \Z \\ n,m,2\alpha nm-r^2\geq 0\end{array}}A_{\xi}[-nb_4-mb_{-4}-\dfrac{r}{\alpha}y_{\alpha}^{\vee}]e^{2\pi i (\tau'm+n\tau+rz)}.
\end{equation}
Grouping terms in \eqref{FE-HMF-EXPLICIT-COORDINATES} gives the classical Fourier-Jacobi expansion (see for example \cite{EichZag}),  $f_{\xi}(\tau',  z, \tau)=
\sum_{m\in \Z_{\geq 0}} \phi_m(\tau, z) e^{2\pi i m \tau'}$,  where
\begin{equation}
\label{FJ-HMF-M'}
\phi_m(\tau, z)=\sum_{\tiny \begin{array}{c}n,r\in \Z \\ n, 2\alpha nm-r^2\geq 0\end{array}} A_{\xi}[-nb_4-mb_{-4}-\dfrac{r}{\alpha}y_{\alpha}^{\vee}]e^{2\pi i (n\tau+rz)}.
\end{equation}
The coefficients $\phi_m$ satisfy the following well known transformation law. 
\begin{lemma}
\label{lemma-transformation-law-Jacobi-Forms}
Suppose $\left(\begin{smallmatrix} a &b \\ c & d\end{smallmatrix}\right)\in \SL_2(\Z)$,  $z\in \C$ and $\tau\in \C$ satisfies $\mathrm{Im}(\tau)>0$. Then 
\begin{equation}
\label{eqn-lemma-transformation-law-Jacobi-Forms}
\phi_m\left(\dfrac{ a\tau +b}{c\tau +d}, \dfrac{z}{c\tau+d}\right)=\exp\left(\dfrac{2\pi i mcz^2\alpha/2}{c\tau+d}\right)(c\tau+d)^{\ell}\phi_m(\tau, z). 
\end{equation}
\end{lemma}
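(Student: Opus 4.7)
The plan is to exhibit an embedded copy of $\SL_2(\Z)$ inside the arithmetic subgroup $M'(\Z)$ whose action on $\mathfrak{h}_y'$, written in the coordinates $(\tau',z,\tau)$ from \eqref{eqn-coordinates-on-h}, reproduces the standard Jacobi-form substitutions $\tau\mapsto(a\tau+b)/(c\tau+d)$ and $z\mapsto z/(c\tau+d)$. Once such an embedding is in place, the transformation property of $f_\xi$ in Definition~\ref{defn-HMF-on-Sp(4)} combined with a coefficient-by-coefficient comparison in $e^{2\pi i m\tau'}$ will give \eqref{eqn-lemma-transformation-law-Jacobi-Forms}.

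First, I would construct the $\SL_2$. Consider the totally isotropic $2$-plane $W=\Q b_2+\Q b_4$ inside $V_{2,3}'$, which is maximal for the Witt index. The parabolic $\widetilde{Q}\subset M'$ stabilizing $W$ has a Levi factor whose identity component is $\GL(W)\cong \GL_2$, with derived subgroup an $\SL_2$ acting on $W$ in the standard way, on the dual plane $\Q b_{-4}+\Q b_{-2}$ by the inverse-transpose, and trivially on the anisotropic line $W^\perp/W=\Q y_\alpha^\vee$. Using the integral basis $\{b_1,\dots,b_{-1}\}$ of $V$, this $\SL_2$ preserves the ambient $\Z$-lattice, so $\SL_2(\Z)$ embeds into $M'(\Z)$.

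Next, for $\gamma=\left(\begin{smallmatrix}a&b\\c&d\end{smallmatrix}\right)\in \SL_2(\Z)$ let $\widetilde{\gamma}$ denote the corresponding element of $M'$. I would then compute $\widetilde{\gamma}\cdot r(Z)$ directly, starting from $r(Z)=-q(Z)b_2+Z+b_{-2}$ and using $q(Z)=\tau\tau'-(\alpha/2)z^2$. Collecting the coefficients of $\widetilde{\gamma}\cdot r(Z)$ on the basis $\{b_4,b_2,y_\alpha^\vee,b_{-4},b_{-2}\}$, one recovers a vector of the form $(c\tau+d)\cdot r(Z_*)$ where
$$
\tau_*=\frac{a\tau+b}{c\tau+d},\qquad z_*=\frac{z}{c\tau+d},\qquad \tau_*'=\tau'-\frac{c(\alpha/2)z^2}{c\tau+d}.
$$
In particular $j_y(\widetilde{\gamma},Z)=c\tau+d$, and $\widetilde{\gamma}\cdot Z=-\tau_*'b_4+z_* y_\alpha^\vee-\tau_* b_{-4}\in \mathfrak{h}_y'$.

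Finally, the modular transformation property of $f_\xi$ in Definition~\ref{defn-HMF-on-Sp(4)}(2) gives $f_\xi(\widetilde{\gamma}Z)=(c\tau+d)^\ell f_\xi(Z)$. Substituting the Fourier-Jacobi expansion \eqref{FJ-HMF-M'} on both sides yields
$$
\sum_{m\geq 0}\phi_m(\tau_*,z_*)\,e^{-2\pi i m c(\alpha/2)z^2/(c\tau+d)}e^{2\pi i m\tau'}=(c\tau+d)^\ell\sum_{m\geq 0}\phi_m(\tau,z)\,e^{2\pi i m\tau'},
$$
and equating the coefficient of $e^{2\pi i m\tau'}$ on each side produces \eqref{eqn-lemma-transformation-law-Jacobi-Forms}. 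The main obstacle is the bookkeeping in the second step: pinning down the correct sign conventions and dual-basis identifications so that $\widetilde{\gamma}\cdot r(Z)$ factors in the prescribed way through $r(\widetilde{\gamma}\cdot Z)$ with automorphy factor exactly $c\tau+d$. This is the analogue for $\Spin(2,3)$ of the familiar computation of the Jacobi-group action inside $\Sp_4$ via the exceptional isomorphism $\Spin(2,3)\cong \Sp_4$, and everything else in the argument is essentially formal.
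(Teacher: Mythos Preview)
Your approach is essentially the same as the paper's: the paper also embeds $\SL_2$ into $M'$ via the Levi of the stabilizer of the isotropic plane $\Q b_2+\Q b_4$ (written there as an explicit $5\times 5$ matrix in the basis $\{b_2,b_4,y_\alpha^\vee,b_{-4},b_{-2}\}$), computes the resulting action on $(\tau',z,\tau)$ and the automorphy factor $c\tau+d$, and then equates coefficients of $e^{2\pi i m\tau'}$ in the Fourier--Jacobi expansion. Your formula $\tau_*'=\tau'-c(\alpha/2)z^2/(c\tau+d)$ agrees with the paper's $(c(\tau\tau'-z^2\alpha/2)+d\tau')/(c\tau+d)$ after simplification, so the two arguments are identical in substance.
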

\begin{proof}
The transformation law \eqref{eqn-lemma-transformation-law-Jacobi-Forms} follows from property 2 of Definition \ref{defn-HMF-on-Sp(4)}.  In more detail,  recall the  $\Q$-rational morphism from $M'$ to $\SO(V_{2,3}')$ defined via the action of $M'$ on $V_{2,3}'$. Writing elements of $\SO(V_{2,3}')$ as matrices relative to the basis $\{b_2, b_4, y_{\alpha}^{\vee}, b_{-4}, b_{-2}\}$,  we consider 
\begin{equation}
\label{eqn-definition-iota}
\iota\colon \SL_2\to \SO(V_{2,3}'), \qquad \gamma= \begin{pmatrix} a &b \\ c &d \end{pmatrix} \mapsto \iota(\gamma):=\left( \begin{smallmatrix}  
a &b & & & \\ 
c &d & & & \\ 
   &   &1 & & \\ 
   &   &   & a &-b \\ 
   &   &   &-c &d 
   \end{smallmatrix}\right).
   \end{equation}
Then $\iota$ lifts to give a map $\tilde{\iota}\colon \SL_2\to M'$.  \\
\indent 
Suppose $\gamma=\left(\begin{smallmatrix} a &b \\ c & d \end{smallmatrix}\right)\in \SL_2(\Z)$. Applying the description of the action of $M'$ on $\h_{y_{\alpha}}'$ given in \eqref{eqn-defn-action-on-h},  one calculates that with respect to the coordinates \eqref{eqn-coordinates-on-h},  $j_{y_{\alpha}}(\tilde{\iota}(\gamma), Z)=c\tau+d$ and 
\begin{equation}
\label{eqn-action-of-SL2-on-h}
\tilde{\iota}(\gamma)\cdot (\tau', z, \tau) = \left(\dfrac{c(\tau\tau'-z^2\alpha/2)+d\tau'}{c\tau +d}, \dfrac{z}{c\tau+d}, \dfrac{a\tau+b}{c\tau+d}\right).  
\end{equation}
Using \eqref{eqn-action-of-SL2-on-h} in tandem with the Fourier-Jacobi expansion of $f_{\xi}$, we obtain  
\begin{equation}
\label{eqn-expansion-fxi}
f_{\xi}(\tilde{\iota}(\gamma)\cdot (\tau', z, \tau))=\sum_{m\geq 0}\phi_m\left(\frac{a\tau +b}{c\tau +d}, \frac{z}{c\tau+d}\right)\exp\left(\frac{-2\pi i mcz^2\alpha /2}{c\tau+d}\right) e^{2\pi i m\tau' }.
\end{equation}
Since $\xi$ is of level $M'(\Z)$,  Definition \ref{defn-HMF-on-Sp(4)} implies$f_{\xi}(\tilde{\iota}(\gamma)\cdot (\tau', z, \tau))=(c\tau+d)^{\ell}f_{\xi}(\tau', z, \tau)$ for all $\gamma\in \SL_2(\Z)$ and $Z\in \h'_{y_{\alpha}}$.  Thus,  \eqref{eqn-lemma-transformation-law-Jacobi-Forms} follows by equating the coefficient of $e^{2 \pi i m\tau'}$ in \eqref{eqn-expansion-fxi} with the coefficient of $e^{2 \pi i m\tau'}$ in $(c\tau+d)^{\ell}f_{\xi}(\tau', z, \tau)$. 
\end{proof}
\subsection{A Primitivity Theorem for Modular Forms on $M'$.}
\label{subsec:A Primitivity Theorem for Modular Forms on $M'$}
In this subsection, we continue with the notation of the preceding subsection. Our goal is to establish Theorem \ref{thm-3-yamana}, which has its origins in a result of Zagier \cite[pg.  387]{ZaFrench}.  Theorem \ref{thm-3-yamana} is closely related to a special case of \cite[Theorem 3]{Yamana09},  except for the fact that the level subgroup $M'(\Z)$ is different from the level subgroups considered in (loc. cit.).  In spite of this difference,  the proof of Theorem \ref{thm-3-yamana} is essentially a special case of the proof given in (loc. cit.).
\begin{theorem}\label{thm-3-yamana}
Suppose $\xi\colon M'(\R)\to \C$ is the automorphic functions associated to a holomorphic modular form of weight $\ell>0$ and level $M'(\Z)$.  Suppose $A_{\xi}[S]=0$ for all vectors $S\in V_{1,2}'(\Z)^{\vee}$ such that $\Q\linspan\{S\}\cap V_{1,2}'(\Z)^{\vee}=\Z\linspan\{S\}$.  Then $\xi\equiv 0$. 
\end{theorem}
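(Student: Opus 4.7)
The plan is to adapt Zagier's argument \cite[p.~387]{ZaFrench} (and its generalization \cite{Yamana09}) to holomorphic modular forms on $M'$. First, I would Fourier--Jacobi expand $f_\xi$ along the Siegel parabolic $Q'$ as in \eqref{FJ-HMF-M'} to write
\[
f_\xi(\tau',z,\tau)=\sum_{m\geq 0}\phi_m(\tau,z)\,e^{2\pi i m\tau'},
\]
so that it suffices to show $\phi_m\equiv 0$ for every $m$. By Lemma~\ref{lemma-transformation-law-Jacobi-Forms}, $\phi_m$ is a Jacobi form of weight $\ell$ and index $m\alpha/2$ for the subgroup $\tilde\iota(\SL_2(\Z))\subseteq M'(\Z)$; the Eichler--Zagier theta decomposition then yields
\[
\phi_m(\tau,z)=\sum_{\mu\bmod m\alpha}h_{m,\mu}(\tau)\,\vartheta_{m\alpha/2,\mu}(\tau,z),
\]
with each $h_{m,\mu}$ a half-integral weight modular form. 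Consequently the coefficient $a(n,r,m):=A_\xi[-nb_4-mb_{-4}-(r/\alpha)y^\vee_\alpha]$ depends only on the pair $(r\bmod m\alpha,\,2nm\alpha-r^2)$.

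Next, I would exploit the swap symmetry produced by a lift $w\in M'(\Z)$ of the element of $\SO(V_{2,3}')$ acting by $b_4\leftrightarrow b_{-4}$, $y^\vee_\alpha\mapsto -y^\vee_\alpha$, and trivially on $b_{\pm 2}$. Under the coordinates of \eqref{eqn-coordinates-on-h}, $w$ interchanges $\tau$ and $\tau'$ (together with a sign on $z$), producing the identity $a(n,r,m)=\pm a(m,-r,n)$ which links the Jacobi components $\phi_m$ and $\phi_n$ at different indices.

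The central step, which I expect to be the principal obstacle, is to conclude from the hypothesis and the symmetries above that every $h_{m,\mu}$ vanishes. The difficulty is that the $\SL_2(\Z)$-action on triples $(n,r,m)$ with fixed $m$ preserves $\gcd(n,r,m)$, and for certain pairs $(m,\mu)$ there are Fourier coefficients of $h_{m,\mu}$ whose underlying triples $(n,r,m)$ are all non-primitive, so the primitivity hypothesis does not directly annihilate them. Following \cite{Yamana09}, the resolution is to couple the swap symmetry---which relates $h_{m,\mu}$ at different indices $m$---with the half-integral-weight modularity of each $h_{m,\mu}$ and a Shimura-type correspondence between these forms and classical forms of weight $2\ell-1$. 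The resulting linear system among the coefficients $\{a(n,r,m)\}$ forces $h_{m,\mu}\equiv 0$ for every $(m,\mu)$ once all primitive coefficients are set to zero, whence $\phi_m\equiv 0$ for all $m$ and $\xi\equiv 0$.
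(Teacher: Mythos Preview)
Your overall shape—Fourier--Jacobi expand and then kill each $\phi_m$—matches the paper, but the mechanism you propose for the second step is both different from the paper's and incompletely justified.

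\textbf{What the paper actually does.} The paper does \emph{not} theta-decompose $\phi_{m_0}$ into half-integral weight components. Instead it Taylor-expands $\phi_{m_0}(\tau,z)=\sum_{\nu\ge 0}\lambda_\nu(\tau)z^\nu$ and uses the transformation law of Lemma~\ref{lemma-transformation-law-Jacobi-Forms} to see that the leading nonzero coefficient $\lambda_{\nu_0}$ is an \emph{integral}-weight elliptic modular form for $\SL_2(\Z)$, of weight $\ell+\nu_0>0$. Its $n$-th Fourier coefficient is a finite sum of the $A_\xi[S]$ with $S=-nb_4-m_0 b_{-4}-(r/\alpha)y_\alpha^\vee$; whenever $\gcd(n,m_0)=1$, every such $S$ is primitive and the sum vanishes. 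One is then reduced to an elementary statement about level-one elliptic forms, which is exactly the endgame of Zagier's and Yamana's arguments. No theta decomposition, no half-integral weights, no Shimura lift.

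\textbf{Two gaps in your route.} First, the swap element you write down need not lie in $M'(\Z)$. As a product of the reflections in $b_4-b_{-4}$ and $y_\alpha^\vee$ it has spinor norm $q(b_4-b_{-4})\,q(y_\alpha^\vee)=(-1)(-\alpha/2)=\alpha/2$ in $\Q^\times/(\Q^\times)^2$; for general even $\alpha$ this is not a square, so the element does not lift to $M'=\Spin(V_{2,3}')$ over $\Q$. You would need a different element or a different symmetry argument. Second, your ``central step'' defers to \cite{Yamana09} for a resolution via the Shimura correspondence, but that is not what Yamana does: his argument (following Zagier) is precisely the Taylor-expansion trick above. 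The claim that the swap symmetry plus Shimura lift produce a linear system forcing $h_{m,\mu}\equiv 0$ is asserted, not argued, and I do not see how to make it precise without essentially reverting to the Taylor-expansion route.

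In short: your diagnosis of the obstacle (non-primitive triples at fixed $m$) is correct, but the cure is simpler than you propose—pass to the leading Taylor coefficient in $z$ rather than to the theta components.
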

\begin{proof}
For the sake of contradiction, we suppose $\xi\not \equiv 0$.   Then there exists $m_0>0$ such that the Fourier-Jacobi coefficient $\phi_{m_0}(\tau, z) \neq 0$.  Since $\phi_{m_0}$ is holomorphic in the variable $z$, we may develop $\phi_{m_0}(\tau, z)$ into a Taylor series as $\phi_{m_0}(\tau,z)=\sum_{\nu\geq 0}\lambda_{\nu}(\tau)z^{\nu}$ where 
\begin{equation}
\label{eqn-nu-Taylor-coefficient-of-phi}
\lambda_{\nu}(\tau)=\sum_{n\geq 0}\left(\sum_{\tiny \begin{array}{c} \hbox{$r\in \Z$ such that} \\ 2\alpha nm_0-r^2\geq 0\end{array}} \dfrac{(2\pi i )^{\nu}A_{\xi}\left[-nb_4-m_0b_{-4}-\dfrac{r}{\alpha}y_{\alpha}^{\vee}\right]}{\nu !} \right)e^{2\pi i n\tau}.   
\end{equation}
Applying the transformation law \eqref{eqn-lemma-transformation-law-Jacobi-Forms},  we conclude that for all $\left(\begin{smallmatrix} a & b \\ c & d\end{smallmatrix}\right)\in \SL_2(\Z)$,
\begin{equation}
\label{eqn-leading-order-transformation-law}
\sum_{\nu\geq 0}\sum_{j\geq 0}\dfrac{1}{j!}\left(\dfrac{2\pi i c\alpha/2}{c\tau+d}\right)^j\lambda_{\nu}(\tau)z^{\nu+2j}=\sum_{\nu\geq 0}\dfrac{1}{(c\tau+d)^{\nu+\ell}}\lambda_{\nu}\left(\frac{a\tau+b}{c\tau+d}\right)z^{\nu}. 
\end{equation}
Let $\nu_0\geq 0$, be minimal such that $\lambda_{\nu_0}(z)\not \equiv 0$.  By equating the coefficients of $z^{\nu_0}$ on the left and right hand sides of \eqref{eqn-leading-order-transformation-law},  we deduce that $$
\lambda_{\nu_0}\left(\dfrac{a\tau+b}{c\tau+d}\right)=(c\tau+d)^{\nu_0+\ell}\lambda_{\nu_0}(z).$$It follows that $\lambda_{\nu_0}(\tau)$ is an elliptic modular form of weight $\ell+\nu_0$ and level $\SL_2(\Z)$.  Inspecting \eqref{eqn-nu-Taylor-coefficient-of-phi}, we determine that the numbers 
$$
a(n):=\sum_{\tiny \begin{array}{c} \hbox{$r\in \Z$ such that} \\ 2\alpha nm_0-r^2\geq 0\end{array}} \dfrac{(2\pi i )^{\nu_0}A_{\xi}\left[-nb_4-m_0b_{-4}-\dfrac{r}{\alpha}y_{\alpha}^{\vee}\right]}{\nu_0 !}
$$ are the Fourier coefficients of an elliptic modular form on $\SL_2(\Z)$ of weight $\ell+\nu>0$.  The proof now follows from the argument given in the proof of \cite[Theorem 3]{Yamana09}.
\end{proof}
\subsection{The Hecke Bound Characterization of Cusp Forms on $M'$} Continuing with the notation of the previous subsection, we assume $$
y=y_{\alpha}=b_3+\dfrac{\alpha}{2}b_{-3}.
$$
The purpose of this subsection is to prove the following cuspidality criterion. 
\begin{theorem}
\label{thm-hecke-bound-IFF-cuspidal-holomorphic}
Suppose $\ell\geq 5$ and let $f_{\xi}$ be a weight $\ell$ holomorphic modular form of level $M'(\Z)$. Then $f_{\xi}$ is cuspidal if and only if,  for all $S\in V_{1,2}'(\Z)^{\vee}_{\geq 0}$ satisfying $(S,S)>0$,
\begin{equation}
\label{eqn-hecke-bound-holomorphic}
A_{\xi}[S]\ll_{f_{\xi}}  (S,S)^{\frac{\ell+1}{2}}. 
\end{equation}
\end{theorem}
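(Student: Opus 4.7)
The forward implication is the classical Hecke bound for holomorphic cusp forms. For cuspidal $f_\xi$ of weight $\ell$, one has a standard pointwise bound $|f_\xi(Z)| \ll (\mathrm{Im}(Z))^{-\ell/2}$ on $\h'_y$ (in terms of the appropriate invariant norm on the imaginary part), and applying Fourier inversion to the expansion \eqref{FE-HMF-EXPLICIT-COORDINATES} with an optimal choice of $\mathrm{Im}(Z)$ yields the stronger Hecke-type estimate $|A_\xi[S]| \ll (S,S)^{\ell/2}$, which implies \eqref{eqn-hecke-bound-holomorphic}.

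The converse is the main content; I plan to adapt the Kohnen-Martin strategy of \cite[Theorem 2.1]{KoMa14} to the signature-$(2,3)$ setting, using the exceptional isogeny $\Spin(V_{2,3}') \simeq \Sp_4$. Cuspidality of $f_\xi$ is equivalent to the vanishing of the isotropic Fourier coefficients $A_\xi[S]$ with $(S,S) = 0$. Writing $f_\xi = f_{\mathrm{cusp}} + E$, where $E$ lies in the orthogonal complement of cusp forms in the space of weight-$\ell$, level-$M'(\Z)$ holomorphic modular forms on $M'$, the component $E$ is a finite linear combination of Siegel- and Klingen-Eisenstein series. The Siegel-Eisenstein component is ruled out by a direct size comparison: its positive-definite Fourier coefficients are of order $(S,S)^{\ell-3/2}$, and the inequality $\ell - 3/2 > (\ell+1)/2$, valid precisely when $\ell \geq 5$, contradicts \eqref{eqn-hecke-bound-holomorphic} unless this component vanishes. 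The Klingen-Eisenstein component is more delicate since its positive-definite coefficients are comparable in size to those of cusp forms; I would handle it by pairing $f_\xi$ against holomorphic Poincaré series $P_S$ attached to positive-definite $S$, using the identity $\langle f, P_S \rangle_{\mathrm{Pet}} \propto \overline{A_f[S]}$ for cusp forms $f$ together with the orthogonality of Eisenstein series to $P_S$, and showing that \eqref{eqn-hecke-bound-holomorphic} then forces the Klingen-Eisenstein projection of $f_\xi$ to vanish.

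An alternative and more self-contained approach proceeds via the Fourier-Jacobi expansion $f_\xi = \sum_{m \geq 0} \phi_m(\tau,z)\, e^{2\pi i m\tau'}$, following the Taylor-series technique used in the proof of Theorem \ref{thm-3-yamana}. The transformation law \eqref{eqn-lemma-transformation-law-Jacobi-Forms} shows that the successive Taylor coefficients of each $\phi_m$ in the variable $z$ yield elliptic modular forms on $\SL_2(\Z)$ of weights $\ell, \ell+1, \ell+2, \ldots$, whose $n$-th Fourier coefficient is a finite sum of at most $O(\sqrt{nm})$ values $A_\xi[S]$ with $(S,S) > 0$. Applying \eqref{eqn-hecke-bound-holomorphic} bounds these $n$-th Fourier coefficients by $O(n^{(\ell+2)/2})$, which for $\ell \geq 5$ is strictly smaller than the growth $n^{\ell-1}$ of Fourier coefficients of non-cuspidal modular forms on $\SL_2(\Z)$; hence each Taylor coefficient must be an elliptic cusp form. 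The main obstacle will be pushing an induction on the Fourier-Jacobi index $m$ and the Taylor order through to conclude the vanishing of \emph{all} isotropic Fourier coefficients of $f_\xi$. In particular, the boundary case $m = 0$ requires a separate argument: here $\phi_0(\tau,z)$ is independent of $z$ and equals $\sum_n A_\xi[-n b_4]\, e^{2\pi i n\tau}$, so one must exploit the Weyl-group action on the Fourier lattice of $M'$ to relate the isotropic coefficients $A_\xi[-n b_4]$ back to positive-definite coefficients controlled by \eqref{eqn-hecke-bound-holomorphic}.
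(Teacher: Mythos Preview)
Your forward implication is fine. For the converse, both sketches have genuine gaps.

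In the first approach, the Poincar\'e-series step for the Klingen--Eisenstein component does not do what you want. Pairing $f_\xi$ with $P_S$ recovers $A_{f_{\mathrm{cusp}}}[S]$, not $A_{E_K}[S]$; orthogonality of $E_K$ to $P_S$ says nothing about the Fourier coefficients of $E_K$ itself. Since you concede that the positive-definite coefficients of Klingen--Eisenstein series can be as small as those of cusp forms, no size comparison against $(S,S)^{(\ell+1)/2}$ produces a contradiction. There is also a level issue: the identification $\Spin(V'_{2,3})\simeq\Sp_4$ carries $M'(\Z)$ to $\Sp_4(\Z)$ only when $\alpha=2$ (cf.\ Remark~\ref{rmk-hecke-IFF-cuspidal-holomorphic}); for general $\alpha$ you are at a paramodular-type level, and the Siegel-side Eisenstein theory you invoke is not available off the shelf.

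In the second approach, the bound you assert on the $n$-th Fourier coefficient of $\lambda_\nu$ is circular: that coefficient is a sum over all $r$ with $2\alpha n m - r^2 \ge 0$, and the terms with $r^2 = 2\alpha n m$ correspond exactly to the isotropic $A_\xi[S]$ with $(S,S)=0$ that you are trying to kill. You have no hypothesis bounding those terms, so you cannot conclude that $\lambda_\nu$ has cusp-form-size coefficients. The ``obstacle'' you flag is not a bookkeeping induction but the entire content of the theorem.

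The paper stays with the Fourier--Jacobi expansion but replaces the Taylor-coefficient device by a cuspidality criterion for the Jacobi forms $\phi_m$ directly (Proposition~\ref{prop-Hecke-IFF-Cusp-Jacobi-Forms}): decompose $\phi_m$ as Jacobi cusp form plus Jacobi Eisenstein series, and use the explicit Eichler--Zagier formulas and \cite[Lemma~4.3]{KoMa14} to get a lower bound $|c^{\chi}_{\ell,f^2}(n,r)|\gg |D|^{\ell-3/2}$ at fundamental discriminants. Since $(\ell+1)/2<\ell-3/2$ precisely for $\ell\ge 5$, the Eisenstein part of each $\phi_m$ with $m>0$ must vanish, so $A_\xi[S]=0$ for isotropic $S$ with $m>0$. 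Transitivity of $M_{Q'}$ on isotropic lines then handles the remaining nonzero isotropic $S$, and the $m=0$ term of \eqref{eqn-expansion-fxi} deals with the constant. The key point you are missing is that the Eisenstein decomposition should be carried out one level down, on the Jacobi side, where there is only one flavour of Eisenstein series and its coefficients are always large.
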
 
\begin{remark}
\label{rmk-hecke-IFF-cuspidal-holomorphic}
As we shall see,  the proof of Theorem \ref{thm-hecke-bound-IFF-cuspidal-holomorphic} is essentially the same as the proof of \cite[Theorem 2.1]{KoMa14}.  With that said, the statement of Theorem \ref{thm-hecke-bound-IFF-cuspidal-holomorphic} differs from that of (loc. cit.) in at least two respects. For one, \cite[Theorem 2.1]{KoMa14} pertains to modular forms on $\Sp_4$ of level one. However,  though there is a connection between modular forms on $M'(\Z)$ and level one Siegel modular forms in the case when $\alpha=2$,  these two families of modular forms are, in general, distinct from one another.  Secondly,  in Theorem \ref{thm-hecke-bound-IFF-cuspidal-holomorphic}, $\ell\geq 5$ and the exponent in \eqref{eqn-hecke-bound-holomorphic} equals $(\ell+1)/2$.  However, in (loc. cit.), the authors assume $\ell\geq 4$ and their bound takes the form 
\begin{equation}
\label{eqn-hecke-bound2-holomorphic}
A_{\xi}[S]\ll_{f_{\xi}}  (S,S)^{\frac{\ell}{2}}. 
\end{equation} 
For our application to quaternionic modular forms, we require the additional flexibility afforded by the bound \eqref{eqn-hecke-bound-holomorphic}, though this comes at the cost of excluding $\ell=4$.  In the setting of modular forms on $\Sp_{2g}$, the tradeoff between the exponent in the Hecke bound, and the range of weights is well understood (see \cite[Theorem 4.1]{BoSo14}). 
\end{remark}
The proof of Theorem \ref{thm-hecke-bound-IFF-cuspidal-holomorphic} requires a close analogue of \cite[Theorem 2.2]{KoMa14}.
\begin{prop}
\label{prop-Hecke-IFF-Cusp-Jacobi-Forms}
Suppose $\ell\geq 5$ and let $\phi$ be a Jacobi form of level one, weight $\ell$ and index $m>0$.  Following \cite{EichZag}, we record the Fourier expansion of $\phi$ as 
$$
\phi(\tau,z)=\sum_{\tiny \begin{array}{c} n\geq 0, r\in \Z \\   \hbox{such that $r^2\leq 4mn$}\end{array}}c(n,r)e^{2\pi i (n\tau+rz)}. 
$$
Then $\phi$ is cuspidal if and only if the Fourier coefficients $c(n,r)$ satisfy the following condition: if $n\geq 0$, $r\in \Z$, and $D:=r^2-4mn< 0$, then 
\begin{equation}
\label{Hecke-IFF-Cusp-Jacobi-Forms}
c(n,r)\ll_{\phi}|D|^{\frac{\ell+1}{2}}.
\end{equation}
\end{prop}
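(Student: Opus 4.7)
The statement has two directions, and I would handle them asymmetrically. The forward direction (cuspidal implies Hecke bound) is essentially immediate from the classical Hecke bound for Jacobi cusp forms: by \cite{EichZag} any $\phi\in J^{\mathrm{cusp}}_{\ell,m}$ satisfies $|c(n,r)|\ll_\phi |D|^{\ell/2-1/4}$, which is strictly sharper than the bound $|D|^{(\ell+1)/2}$ appearing in \eqref{Hecke-IFF-Cusp-Jacobi-Forms}, since $\ell/2-1/4<(\ell+1)/2$ for all $\ell\geq 0$.

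For the converse, the plan is to follow the strategy of Kohnen–Martin \cite[Theorem 2.2]{KoMa14}. Orthogonally decompose $J_{\ell,m}=J^{\mathrm{cusp}}_{\ell,m}\oplus J^{\mathrm{Eis}}_{\ell,m}$ under the Petersson inner product, and write $\phi=\phi^{\mathrm{cusp}}+\phi^{\mathrm{Eis}}$. The cuspidal piece automatically satisfies the hypothesized bound by the forward direction already established. Consequently, if $\phi$ satisfies the bound in \eqref{Hecke-IFF-Cusp-Jacobi-Forms}, then so does $\phi^{\mathrm{Eis}}$. The remaining task is to prove that any non-zero element of $J^{\mathrm{Eis}}_{\ell,m}$ has Fourier coefficients which grow strictly faster than $|D|^{(\ell+1)/2}$ along an infinite subsequence of $D$.

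To carry this out, I would invoke the theta decomposition $\phi^{\mathrm{Eis}}(\tau,z)=\sum_{\mu\bmod 2m}h_\mu(\tau)\vartheta_{m,\mu}(\tau,z)$, under which $(h_\mu)_\mu$ becomes a vector-valued Eisenstein modular form of weight $\ell-1/2$ for the Weil representation of $\SL_2(\Z)$. The Fourier coefficients of such Eisenstein series are known explicitly in terms of Cohen's generalized class numbers $H(\ell-1,|D|)$ (the original approach in \cite{EichZag}) or, equivalently, in terms of special values $L(2-\ell,\chi_D)$ of Dirichlet $L$-functions twisted by the Kronecker symbol of the fundamental discriminant associated to $D$. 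Standard non-vanishing of $L(2-\ell,\chi_D)$ along the infinite family of fundamental discriminants, together with the functional equation, yields a positive-density subsequence of $D$ along which $|c_\mu^{\mathrm{Eis}}(D)|\gg |D|^{\ell-3/2}$. The hypothesis $\ell\geq 5$ is precisely the inequality $\ell-3/2>(\ell+1)/2$, so this growth contradicts the bound unless $\phi^{\mathrm{Eis}}=0$.

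The main obstacle is the precise lower bound $|c_\mu^{\mathrm{Eis}}(D)|\gg |D|^{\ell-3/2}$ along a subsequence, which I expect to reduce to a standard non-vanishing statement for special $L$-values (or, equivalently, for Cohen's numbers on fundamental discriminants). Notably, this is exactly the threshold phenomenon behind the hypothesis $\ell\geq 5$: for $\ell=4$ the Eisenstein growth exponent $\ell-3/2=5/2$ would match the hypothesized cusp bound $(\ell+1)/2=5/2$, leaving no room for a contradiction — consistent with the weight restriction in the analogous Siegel result \cite[Theorem 2.1]{KoMa14} and the discussion in Remark~\ref{rmk-hecke-IFF-cuspidal-holomorphic}.
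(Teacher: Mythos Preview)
Your strategy coincides with the paper's: both directions are reduced to Kohnen--Martin \cite{KoMa14}. The paper cites \cite[Lemma~4.1]{KoMa14} for the forward implication and, for the converse, reduces to $\phi\in J^{\mathrm{Eis}}_{\ell,m}$ exactly as you do, then invokes the lower bound $|c_{\ell,f^2}^{\chi}(n,r)|>A_{\ell,f}|D|^{\ell-3/2}$ from \cite[Lemma~4.3]{KoMa14} together with the inequality $(\ell+1)/2<\ell-3/2$ for $\ell\geq 5$.

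The one place where the paper is more precise than your sketch is in handling an \emph{arbitrary} element of $J^{\mathrm{Eis}}_{\ell,m}$ rather than a single canonical Eisenstein series. Your theta-decomposition framing is correct, but the step ``any non-zero Eisenstein element has coefficients $\gg|D|^{\ell-3/2}$ along a subsequence'' needs an argument ruling out cancellation among basis elements. The paper (following \cite{KoMa14} verbatim) supplies this via the explicit decomposition $c(n,r)=\sum_{l\mid b}\lambda_l\bigl(\sum_d d^{\ell-1}c_{\ell,f^2}^{\chi}(\cdots)\bigr)$ and the inductive argument on \cite[p.~1330]{KoMa14}: one chooses discriminants satisfying congruence conditions that isolate the $l=b$ term, forces $\lambda_b=0$, and descends. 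Your vector-valued Eisenstein approach would ultimately require the same kind of isolation argument, so the obstacle you flag is exactly the one the paper resolves by quoting Kohnen--Martin's induction.
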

\begin{proof}
For the proof of the ``only if" implication see \cite[Lemma 4.1]{KoMa14}. Conversely, suppose the non-degenerate Fourier coefficient of $\phi$ satisfy the bound \eqref{Hecke-IFF-Cusp-Jacobi-Forms}. Let $f\in \Z_{>0}$ be such that $f^2\mid m$ and write $m/f^2=ab^2$ with positive integers $a,b$ such that $a$ is the square-free part of $m/f^2$.  Then applying the argument in the proof of \cite[Theorem 2.2]{KoMa14}, we may assume that $\phi$ is a linear combination of Eisenstein series, and there exists a primitive Dirichlet character $\chi\colon (\Z/f\Z)^{\times} \to \C$ such that 
\begin{equation}
\label{eqn-expression-for-FC}
c(n,r)=\sum_{l\mid b}\lambda_{l}\left(\sum_{\tiny \begin{array}{c} d\mid (n,r,ab^2/l^2) \\ r/d \equiv 0\hspace{.1cm} (\mathrm{mod }\hspace{.1cm} l) \end{array}}d^{\ell-1}c_{\ell,f^2}^{\chi}\left(\dfrac{nab^2/l^2}{d^2}, \dfrac{r}{dl}\right)\right).
\end{equation}
Here $\lambda_{l}\in \C$ and the numbers $c_{k,f^2}^{\chi}(n,r)$ are Fourier coefficients of the Eisenstein series $E_{\ell,f^2}^{(\chi)}$ in \cite[pg. 26]{EichZag}.  By \cite[Lemma 4.3]{KoMa14},  if $r\geq 0$ is coprime to $f$, and $D=r^2-4nf^2$ is a fundamental discriminant, then there exists a constant $A_{\ell,f}>0$ such that 
$$
|c_{\ell,f^2}^{\chi}(n,r)|>A_{\ell,f}|D|^{\ell-3/2}.
$$
On the other hand,  since $\ell\geq 5$, $\frac{\ell+1}{2}<\ell-\frac{3}{2}$,  and so the inductive argument of \cite[pg. 1330]{KoMa14} implies $\lambda_{\ell}=0$ for all $\ell\mid b$.  Hence,  $\phi$ is necessarily zero, which completes the proof. 
\end{proof} 
\begin{proof}[Proof of Theorem \ref{thm-hecke-bound-IFF-cuspidal-holomorphic}:] The ``only if" implication follows directly from the Hecke bound \eqref{eqn-hecke-bound2-holomorphic}, which is satisfied for all holomorphic cusp forms on $M'$.  The converse implication is proven by the argument presented in \cite[\S 3]{KoMa14}. In more detail,  assuming the Fourier coefficients $A_{\xi}[S]$ satisfy \eqref{eqn-hecke-bound-holomorphic} for $(S,S)>0$, the results of Proposition \ref{prop-Hecke-IFF-Cusp-Jacobi-Forms} and Lemma \ref{lemma-transformation-law-Jacobi-Forms} imply that $A_{\xi}[S]=0$ whenever $S=-nb_{4}-mb_{-4}-\frac{r}{\alpha} y_{\alpha}^{\vee}$ satisfies $m> 0$ and $(S,S)=0$.  Since $M_{Q'}$ acts transitively on the isotropic lines in $V'_{1,2}$, it follows that $A_{\xi}[S]=0$ for all non-zero $S\in V'_{1,2}(\Z)$ satisfying  $(S,S)=0$.  It remains to show that the constant term of $f_{\xi}$ vanishes,  which is achieved by examining the $m=0$ term in \eqref{eqn-expansion-fxi}. 
\end{proof}
\subsection{Relation to Modular Forms on $\Sp_4$}
\label{subsec-Relation-to-Modular-Forms-on-Sp4} 
In this subsection we review the relationship between the theory of modular forms on $M'=M'_{y_{\alpha}}$ in the case when $\alpha=2$, and the theory of genus two Siegel modular forms.  This relationship is also explained \cite[\S 7.3]{JMNPR24}. 

\indent Let $\Sp_4$ denote the split symplectic group of rank $2$ over $\Q$, and let $\pi \colon \Sp_4\to \SO(V_{2,3}')$ denote the map constructed in (loc. cit.).  Since $\Sp_4$ and $M'$ are simply connected,  $\pi$ lifts to give an isomorphism $\widetilde{\pi} \colon \mathrm{Sp}_4\to M'$.  Hence, if $\xi$ is the automorphic function on $M'$ associated to a holomorphic modular form, we define 
$$
\xi^*:=\varphi\circ \widetilde{\pi}.
$$
\begin{prop}\emph{\cite[\S 7.3]{JMNPR24}}
If $\xi$ is the automorphic function corresponding to a weight $\ell$ holomorphic modular form on $M'$ of level $M'(\Z)$, then $\xi^*$ is the automorphic function on $\Sp_4$ corresponding to a genus $2$, Siegel modular form $F_{\xi^{\ast}}$ of weight $\ell$ and level $1$.  Moreover,  the Fourier expansion of $F_{\xi^{\ast}}$ takes the form 
\begin{equation}
\label{eqn-FE-Classical-Siegel-Modular-Form}
F_{\xi^{\ast}}(Z)=\sum_{T\geq 0}B_{\xi^{\ast}}[T]\exp(2\pi i \tr(TZ)), \qquad (Z\in \h_{\Sp_4}).
\end{equation}
Here $T$ runs over half-integral positive semi-definite matrices $T=\left(\begin{smallmatrix} a &b/2 \\ b/2 &c\end{smallmatrix}\right)$, and the Fourier coefficients $B_{\xi^{\ast}}[T]$ satisfy 
\begin{equation}
\label{eqn-relation-between-FCS}
B_{\xi^{\ast}}\left[\begin{pmatrix} a &b/2 \\ b/2 &c \end{pmatrix}\right]=A_{\xi}[-cb_4-\dfrac{b}{2}(b_3-b_{-3})-ab_{-4}]. 
\end{equation}
\end{prop}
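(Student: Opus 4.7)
The plan is to directly verify the three conditions that make $\xi^*$ the automorphic function associated to a weight $\ell$, level one Siegel modular form, and then read off the Fourier expansion by comparing it to \eqref{FE-HMF-on-M'}.

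First I would establish the identification of integral structures. The exceptional isomorphism $\widetilde\pi\colon \Sp_4\xrightarrow{\sim} M'=\Spin(V_{2,3}')$ is obtained from the action of $\Sp_4$ on the $5$-dimensional representation $\bigwedge^2\C^4/\langle \omega\rangle$ carrying a natural symmetric form, together with the lift to spin groups. A direct check—fixing standard symplectic and orthogonal bases and matching $\Sp_4(\Z)$-generators with their images in $M'(\Z)$—shows that $\widetilde\pi(\Sp_4(\Z))=M'(\Z)$. The $M'(\Z)$-invariance of $\xi$ then immediately yields $\Sp_4(\Z)$-invariance of $\xi^*$.

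Second I would check the $K_\infty$-type and holomorphy. Under $\widetilde\pi$, the maximal compact subgroup $K_y\subseteq M'(\R)$ stabilizing $\R\linspan\{u_2,v_2\}\oplus \R\widehat{y}$ pulls back to the standard maximal compact $U(2)\subseteq \Sp_4(\R)$. The embedding $Z\mapsto r(Z)=-q(Z)b_2+Z+b_{-2}$ of $\h'_{y}$ into the isotropic cone in $V_{2,3}'\otimes\C$ is nothing other than the Plücker embedding $Z_{\Sp_4}\mapsto \bigwedge^2(Z_{\Sp_4},I_2)$ composed with $\widetilde\pi^{-1}$, after identifying $Z_{\Sp_4}=\bigl(\begin{smallmatrix}\tau' & z\\ z& \tau\end{smallmatrix}\bigr)$ with the point $-\tau'b_4+z y_2^\vee-\tau b_{-4}\in \h_{y}'$ (where $\alpha=2$, so $y_\alpha^\vee=b_3-b_{-3}$). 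From this it follows that $j_{y}(\widetilde\pi(g),-i\sqrt{2}v_2)=\det(j_{\Sp_4}(g,iI_2))$, where $j_{\Sp_4}(g,Z_{\Sp_4})=cZ_{\Sp_4}+d$ is the usual Siegel cocycle. Raising to the $\ell$-th power translates condition (ii) of Definition \ref{defn-automorphic-function-associated-to-holomorphic-mf} into the Siegel automorphy law, while condition (iii) becomes the holomorphy of $F_{\xi^*}$; moderate growth is preserved because $\widetilde\pi$ is an isomorphism of real Lie groups.

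Third I would derive the Fourier coefficient identity \eqref{eqn-relation-between-FCS} by matching \eqref{FE-HMF-on-M'} with \eqref{eqn-FE-Classical-Siegel-Modular-Form}. With $\alpha=2$, the bilinear pairing gives $(b_4,b_{-4})=1$ and $(y_\alpha^\vee,y_\alpha^\vee)=-2$, so a direct calculation using $Z=-\tau'b_4+z(b_3-b_{-3})-\tau b_{-4}$ and $S=-cb_4-\tfrac{b}{2}(b_3-b_{-3})-ab_{-4}$ yields
\[
(S,Z)\;=\;c\tau+a\tau'+bz\;=\;\tr\!\left(\begin{pmatrix} a & b/2\\ b/2 & c\end{pmatrix}\begin{pmatrix} \tau' & z\\ z & \tau\end{pmatrix}\right).
\]
Hence $\exp(2\pi i(S,Z))=\exp(2\pi i\,\tr(TZ_{\Sp_4}))$ with $T=\bigl(\begin{smallmatrix} a & b/2\\ b/2 & c\end{smallmatrix}\bigr)$, and equating the two Fourier expansions term by term produces \eqref{eqn-relation-between-FCS}. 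The Koecher positivity condition $S\in V_{1,2}'(\Z)^\vee_{\geq 0}$ transports under this matching to $T\geq 0$ half-integral.

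The main obstacle is the bookkeeping in the second paragraph: verifying precisely that the symmetric domain map $Z\mapsto r(Z)$ and the cocycle $j_y$ correspond, under $\widetilde\pi$, to the standard Siegel conventions. Everything else is a routine verification once this identification is pinned down, and the bulk of it is recorded in \cite[\S 7.3]{JMNPR24}, which I would cite for the details of the isomorphism $\widetilde\pi$ itself.
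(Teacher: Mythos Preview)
The paper does not give its own proof of this proposition: it is stated with the attribution \cite[\S 7.3]{JMNPR24} and no argument is supplied in the text. Your proposal is therefore not competing with any proof in the paper, but rather supplying what the paper omits by citation.

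Your outline is correct. The three steps---matching integral structures via $\widetilde\pi(\Sp_4(\Z))=M'(\Z)$, identifying the automorphy factors $j_y(\widetilde\pi(g),-i\sqrt{2}v_2)=\det(j_{\Sp_4}(g,iI_2))$, and computing $(S,Z)=a\tau'+bz+c\tau=\tr(TZ_{\Sp_4})$---are exactly the verifications needed, and your pairing calculation in the third step is accurate (using $(b_3-b_{-3},b_3-b_{-3})=-2$ and $(b_4,b_{-4})=1$). You already note that the second step is the one requiring genuine bookkeeping and that the details are in \cite[\S 7.3]{JMNPR24}; this is consistent with how the paper treats the result.
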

\subsection{Holomorphic Modular Forms on $M_P^{\mathrm{der}}$}
\label{subsec:holomorphic-modular-forms-on-MP}
In Subsection \ref{subsec:The Heisenber Type Parabolic P}, we gave an identification between the derived subgroup of the Heisenberg Levi factor $M_P$, and the group $\SL_2\times \SL_2\times \SL_2$.  In this subsection we give preliminaries regard the holomorphic modular forms on the derived subgroup $M_P^{\mathrm{der}}$.  \\
\indent Let $\h_{\SL_2}$ be the complex upper half-plane and write $\h_{M_P^{\mathrm{der}}}=\h_{\SL_2}\times \h_{\SL_2}\times \h_{\SL_2}$.  Following \cite[Proposition 2.3.1]{pollackQDS}, we define an automorphy factor 
$$
j_{M_P^{\mathrm{der}}}\colon M_P^{\mathrm{der}}(\R)\times \h_{M_P^{\mathrm{der}}}\to \C, \quad ((g_1,g_2,g_3), (z_1,z_2,z_3))\mapsto j_{\SL_2}(g_1,z_1)j_{\SL_2}(g_2,z_2)j_{\SL_2}(g_3,z_3)
$$
where $j_{\SL_2}\colon \SL_2(\R)\times \h_{\SL_2}\to \C$ is the standard factor of automorphy $j(\left(\begin{smallmatrix} a& b\\ c &d \end{smallmatrix}\right), z)=cz+d$. 
\begin{definition}
\label{defn-holomorphic-modular-forms-on-MP}
  Write $\h_{M_P^{\mathrm{der}}}=\h_{\SL_2}\times \h_{\SL_2}\times \h_{\SL_2}$ so that $M_P^{\mathrm{der}}(\R)$ acts on $\h_{M_P^{\mathrm{der}}}$.  Let $\ell\in \Z_{>0}$ and suppose $\Gamma\leq M_P^{\mathrm{der}}(\R)$ is an arithmetic subgroup. A holomorphic modular form on $\h_{M_P^{\mathrm{der}}}$ of weight $\ell$ and level $\Gamma$ is a function $f\colon \h_{M_P^{\mathrm{der}}}\to \C$ such that 	\begin{compactenum}
		\item $f$ is holomorphic,
		\item if $z\in \h_{M_P^{\mathrm{der}}}$ and $\gamma\in \Gamma$ then 
		$
		f(\gamma\cdot z) =j_{M_P^{\mathrm{der}}}(\gamma,z)^{\ell}f(z)
		$, and 
		\item
		$
		\xi\colon M_P^{\mathrm{der}}(\R) \rightarrow \C, \quad \xi(g):=j_{M_P^{\mathrm{der}}}(\gamma,z)^{-\ell} f(g\cdot (i,i,i))$
		is of moderate growth.
	\end{compactenum}
\end{definition}
\begin{lemma}
\label{primitivity-holomorphic-modular-forms-on-MP}
Assume $\ell\in \Z_{>0}$. 
Suppose $f\colon \h_{M_P^{\mathrm{der}}}\to \C$ is a weight $\ell$ holomorphic modular form  of level $M_P^{\mathrm{der}}(\Z)$.  Write the classical Fourier expansion of $f$ as 
\begin{equation}
\label{eqn-FE-3-modular-forms-on-SL2}
f(z_1,z_2,z_3)=\sum_{n_1,n_2,n_3\geq 0}a(n_1,n_2,n_3)e^{2\pi i (n_1z_1+n_2z_2+n_3z_3)}. 
\end{equation}
If $a(n_1,n_2,n_3)=0$ for all $(n_1,n_2,n_3)\in \Z_{>0}^3$ satisfying $\mathrm{gcd}(n_1,n_2,n_3)=1$, then $f\equiv 0$. 
\end{lemma}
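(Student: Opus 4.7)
The plan is to decompose $f$ in a Hecke eigenbasis of $M_\ell(\SL_2(\Z))$ and then use the vanishing of Fourier coefficients at triples of distinct primes (which are automatically primitive) to force every expansion coefficient to vanish. If $M_\ell(\SL_2(\Z))=0$ (for instance when $\ell$ is odd or $\ell=2$) then the statement is vacuous, so I may assume $N:=\dim_{\C} M_\ell(\SL_2(\Z))>0$ and fix a basis $\{g_1,\ldots,g_N\}$ of $M_\ell(\SL_2(\Z))$ consisting of normalized Hecke eigenforms, i.e.\ with $a_{g_i}(1)=1$ (so the $g_i$ run over the Eisenstein series $E_\ell$ and the cuspidal Hecke eigenbasis). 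Using the tensor decomposition $M_\ell(\SL_2(\Z)^3)\cong M_\ell(\SL_2(\Z))^{\otimes 3}$ at weight $(\ell,\ell,\ell)$ I would write uniquely
\[
f(z_1,z_2,z_3)=\sum_{i,j,k=1}^N c_{ijk}\, g_i(z_1)\, g_j(z_2)\, g_k(z_3),
\]
so that the Fourier coefficient of $f$ at $(n_1,n_2,n_3)$ factors as
\[
a(n_1,n_2,n_3)=\sum_{i,j,k} c_{ijk}\, a_{g_i}(n_1)\, a_{g_j}(n_2)\, a_{g_k}(n_3).
\]

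The key input I would establish is the following linear algebra claim: the vectors $\bigl(a_{g_1}(p),\ldots,a_{g_N}(p)\bigr)\in \C^N$, as $p$ ranges over primes, span $\C^N$. Once this holds, for each $m\in\{1,2,3\}$ I can choose primes $p_1^{(m)},\ldots,p_N^{(m)}$ so that the $N\times N$ matrix $A^{(m)}:=\bigl(a_{g_i}(p_s^{(m)})\bigr)_{i,s}$ is invertible, and by drawing from infinitely many primes at each step I can also arrange the three prime sets $\{p_s^{(m)}\}_s$ for $m=1,2,3$ to be pairwise disjoint. To prove the spanning claim, suppose $h=\sum_i\alpha_i g_i$ satisfies $a_h(p)=0$ for all primes $p$. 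The Deligne bound $|a_{f}(p)|\ll p^{(\ell-1)/2}$ for cuspidal eigenforms, compared with the dominant growth $a_{E_\ell}(p)=1+p^{\ell-1}$ of the Eisenstein series, forces the Eisenstein component of $h$ to vanish; the remaining cuspidal piece then vanishes as a consequence of Strong Multiplicity One combined with the multiplicativity of Hecke eigenvalues (which together rule out a nontrivial linear relation among the prime-eigenvalue sequences of distinct cuspidal Hecke eigenforms of the same weight and level).

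Granting the spanning claim, the triple $(p_{s_1}^{(1)},p_{s_2}^{(2)},p_{s_3}^{(3)})$ consists of three distinct primes and therefore has $\gcd=1$, so is primitive in $\Z_{>0}^3$ for every $(s_1,s_2,s_3)\in\{1,\ldots,N\}^3$. Applying the vanishing hypothesis to these triples yields the system
\[
\sum_{i,j,k} c_{ijk}\, a_{g_i}(p_{s_1}^{(1)})\, a_{g_j}(p_{s_2}^{(2)})\, a_{g_k}(p_{s_3}^{(3)})=0 \qquad \forall\,(s_1,s_2,s_3),
\]
which says precisely that the tensor $(c_{ijk})\in(\C^N)^{\otimes 3}$ is annihilated by the invertible operator $(A^{(1)})^T\otimes (A^{(2)})^T\otimes (A^{(3)})^T$. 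Consequently $c_{ijk}=0$ for all $(i,j,k)$, so $f\equiv 0$.

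The main obstacle is the spanning claim for the prime-eigenvalue vectors, equivalently the injectivity of the map $M_\ell(\SL_2(\Z))\to \C^{\{\text{primes}\}}$, $g\mapsto (a_g(p))_p$. The Eisenstein component is handled by a straightforward growth comparison, but the cuspidal subcase genuinely uses Strong Multiplicity One together with a Vandermonde/induction argument on the number of eigenforms to rule out nontrivial linear dependencies among their prime-eigenvalue sequences.
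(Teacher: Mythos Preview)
Your approach is valid in outline but takes a different and heavier route than the paper. The paper freezes variables one at a time: fixing $(z_2,z_3)=(w_2,w_3)$ generically gives a nonzero weight-$\ell$ elliptic form in $z_1$, hence some $m>0$ with nonvanishing $m$-th coefficient; iterating produces $m'>0$; then $h(z_3)=\sum_{n_3}a(m,m',n_3)q^{n_3}$ is a nonzero form in $M_\ell(\SL_2(\Z))$, and one quotes the standard fact that such an $h$ has $a_h(m'')\neq 0$ for some $m''>0$ coprime to $d=\gcd(m,m')$. This last step needs only that the $T_p$ with $p\nmid d$ separate the Hecke eigenforms (the softest form of strong multiplicity one), since then the \emph{algebra} they generate, acting on the eigenbasis, is all of $\C^N$.

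The gap in your argument is the spanning claim that the vectors $(a_{g_i}(p))_i$, as $p$ runs over primes, already span $\C^N$ \emph{linearly}. Strong multiplicity one plus multiplicativity gives only that the algebra generated by these vectors and $\mathbf 1$ is $\C^N$; I do not see how a Vandermonde or induction scheme upgrades this to a statement about the linear span of the prime-indexed vectors alone (the natural induction step would need $a_h(pq)=0$, which is not given). The spanning claim is true, but the proofs I know pass through Rankin--Selberg orthogonality of the eigenvalue sequences, which is far deeper than anything the lemma warrants. You can repair this cheaply: pick three pairwise disjoint finite sets of primes $\mathcal P_1,\mathcal P_2,\mathcal P_3$, each chosen (using strong multiplicity one) to separate every pair of eigenforms, and test at triples $(n_1,n_2,n_3)$ with $n_m$ a positive integer supported on $\mathcal P_m$ (allowing $n_m=1$). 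Such triples are coprime, and the Hecke recursion shows the span of $\{(a_{g_i}(n))_i:\operatorname{supp}(n)\subseteq\mathcal P_m\}$ equals the algebra generated by $\{v_p:p\in\mathcal P_m\}$ and $\mathbf 1$, which is $\C^N$; so invertible matrices $A^{(m)}$ exist and your tensor argument goes through. (Minor point: the elementary Hecke bound $a_g(p)\ll p^{\ell/2}$ already suffices to peel off the Eisenstein part; Deligne is not needed.)
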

\begin{proof}
Suppose for a contradiction that $f\not\equiv 0$.  Then there exists $w=(w_1,w_2,w_3)\in\h_{M_P^{\mathrm{der}}}$ such that $f(w)\neq 0$.   Therefore, 
$$
f_1(z)=\sum_{n_1\geq 0}\left(\sum_{n_2,n_3\geq 0}a(n_1,n_2,n_3)e^{2\pi i (n_2w_2+n_3w_3)}\right)e^{2\pi i n_1z.} 
$$
is a non-zero elliptic modular form of weight $\ell>0$. So there exists $m>0$,  such that $g(z_2,z_3)=\sum_{n_2,n_3\geq 0}a(m,n_2,n_3)e^{2\pi i (n_2z_2+n_3z_3)}$ is a non-zero, weight $\ell$ holomorphic modular form on $\h_{\SL_2}\times \h_{\SL_2}$ of level $\SL_2(\Z)\times \SL_2(\Z)$.  Applying the same logic to $g$,  there exists an integer $m'>0$ such that $h(z_3)=\sum_{n_3\geq 0}  a(m,m',n_3)e^{2\pi i n_3z_3}$ is non-zero, weight $\ell>0$ modular form on $\h_{\SL_2}$ of level $\SL_2(\Z)$.  Thus if $d=\mathrm{gcd}(m,m')$, then there exists  an integer $m''>0$ such that $\gcd(d,m'')=1$ and $a(m,m',m'')\neq 0$,  completing the proof. 
\end{proof}
\section{Preliminaries on Quaternionic Modular Forms}
\label{Sec-Prelimns-on-Quat-Modular-Forms}
\subsection{Quaternionic Modular Forms on $G$}
\label{subsec: Quaternionic Modular Forms on G}
Let $\ell\in \Z_{\geq 1}$ and recall the representation $\mathbf{V}_{\ell}$ defined in \eqref{defn-of-Vell}. Following \cite{pollackQDS}, we now define quaternionic modular forms on $G$. \\
\indent We begin by reviewing the construction of a differential operator $D_{\ell}$ which goes back to the work of Schmid \cite{Schmid89}.  To specify $D_{\ell}$,  let $\g = \k \oplus \p$ be the Cartan decomposition of the complexified Lie algebra $\g$ of $G(\R)$ from Subsection \ref{subsec-compact-subgroups}.  Let $\{X_\alpha\}$ be a basis of $\p$ and $\{X_\alpha^\vee\}$ the dual basis of $\p^\vee$. One has that, as a representation of $K_{\infty}$, $\p \simeq \p^\vee \simeq V_2 \boxtimes W$, where the distinguished $\SU_2$ acts trivially on $W$ (see \eqref{eqn-tensor-decomposition-W}).

\indent  For a $K_{\infty}$-equivariant function $F:G(\R) \rightarrow \mathbf{V}_{\ell}$, set $\widetilde{D}_{\ell}F = \sum_{\alpha}{X_\alpha F \otimes X_\alpha^\vee}$.  The sum is independent of the choice of basis and $\widetilde{D}_{\ell}F$ takes values in 
$$
\mathbf{V}_{\ell}\otimes \p^\vee \simeq (\mathrm{Sym}^{2\ell-1}(V_2) \otimes W) \oplus (\mathrm{Sym}^{2\ell+1}(V_2) \otimes W).
$$ Let $\mathrm{pr}$ be the projection $\mathbf{V}_{\ell}\otimes \p^\vee \rightarrow \mathrm{Sym}^{2\ell-1}(V_2) \otimes W$.  Then $D_{\ell} = \mathrm{pr} \circ \widetilde{D}_{\ell}$.  
\begin{definition}
    Suppose $\ell\in \Z_{\geq 1}$. The space of weight $\ell$ (quaternionic) modular forms $M_{\ell}$ on $G$ is the space of smooth, moderate growth functions 
    $
    \varphi\colon G(\A) \to \V_{\ell}
    $ such that: 
  \begin{compactenum}
  \item if $\gamma\in G(\Q)$ and $g\in G(\A)$ then $\varphi(\gamma g)=\varphi(g)$, 
  \item there exists an open compact subgroup $K_f\leq G(\A_f)$ such that $\varphi$ is right $K_f$-invariant, 
  \item if $k\in K_{\infty}$, and $g\in G(\A)$, then $\varphi(gk)=k^{-1}\varphi(g)$, 
  \item $D_{\ell}\varphi \equiv 0$,  and 
  \item $\varphi$ is $Z(\g)$ finite.  
  \end{compactenum}
Let $S_{\ell}$ be the subspace of $M_{\ell}$ consisting of cusp forms. So $S_{\ell}$ consists of forms $\varphi\in M_{\ell}$ such that if $\mathcal{N}\leq G$ is the unipotent radical of a proper $\Q$-rational parabolic subgroup then 
$$
\varphi_{\mathcal{N}}(g):=\displaystyle{\int_{\mathcal{N}(\Q)\backslash \mathcal{N}(\A)}}\varphi(ng)\,dn
$$ 
is identically zero. 
Write $M_{\ell}(1)$ (resp. $S_{\ell}(1)$) to denote the subspace of $M_{\ell}$ (resp. $S_{\ell}$) consisting of forms $\varphi$ such that $\varphi(gk)=\varphi(g)$ for all $g\in G(\A)$ and $k\in G(\widehat{\Z})$. 
\end{definition}
\subsection{The Fourier Expansion of Quaternionic Modular Forms on $G$}
\label{subsec: FE Quaternionic Modular Forms on G}
The Heisenberg parabolic subgroup $P=M_PN_P$ is defined in  Subsection \ref{subsec:The Heisenber Type Parabolic P},  and the unipotent radical $N_P$ of $P$ is two-step nilpotent with center $Z$.  \\
\indent Recall the identification of the character lattice $\mathrm{Hom}(N_P(\Q) \backslash N_P(\A), \C^1)$ given in \eqref{defn-characters-NP}.  If $T_1,T_2\in V_{2,2}(\Q)$,  and $\varphi\in M_{\ell}$, we define the Fourier coefficient $\varphi_{[T_1,T_2]}$ through the formula 
\begin{equation}
\label{defn-FE-General-Aut-Form}
\varphi_{[T_1,T_2]}\colon G(\A) \to \V, \qquad \varphi_{[T_1,T_2]}(g)=\displaystyle{\int_{N_P(\Q)\backslash N_P(\A)}}\varphi(ng)\varepsilon_{[T_1,T_2]}^{-1}(n)\,dn. 
\end{equation}
Then since $Z=[N_P, N_P]$, we may Fourier expand the constant term $\varphi_Z$ as
\begin{equation}
\label{FE-general-phiZ}
\varphi_Z(g)=\varphi_{N_P}(g)+\sum_{T_1,T_2\in V_{2,2}(\Q)\colon [T_1,T_2]\neq [0,0] }\varphi_{[T_1,T_2]}(g). 
\end{equation}

\indent Write $\langle \cdot, \cdot \rangle $ for the $\GL(U)\times \SO(V_{2,2})$-invariant pairing between $U^{\vee}\otimes_{\Q} V_{2,2}$ and $U\otimes_{\Q} V_{2,2}$. So if $T_1, T_1',T_2,T_2'\in V_{2,2}(\R)$, then 
$$
\langle b_{-1}\otimes T_1+b_{-2}\otimes T_2, b_{1}\otimes T_1'+b_{2}\otimes T_2'\rangle =(T_1,T_1')+(T_2,T_2').
$$ For $T_1,T_2\in V_{2,2}(\R)$, define $\beta_{[T_1,T_2]}\colon M_P(\R)\to \C$ by the formula
\begin{equation}
\label{definition of beta}
\beta_{[T_1,T_2]}(r):=\sqrt{2} i\langle r^{-1}\cdot (b_{-1}\otimes T_1+b_{-2}\otimes T_2), b_{1}\otimes(v_1+iv_2)+b_{2}\otimes i(v_1+ iv_2)\rangle.
\end{equation}
For the convenience of the reader we recall that $v_1=(b_3+b_{-3})/\sqrt{2}$ and $v_2=(b_4+b_{-4})/\sqrt{2}$. 
\begin{definition}
\label{defn-of-beta}
Let $[T_1,T_2]\in V_{2,2}(\R)^{\oplus 2}$. We say $[T_1, T_2]$ is positive semi-definite if $\beta_{[T_1,T_2]}(r)\neq 0$ for all $r\in M(\R)^0$. We write $[T_1,T_2]\succeq 0$ to mean that the pair $[T_1,T_2]$ is positive semi-definite. Moreover, we write $[T_1,T_2]\succ 0$ if $[T_1,T_2]\succeq 0$ and $(T_1,T_1)(T_2,T_2)-(T_1,T_2)^2>0$. 
\end{definition}
 Given $r\in M_P(\R)$ we write the image of $m$ under the projection $p\colon G(\R)\to \SO(V)(\R)^0$ as $p(r)=(m,h)$ with the understanding that $m\in \GL(U)(\R)$ and $h\in \SO(V_{2,2})(\R)^0$.  \\
 \indent Modulo a technicality regarding the difference between $G$ and $G^{\mathrm{ad}}$, the following result is proven in \cite{pollackQDS}. With that said, a version of Theorem \ref{Thm 1.2.1 Aaron Paper} is available in \cite[Theorem 16]{wallach}, though (loc. cit.) does not establish \eqref{K-Bessel-Magic}, which is of crucial importance to us.
\begin{theorem}\label{Thm 1.2.1 Aaron Paper}
Fix $\ell\in \Z_{\geq 1}$ and suppose $[T_1,T_2]\in V_{2,2}(\R)^{\oplus2}$. Then up to scalar multiple, there is a unique moderate growth function $\mathcal{W}_{[T_1,T_2]}\colon G(\R)\to \mathbf{V}_{\ell}$ such that:
\begin{compactenum}
\item If $g\in G(\R)$ and $k\in K_{\infty}$ then $\mathcal{W}_{[T_1,T_2]}(gk)=k^{-1}\mathcal{W}_{[T_1,T_2]}(g)$.
\item If $g\in G(\R)$ and $n \in N_P(\R)$ satisfy $\log(n) = b_1 \wedge w_1 + b_2 \wedge w_2 + z b_1 \wedge b_2$ for $w_1,w_2\in V_{2,2}(\R)$ and $z\in \R$, then $\mathcal{W}_{[T_1,T_2]}(ng)=e^{i(T_1,w_1) + i (T_2,w_2)}\mathcal{W}_{[T_1,T_2]}(g)$.
\item If $g\in G(\R)$ then $D_{\ell}\mathcal{W}_{[T_1,T_2]}(g)=0$.  
\end{compactenum}
Moreover $\mathcal{W}_{[T_1,T_2]}(g)\equiv 0$ unless $[T_1,T_2]\succeq 0$, and if $[T_1,T_2]\succeq 0$ then the function $\mathcal{W}_{[T_1,T_2]}(g)$ is uniquely characterized by requiring that for all $r\in M_P(\R)^0$,
\begin{equation}
\label{K-Bessel-Magic}
\mathcal{W}_{[T_1,T_2]}(r)=\det(m)^{\ell}|\det(m)|\sum_{-\ell\leq v\leq \ell}\left(\frac{\beta_{[T_1,T_2]}(r)}{|\beta_{[T_1,T_2]}(r)|}\right)^vK_v(|\beta_{[T_1, T_2]}(r)|)\frac{x^{\ell+v}y^{\ell-v}}{(\ell+v)!(\ell-v)!}.
\end{equation}
Here $K_{v}\colon \R_{>0}\to \R$ denotes the modified K-Bessel function $K_v(x)=\dfrac{1}{2}\displaystyle{\int_0^{\infty}}t^{v-1}e^{-x(t+t^{-1})}\,dt$.
\end{theorem}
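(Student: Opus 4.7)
The plan is to deduce the theorem from Pollack's \cite[Theorem 1.2]{pollackQDS}, which establishes the analogous statement for the adjoint group $G^{\mathrm{ad}}$, together with Wallach's \cite[Theorem 16]{wallach}, which provides the one-dimensionality needed for uniqueness. The substantive analytic work is already contained in \cite{pollackQDS}; what remains is to transfer those results across the isogeny $\pi \colon G \to \SO(V)$ and reconcile conventions.

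First, I would reduce the problem to finding a function on $M_P(\R)^0$. By the Iwasawa decomposition $G(\R) = N_P(\R)\, M_P(\R)^0\, K_\infty$, conditions (1) and (2) force $\mathcal{W}_{[T_1,T_2]}$ to be determined by its restriction to $M_P(\R)^0$. Translating $D_\ell \mathcal{W}_{[T_1,T_2]} = 0$ via the decomposition $\p \simeq V_2 \boxtimes W$ from \eqref{eqn-tensor-decomposition-W}, one obtains a system of first-order ODEs along the split torus in $M_P(\R)^0$; the modified K-Bessel functions $K_v$ arise as the unique moderate-growth solutions, and the multiplicity-one statement of \cite[Theorem 16]{wallach} gives uniqueness up to scalar.

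Second, I would verify that the explicit formula of \cite{pollackQDS} pulls back correctly through $\pi$. Since $\pi$ is an isomorphism on unipotent radicals, the character $\varepsilon_{[T_1,T_2]}$ of $N_P(\R)$ matches the corresponding character on $\pi(N_P)(\R)$, so condition (2) is preserved by the pullback. On the Levi, the factor $\det(m)^{\ell} |\det(m)|$ in \eqref{K-Bessel-Magic} is unambiguously defined because $\GL(U)(\R)^0$ lifts canonically through $\pi|_{M_P(\R)^0}$. The vanishing for $[T_1,T_2] \not\succeq 0$ then follows from the moderate growth hypothesis combined with the asymptotics at infinity of the ODE system, as in the proof of \cite[Theorem 16]{wallach}.

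The main obstacle will be checking that the $\mathbf{V}_\ell$-valued formula transforms correctly under the full $K_\infty$, not merely under the identity component of the image of $K_\infty$ in $\SO(V^+)$. Using the description $K_\infty = (\Spin(V^+) \times \Spin(V^-))/\mu_2$ from Subsection \ref{subsec-compact-subgroups}, one must verify that $\Spin(V^-)$ acts trivially on $\mathbf{V}_\ell$, and that the $\Spin(V^+)$-action on $\mathbf{V}_\ell = \mathrm{Sym}^{2\ell}(V_2)$ is compatible with the summation over $v$ in \eqref{K-Bessel-Magic}. This compatibility is ultimately encoded in the functional $\beta_{[T_1,T_2]}(r)$ of \eqref{definition of beta}, which carries precisely the rotation by the distinguished $\SU(2)$ factor needed to twist the argument of each Bessel summand in a $K_\infty$-equivariant way.
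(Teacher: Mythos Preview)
Your overall strategy---cite the known result on a quotient of $G$ and transfer across the isogeny $\pi$---is exactly what the paper does. The paper cites \cite[Theorem 4.5]{JMNPR24} (the $\SO(V)$ version) rather than \cite[Theorem 1.2]{pollackQDS} directly, but this is immaterial.

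Where you diverge is in the transfer step, which you have overcomplicated. The paper's argument is a one-line observation: the kernel of $\pi\colon G(\R)\to \SO(V)(\R)^0$ is the central $\mu_2$, which lies in $K_\infty$ and acts trivially on $\V_\ell=\mathrm{Sym}^{2\ell}(V_2)$ (an even symmetric power). Hence property (1) alone forces any candidate $\mathcal{W}_{[T_1,T_2]}$ to be $\mu_2$-invariant on the left, i.e.\ to factor through $\SO(V)(\R)^0$. Both existence and uniqueness then reduce immediately to the cited result, with no further verification needed.

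Your fourth paragraph misidentifies the obstacle. There is no issue of ``full $K_\infty$ versus identity component of the image in $\SO(V^+)$'': once a function on $\SO(V)(\R)^0$ satisfying the analogous equivariance is pulled back along $\pi$, equivariance under all of $K_\infty$ is automatic because $\pi(K_\infty)=K_{\SO_8}^0$. Checking separately that $\Spin(V^-)$ acts trivially on $\V_\ell$, or that $\beta_{[T_1,T_2]}$ carries the correct $\SU(2)$-twist, is unnecessary---these facts are already encoded in the $\SO(V)$-level statement you are citing. The only genuine content in passing from $\SO(V)$ to $\Spin(V)$ is the $\mu_2$ observation above, which you do not mention.
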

\begin{proof}
The statement is identical to that of \cite[Theorem 4.5]{JMNPR24} except for the fact $G=\Spin(V)$ as opposed to $\SO(V)$.  However,  since the central $\mu_2$ in $\Spin(V)$ is contained in $K_{\infty}$ and this subgroup acts trivially on $\V_{\ell}$, property (1) of the theorem statement implies that $\mathcal{W}_{[T_1,T_2]}$ factors across $\SO(V)(\R)^0$.  Hence, the result follows directly from (loc. cit.).
\end{proof}
As a corollary to Theorem \ref{Thm 1.2.1 Aaron Paper} we have the following.
\begin{corollary}
\label{cor:FE of phiZ}
Suppose $\ell\in \Z_{\geq 1}$ and let $\varphi\colon G(\R)\to \mathbf{V}_{\ell}$ be a weight $\ell$ quaternionic modular form on $G(\A)$.  If $[T_1,T_2]\in V_{2,2}(\Q)^{\oplus 2}$ is non-zero and not positive semi-definite then $\varphi_{[T_1,T_2]}\equiv 0$. Moreover, there exists a unique family of locally constant functions  
$$
\{a_{[T_1,T_2]}(\varphi,\cdot)\colon G(\A_f)\to \C\colon \hbox{$[T_1,T_2]\in V_{2,2}(\Q)^{\oplus 2}$ such that $[T_1,T_2]\succeq 0$}\}
$$ such that $\varphi_{[T_1,T_2]}(g_fg_{\infty})=a_{[T_1,T_2]}(\varphi,g_f)\mathcal{W}_{[2\pi T_1,2\pi T_2]}(g_{\infty})$ for all $[T_1,T_2]\succeq 0$. In particular, the Fourier expansion of $\varphi_Z$ along $Z(\A)N_P(\Q)\backslash N_P(\A)$ takes the form
\begin{equation}
\label{FE-non-cuspidal-QMF-phiZ} 
\varphi_{Z}(g_fg_{\infty})=\varphi_{N_P}(g_fg_{\infty})+\sum_{T_1,T_2\in V_{2,2}(\Q) \colon [T_1,T_2]\succeq 0}a_{[T_1,T_2]}(\varphi,g_f)\mathcal{W}_{[2\pi T_1,2\pi T_2]}(g_{\infty}).  
\end{equation}
\end{corollary}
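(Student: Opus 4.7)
The proof plan is to apply Theorem~\ref{Thm 1.2.1 Aaron Paper} place-by-place, after verifying that, for every fixed $g_f \in G(\A_f)$, the archimedean function $g_\infty \mapsto \varphi_{[T_1,T_2]}(g_f g_\infty)$ satisfies the characterizing properties of the generalized Whittaker function $\mathcal{W}_{[2\pi T_1, 2\pi T_2]}$. The factor of $2\pi$ arises from the normalization of the fixed additive character $\psi$, whose archimedean component is $\psi_\infty(x) = e^{2\pi i x}$: thus $\varepsilon_{[T_1,T_2]}$ restricts at infinity to $\exp(2\pi i((T_1, w_1)+(T_2,w_2)))$, matching property (2) of Theorem~\ref{Thm 1.2.1 Aaron Paper} for the pair $[2\pi T_1, 2\pi T_2]$.

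First I would verify the four properties. The $K_\infty$-equivariance $\varphi_{[T_1,T_2]}(gk) = k^{-1}\varphi_{[T_1,T_2]}(g)$ follows directly from the corresponding property of $\varphi$ by pulling $k$ through the integral over $N_P(\Q)\backslash N_P(\A)$. The $N_P(\R)$-transformation law follows by the change of variable $n \mapsto n n_0^{-1}$ inside the integral. The annihilation $D_\ell \varphi_{[T_1,T_2]} \equiv 0$ follows because the differential operator $D_\ell$ is built from right-invariant vector fields on $G(\R)$ and therefore commutes with the left integration over $N_P(\Q)\backslash N_P(\A)$. Moderate growth is inherited from $\varphi$ by a standard estimate on the compact quotient. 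Once these are in hand, Theorem~\ref{Thm 1.2.1 Aaron Paper} implies that $g_\infty \mapsto \varphi_{[T_1,T_2]}(g_f g_\infty)$ must be a scalar multiple of $\mathcal{W}_{[2\pi T_1, 2\pi T_2]}$ in $g_\infty$, where the scalar depends on $g_f$. When $[T_1,T_2]$ is non-zero and not positive semi-definite, the theorem gives $\mathcal{W}_{[2\pi T_1, 2\pi T_2]} \equiv 0$, forcing $\varphi_{[T_1,T_2]} \equiv 0$ and establishing the first assertion.

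Define $a_{[T_1,T_2]}(\varphi, g_f)$ by the proportionality above on any $M_P(\R)^0$ element $r$ where $\mathcal{W}_{[2\pi T_1, 2\pi T_2]}(r) \neq 0$; the explicit formula \eqref{K-Bessel-Magic} (and the non-vanishing of the K-Bessel functions on $\R_{>0}$) guarantees that such $r$ exists whenever $[T_1,T_2] \succeq 0$, so the scalar is well-defined and unique. Local constancy of $a_{[T_1,T_2]}(\varphi, \cdot)$ is inherited from the right $K_f$-invariance of $\varphi$ for some open compact $K_f \leq G(\A_f)$. Finally, substituting the factorization $\varphi_{[T_1,T_2]}(g_f g_\infty) = a_{[T_1,T_2]}(\varphi, g_f)\mathcal{W}_{[2\pi T_1, 2\pi T_2]}(g_\infty)$ into the abstract Fourier expansion \eqref{FE-general-phiZ}, and discarding the identically-zero terms coming from pairs that are not positive semi-definite, yields \eqref{FE-non-cuspidal-QMF-phiZ}.

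The only genuinely delicate point is ensuring one may invoke the uniqueness statement of Theorem~\ref{Thm 1.2.1 Aaron Paper} for each $g_f$: this requires $\varphi_{[T_1,T_2]}(g_f \cdot)$ to lie in the moderate-growth class, which is standard, but one must also tacitly use that the Fourier coefficient has the correct eigencharacter for the center $Z$ (built into the transformation under $N_P(\R)$). The main obstacle, such as it is, is thus bookkeeping: aligning the adelic normalization of $\varepsilon_{[T_1,T_2]}$ (via $\psi$) with the archimedean exponential appearing in Pollack--Wallach, which is why the $[2\pi T_1, 2\pi T_2]$ rescaling enters. Once this is pinned down, the remainder of the argument is formal.
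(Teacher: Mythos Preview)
Your proposal is correct and follows the natural route: the paper states this result as an immediate corollary to Theorem~\ref{Thm 1.2.1 Aaron Paper} and does not supply a separate proof, so your argument is exactly the standard unpacking one would give. One minor terminological slip: the Schmid operator $D_\ell$ is built from \emph{left}-invariant vector fields (right differentiation via $X_\alpha F(g)=\tfrac{d}{dt}\big|_{t=0}F(g\exp tX_\alpha)$), not right-invariant ones; it is this left-invariance that makes $D_\ell$ commute with the left integration over $N_P(\Q)\backslash N_P(\A)$, so your conclusion stands but the justification should be phrased accordingly.
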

\begin{definition}
Suppose $\varphi\in M_{\ell}$ and $B=[T_1,T_2]\in V_{2,2}(\Q)^{\oplus 2}$ satisfies $B]\succeq 0$. The Fourier coefficients of $\varphi$ indexed by $B$ is defined as $\Lambda_{\varphi}(B)=\Lambda_{\varphi}[T_1,T_2]=a_{[T_1,T_2]}(\varphi,1)$.  
\end{definition}
\indent The constant term $\varphi_{N_P}$ is essentially a holomorphic modular form on $M_{P}$. More precisely, following \cite[Proposition 11.1.1]{pollackQDS},  the function 
\begin{equation}
\label{eqn-structure-of-Heisenberg-constant-term}
\Phi\colon M_P^{\mathrm{der}}(\R) \to \C,  \quad \Phi(m)=j_{M_P^{\mathrm{der}}}(m,(i,i,i))^{\ell}\left\{ \varphi_{N_P}(m), y^{2\ell}\right\}_{K_{\infty}}
\end{equation}
descends to a holomorphic modular form on $\h_{M_P^{\mathrm{der}}}$ in the sense of Definition \ref{defn-holomorphic-modular-forms-on-MP}. Here $\{\, , \,\}_{K_{\infty}}$ denotes the unique $K_{\infty}$ invariant symmetric bilinear form on $\V_{\ell}$ satisfying
$$
\{ x^{\ell+v}y^{\ell-v}, x^{\ell-w}y^{\ell+w}\}_{K_{\infty}}=(-1)^{\ell+v}\delta_{v,w}(\ell+v)!(\ell-v)!
$$
where $\delta_{v,w}$ denotes the Kronecker delta function. \\
\indent 
Next we recall a result of  \cite[\S 4.4]{JMNPR24},  which establishes a positive semi-definiteness support property for the Fourier coefficients of quaternionic modular forms on $G$.  
 \begin{proposition}\emph{\cite[Proposition 4.9]{JMNPR24}}
\label{Properties-of-beta}
Let $T_1, T_2\in V_{2,2}(\R)$, $W=\R\linspan\{T_1,T_2\}$, and $V^+_2(\R)=\R\linspan\{v_1, v_2\}$.
\begin{compactenum}[(i)] 
\item If $\R\linspan\{T_1,T_2\}$ is an indefinite two plane,  a negative definite two plane, or a negative definite line, then there exists $r\in M_P(\R)^0$ such that $\beta_{[T_1,T_2]}(r)=0$. 
\item If $|\beta_{[T_1,T_2]}(r)|$ is bounded away from zero on $M_R^{\mathrm{der}}(\R)$ then $(T_1,T_1)(T_2,T_2)-(T_1,T_2)^2>0$. In particular $T_1$ and $T_2$ span a two plane in $V_{2,2}(\R)$. 
\end{compactenum}
\end{proposition}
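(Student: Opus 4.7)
My plan is to first rewrite $\beta_{[T_1,T_2]}(r)$ in a more transparent form. Using the $\GL(U)\times \SO(V_{2,2})$-invariance of the pairing $\langle\cdot,\cdot\rangle$, one transfers the action from the first argument to the second, obtaining
$$\beta_{[T_1,T_2]}(r)=\sqrt{2}\,i\,(\mu T_1+\nu T_2,\,h(v_1+iv_2)),$$
where $(\mu,\nu)\in \C^2$ arises from $m(b_1+ib_2)=\mu b_1+\nu b_2$ (so $\mathrm{Im}(\bar\mu\nu)>0$ since $m\in \GL_2(\R)^0$) and $h$ is the image of $r$ in $\SO(V_{2,2})(\R)^0$. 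The vector $w=v_1+iv_2$ is isotropic in $V_{2,2}(\C)$, so both assertions reduce to a linear-algebraic analysis of $(\mu T_1+\nu T_2,h(w))$.

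With this reformulation in hand, part (i) splits into three cases. If $\R\linspan\{T_1,T_2\}$ is a negative-definite 2-plane, then its orthogonal complement in $V_{2,2}$ is a positive-definite 2-plane; choosing $h\in \SO(V_{2,2})(\R)^0$ that maps $V_{2,2}^+=\R\linspan\{v_1,v_2\}$ onto this complement forces $(T_j,h(v_k))=0$ for all $j,k$, giving $\beta_{[T_1,T_2]}(r)=0$. If $\R\linspan\{T_1,T_2\}$ is a negative-definite line, then $T_1^\perp$ has signature $(2,1)$ and still contains positive-definite 2-planes, so a similar choice of $h$ suffices. If $\R\linspan\{T_1,T_2\}$ is an indefinite 2-plane, I would exploit the complex scalar freedom: for fixed $h$, the equation $\mu(T_1,h(w))+\nu(T_2,h(w))=0$ admits a solution with $\mathrm{Im}(\bar\mu\nu)>0$ precisely when the real $2\times 2$ matrix $[(T_j,h(v_k))]_{j,k=1,2}$ has positive determinant; such $h$ can be produced by arranging $h(V_{2,2}^+)$ to meet the indefinite span along a positively oriented frame, using the transitivity of $\SO(V_{2,2})(\R)^0$ on oriented positive-definite 2-planes.

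Part (ii) then follows by contraposition. If $(T_1,T_1)(T_2,T_2)-(T_1,T_2)^2\leq 0$, the span of $T_1,T_2$ is not a positive-definite 2-plane. The subcases covered by (i) immediately produce an exact zero of $\beta_{[T_1,T_2]}$. In the remaining degenerate cases, for example $T_2=\lambda T_1$ with $T_1$ positive or null, I would use a scaling argument: choosing $(\mu,\nu)=(-i\epsilon,1)$ for small $\epsilon>0$ preserves $\mathrm{Im}(\bar\mu\nu)=\epsilon>0$ while driving $|\beta_{[T_1,T_2]}(r)|$ to zero, so $|\beta_{[T_1,T_2]}|$ is not bounded away from zero.

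The main technical obstacle I foresee lies in the indefinite 2-plane case of (i): one must verify that the $h$ producing a positive determinant for $[(T_j,h(v_k))]$ can be taken in the identity component $\SO(V_{2,2})(\R)^0$ rather than a non-identity coset. Since $\SO(2,2)(\R)$ has two connected components, I would resolve this by exhibiting an explicit path in $\SO(V_{2,2})(\R)^0$ realizing the required rotation, or by appealing to the component analysis in the proof of Proposition 4.9 of \cite{JMNPR24}.
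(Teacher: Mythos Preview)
The paper does not prove this proposition; it merely quotes \cite[Proposition 4.9]{JMNPR24}. So there is no argument in the present paper to compare against, and your write-up is effectively an independent attempt to reconstruct the proof from the cited reference.

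Your reformulation of $\beta_{[T_1,T_2]}(r)$ as $\sqrt{2}\,i\,(\mu T_1+\nu T_2,\,hw)$ with $\mathrm{Im}(\bar\mu\nu)=\det(m)>0$ is correct and is the right starting point. The negative-definite cases in (i) are handled cleanly: the transitivity of $\SO(V_{2,2})(\R)^0$ on positive-definite $2$-planes (its symmetric space) does give the required $h$. For the indefinite $2$-plane case you correctly reduce to finding $h\in\SO(V_{2,2})(\R)^0$ with $\det[(T_j,hv_k)]>0$, and you are right to flag the component issue as the remaining obstacle. This can be resolved directly: a one-parameter unipotent subgroup of $\SL_2(\R)\times\SL_2(\R)$ acting on $V_{2,2}\cong M_2(\R)$ makes this determinant a nonconstant polynomial that crosses zero, so it takes both signs inside the identity component.

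Part (ii) has two genuine gaps. First, your scaling example $(\mu,\nu)=(-i\epsilon,1)$ for $T_2=\lambda T_1$ gives $\mu T_1+\nu T_2=(\lambda-i\epsilon)T_1$, whose norm tends to $|\lambda|\,|T_1|\neq 0$ when $\lambda\neq 0$; you need something like $(\mu,\nu)=(\lambda+i\epsilon,-1)$, which gives $\mu+\lambda\nu=i\epsilon\to 0$ while $\mathrm{Im}(\bar\mu\nu)=\epsilon>0$. Second, you do not address the case where $\R\linspan\{T_1,T_2\}$ is a genuinely two-dimensional degenerate plane (nontrivial radical), which also has $(T_1,T_1)(T_2,T_2)-(T_1,T_2)^2=0$ and is not covered by (i); this case requires its own argument. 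Finally, note that the statement of (ii) is phrased on $M_R^{\mathrm{der}}(\R)$ rather than $M_P(\R)^0$, a point your outline silently elides; you should either justify restricting $\beta$ to $M_R^{\mathrm{der}}(\R)\cap M_P(\R)$ or consult the original formulation in \cite{JMNPR24}.
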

\indent We conclude with an application of Proposition \ref{Properties-of-beta} to the study of cusp forms $\varphi\in S_{\ell}$, which will be applied during the proof of Corollary \ref{slice-primitivity-corollary}.
\begin{corollary}
\label{FE-CUSPIDAL-QMF} 
 Suppose $\ell\in \Z_{\geq 1}$ and $\varphi\colon  G(\A)\to \mathbf{V}_{\ell}$ is a weight $\ell$ quaternionic modular form.  Then $\varphi$ is a cusp form if and only if the Fourier expansion \eqref{FE-non-cuspidal-QMF-phiZ} takes the form 
  \begin{equation}
  \label{equation-FE-CUSPIDAL-QMF}
\varphi_{Z}(g_fg_{\infty})=\sum_{T_1,T_2\in V_{2,2}(\Q) \colon [T_1,T_2]\succ 0}a_{[T_1,T_2]}(\varphi,g_f)\mathcal{W}_{[2\pi T_1,2\pi T_2]}(g_{\infty}).  
\end{equation}
\end{corollary}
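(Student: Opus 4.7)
\emph{Proof proposal.} I would prove the two directions of the equivalence separately, combining the explicit Whittaker formula \eqref{K-Bessel-Magic} with the analysis of $\beta_{[T_1,T_2]}$ in Proposition \ref{Properties-of-beta}. For the forward implication, assume $\varphi$ is cuspidal. Since $N_P$ is the unipotent radical of a proper parabolic, $\varphi_{N_P}\equiv 0$, so the first term in \eqref{FE-non-cuspidal-QMF-phiZ} disappears. To show $a_{[T_1,T_2]}(\varphi,g_f)=0$ whenever $[T_1,T_2]\succeq 0$ with $(T_1,T_1)(T_2,T_2)-(T_1,T_2)^2=0$, suppose to the contrary that $a_{[T_1,T_2]}(\varphi,g_f)\neq 0$ for some $g_f\in G(\A_f)$. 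The rapid decay of $\varphi$ on Siegel sets passes to the Fourier coefficient $\varphi_{[T_1,T_2]}$, so by Corollary \ref{cor:FE of phiZ} the function $g_\infty\mapsto \mathcal{W}_{[2\pi T_1,2\pi T_2]}(g_\infty)$ must itself decay rapidly on Siegel sets of $G(\R)$. On the other hand, Proposition \ref{Properties-of-beta}(ii) produces a sequence $r_n\in M_R^{\mathrm{der}}(\R)$ with $|\beta_{[2\pi T_1,2\pi T_2]}(r_n)|\to 0$, and combining the asymptotics $K_v(x)\sim\Gamma(v)2^{v-1}x^{-v}$ for $v>0$ and $K_0(x)\sim-\log x$ as $x\to 0^+$ with the explicit formula \eqref{K-Bessel-Magic} shows that $\mathcal{W}_{[2\pi T_1,2\pi T_2]}$ cannot be rapidly decreasing on any Siegel set meeting the locus of such $r_n$, yielding the desired contradiction.

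For the converse, suppose the expansion of $\varphi_Z$ takes the form \eqref{equation-FE-CUSPIDAL-QMF}, and let $\mathcal{N}$ be the unipotent radical of an arbitrary proper $\Q$-rational parabolic of $G$. Since $\varphi_{\mathcal{N}}(g)$ factors through $\varphi_{\mathcal{N}'}(g)$ for any smaller unipotent subgroup $\mathcal{N}'\subseteq\mathcal{N}$, it suffices to verify $\varphi_{\mathcal{N}'}\equiv 0$ when $\mathcal{N}'$ is the unipotent radical of a maximal parabolic. For $G$ of type $D_4$ the maximal parabolics are $P$ and three outer-vertex parabolics permuted by triality, a representative being $R$. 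The vanishing $\varphi_{N_P}\equiv 0$ is given. For $R$, note that $N_P\cap N_R$ is normal in $N_P$, contains $Z$, and corresponds modulo $Z$ to the subspace $b_1\wedge V_{2,2}\subseteq \Lie(N_P^{\mathrm{ab}})$. Integrating \eqref{FE-general-phiZ} over $(N_P\cap N_R)/Z$ extracts precisely those characters $\varepsilon_{[T_1,T_2]}$ trivial on this subspace, namely those with $T_1=0$, giving
\[
\varphi_{N_P\cap N_R}(g)=\varphi_{N_P}(g)+\sum_{T_2\in V_{2,2}(\Q),\ T_2\neq 0}\varphi_{[0,T_2]}(g).
\]
Each $[0,T_2]$ has vanishing discriminant, so every term on the right vanishes by hypothesis, giving $\varphi_{N_P\cap N_R}\equiv 0$, and hence $\varphi_{N_R}\equiv 0$ upon integrating further over $N_R/(N_P\cap N_R)$. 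The argument for the two triality cousins of $R$ is identical after applying the corresponding outer automorphism, which fixes $P$ and permutes the set of degenerate characters of $N_P$ among themselves.

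The main obstacle I anticipate is making the analytic step in the forward direction fully rigorous: translating the qualitative failure of uniform positivity of $|\beta|$ on $M_R^{\mathrm{der}}(\R)$ into a quantitative failure of rapid decay along a Siegel set of $G(\R)$ requires carefully balancing the prefactor $\det(m)^\ell|\det(m)|$ in \eqref{K-Bessel-Magic} against the growth of $K_v(|\beta|)$ as $|\beta|\to 0$, and verifying that the witnessing sequence $r_n$ can be arranged inside a Siegel set of $G$ via the Iwasawa decomposition $G(\R)=P(\R)K_\infty$ while simultaneously controlling $|\det(m_n)|$ and $|\beta(r_n)|$.
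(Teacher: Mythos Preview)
Your backward implication is essentially identical to the paper's: both compute $\varphi_{N_P\cap N_R}$ via the Fourier expansion along $N_P^{\mathrm{ab}}$, observe that only characters with $T_1=0$ survive (hence only degenerate terms), and then invoke the triality action on $M_\ell$ (the paper cites \cite[Theorem~A.1, Theorem~A.8]{JMNPR24}) to transport the argument to the other two outer-vertex parabolics. Your phrasing ``fixes $P$ and permutes the set of degenerate characters of $N_P$ among themselves'' is exactly what the paper formalizes as ``preserves the subspace of forms whose Fourier coefficients are supported on positive definite indices''.

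For the forward implication the paper simply cites \cite[Corollary~4.10]{JMNPR24}, so a direct line-by-line comparison is not possible here. Your analytic approach via rapid decay and the small-argument asymptotics of $K_v$ is the natural one, but the obstacle you flag is genuine and not merely cosmetic. The sequence $r_n\in M_R^{\mathrm{der}}(\R)$ produced by Proposition~\ref{Properties-of-beta}(ii) does not sit in $M_P(\R)$, so to evaluate $\mathcal{W}_{[2\pi T_1,2\pi T_2]}(r_n)$ via \eqref{K-Bessel-Magic} you must first Iwasawa-decompose $r_n=n_n m_n k_n$ with $m_n\in M_P(\R)$, and there is no a priori reason the $|\det(m_n)|^{\ell+1}$ prefactor stays bounded below while $|\beta(m_n)|\to 0$. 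One way to sidestep this is to work not on $M_R^{\mathrm{der}}$ but on $M_P^{\mathrm{der}}(\R)\simeq\SL_2(\R)^3$, where $\nu\equiv 1$ kills the prefactor entirely: for $[T_1,T_2]\succeq 0$ with $Q([T_1,T_2])=0$, one can exhibit directly (using the explicit formula \eqref{definition of beta} and the fact that $\{T_1,T_2\}$ spans at most a degenerate plane) a sequence in $M_P^{\mathrm{der}}(\R)$ along which $|\beta|\to 0$, and then the blow-up of $K_\ell(|\beta|)\sim \Gamma(\ell)2^{\ell-1}|\beta|^{-\ell}$ contradicts rapid decay cleanly. This is closer in spirit to how such results are proved in \cite{JMNPR24}, and it removes the balancing problem you identified.
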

\begin{proof}
For the proof of the forward implication, we refer the reader to \cite[Corollary 4.10]{JMNPR24}.  To handle the backward implication, suppose $\varphi$ is such that $\varphi_{N_P}\equiv 0$ and $a_{[T_1,T_2]}(\varphi,g_f)\equiv 0$ whenever $[T_1,T_2]$ does not satisfy $[T_1,T_2]\succ 0$.  \\ 
\indent The group $G$ contains $4$ conjugacy classes of maximal parabolic subgroups.  Two of these conjugacy classes are represented by the Heisenberg parabolic subgroup $P$,  and the orthogonal parabolic subgroup $R$.  Since $R$ corresponds to an outer node in the Dynkin diagram of $G$, representatives for the remaining two conjugacy classes may be obtain by translating $R$ by the triality outer automorphisms described in \cite[Theorem A.8]{JMNPR24}.  By \cite[Theorem A.1]{JMNPR24}, the triality outer automorphism of $G$ stabilize $N_P$, and induce an action of the symmetric group $S_3$ on the space $M_{\ell}$. Moreover, the induced action of $S_3$ on $M_{\ell}$ preserves the subspace consisting of forms whose Fourier coefficients are supported on positive definite indices.   Hence,  to prove that $\varphi$ is cuspidal, it suffices to show that $\varphi_{N_P\cap N_R}\equiv 0$.  \\
\indent Writing $g\in G(\A)$ as $g=g_fg_{\infty}$ , a standard manipulation gives that 
\begin{equation}
\label{Eqn-constant-term analysis}
\varphi_{N_P\cap N_R}(g_fg_{\infty})=\sum_{w\in V_{2,2}(\Q)^{\oplus 2} \colon \varepsilon_w\rvert_{N_P\cap N_R}=1}a_w(\varphi,g_f)\mathcal{W}_{2\pi w}(g_{\infty}). 
\end{equation}
Since $\Lie(N_P\cap N_R)$ contains the subspace $b_1\wedge V_{2,2}$,  if $w=[T_1,T_2]$ satisfies $\varepsilon_w\rvert_{N_P\cap N_R}=1$, then $T_1=0$.  Hence, the only terms appearing in \eqref{Eqn-constant-term analysis} are those for which $w=[T_1,T_2]$ does not satisfy $[T_1,T_2]\succ 0$.  Therefore,  $\varphi_{N_P\cap N_R}\equiv 0$, and $\varphi$ is cuspidal. 
\end{proof} 
\subsection{The Hecke Bound for Quaternionic Modular Forms on $G$}
If $B=[T_1,T_2]\in V_{2,2}(\Q)^{\oplus 2}$, we define 
$$
Q(B)=\det\left(\begin{pmatrix} (T_1,T_1) &(T_1,T_2) \\ (T_2,T_2) &(T_2,T_2)\end{pmatrix}\right). 
$$
Then $r\in M_P$ acts on $V_{2,2}(\Q)^{\oplus 2}$ preserving $Q$ up to a similitude character $\nu \colon M_P\to \G_m$ i.e. 
$$
Q(r\cdot B)=\nu(r)^2Q(B). 
$$
To define $\nu$, write the image of $r\in M_P$ under the projection $M_P\to \GL(U)\times \SO(V_{2,2})$ as $(m,h)$. Then $\nu(r)=\det(m)$.  The purpose of this subsection is to prove an analogue of the Hecke bound for quaternionic cusp forms on $G$.  Our proof is adapted from \cite[Proposition 8.6]{ganGrossSavin}, in which the authors prove an analogous bound for modular forms on $G_2$. 
\begin{prop}
\label{hecke-bound-quaternionic-modular-forms}
Suppose $\varphi\in S_{\ell}$ is a weight $\ell$ cuspidal quaternionic modular form on $G$. If $B\in V_{2,2}(\Q)^{\oplus 2}$ satisfies $B\succ 0$, then 
$$
\Lambda_{\varphi}[B]\ll_{\varphi} Q(B)^{\frac{\ell+1}{2}}. 
$$
\end{prop}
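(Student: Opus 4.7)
The strategy, inspired by \cite[Proposition 8.6]{ganGrossSavin}, is to combine the boundedness of the cusp form $\varphi$ with the explicit formula \eqref{K-Bessel-Magic} for the generalized Whittaker function from Theorem \ref{Thm 1.2.1 Aaron Paper}. First, since $\varphi$ is cuspidal and $K_{\infty}$-equivariant, the scalar function $g \mapsto \|\varphi(g)\|$ (computed using a $K_{\infty}$-invariant norm on $\mathbf{V}_{\ell}$) is bounded on $G(\A)$; denote this bound by $C_{\varphi}$. Integration against $\varepsilon_{[T_1,T_2]}^{-1}$ over the compact quotient $N_P(\Q)\backslash N_P(\A)$ immediately gives $\|\varphi_{[T_1,T_2]}(g)\| \leq C_{\varphi}$ for all $g \in G(\A)$. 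Specializing to $g = r_{\infty} \in M_P(\R)^0$ and invoking Corollary \ref{cor:FE of phiZ} then yields
\[
|\Lambda_{\varphi}[B]|\cdot \|\mathcal{W}_{[2\pi T_1, 2\pi T_2]}(r_{\infty})\| \leq C_{\varphi},
\]
so it suffices to exhibit $r_{\infty}$ with $\|\mathcal{W}_{[2\pi T_1, 2\pi T_2]}(r_{\infty})\|$ bounded below by a constant times $Q(B)^{-(\ell+1)/2}$.

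Second, the formula \eqref{K-Bessel-Magic} shows that whenever $|\beta_{[2\pi T_1, 2\pi T_2]}(r_{\infty})|$ is of unit size, one has $\|\mathcal{W}_{[2\pi T_1, 2\pi T_2]}(r_{\infty})\| \asymp |\det(m)|^{\ell+1}$: the basis vectors $x^{\ell+v}y^{\ell-v}$ are linearly independent in $\mathbf{V}_{\ell}$, and each $K_v(1)$ is a positive constant depending only on $\ell$, so no cancellation can occur in the $K_{\infty}$-invariant norm. Thus my goal becomes to find $r_{\infty} \in M_P(\R)^0$ satisfying $|\beta_{[2\pi T_1, 2\pi T_2]}(r_{\infty})| = 1$ while making $|\det(m(r_{\infty}))|$ as large as possible in terms of $Q(B)$.

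Third, I would carry out an orbit analysis of the $M_P^{\mathrm{der}}(\R)^0$-action on the cone $\{B \in V_{2,2}(\R)^{\oplus 2} : B \succ 0\}$. Since $B \succ 0$, the span $\R\linspan\{T_1, T_2\}$ is a positive definite $2$-plane in $V_{2,2}(\R)$. Transitivity of $\SO(V_{2,2})(\R)^0$ on such planes, combined with the $\SL(U)(\R)$-action on Gram matrices of fixed determinant, lets me find $r_0 \in M_P^{\mathrm{der}}(\R)^0$ placing $r_0^{-1}\cdot B$ in the standard form $[Q(B)^{1/4}v_1, -Q(B)^{1/4}v_2]$; a direct calculation with \eqref{definition of beta} then yields $|\beta_B(r_0)| = 2\sqrt{2}\, Q(B)^{1/4}$. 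Composing with the central one-parameter subgroup $a_t \subset M_P(\R)^0$ acting on $U$ by the scalar $t$ gives $\beta_B(a_tr_0) = t\cdot \beta_B(r_0)$ and $\det(m(a_tr_0)) = t^2$; choosing $t \asymp Q(B)^{-1/4}$ enforces $|\beta_{[2\pi T_1, 2\pi T_2]}(r_{\infty})| = 1$ and $|\det(m(r_{\infty}))| \asymp Q(B)^{-1/2}$. Substituting these into the Whittaker asymptotic and then into the displayed inequality yields $|\Lambda_{\varphi}[B]| \ll Q(B)^{(\ell+1)/2}$.

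The principal technical obstacle is the orbit analysis in step three: one must verify that \emph{every} $B \succ 0$ can be moved, via $M_P^{\mathrm{der}}(\R)^0$, to a representative achieving $|\beta_B(r_0)| \gtrsim Q(B)^{1/4}$, rather than accidentally landing on a degenerate locus where $\beta$ vanishes (as it does at the closely related pair $[Q(B)^{1/4}v_1, +Q(B)^{1/4}v_2]$). Proposition \ref{Properties-of-beta}(ii) gives the converse implication---positive definiteness follows from $|\beta|$ being bounded away from zero---but here I need the forward direction, together with uniformity across the (possibly several) connected $M_P^{\mathrm{der}}(\R)^0$-orbits on the positive definite cone, so that a single implicit constant depending only on $\ell$ and $C_{\varphi}$ suffices for all $B$.
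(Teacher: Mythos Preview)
Your proposal is correct and follows the same underlying strategy as the paper: bound $\|\varphi\|$ by cuspidality, pair against the explicit Whittaker function \eqref{K-Bessel-Magic}, and choose a point in $M_P(\R)$ at which $|\det(m)|^{\ell+1}$ scales like $Q(B)^{-(\ell+1)/2}$ while $|\beta_B|$ stays of unit size. The difference is in how that point is produced. You work inside $M_P^{\mathrm{der}}(\R)^0$, aim for the explicit representative $[Q(B)^{1/4}v_1,-Q(B)^{1/4}v_2]$, and then scale by the central torus; this forces you to confront the orbit ambiguity you flag as your ``principal technical obstacle.'' The paper instead uses that the \emph{full} group $M_P(\R)$ acts transitively on $\{B':Q(B')>0\}$, fixes once and for all a base point $B_0$ with $Q(B_0)=1$, chooses any $m_0$ with $m_0\cdot B_0=B$, and evaluates at $m_\infty=\nu(m_0)^{-1}m_0$; the identity $Q(B)=\nu(m_0)^2$ then gives $\{\mathcal{W}_B(\nu(m_0)^{-1}m_0),u_0\}_{K_\infty}=Q(B)^{-(\ell+1)/2}\{\mathcal{W}_{B_0}(1),u_0\}_{K_\infty}$ directly, with the constant $\{\mathcal{W}_{B_0}(1),u_0\}_{K_\infty}$ manifestly independent of $B$. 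Extracting only the $v=0$ component (via $u_0$ and the real, positive $K_0$) rather than the full norm also sidesteps any phase bookkeeping. Your obstacle is genuine but resolvable---the condition $B\succeq 0$ excludes the degenerate representative $[v_1,+v_2]$ since $\beta_{[v_1,v_2]}(1)=0$---yet the paper's packaging avoids having to argue this at all.
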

\begin{proof}
Assume $B\in V_{2,2}(\Q)^{\oplus 2}$ satisfies $B\succ 0$. Independent of the choice of $B$, we fix an element $B_0\in V_{2,2}(\R)^{\oplus 2}$ such that $Q(B_0)=1$.  The group $M_P(\R)$ acts transitively on the elements $B'\in V_{2,2}(\R)^{\oplus 2}$ such that $Q(B')>0$. Therefore, we may fix an element $m_0\in M_P(\R)$ such that $m_0\cdot B_0=B$.  It follows that $Q(B)=\nu(m_0)^2$.  \\
\indent By definition,  if $m_{\infty}\in M_P(\R)$, then 
\begin{equation}
\label{eqn-int-expression}
\int_{[N_P]}\varphi(nm_{\infty})\varepsilon_{B}(n)^{-1}\,dn=\Lambda_{\varphi}[B]\cdot \mathcal{W}_B(m_{\infty})
\end{equation}
where $\mathcal{W}_B$ is given by \eqref{K-Bessel-Magic}.  Fix an element $u_0\in \V_{\ell}$ so that if $\{\cdot , \cdot \}_{K_\infty}$ is the $K_{\infty}$ invariant bilinear form on $\V_{\ell}$ then  
$$
\{x^{\ell-v}y^{\ell+v}, u_0\}_{K_{\infty}}=\begin{cases} 2\ell !&\hbox{if $v=0$,} \\ 0 &\hbox{else.} \end{cases}
$$
Since $M_P$ is a split, connected, and reductive,  $M_P$ is an almost direct product of $M_P^{\mathrm{der}}$ and the radical of $M_P$,  $R(M_P)$, which is a central torus \cite[Corollary 8.1.6]{MR1642713}. As $M_P$ has rank $4$, and $M_P^{\mathrm{der}}=\SL_2\times \SL_2\times \SL_2$,  $R(M_P)$ is isomorphic to the multiplicative group $\G_m$.  We choose this isomorphism so that the natural action of $R(M_P)$ on $U^{\vee}\otimes V_{2,2}$ is identified with the action of $\G_m$ on $U^{\vee}\otimes V_{2,2}$ by scalar multiplication.  In this way, we may write $\nu(m_0)^{-1}m_0\in M_P(\R)$ and 
\begin{align*}
\left\{\mathcal{W}_B(\nu(m_0)^{-1}m_0), u_0\right\}_{K_\infty}
&=\nu(\nu(m_0)^{-1}m_0)^{\ell}|\nu(\nu(m_0)^{-1}m_0)| K_0\left(|\beta_{B}(\nu(m_0)^{-1}m_0)|\right) \\
&= Q(B)^{\frac{-\ell-1}{2}}\left\{\mathcal{W}_{B_0}(1), u_0\right\}_{K_\infty}.
\end{align*}
Since $\varphi$ is cuspidal, $|\varphi(g)|$ is bounded on $G(\Q)\backslash G(\A)$.  Therefore, since the domain of integration in  \eqref{eqn-int-expression} is compact,  we may set $m_{\infty}=\nu(m_0)^{-1}m_0$ to obtain 
$$
|\Lambda_{\varphi}[B]|\leq \frac{\|\varphi\|_{\infty}}{\left\{\mathcal{W}_{B_0}(1), u_0\right\}_{K_\infty}} Q(B)^{\frac{\ell+1}{2}}.
$$
\end{proof}
\section{The Fourier-Jacobi Expansion of Modular Forms on $G$}
\label{chap:THE FOURIER-JACOBI EXPANSION OF MODULAR FORMS ON $G$}
Given a vector valued automorphic form $\varphi\colon G(\A) \to \mathbf{V}$, and a vector $y\in V_{3,3}(\Q)$, define
$$
\mathcal{F}(\varphi;y)\colon G(\A)\to \mathbf{V}, \qquad g\mapsto \int_{N_R(\Q)\backslash N_R(\A)}\varphi(ng)\chi_y(n)^{-1}\,dn. 
$$
Here the the character $\chi_y$ is as defined in \eqref{eqn-character-lattice-R}. 
\subsection{Unfolding $\mathcal{F}(\varphi;y)$: The case of isotropic $y$}
\label{subsec-degenerate-terms}
Suppose $\varphi$ is a vector valued automorphic function of $G(\A)$ and let $y\in V_{3,3}(\Q)$ be non-zero and isotropic.  
In this subsection,  we establish  Lemma \ref{lemma-degenerate-FCs},  which gives a relationship between the degenerate Fourier-Jacobi coefficient $\mathcal{F}(\varphi;y)$, and Fourier coefficients of $\varphi$ along the Heisenberg unipotent radical $N_P$.  \\
\indent We are considering the case of isotropic $y$, and since $M_R^{\mathrm{der}}\simeq \Spin(V_{3,3})$,  we restrict to considering the case of $y=nb_2$ where $n\in \Z\backslash\{0\}$.  Then $\mathcal{F}(\varphi;y)\rvert_{G(\R)}$ is left invariant under the rational points of the parabolic subgroup in $\Spin(V_{3,3})$ stabilizing $b_2$.  Let $N$ denote the unipotent radical  of this parabolic subgroup.  So, 
$$
N\simeq\left\{
=\left(\tiny
\begin{matrix} 
1&0 &\mathbf{0} &0 &0 \\ 
0 &1 &v &\ast &0  \\ 
\mathbf{0} &\mathbf{0} &I_{4} &\ast &\mathbf{0} \\
0 &0 &\mathbf{0} &1 &0 \\ 
0 &0 &\mathbf{0} &0 &1 
\end{matrix}
\right)\in \SO(V)
\colon v\in V_{2,2}\right\},
$$
and $\mathrm{Hom}(N(\Q)\backslash N(\A), \C^{\times})=\{\varepsilon_{[0,T]}\colon T\in V_{2,2}(\Q)\}$,
where $\varepsilon_{[0,T]}$ is defined in \eqref{eqn defn epsilon}.
Since $N$ is abelian, we may Fourier expand $\mathcal{F}(\varphi;nb_2)$ in terms of the coefficients 
\begin{equation}
\label{eqn-defn-FC-of-degenerate-FJ-coefficient}
\mathcal{F}^N_T(\varphi;nb_2)(g):=\int_{N(\Q)\backslash N(\A)}\mathcal{F}(\varphi;nb_2)(rg)\varepsilon_{[0,T]}^{-1}(r)\,dr.
\end{equation}
Here $g\in G(\A)$ and $
T\in V_{2,2}(\Q)$.
Regarding $\mathcal{F}^N_T(\varphi;nb_2)$ we have the following.  
\begin{lemma}
\label{lemma-degenerate-FCs}
Suppose $g\in G(\A)$ and $T\in V_{2,2}(\Q)$. Let $\varepsilon_{[0,T]}$ be the character of $N_P(\Q)\backslash N_P(\A)$ defined in \eqref{eqn defn epsilon} and write $\varphi_{[0,T]}(g)=\displaystyle{\int_{N_P(\Q)\backslash N_P(\A)}}\varphi(ng)\varepsilon_{[0,T]}(n)^{-1}\,dn$. Then 
    \begin{equation}
    \label{eqn-FC-of-degenerate-FJ-coefficient}
   \mathcal{F}^N_T(\varphi;nb_2)(g)=\int_{\Q\backslash \A}\psi^{-1}(ns)\varphi_{[0,T]}(\exp(sb_1\wedge b_{-2})g)\,ds.
    \end{equation}
\end{lemma}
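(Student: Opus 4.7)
The plan is to introduce a single unipotent subgroup $H \leq G$ with $\Lie(H) = b_1 \wedge V_{3,3} + b_2 \wedge V_{2,2}$, construct a character $\Theta$ on $H(\Q)\backslash H(\A)$, and evaluate $\int_{H(\Q)\backslash H(\A)}\varphi(hg)\Theta(h)^{-1}\,dh$ in two different ways by exploiting two semi-direct product decompositions of $H$. One way will yield the left-hand side of the identity, the other will yield the right-hand side.

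First I would establish the two decompositions. The bracket $[b_2 \wedge w, b_1 \wedge v] \in b_1 \wedge V_{3,3}$ (for $w \in V_{2,2}$ and $v \in V_{3,3}$) shows that $N_R$ is normal in $H$; together with $\dim \Lie(N_R) + \dim \Lie(N) = 6 + 4 = 10 = \dim \Lie(H)$ and $N_R \cap N = \{e\}$, this gives $H = N_R \rtimes N$. Setting $U_1 = \exp(\Q\, b_1 \wedge b_{-2})$, the computations $[b_1 \wedge b_{-2}, b_2 \wedge w] = b_1 \wedge w \in \Lie(N_P)$ and $[b_1 \wedge b_{-2}, b_1 \wedge w'] = 0$ for $w' \in V_{2,2} + \Q b_2$ show that $N_P$ is normal in $H$; the analogous dimension count yields the second decomposition $H = N_P \rtimes U_1$.

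Next I would define $\Theta(vu) := \chi_{nb_2}(v)\varepsilon_{[0,T]}(u)$ for $v \in N_R$, $u \in N$, and verify that $\Theta$ is a character of $H$. Via the first decomposition, multiplicativity reduces to the $\mathrm{Ad}(N)$-invariance of $\chi_{nb_2}$. This holds because $\mathrm{Ad}(\exp(b_2 \wedge w))$ sends $b_1 \wedge v$ to an element of $b_1 \wedge V_{3,3}$ with the same $b_{-2}$-component as $v$, and $\chi_{nb_2}$ depends only on this $b_{-2}$-component.

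Finally, unfolding via $H = N_R \rtimes N$ gives exactly the double integral defining $\mathcal{F}^N_T(\varphi; nb_2)(g)$. Unfolding via $H = N_P \rtimes U_1$ instead, I would write each $\eta \in N_P$ as $\eta = u_2 u_N$ with $u_2 \in U_2 := N_R \cap N_P = \exp(b_1 \wedge (\Q b_2 + V_{2,2}))$ and $u_N \in N$, using the further semi-direct product decomposition $N_P = U_2 \rtimes N$. Since $\chi_{nb_2}$ vanishes on $U_2$, this yields $\Theta|_{N_P} = \varepsilon_{[0,T]}$; meanwhile $\Theta(\exp(sb_1 \wedge b_{-2})) = \psi(ns)$. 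Unfolding therefore produces
$$
\int_{\Q\backslash\A}\psi(ns)^{-1}\int_{N_P(\Q)\backslash N_P(\A)}\varphi(\eta\exp(sb_1 \wedge b_{-2})g)\varepsilon_{[0,T]}(\eta)^{-1}\,d\eta\,ds,
$$
which equals the right-hand side of the identity. The only real obstacle is the algebraic bookkeeping: confirming that the single character $\Theta$ restricts correctly under both semi-direct product decompositions, which reduces to the $\mathrm{Ad}(N)$-invariance of $\chi_{nb_2}$ and the vanishing of $\chi_{nb_2}$ on $U_2$, both elementary consequences of the bracket relations in $\wedge^2 V$.
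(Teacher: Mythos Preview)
Your proof is correct, and the underlying group-theoretic facts you use (the bracket relations, the $\mathrm{Ad}(N)$-invariance of $\chi_{nb_2}$, the triviality of $\chi_{nb_2}$ on $N_R\cap N_P$) are exactly the ones the paper relies on. The organization, however, is genuinely different. The paper proceeds step by step: it writes out the double integral, factors out the center $Z\leq N_R$, isolates the one-parameter subgroup $X=\exp(\Q\,b_1\wedge b_{-2})$, commutes the $X$-integral past the $N$-integral using $[X,N]\subseteq Z$, recognizes the remaining domain as $[N_P^{\mathrm{ab}}]$, Fourier-expands $\varphi_Z$ along $N_P^{\mathrm{ab}}$, and then checks by hand that the only surviving term in the sum over $[T_1,T_2]$ is the one with $T_1=0$, $T_2=T$. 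Your approach packages all of this into a single integral over the group $H$ with a single character $\Theta$: the two semi-direct product decompositions $H=N_R\rtimes N$ and $H=N_P\rtimes U_1$ immediately yield the two sides of the identity, and the identification $\Theta|_{N_P}=\varepsilon_{[0,T]}$ replaces the paper's Fourier-expansion-and-orthogonality argument. Your route is cleaner and more conceptual; the paper's route is more explicit and perhaps makes the intermediate objects (such as $\varphi_Z$) more visible, which matters elsewhere in the paper where the Fourier expansion of $\varphi_Z$ is used directly.
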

\begin{proof}
Throughout the proof we abbreviate notation by writing $[\mathcal{G}]:=\mathcal{G}(\Q)\backslash \mathcal{G}(\A)$ to denote the adelic quotient of an algebraic group $\mathcal{G}$. 
    Plugging in definitions, one obtains  
    \begin{equation}
    \label{lemma-degerate-FCs-First-Step}
    \mathcal{F}^N_T(\varphi;n b_2)(g)=\int_{[N]}\int_{[N_R]}\varphi(urg)\chi_{nb_2}^{-1}(u)\varepsilon_{[0,T]}^{-1}(r)\,du\,dr.
    \end{equation}
    The character $\chi_{nb_2}$ is trivial on $Z\leq N_R$. Since $N_R$ is abelian, we may thus factor the inner integral in \eqref{lemma-degerate-FCs-First-Step} across $Z$ to obtain
    \begin{equation}
    \label{lemma-degerate-FCs-Second-Step}
   \mathcal{F}^N_T(\varphi;n b_2)(g)=\int_{[N]}\int_{[N_R/Z]}\varphi_Z(urg)\chi_{nb_2}^{-1}(u)\varepsilon_{[0,T]}^{-1}(r)\,du\,dr.
    \end{equation}
    Let $X=M_P\cap N_R=\{\exp(tb_1\wedge b_{-2})\colon t\in \G_a\}$. Since $N_R$ is abelian, the integral over $[N_R/Z]$ in \eqref{lemma-degerate-FCs-Second-Step} is an iterated integral over $X(\Q)\backslash X(\A)$ and $[N_R/(ZX)]$. Moreover, the commutator $[X,N]$ is contained in $Z$. Hence, moving the integral over $[X]$ to the left of the integration over $[N]$,  one arrives at the expression
    \begin{equation}
    \label{lemma-degerate-FCs-Third-Step} 
   \mathcal{F}^N_T(\varphi;n b_2)(g)=\int_{[X]}\chi_{nb_2}^{-1}(x)\int_{[N]}\int_{[N_R/(ZX)]}
    \varphi_Z(xurg)\varepsilon_{[0,T]}^{-1}(r)\,du\,dr\,dx.
    \end{equation}
    Since $N\times N_R/(ZX)=N_P^{\mathrm{ab}}$,  \eqref{lemma-degerate-FCs-Third-Step} may be rewritten as 
    \begin{equation}
    \label{lemma-degerate-FCs-Fourth-Step} 
    \mathcal{F}^N_T(\varphi;n b_2)(g)=\int_{[X]}\chi_{nb_2}^{-1}(x)\int_{[N_P^{\mathrm{ab}}]}
    \varphi_Z(xng)\varepsilon_{[0,T]}^{-1}(n)\,dn\,dx.
    \end{equation} 
   Fourier expanding $\varphi_Z$ along $N_P^{\mathrm{ab}}(\Q)\backslash N_P^{\mathrm{ab}}(\A)$, we apply the inclusion $X\leq M_P$ and the fact that $M_P$ normalized $N_P$ to simplify the inner integral in \eqref{lemma-degerate-FCs-Fourth-Step} as 
    \begin{align}
    \label{lemma-degerate-FCs-Sixth-Step}
    \int_{[N_P^{\mathrm{ab}}]}
    \varphi_Z(xng)\varepsilon_{[0,T]}^{-1}(n)\,dn 
    &=\int_{[N_P^{\mathrm{ab}}]}\sum_{T_1,T_2\in V_{2,2}(\Q)}\varphi_{[T_1,T_2]}(xng)\varepsilon_{[0,T]}^{-1}(n)\,dn \notag \\ 
    &=\sum_{T_1,T_2\in V_{2,2}(\Q)}\varphi_{[T_1,T_2]}(xg)\int_{[N_P^{\mathrm{ab}}]}\varepsilon_{[T_1,T_2]}(xnx^{-1})\varepsilon_{[0,T]}^{-1}(n)\,dn.
    \end{align}
    Fix $s\in \A$, $x=\exp(sb_{1}\wedge b_{-2})$, and suppose $n=\exp(b_1\wedge v+b_2\wedge v')$ with $v,v'\in V_{2,2}(\A)$. Then $xnx^{-1}=\exp(b_1\wedge (v+sv')+b_2\wedge v)$ and so if $T_1,T_2\in V_{2,2}(\Q)$, 
    \begin{equation}
    \label{lemma-degerate-FCs-Fifth-Step}
   \int_{[N_P^{\mathrm{ab}}]}\varepsilon_{[T_1,T_2]}(xnx^{-1})\varepsilon_{[0,T]}^{-1}(n)\,dn
   =
   \int_{[V_{2,2}]}\int_{[V_{2,2}]}\psi((T_1,v+sv')+(T_2,v)-(v',T))\,dv\,dv'. 
    \end{equation}
   The double integral above is non-zero if and only if $(T_1,v+sv')+(T_2,v')-(v',T)=0$ for all $v,v'\in V_{2,2}(\A)$. Setting $v'=0$ this condition reduces to $(T_1,v)=0$ for all $v\in V_{2,2}(\A)$. Since $V_{2,2}$ is non-degenerate, it follows that the integral \eqref{lemma-degerate-FCs-Fifth-Step} is non-vanishing if and only if $T_1=0$ and $T_2=T$. Thus the sum in \eqref{lemma-degerate-FCs-Sixth-Step} reduces to a single term equal to $\varphi_{[0,T]}(xg)$. Therefore, \eqref{lemma-degerate-FCs-Fourth-Step} simplifies to
\eqref{eqn-FC-of-degenerate-FJ-coefficient}
as required.
\end{proof}
  \subsection{Unfolding $\mathcal{F}(\varphi;y)$: The case of non-isotropic $y$}
\label{subsec:nondegenerate}
Let $\varphi$ is a vector valued automorphic function on $G(\A)$,  and suppose $y\in V_{3,3}(\Q)$ is non-zero and non-isotropic. 
The goal of this subsection is to give companion result to Lemma \ref{lemma-degenerate-FCs}, that analyses $\mathcal{F}(\varphi;y)$ in the case when $y$ is non-isotropic.  By the result of \cite[Lemma B.2]{JMNPR24}, the orbits of $M_R^{\mathrm{der}}(\Z)$ on the space of non-isotropic vectors in $V_{3,3}(\Z)$ are exhausted by representatives 
\begin{equation}
\label{eqn-defn-y_n,alpha}
ny_{\alpha}:=nb_3+\dfrac{n\alpha}{2}b_{-3}
\end{equation}
where $n\in \Z_{\geq 1}$ and $\alpha \in 2\Z\backslash\{0\}$.   For ease of notation we write $y_{\alpha}=b_3+\alpha b_{-3}/2$ \\
\indent 
To analyze $\mathcal{F}(\varphi;ny_{\alpha})$,  we study its Fourier expansion in characters of the $[N_{Q'}]$. Here $Q'$ denotes the Siegel parabolic subgroup of $M'=\mathrm{Stab}_{M_R^{\mathrm{der}}}(y)$ (see Subsection \ref{subsec-The-Siegel-Parabolic-$Q'$}).  \\
\indent Since $M'$ stabilizes $ny_{\alpha}$, the coefficient $\mathcal{F}(\varphi;ny_{\alpha})$ is left invariant by $N_{Q'}(\Q)$.  Hence, we may Fourier expand $\mathcal{F}(\varphi;ny_{\alpha})$ in characters of $N_{Q'}(\Q)\backslash N_{Q'}(\A)$ as
$$
\mathcal{F}(\varphi;ny_{\alpha})(g)=\sum_{S\in V'_{1,2}(\Q)}\mathcal{F}^{Q'}_S(\varphi;ny_{\alpha})(g)
$$
where $g\in G(\A)$ and 
$$\mathcal{F}^{Q'}_S(\varphi;ny_{\alpha})(g):=\displaystyle{\int_{N_{Q'}(\Q)\backslash N_{Q'}(\A)}}\mathcal{F}(\varphi;ny_{\alpha})(wg)\varepsilon_{[0,S]}^{-1}(w)\,dw.$$
In analogy with Lemma \ref{lemma-degenerate-FCs}, we have the following relationship between the Fourier coefficients $\mathcal{F}^{Q'}_S(\varphi;ny_{\alpha})(g)$ and Fourier coefficients of $\varphi$ along the $N_P$.
\begin{lemma}
\label{lemma-non-degerate FCs}
Let $\alpha\in 2\Z\backslash\{0\}$ and $n\in \Z_{\geq 1}$.
    Suppose $S\in V_{1,2}'(\Q)$ and $g\in G(\A)$.  Let $\varphi_{[ny_{\alpha},S]}$ denote the Fourier coefficient of $\varphi$ along $N_P$ corresponding to the character $\varepsilon_{[ny_{\alpha},S]}$.  Then 
    $$
    \mathcal{F}^{Q'}_S(\varphi;ny_{\alpha})(g)=\int_{\A}\varphi_{[ny_{\alpha},S]}(\exp(sb_1\wedge b_{-2})g)\, ds, \qquad (g\in G(\A)). 
    $$
\end{lemma}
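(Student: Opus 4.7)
The plan is to adapt the strategy of Lemma \ref{lemma-degenerate-FCs}. My first step is to unfold the definitions, writing $\mathcal{F}^{Q'}_S(\varphi;ny_\alpha)(g)$ as the iterated integral of $\varphi(u_{Q'}u_Rg)\chi_{ny_\alpha}^{-1}(u_R)\varepsilon_{[0,S]}^{-1}(u_{Q'})$ over $[N_{Q'}]\times [N_R]$. A direct calculation using $y_\alpha\in V_{2,2}$ and the fact that $V_{2,2}$ is orthogonal to $\Q\linspan\{b_2,b_{-2}\}$ shows that $\chi_{ny_\alpha}$ is trivial both on $Z=\exp(\Q b_1\wedge b_2)$ and on $X=M_P\cap N_R=\exp(\Q b_1\wedge b_{-2})$. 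Triviality on $Z$ lets me factor out the $[Z]$-integration to produce $\varphi_Z$; triviality on $X$ means that after splitting $N_R/Z$ as $X\times N_R/(ZX)$ and moving the $[X]$-integration to the outermost position, no character twist attaches to $x$.

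Next, I combine the integrations over $[N_{Q'}]$ and $[N_R/(ZX)]$ into a single integration over $[\widetilde N]$, where $\widetilde N\subset N_P^{\mathrm{ab}}$ is the abelian unipotent subgroup with Lie algebra $b_2\wedge V_{1,2}'+b_1\wedge V_{2,2}$. Since $(y_\alpha,y_\alpha)=\alpha\neq 0$, one has $V_{2,2}=V_{1,2}'\oplus \Q y_\alpha$, so $\widetilde N$ has codimension one in $N_P^{\mathrm{ab}}$. Fourier expanding $\varphi_Z$ along $N_P^{\mathrm{ab}}$ and applying orthogonality of characters over $[\widetilde N]$, the only surviving Fourier coefficients are those indexed by $[T_1,T_2]$ with $T_1=ny_\alpha$ and $T_2-S\in \Q y_\alpha$. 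Writing $T_2=S+ty_\alpha$ with $t\in \Q$, the problem reduces to verifying
\begin{equation*}
\mathcal{F}^{Q'}_S(\varphi;ny_\alpha)(g)=\int_{[X]}\sum_{t\in \Q}\varphi_{[ny_\alpha,\,S+ty_\alpha]}\bigl(\exp(sb_1\wedge b_{-2})g\bigr)\,ds.
\end{equation*}

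The final step is a Poisson-style unfolding. Using the bracket $[b_1\wedge b_{-2}, b_2\wedge v]=b_1\wedge v$ for $v\in V_{2,2}$, I compute that the pullback $\mathrm{Ad}(\exp(s_0b_1\wedge b_{-2}))^*\varepsilon_{[T_1,T_2]}=\varepsilon_{[T_1,T_2+s_0T_1]}$. Combining this computation with the left $G(\Q)$-invariance of $\varphi$ yields the identity $\varphi_{[T_1,T_2]}(\exp(s_0b_1\wedge b_{-2})h)=\varphi_{[T_1,T_2+s_0T_1]}(h)$ for all $s_0\in \Q$. Specializing $T_1=ny_\alpha$ and $s_0=t/n$ rewrites the summand above as $\varphi_{[ny_\alpha,S]}(\exp((s+t/n)b_1\wedge b_{-2})g)$. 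Reindexing $t'=t/n$ so that $t'$ ranges over $\Q$, the standard adelic unfolding $\int_{\Q\backslash \A}\sum_{t'\in \Q}f(s+t')\,ds=\int_\A f(s)\,ds$ produces the claimed formula.

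The main obstacle I anticipate is the character-orthogonality step on the codimension-one subgroup $\widetilde N\subset N_P^{\mathrm{ab}}$: it produces contributions from infinitely many $N_P$-Fourier coefficients of $\varphi$, and only their reassembly via the $X(\Q)$-equivariance and the unfolding identity yields the clean integral over $\A$ on the right. Tracking the exact normalizations of the Haar measures on $[\widetilde N]$ and $[X]$, and verifying that the implicit Fubini steps are justified (using the fact that the Fourier series of $\varphi_Z$ converges in a suitable sense) will require some care.
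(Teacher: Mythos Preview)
Your proposal is correct and follows essentially the same strategy the paper indicates (it defers to \cite[Lemma 7.1]{JMNPR24} and \cite[Lemma 4.0.3]{McG25}, noting the argument parallels that of Lemma~\ref{lemma-degenerate-FCs}). The one structural difference from Lemma~\ref{lemma-degenerate-FCs} is that $N_{Q'}\times N_R/(ZX)$ only fills a codimension-one subgroup $\widetilde N\subset N_P^{\mathrm{ab}}$ rather than all of it; you correctly compensate by picking up the one-parameter family $\{[ny_\alpha,S+ty_\alpha]:t\in\Q\}$ from orthogonality, then collapsing it via the $X(\Q)$-equivariance $\varphi_{[ny_\alpha,S+ty_\alpha]}(h)=\varphi_{[ny_\alpha,S]}(\exp((t/n)b_1\wedge b_{-2})h)$ and the unfolding $\int_{\Q\backslash\A}\sum_{t'\in\Q}=\int_\A$, which is exactly what produces the adelic integral in the statement.

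One small point worth making explicit when you write this up: in your orthogonality step the argument of $\varphi_{[T_1,T_2]}$ is $x\tilde n g$ with $x$ sandwiched on the left, so strictly one must conjugate $\tilde n\mapsto x\tilde nx^{-1}$ before reading off the character. The key observation is that $\mathrm{Ad}(x)$ shifts $b_2\wedge v'$ by $sb_1\wedge v'$ with $v'\in V_{1,2}'\perp y_\alpha$, so the resulting character condition on $T_2$ is independent of $s$; this is why your conclusion ``$T_2-S\in\Q y_\alpha$'' is unaffected by the presence of $x$.
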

\begin{proof}
The statement is a mild generalization of a \cite[Lemma 7.1]{JMNPR24} with essentially the same proof. The details of the general proof are available in \cite[Lemma 4.0.3]{McG25}.
\end{proof}
\subsection{The Fourier-Jacobi Expansion of Quaternionic Modular Forms on $G$}
\label{subsec:The Fourier-Jacobi Expansion of Quaternionic Modular Forms on G}
In this subsection we combine Lemma {\ref{lemma-degenerate-FCs} and Lemma \ref{lemma-non-degerate FCs} with the results of Subsection \ref{subsec: Quaternionic Modular Forms on G} to refine the Fourier-Jacobi expansion, \begin{equation}
\label{eqn FJ expansion level N}
\varphi(g)=\varphi_{N_R}(g)+\sum_{y\in V_{3,3}(\Z) \colon y\neq 0}\mathcal{F}(\varphi;y)(g)., 
\end{equation}
 in the case when $\varphi\in M_{\ell}(1)$.
\begin{proposition}
\label{prop-vanishing-of-negative-FJ-coefficients}
 Suppose $\varphi\in  M_{\ell}(1)$ is a weight $\ell$ quaternionic modular form of level $1$. Then the Fourier-Jacobi expansion \eqref{eqn FJ expansion level N} takes the form 
    \begin{equation}
    \label{FJ-expansion-of-general-QMF}
    \varphi(g_{\infty})=\varphi_{N_R}(g_{\infty})+\sum_{y\in V_{3,3}(\Z)\backslash\{0\} \colon (y,y)\geq 0}\mathcal{F}(\varphi;y)(g_{\infty})
    \end{equation}
    for all $g_{\infty}\in G(\R)$. Moreover, if $\varphi\in S_{\ell}(1)$, then for $g_{\infty}\in G(\R)$, \eqref{FJ-expansion-of-general-QMF} takes the form 
    \begin{equation}
    \label{FJ-expansion-of-cuspidal-QMF}
    \varphi(g_{\infty})=\sum_{y\in V_{3,3}(\Z)\colon (y,y)> 0}\mathcal{F}(\varphi;y)(g_{\infty}).
    \end{equation}
\end{proposition}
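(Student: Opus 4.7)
The plan is to treat the two assertions separately, in each case reducing the vanishing of a Fourier--Jacobi term $\mathcal{F}(\varphi;y)$ to the vanishing of a family of Heisenberg Fourier coefficients $\varphi_{[T_1,T_2]}$ via the unfolding lemmas~\ref{lemma-degenerate-FCs} and \ref{lemma-non-degerate FCs}, and then invoking the support statements of Corollary~\ref{cor:FE of phiZ}, Proposition~\ref{Properties-of-beta} and Corollary~\ref{FE-CUSPIDAL-QMF}. The first step is an orbit reduction: since $\varphi$ is left $G(\Q)$-invariant, a direct change of variables gives $\mathcal{F}(\varphi;y)(\gamma g)=\mathcal{F}(\varphi;\gamma^{-1}y)(g)$ for $\gamma\in M_R^{\mathrm{der}}(\Q)$. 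Combined with Witt's theorem and the orbit analysis of \cite[Lemma B.2]{JMNPR24}, this lets me replace any non-zero $y\in V_{3,3}(\Z)$ by a representative of the form $ny_\alpha$ with $n\geq 1$ and $\alpha\in 2\Z\setminus\{0\}$ (non-isotropic case) or $nb_2$ with $n\geq 1$ (isotropic case), so it suffices to establish vanishing on these representatives.

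For \eqref{FJ-expansion-of-general-QMF} I would focus on the negative-norm representatives $y=ny_\alpha$, $\alpha<0$. Because $M'(\Q)$, and in particular $N_{Q'}(\Q)$, fixes $ny_\alpha$, the function $\mathcal{F}(\varphi;ny_\alpha)$ is left $N_{Q'}(\Q)$-invariant and I would Fourier-expand it as $\sum_{S\in V_{1,2}'(\Q)}\mathcal{F}^{Q'}_S(\varphi;ny_\alpha)$. Lemma~\ref{lemma-non-degerate FCs} rewrites each component as an $\A$-integral of the Heisenberg coefficient $\varphi_{[ny_\alpha,S]}$, which by Corollary~\ref{cor:FE of phiZ} vanishes unless $[ny_\alpha,S]\succeq 0$. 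Since $S\in V_{1,2}'\subset V_{2,2}$ is orthogonal to $y_\alpha$, the span $\R\linspan\{ny_\alpha,S\}$ contains the negative vector $y_\alpha$ and is therefore a negative definite line (if $S=0$), a negative definite $2$-plane (if $(S,S)<0$), an indefinite $2$-plane (if $(S,S)>0$), or a degenerate $2$-plane (if $S\neq 0$ and $(S,S)=0$). In the first three cases, Proposition~\ref{Properties-of-beta}(i) produces $r\in M_P(\R)^0$ with $\beta_{[ny_\alpha,S]}(r)=0$, ruling out $\succeq 0$ and killing the coefficient.

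For the cuspidal refinement \eqref{FJ-expansion-of-cuspidal-QMF} the constant term $\varphi_{N_R}$ vanishes by the definition of cuspidality, so only the isotropic indices $y=nb_2$ remain. Lemma~\ref{lemma-degenerate-FCs} expresses each Fourier coefficient $\mathcal{F}^N_T(\varphi;nb_2)$ as an integral of $\varphi_{[0,T]}$, and since $[0,T]$ has discriminant $0$ it fails $[0,T]\succ 0$; Corollary~\ref{FE-CUSPIDAL-QMF} then forces $\varphi_{[0,T]}\equiv 0$ for a cusp form, so $\mathcal{F}(\varphi;nb_2)\equiv 0$. The main obstacle I anticipate is the remaining degenerate $2$-plane subcase of the non-isotropic argument, which Proposition~\ref{Properties-of-beta}(i) does not literally cover. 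I expect to handle it by a direct computation in $M_P$: writing $r\in M_P(\R)^0$ in terms of its $\GL(U)(\R)$-component acting on $U^\vee$ together with an $\SO(V_{2,2})(\R)^0$-rotation of the pair $(ny_\alpha,S)$, the equation $\beta_{[ny_\alpha,S]}(r)=0$ reduces to a single complex linear relation subject to a non-singularity constraint; choosing the $\SO(V_{2,2})(\R)^0$-rotation so that the two complex inner products appearing are not $\R$-proportional will make the relation solvable, producing the required $r$ and extending Proposition~\ref{Properties-of-beta}(i) to this degenerate case.
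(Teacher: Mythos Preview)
Your approach matches the paper's: orbit reduction to the representatives $ny_\alpha$ and $nb_2$ (the paper does this via $M_R^{\mathrm{der}}(\Z)$-orbits using the level-one condition, which aligns directly with the $\Z$-orbit classification of \cite[Lemma B.2]{JMNPR24}), then Lemma~\ref{lemma-non-degerate FCs} together with Corollary~\ref{cor:FE of phiZ} and Proposition~\ref{Properties-of-beta} for the negative-norm case, and Lemma~\ref{lemma-degenerate-FCs} with Corollary~\ref{FE-CUSPIDAL-QMF} for the isotropic terms in the cuspidal refinement. The only substantive difference is that the paper does not isolate your degenerate two-plane subcase---it asserts in one line that Proposition~\ref{Properties-of-beta}(i) applies because $ny_\alpha$ has negative norm---so your proposed direct computation of $\beta_{[ny_\alpha,S]}$ is a reasonable way to make that step explicit.
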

\begin{proof}
Since $\varphi$ has level one, to prove that $\mathcal{F}(\varphi;y)\rvert_{G(\R)}\equiv 0$ for a given $y\in V_{3,3}(\Z)$,  it suffices to show that $\mathcal{F}(\varphi;y')\rvert_{G(\R)}\equiv 0$ for any $y'$ in the same $M_R^{\mathrm{der}}(\Z)$ orbit of $y$. \\
\indent 
    Initially, we suppose $\varphi\in M_{\ell}(1)$ and $y\in V_{3,3}(\Z)$ satisfies $\langle y,y\rangle<0$. Then by \cite[Lemma B.2]{JMNPR24}, there exist $\alpha\in 2\Z_{<0}$ and $n\in \Z_{\geq 1}$ such that $y$ is in the same $M_R^{\mathrm{der}}(\Z)$ orbit as $ny_{\alpha}=nb_3+n\alpha b_{-3}/2$.
  Combining Lemma \ref{lemma-non-degerate FCs} with Corollary \ref{cor:FE of phiZ},  if $S\in  V'_{1,2}(\Q)$ then $\mathcal{F}^{Q'}_S(\varphi;ny_{\alpha})(g_{\infty})$ is equal to
  \begin{equation}
  \label{eqn-factorization-non-degenerate-FJ}
  \int_{\A_f}a_{[ny_{\alpha},S]}(\varphi,\exp(s_fb_1\wedge b_{-2}))\,ds_f\cdot\int_{\R}\mathcal{W}_{[2\pi n y_{\alpha},2\pi S]}(\exp(s_{\infty}b_1\wedge b_{-2})g_{\infty})\,ds_{\infty}. 
  \end{equation}
  Since $ny_{\alpha}$ is a vector of negative norm, Proposition \ref{Properties-of-beta}(a) implies that the pair $[ny_{\alpha},S]$ is not positive semi-definite. Therefore,  $\mathcal{W}_{[2\pi n y_{\alpha},2\pi S]}\equiv 0$ according to Theorem \ref{Thm 1.2.1 Aaron Paper},  and $\mathcal{F}(\varphi;ny_{\alpha})$ vanishes identically on $G(\R)$. Hence, we have proven expression \eqref{FJ-expansion-of-general-QMF}. \\
   \indent We now address the proof of \eqref{FJ-expansion-of-cuspidal-QMF}. Thus suppose $\varphi$ is cuspidal. It remains to show that $\mathcal{F}(\varphi;y)$ vanishes identically on $G(\R)$ whenever $y\in V_{3,3}(\Z)$ is isotropic.  For this,  it suffices to show that $\mathcal{F}(\varphi;nb_2)$ vanishes identically on $G(\R)$ for all $n\in \Z_{\neq 0}$.  Applying Lemma \ref{lemma-degenerate-FCs},  $\mathcal{F}(\varphi;nb_2)$ vanishes identically on $G(\R)$ provided $\varphi_{[0,T]}$ vanishes identically on $G(\R)$ for all $T\in V_{2,2}(\Q)$.  When $T=0$, $\varphi_{[0,T]}=\varphi_{N_P}$ vanishes since $\varphi$ is cuspidal. If $T\neq 0$, then a consequence of definition \ref{defn-of-beta}, $[0,T]$ does not satisfy $[0,T]\succ 0$. Hence, the vanishing of $\varphi_{[0,T]}$ follows from Corollary \ref{FE-CUSPIDAL-QMF}.  
\end{proof}
\subsection{The Degenerate Fourier-Jacobi Coefficients of Modular Forms on G}
\label{subsec:Degenerate Fourier-Jacobi Coefficients of Modular Forms on G}
In this subsection, we further refine Lemma \ref{lemma-degenerate-FCs} in the case when $\varphi$ is a weight $\ell$ quaternionic modular form of level $1$.  The main results are Proposition \ref{corollary-Degenerate-QMF-FJs} and Corollary \ref{cor-FCs-constant-degenerate-Heisenberg-FCS-D4}.  The proofs of these result use Lemma \ref{lemma-left-invariance-by-X(A)} and Proposition  \ref{prop-FCs-of-Heisenberg-constant-term}, for which the reader should consult Section \ref{sec-The Hecke Bound Characterization of Cusp Form on $G$}.
\begin{prop}
\label{corollary-Degenerate-QMF-FJs}
Let $n\in \Z_{\neq 0}$,  and $g_{\infty}\in G(\R)$.  Recall the Fourier coefficients $\mathcal{F}_T^N(\varphi;nb_2)(g)$ defined in \eqref{eqn-defn-FC-of-degenerate-FJ-coefficient}.  If $T\in V_{2,2}(\Q)\backslash \{0\}$  then $\mathcal{F}_T^N(\varphi;nb_2)(g_{\infty})=0$  for all $g_{\infty}\in G(\R)$, and
\begin{equation}
\label{eqn-Degenerate-QMF-FJs}
\mathcal{F}(\varphi;nb_2)(g_{\infty})=\int_{\Q\backslash \A}\varphi_{N_P}(\exp(sb_1\wedge b_{-2})g_{\infty})\psi^{-1}(ns)\, ds.
\end{equation}
\end{prop}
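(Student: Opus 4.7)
The plan is to deduce the proposition directly from Lemma \ref{lemma-degenerate-FCs}. Writing $x(s):=\exp(sb_1\wedge b_{-2})\in X(\A)$, that lemma gives
\begin{equation*}
\mathcal{F}_T^N(\varphi;nb_2)(g_\infty)=\int_{\Q\backslash \A}\psi^{-1}(ns)\,\varphi_{[0,T]}(x(s)g_\infty)\,ds.
\end{equation*}
The key claim is that for $T\in V_{2,2}(\Q)\setminus\{0\}$ the integrand is \emph{independent of $s$}; granted this, the integral equals $\varphi_{[0,T]}(g_\infty)\int_{\Q\backslash \A}\psi^{-1}(ns)\,ds$, which vanishes for $n\neq 0$ by orthogonality of characters. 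Once the $T\neq 0$ terms are killed, Fourier-expanding $\mathcal{F}(\varphi;nb_2)$ along $[N]$ leaves only the $N$-constant term $\mathcal{F}_0^N(\varphi;nb_2)$, and specializing Lemma \ref{lemma-degenerate-FCs} at $T=0$ together with the identity $\varphi_{[0,0]}=\varphi_{N_P}$ yields \eqref{eqn-Degenerate-QMF-FJs}.

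To prove the $s$-invariance for $T\neq 0$, I use strong approximation for $X\simeq \G_a$ to write $s=s_\Q+s_f+s_\R$ with $s_\Q\in \Q$, $s_f\in\widehat{\Z}$, $s_\R\in \R$, and factor $x(s)=x(s_\Q)x(s_f)x(s_\R)$. From the computation of $\mathrm{Ad}(X)$ on the character lattice $U^\vee\otimes V_{2,2}$ one sees $x(t)\cdot \varepsilon_{[T_1,T_2]}=\varepsilon_{[T_1,T_2-tT_1]}$, so $\varepsilon_{[0,T]}$ is $X$-fixed; combined with left $G(\Q)$-invariance of $\varphi$ this removes the $x(s_\Q)$ factor. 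Since $\varphi$ has level one, the coefficient $\varphi_{[0,T]}$ is right $G(\widehat{\Z})$-invariant; because the finite-adelic element $x(s_f)\in X(\widehat{\Z})$ commutes with the real element $x(s_\R)g_\infty$, sliding it to the right removes the $x(s_f)$ factor. These two manipulations are the content of Lemma \ref{lemma-left-invariance-by-X(A)}.

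The principal remaining step, and the main obstacle, is the real piece: showing that $\mathcal{W}_{[0,2\pi T]}(x(s_\R)g_\infty)=\mathcal{W}_{[0,2\pi T]}(g_\infty)$, which by Corollary \ref{cor:FE of phiZ} is enough to conclude $\varphi_{[0,T]}(x(s_\R)g_\infty)=\varphi_{[0,T]}(g_\infty)$. Writing $g_\infty=n_0r_0k$ via the Iwasawa decomposition, I rewrite $x(s_\R)g_\infty=n_0'\cdot (x(s_\R)r_0)\cdot k$ with $n_0'=x(s_\R)n_0x(s_\R)^{-1}\in N_P(\R)$. A short bracket calculation in $\wedge^2V$ shows $\log n_0'$ differs from $\log n_0$ only in its $b_1\wedge V_{2,2}$ component, which pairs with the first slot $T_1=0$ of the character, so property~(2) of Theorem \ref{Thm 1.2.1 Aaron Paper} contributes a factor independent of $s_\R$. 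The K-Bessel formula \eqref{K-Bessel-Magic} then reduces matters to checking that $\det(m)$ and $\beta_{[0,2\pi T]}(r)$ are invariant under $r_0\mapsto x(s_\R)r_0$, which follow respectively from $\det(x(s_\R)|_{\GL(U)})=1$ and the identity $x(s_\R)^{-1}\cdot b_{-2}=b_{-2}$ in the dual action on $U^\vee$. Carrying out this real-place calculation cleanly is the principal technical obstacle; the analogous Proposition \ref{prop-FCs-of-Heisenberg-constant-term} plays a parallel role in the companion Corollary \ref{cor-FCs-constant-degenerate-Heisenberg-FCS-D4}.
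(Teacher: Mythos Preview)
Your proposal is correct and follows essentially the same route as the paper: both arguments start from Lemma~\ref{lemma-degenerate-FCs}, establish that $\varphi_{[0,T]}(\exp(sb_1\wedge b_{-2})g_\infty)$ is independent of $s$ for $T\neq 0$, and then conclude by orthogonality of characters. The paper packages the $s$-invariance step as a single appeal to Lemma~\ref{lemma-left-invariance-by-X(A)}, whereas you unpack that lemma's content (the $\Q$-, finite-, and real-place pieces, with the archimedean computation via \eqref{K-Bessel-Magic}) in this specific instance; the substance is the same.
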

\begin{proof}
In light of Lemma \ref{lemma-degenerate-FCs} and the fact that $S$ is abelian, expression \eqref{eqn-Degenerate-QMF-FJs} follows immediately from the vanishing of $\mathcal{F}_T^N(\varphi;nb_2)(g_{\infty})$ for all non-zero $T\in V_{2,2}(\Q)$.  Thus suppose $T\in V_{2,2}(\Q)$ is non-zero and $s\in \A$.  One checks that for $m=\exp(sb_1\wedge b_{-2})$ and $[T_1,T_2]=[0,T]$,  the hypotheses of Lemma \ref{lemma-left-invariance-by-X(A)} are satisfied, and thus the term $\varphi_{[0,T]}(\exp(sb_1\wedge b_{-2})g_{\infty})$ appearing in the integrand of \eqref{eqn-FC-of-degenerate-FJ-coefficient} is independent of $s$.  It follows that
$$
\mathcal{F}_T^N(\varphi;nb_2)(g_{\infty})=\varphi_{[0,T]}(g_{\infty})\int_{\Q\backslash \A}\psi^{-1}(ns)\,ds=\varphi_{[0,T]}(g_{\infty})\cdot 0=0.
$$
\end{proof}
     The lemma above shows that the degenerate Fourier-Jacobi coefficient $\mathcal{F}(\varphi;nb_2)$ factors across the constant term $\varphi_{N_P}$. In Subsection \ref{subsec: Quaternionic Modular Forms on G}, we saw that $\varphi_{N_P}$ is closely related to a holomorphic modular form on the group $M_P^{\mathrm{der}}=\SL_2^3$.  As such, it is natural to consider Fourier coefficients of $\varphi_{N_P}$ along the unipotent radical of a Borel subgroup of $M_P^{\mathrm{der}}$. Using the isomorphism of \cite[Theorem A.8]{JMNPR24},  the unipotent radical of such a subgroup has Lie algebra
 $$
e_2\otimes E =\Q\linspan\{b_{-3}\wedge b_{-4},b_2\wedge b_{-1},b_3\wedge b_{-4}\}.
$$
     \indent 
     Given a triple of $C=(a,b,c)\in \Q^3$, we let $\eta_{C}$ denote the character of $[\exp(e_2\otimes E)]$ corresponding to the linear functional 
     $$
     xb_2\wedge b_{-1}+yb_{-3}\wedge b_{-4}+xb_3\wedge b_{-4}\mapsto bz-ay-cx.$$ Proposition \ref{prop-FCs-of-Heisenberg-constant-term} shows that the Fourier coefficient $(\varphi_{N_P})_{\exp(e_2\otimes E),C}\colon G(\A) \to \V_{\ell}$, defined by
\begin{equation}
\label{defn-FC-Heisenber-D4}
      (\varphi_{N_P})_{\exp(e_2\otimes E),C}(g)=\int_{[\exp(e_2\otimes E)]}\varphi_{N_P}(ug)\eta_{C}^{-1}(u)\,du,
  \end{equation} 
  is equal to an integral transform of a Fourier coefficient of $\varphi$ along $N_P$.  More precisely,  we have the following corollary to Proposition  \ref{prop-FCs-of-Heisenberg-constant-term}.
     \begin{corollary}
     \label{cor-FCs-constant-degenerate-Heisenberg-FCS-D4}
     Let $\varphi\in M_{\ell}(1)$ be a weight $\ell$ modular form of level $1$ of $G=\Spin(V)$.  As in \eqref{eqn-structure-of-Heisenberg-constant-term},  write $\Phi$ for the holomorphic modular form on $M_P^{\mathrm{der}}$ associated to the constant term $\varphi_{N_P}$. Folllowing the notation in \eqref{eqn-FE-3-modular-forms-on-SL2}, write the Fourier expansion of the classical modular form associated to $\Phi$ as 
\begin{equation}
\label{eqn-FE-3-modular-forms-on-d4}
     f_{\Phi}(z_1,z_2,z_3)=\sum_{C=(a,b,c)\in \Z^3_{\geq 0}} a_{\varphi}(C)e^{2\pi i (az_1+bz_2+cz_3)}. 
 \end{equation} Then there exists a non-zero scalar $B\in \C$ such that if $C\in \Z^3$, then 
     \begin{equation}
     \label{eqn-cor-FCs-constant-degenerate-Heisenberg-FCS}
     a_{\varphi}(C)=B\cdot \int_{S_{C}(\A_f)\backslash \exp(e_2\otimes E)(\A_f)}a_{[ab_3+cb_{-3},-bb_{-4}]}(\varphi, u)\, du .
     \end{equation}
     Here $S_C$ denotes the stabilizer of the character $\varepsilon_{[ab_3+cb_{-3},-bb_{-4}]}$ in $\exp(e_2\otimes E)$. 
     \end{corollary}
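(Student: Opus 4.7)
The plan is to deduce Corollary~\ref{cor-FCs-constant-degenerate-Heisenberg-FCS-D4} as the specialization of Proposition~\ref{prop-FCs-of-Heisenberg-constant-term} to Dynkin type $D_4$, where the cubic norm structure is $E = \mathbb{G}_a^3$ and $M_P^{\mathrm{der}} \simeq \SL_2^3$, and to translate between the adelic language of that proposition and the classical Fourier expansion \eqref{eqn-FE-3-modular-forms-on-d4} of $f_\Phi$.

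First I would unfold the classical coefficient $a_\varphi(C)$ as an adelic object. By \eqref{eqn-structure-of-Heisenberg-constant-term}, for any $m \in M_P^{\mathrm{der}}(\R)$ with $m \cdot (i,i,i) = z = (z_1, z_2, z_3)$,
\[
f_\Phi(z) = j_{M_P^{\mathrm{der}}}(m, (i,i,i))^{\ell} \bigl\{\varphi_{N_P}(m),\, y^{2\ell}\bigr\}_{K_\infty}.
\]
Writing $m = u_x t_0$ for $u_x \in \exp(e_2 \otimes E)(\R)$ ranging over a fundamental domain and a fixed torus element $t_0$ (so that $e^{2\pi i(az_1+bz_2+cz_3)}$ is precisely a scalar multiple of $\eta_C(u_x)$), classical Fourier inversion identifies $a_\varphi(C)$ with $\{(\varphi_{N_P})_{\exp(e_2 \otimes E), C}(t_0),\, y^{2\ell}\}_{K_\infty}$, up to a non-zero factor depending only on $\ell$.

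Second I would apply Proposition~\ref{prop-FCs-of-Heisenberg-constant-term} to rewrite $(\varphi_{N_P})_{\exp(e_2 \otimes E), C}(g)$ as an integral of the Heisenberg Fourier coefficient $\varphi_{[T_1, T_2]}(g)$ over $S_C(\A_f) \backslash \exp(e_2 \otimes E)(\A_f)$. The matching of characters is handled by Subsection~\ref{subsec:The Heisenber Type Parabolic P}: the character $\eta_C$ on $\exp(e_2 \otimes E) \subseteq \exp(\Lie(N_P^{\mathrm{ab}}))$ extends to the character $\varepsilon_w$ of $N_P(\A)/N_P(\Q)$ associated via \eqref{defn-characters-NP2} to a specific $w \in W_E$, and the dictionary \eqref{eqn-character-lattice isom} then identifies $\varepsilon_w = \varepsilon_{[T_1,T_2]}$ with $[T_1, T_2] = [ab_3 + cb_{-3},\, -b b_{-4}]$, matching the index in \eqref{eqn-cor-FCs-constant-degenerate-Heisenberg-FCS}. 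Corollary~\ref{cor:FE of phiZ} then factors $\varphi_{[T_1, T_2]}(u t_0) = a_{[T_1, T_2]}(\varphi, u) \cdot \mathcal{W}_{[2\pi T_1, 2\pi T_2]}(t_0)$ for $u \in \exp(e_2 \otimes E)(\A_f)$, and the archimedean factor pulls outside the finite-adelic integral.

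Combining these steps yields \eqref{eqn-cor-FCs-constant-degenerate-Heisenberg-FCS} with
\[
B \;=\; (\text{universal normalization})\cdot \bigl\{\mathcal{W}_{[2\pi T_1, 2\pi T_2]}(t_0),\, y^{2\ell}\bigr\}_{K_\infty}.
\]
The main obstacle is verifying that this $B$ is independent of $C$ and non-zero. For the independence, I would use the $M_P(\R)^0$-equivariance of $\mathcal{W}_{[T_1, T_2]}$ together with the explicit formula \eqref{K-Bessel-Magic}: pairing with $y^{2\ell}$ isolates the $v = -\ell$ component, and its dependence on $\beta_{[T_1, T_2]}(t_0)$ (which scales in $(a,b,c)$) must cancel exactly against the automorphy factor $j_{M_P^{\mathrm{der}}}(m,(i,i,i))^\ell$ absorbed into $a_\varphi(C)$ in the first step. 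This cancellation is forced a priori by the fact that the left-hand side $a_\varphi(C)$ depends on $C$ only through the index, not through any continuous parameter. Non-vanishing of $B$ then reduces to the strict positivity of the modified $K$-Bessel function $K_{-\ell}$ on $\R_{>0}$ together with the positive semi-definiteness of $[T_1, T_2]$ guaranteed by the support property in Corollary~\ref{cor:FE of phiZ}.
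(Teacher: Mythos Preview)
Your approach is essentially the paper's: specialize Proposition~\ref{prop-FCs-of-Heisenberg-constant-term} to $J=E$, match the character $\eta_C$ with $\varepsilon_{[ab_3+cb_{-3},-bb_{-4}]}$ via \eqref{eqn-character-lattice isom}, and then verify that the archimedean factor is a non-zero constant independent of $C$. The paper packages all of this as the single equality \eqref{eqn-equality-of-finite-adelics} (together with the lemma immediately following it), and the corollary's proof is literally ``this is a restatement of \eqref{eqn-equality-of-finite-adelics} at $g_f=1$.''

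Two small points you should not leave to ``forced a priori'' reasoning. First, the integral in \eqref{eqn-equality-of-finite-adelics} carries an extra $n_\alpha$ on the right, which disappears in \eqref{eqn-cor-FCs-constant-degenerate-Heisenberg-FCS} precisely because $\varphi$ is level one and $n_\alpha\in G(\widehat{\Z})$; you should say this. Second, your argument that $B$ is independent of $C$ by ``cancellation forced a priori'' is not a proof: the paper actually evaluates the archimedean integral $\int_{\R}\mathcal{W}_{J,2\pi w_C}(x(s))\,ds$ explicitly using \cite[Lemma A.4]{pollack2024}, obtaining $\tfrac{1}{2}i^v e^{-2\pi(C,1_J)_J}$ in each $v$-component, and the prefactor $e^{2\pi(C,1_J)_J}$ coming from the holomorphic side then visibly cancels the $C$-dependence. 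Your sketch via \eqref{K-Bessel-Magic} points in the right direction, but you should carry out this one-variable Bessel integral rather than appeal to an indirect consistency argument.
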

     \begin{proof}
     This is a restatement of the equality \eqref{eqn-equality-of-finite-adelics} in the case $G_J=G^{\mathrm{ad}}$ and $g_f=1$.  The coefficient $a_{[ab_3+cb_{-3},-bb_{-4}]}$ is right invariant by $n_{\alpha}$ since $\varphi$ is of level one. 
     \end{proof}
\subsection{The Non-Degenerate Fourier-Jacobi Coefficients of Modular Forms on G}
\label{subsec:Non-Degenerate Fourier-Jacobi Coefficients of Modular Forms on G}
Our goal in this subsection is to refine the result of Lemma \ref{lemma-non-degerate FCs} in the special case when $\varphi\in M_{\ell}(1)$ is a modular form on $G$ of level $1$.  The main result is Proposition \ref{prop-existence-holomorphic-FJ-coefficient}, which shows that for such a $\varphi$, the non-degenerate Fourier-Jacobi coefficients of Lemma \ref{lemma-non-degerate FCs} give rise to holomorphic modular forms in the sense of Subsection \ref{subsec:HMFS}.  \\
\indent We adopt the notation of Lemma \ref{lemma-non-degerate FCs}, so $n\in \Z_{\geq 1}$, $\alpha\in 2\Z\backslash\{0\}$, and $ny_{\alpha}=nb_3+n\alpha b_{-3}/2$. By Proposition \ref{prop-vanishing-of-negative-FJ-coefficients}, if $\varphi\in M_{\ell}(1)$ then $\mathcal{F}(\varphi;ny_{\alpha})(g_{\infty})= 0$ whenever $\alpha<0$ and $g_{\infty}\in G(\R)$. Hence, we may assume $\alpha>0$, in which case $\mathcal{F}(\varphi;y)$ Fourier expands along $N_{Q'}$ as
\begin{equation}
\label{eqn-non-degenerate-FJ-coefficents-QMF-1}
\mathcal{F}(\varphi;ny_{\alpha})(g)=\sum_{S\in V'_{1,2}(\Q)}\mathcal{F}^{Q'}_S(\varphi;ny_{\alpha})(g). 
\end{equation}
In fact, if $S\in V'_{1,2}$ is a vector of negative norm, then the line of reasoning applied during the proof of Proposition \ref{prop-vanishing-of-negative-FJ-coefficients} implies that $\mathcal{F}^{Q'}_S(\varphi;ny_{\alpha})$ vanishes identically on $G(\R)$. Thus,  if $g=g_{\infty}\in G(\R)$, then \eqref{eqn-non-degenerate-FJ-coefficents-QMF-1} takes the form 
\begin{equation}
\label{FE-of-FJ-Coefficient}
\mathcal{F}(\varphi;ny_{\alpha})(g_{\infty})=\sum_{S\in V'_{1,2}(\Q)\colon (S,S)\geq 0}\mathcal{F}^{Q'}_S(\varphi;ny_{\alpha})(g_{\infty}). 
\end{equation}
The summation indices appearing in \eqref{FE-of-FJ-Coefficient} resemble the indices which appear in the Fourier expansion of holomorphic modular forms on $M'$ (see \eqref{FE-HMF-on-M'}). However, since $K_{\infty}\cap M'$  is not necessarily a maximal compact subgroup of $M'$, it is necessary to introduce a modified definition of $\mathcal{F}(\varphi;ny_{\alpha})$ in order to obtain holomorphic modular forms on $M'$ from $\mathcal{F}(\varphi;ny_{\alpha})$.  For this we recall the element $g_y\in G(\R)$ and the representation $\mathbf{V}_{\ell}^{\alpha}$ introduced in Subsection \ref{subsec:HMFS}.  Define
$$
\widetilde{\mathcal{F}}(\varphi;ny_{\alpha})\colon G(\A)\to \V^{\alpha}_{\ell}, \qquad g\mapsto \mathcal{F}(\varphi;ny_{\alpha})(gg_{y_{\alpha}}).
$$
Then $\widetilde{\mathcal{F}}(\varphi;ny_{\alpha})(gk)=k^{-1}\cdot \widetilde{\mathcal{F}}(\varphi;ny_{\alpha})(g)$ for all $g\in G(\A)$ and $k\in K_{y_{\alpha}}$. Furthermore,  Lemma \ref{lemma-non-degerate FCs} and Theorem \ref{Thm 1.2.1 Aaron Paper} imply that 
$$\widetilde{\mathcal{F}}(\varphi;ny_{\alpha})(g_{\infty})=\sum_{S\in V'_{1,2}(\Q)\colon (S,S)\geq 0}\widetilde{\mathcal{F}}^{Q'}_S(\varphi;ny_{\alpha})(g_{\infty}),$$ 
where for $S\in V'_{1,2}(\Q)$ satisfying $(S,S)\geq 0$, 
\begin{equation}
\label{eqn-factorization-non-degenerate-FJ-coefficient}
\widetilde{\mathcal{F}}^{Q'}_S(\varphi;ny_{\alpha})(g_{\infty})=\int_{X(\A_f)}a_{[ny_{\alpha},S]}(\varphi,x_f)\,dx_f\cdot \int_{X(\R)}\mathcal{W}_{[2\pi ny_{\alpha},2\pi S]}(x_{\infty}g_{\infty}g_{y_{\alpha}})\,dx_{\infty}. 
\end{equation}
Here $g_{\infty}\in G(\R)$ and $X=\{\exp(sb_1\wedge b_{-2})\colon s\in \mathbb{G}_a\}$. Given $g_{\infty}\in M_{Q'}(\R)$,  let $\overline{g}_{\infty}$ to be the image of $g_{\infty}$ in $\SO(V_{2,3}')$.   We express $\overline{g}_{\infty}$ as an ordered pair $\overline{g}_{\infty}=(t,u)$ with $t\in \R_{>0}$ and $u\in \mathrm{SO}(V'_{1,2})(\R)^0$.    The coordinate $t \in \R$ is normalized so that $(t, 1) \cdot  b_2 = tb_2$.  In \cite[Proposition 7.3]{JMNPR24},  the archimedean integral in \eqref{eqn-factorization-non-degenerate-FJ-coefficient} is evaluated when $\alpha=2$ and $n=1$. The computation in (loc. cit.) generalizes to the case of general $\alpha$ and general $n$ as follows.  
\begin{lemma}
\label{lemma-main-Archimedean Integral}
Suppose $\alpha\in 2\Z_{>0}$, $n\in \Z_{\geq 1}$, and $S\in V'_{1,2}(\Q)$ is such that $[ny_{\alpha},S]\succeq 0$.  Let $g_{\infty}\in M_{Q'}(\R)^0$ and write $\overline{g}_{\infty}=(t,u)$ with $t\in \R_{>0}$ and $u\in \SO(V_{1,2}')(\R)^0$.  Then 
 $$
 \int_{X(\R)}\mathcal{W}_{[2\pi ny_{\alpha},2\pi S]}(x_{\infty}g_{\infty}g_{y_{\alpha}})\,dx_{\infty}=\frac{t^{\ell} e^{-2\sqrt{2}\pi(n\alpha^{1/2}-t(S,u\cdot v_2))}}{2\sqrt{2}n\alpha^{1/2}}\sum_{-\ell\leq v\leq \ell}i^v\frac{x^{\ell+v}y^{\ell-v}}{(\ell+v)!(\ell-v)!}
 $$
\end{lemma}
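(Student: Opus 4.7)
The plan is to generalize the proof of \cite[Proposition 7.3]{JMNPR24}, which establishes the claim for the special case $\alpha = 2$ and $n = 1$. The generalization is essentially a tracking of scale factors, since changing $n$ and $\alpha$ only rescales the length of the index vector $ny_\alpha \in V_{3,3}(\R)$ without altering the structural features of the computation.

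First I would work out the explicit formula for the function $\beta_{[2\pi ny_\alpha,\,2\pi S]}$ defined in \eqref{definition of beta}, evaluated at $x_\infty g_\infty g_{y_\alpha}$. The element $g_{y_\alpha}$ sends $v_1 \in V^+$ to $\widehat{y}_\alpha = y_\alpha/\sqrt{\alpha}$ while fixing $u_{\pm 1}, u_{\pm 2}, v_{\pm 2}$; the factor $g_\infty \in M_{Q'}(\R)^0$, written as the pair $(t,u)$, scales $b_2$ by $t$ and acts through $u \in \SO(V'_{1,2})(\R)^0$ on the subspace spanned by $b_4, b_{-4}, y_\alpha^\vee$; and the unipotent factor $x_\infty = \exp(sb_1 \wedge b_{-2})$ produces a linear contribution in $s$ to the imaginary part of $\beta$. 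A direct computation using the pairing $\langle \cdot, \cdot \rangle$ in \eqref{definition of beta} should produce an expression of the form
$$
\beta_{[2\pi n y_\alpha,\,2\pi S]}(x_\infty g_\infty g_{y_\alpha}) = 2\sqrt{2}\pi\bigl(n\alpha^{1/2} - t(S,u\cdot v_2) + i c_0 s\bigr)
$$
for an explicit nonzero constant $c_0$ that depends only on the chosen normalizations, together with a determinant contribution of $t^\ell$ from the $\GL(U)$ factor in the $M_P$-decomposition of $g_\infty$.

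Next I would substitute this into the Whittaker formula \eqref{K-Bessel-Magic}, Iwasawa-decomposing $x_\infty g_\infty g_{y_\alpha}$ as needed and using the $K_\infty$- and $N_P$-equivariance of $\mathcal{W}$ from Theorem \ref{Thm 1.2.1 Aaron Paper} to reduce the calculation to an evaluation on $M_P(\R)^0$. The integrand is then a finite sum over $-\ell \leq v \leq \ell$ of terms proportional to $(\beta/|\beta|)^v K_v(|\beta|) \cdot x^{\ell+v}y^{\ell-v}$. Finally, I would carry out the integral over $s \in \R$ using the Bessel identity
$$
\int_{-\infty}^{\infty} \Bigl(\tfrac{a+is}{\sqrt{a^2+s^2}}\Bigr)^{v} K_{v}\!\bigl(\sqrt{a^2+s^2}\bigr)\,ds \;=\; \pi\, i^v \,\frac{e^{-a}}{a}
$$
valid for $a > 0$, which follows from the integral representation of $K_v$ recorded in Theorem \ref{Thm 1.2.1 Aaron Paper} together with a standard Fourier-transform computation; this is precisely the identity applied in \cite[Proposition 7.3]{JMNPR24}. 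Summing the resulting $i^v$-weighted contributions reproduces the basis expansion of the lemma, and collecting constants yields the claimed prefactor $t^\ell e^{-2\sqrt{2}\pi(n\alpha^{1/2}-t(S,u\cdot v_2))}/(2\sqrt{2}\, n\alpha^{1/2})$.

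The main obstacle is the first step: carefully computing $\beta_{[2\pi ny_\alpha,\,2\pi S]}(x_\infty g_\infty g_{y_\alpha})$ by tracking how the scaling parameters $\alpha^{1/2}$, $n$, and $t$ all propagate through the Iwasawa decomposition and the pairing $\langle \cdot, \cdot \rangle$. Once the explicit form of $\beta$ is in hand, the $K_\infty$-equivariance of $\mathcal{W}$ and the classical Bessel identity above make the remainder of the argument a direct adaptation of the corresponding step in \cite[Proposition 7.3]{JMNPR24}.
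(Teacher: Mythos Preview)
Your proposal is correct and follows essentially the same approach as the paper: the paper's proof simply states that the result is obtained by applying the computational techniques of \cite[Proposition 7.3]{JMNPR24} (with full details deferred to \cite[Lemma 4.0.9]{McG25}), and your outline---compute $\beta$ explicitly after conjugating by $g_{y_\alpha}$, feed it into \eqref{K-Bessel-Magic}, and integrate term-by-term using the standard Bessel identity---is precisely that generalization. One small caution: double-check the constant in your Bessel identity against the normalization of $K_v$ used here (the paper's $K_v(x)=\tfrac12\int_0^\infty t^{v-1}e^{-x(t+t^{-1})}\,dt$), since the analogous step later in the paper (via \cite[Lemma A.4]{pollack2024}) produces a factor $\tfrac12\,i^v e^{-a}$ rather than $\pi i^v e^{-a}/a$.
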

\begin{proof}
The result is proven by applying the same computational techniques as in \cite[Proposition 7.3]{JMNPR24}. For a detailed account, the reader may consult \cite[Lemma 4.0.9]{McG25}.
\end{proof}
Lemma \ref{lemma-main-Archimedean Integral} is applied to establish the holomorphy statement in the following result. 
\begin{proposition}
\label{prop-existence-holomorphic-FJ-coefficient}
Let $\alpha\in 2\Z_{>0}$, $n\in \Z_{>0}$, and suppose $\varphi\in M_{\ell}(1)$. Then the function 
    $$
    \xi^{\varphi}(ny_{\alpha})\colon M'(\R)\to \C, \qquad g_{\infty}\mapsto \overline{\left\{\widetilde{\mathcal{F}}(\varphi;ny_{\alpha})(g_{\infty}), (-ix+y)^{2\ell}\right\}}_{K_{\infty}}
    $$ 
    is the automorphic form associated to a weight $\ell$ holomorphic modular form on $M'$. 
\end{proposition}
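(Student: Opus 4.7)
The plan is to verify the three defining conditions of Definition~\ref{defn-automorphic-function-associated-to-holomorphic-mf} for $\xi:=\xi^{\varphi}(ny_{\alpha})$: left $M'(\Z)$-invariance, $K_{y_{\alpha}}$-equivariance with the correct automorphy character, and holomorphy of $f_{\xi}(Z):=j_{y_{\alpha}}(g_{\infty},-i\sqrt{2}v_{2})^{\ell}\xi(g_{\infty})$. Moderate growth of $\xi$ is inherited from that of $\varphi$, since $\widetilde{\mathcal{F}}(\varphi;ny_{\alpha})$ is defined by integrating $\varphi$ over the compact quotient $[N_{R}]$ against a unitary character.

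For left $M'(\Z)$-invariance, fix $\gamma\in M'(\Z)\subset M'(\Q)$. The element $\gamma$ fixes $ny_{\alpha}$, normalizes $N_{R}$, and leaves the character $\chi_{ny_{\alpha}}$ invariant under conjugation. Combining automorphy of $\varphi$ with the substitution $n\mapsto \gamma^{-1}n\gamma$ in the defining integral for $\mathcal{F}(\varphi;ny_{\alpha})$, and using the level-one hypothesis to reconcile the diagonally and the archimedeanly embedded copies of $\gamma$ inside $G(\A)$, one obtains $\widetilde{\mathcal{F}}(\varphi;ny_{\alpha})(\gamma g_{\infty})=\widetilde{\mathcal{F}}(\varphi;ny_{\alpha})(g_{\infty})$. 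For $K_{y_{\alpha}}$-equivariance, the defining formula for $\widetilde{\mathcal{F}}$ together with the $K_{\infty}$-equivariance of $\mathcal{F}(\varphi;ny_{\alpha})$ gives $\widetilde{\mathcal{F}}(\varphi;ny_{\alpha})(g_{\infty}k)=(g_{y_{\alpha}}^{-1}kg_{y_{\alpha}})^{-1}\cdot \widetilde{\mathcal{F}}(\varphi;ny_{\alpha})(g_{\infty})$ for $k\in K_{y_{\alpha}}$. Via the $K_{\infty}$-invariance of $\{\cdot,\cdot\}_{K_{\infty}}$, the desired transformation law reduces to showing that $(-ix+y)^{2\ell}$ is an eigenvector for $g_{y_{\alpha}}^{-1}K_{y_{\alpha}}g_{y_{\alpha}}\subset K_{\infty}$ with character $j_{y_{\alpha}}(\cdot,-i\sqrt{2}v_{2})^{\ell}$; this is a direct Lie-algebra check, using that $-ix+y$ is a weight vector for a Cartan of the distinguished $\SL_{2}\subset K_{\infty}$.

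For the holomorphy, write $g_{\infty}=nmk$ in Iwasawa form with $n=\exp(b_{2}\wedge v)\in N_{Q'}(\R)$, $\overline{m}=(t,u)\in \R_{>0}\times \SO(V'_{1,2})(\R)^{0}$, and $k\in K_{y_{\alpha}}$. Combining the Fourier--Jacobi expansion \eqref{FE-of-FJ-Coefficient}, the factorization \eqref{eqn-factorization-non-degenerate-FJ-coefficient}, and Lemma~\ref{lemma-main-Archimedean Integral}, the pairing of the Bessel sum $\sum_{v}i^{v}\tfrac{x^{\ell+v}y^{\ell-v}}{(\ell+v)!(\ell-v)!}$ against $(-ix+y)^{2\ell}$ collapses to $i^{\ell}2^{2\ell}$ via the identity $i^{v}(-i)^{\ell-v}(-1)^{\ell+v}=i^{\ell}$. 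A direct computation from $g\cdot r(Z)=j_{y_{\alpha}}(g,Z)r(gZ)$ yields $Z:=g_{\infty}\cdot(-i\sqrt{2}v_{2})=-itu(b_{4}+b_{-4})-v$ and $j_{y_{\alpha}}(g_{\infty},-i\sqrt{2}v_{2})=t^{-1}$. Together with the character phase $e^{2\pi i(S,v)}$ produced by the transformation of $\widetilde{\mathcal{F}}^{Q'}_{S}$ under $N_{Q'}(\R)$, the real exponential of Lemma~\ref{lemma-main-Archimedean Integral} combines after complex conjugation to produce $e^{2\pi i(S,Z)}$, while the factor $t^{-\ell}$ cancels the $t^{\ell}$ prefactor. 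The resulting expansion
$$
f_{\xi}(Z)=C\sum_{S\in V'_{1,2}(\Q):\,(S,S)\geq 0}\overline{b_{S}(\varphi)}\,e^{2\pi i(S,Z)},
$$
where $b_{S}(\varphi)=\int_{X(\A_{f})}a_{[ny_{\alpha},S]}(\varphi,x_{f})\,dx_{f}$ and $C$ is an explicit nonzero constant depending only on $n,\alpha,\ell$, is manifestly holomorphic on $\mathfrak{h}'_{y_{\alpha}}$.

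The main obstacle is the coordinate reconciliation in the last step: I must carefully derive the formula $Z=-itu(b_{4}+b_{-4})-v$, observing that $v$ appears without a factor of $t$ because the unipotent $n$ acts on $mZ_{0}$ \emph{after} the scaling by $m$ in the Iwasawa decomposition, so that the real exponential $e^{2\sqrt{2}\pi t(S,uv_{2})}$ from the archimedean Whittaker integral combines with the character phase $e^{2\pi i(S,v)}$ exactly into the holomorphic exponential $e^{2\pi i(S,Z)}$. The choice of the vector $(-ix+y)^{2\ell}$ is precisely tailored to make this synthesis work.
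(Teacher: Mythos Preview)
Your proposal is correct and follows essentially the same approach as the paper, which defers to \cite[Corollary 7.6]{JMNPR24} and \cite[Proposition 4.0.12]{McG25} for details but records the resulting formula $j_{y_{\alpha}}(h,-i\sqrt{2}v_2)^{\ell}\xi^{\varphi}(ny_{\alpha})_S(g_{\infty})=\eta_S\cdot e^{2\pi i(S,Z)}$. Your verification of the three conditions of Definition~\ref{defn-automorphic-function-associated-to-holomorphic-mf}, including the explicit collapse of the Bessel sum against $(-ix+y)^{2\ell}$ and the synthesis of the real and oscillatory exponentials into $e^{2\pi i(S,Z)}$, is precisely the content of those references.
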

\begin{proof}
 The result is proven by an argument parallel to the one given in the proof \cite[Corollary 7.6]{JMNPR24}.  Again, the reader may consult \cite[Proposition 4.0.12]{McG25} for full details.  Since it will be relevant shortly, we note that as a consequence of the argument in (loc. cit.), the Fourier expansion of  $\xi^{\varphi}(ny_{\alpha})$ along the  $N_{Q'}$ takes the form 
 $$
\xi^{\varphi}(ny_{\alpha})(g_{\infty})=\sum_{S\in V'_{1,2}(\Q)_{\geq 0}}\xi^{\varphi}(ny_{\alpha})_S(g_{\infty})
$$ where
 \begin{align*}
    j_{y_{\alpha}}(h,-i\sqrt{2}v_2)^{\ell}\xi^{\varphi}(ny_{\alpha})_S(g_{\infty})
    &= \eta_S \cdot e^{2\pi i(S,Z)}.
\end{align*}
Here $j_{y_{\alpha}}(\cdot, -i\sqrt{2}v_2)$ is the automorphy factor of subsection \ref{subsec:HMFS} and 
$$
\eta_S=\frac{e^{-2\sqrt{2}\pi n \alpha^{1/2}}}{2\sqrt{2}n\alpha^{1/2}}\cdot \overline{\int_{X(\A_f)}a_{[ny_{\alpha}, S]}(\varphi, x_f)\, dx_f\cdot \left\{\sum_{-\ell\leq v\leq \ell} i^v\frac{x^{\ell+v}y^{\ell-v}}{(\ell+v)!(\ell-v)!}, (-ix+y)^{2\ell}\right\}}_{K_{\infty}}.
$$ 
\end{proof}
By examining the explicit formula for $\eta_S$ given in the proof above,  we obtain the following corollary to Proposition \ref{prop-existence-holomorphic-FJ-coefficient}.
\begin{corollary}
\label{cor-FC-of-HMF-identified} Given $\varphi\in M_{\ell}(1)$ and $S\in V_{1,2}'(Q)$ such that $(S,S)\geq 0$, define 
\begin{equation}
\label{defn-of-A_xi(ny_alpha)}
A_{\xi^{\varphi}(ny_\alpha)}[S]=\overline{\int_{\A_f}a_{[ny_{\alpha}, S]}(\varphi, \exp(s_fb_1\wedge b_{-2}))\, ds_f}.  
\end{equation}
Then the numbers $\{A[S]\colon S\in V'_{1,2}(\Q)\colon (S,S)\geq 0\}$ are the Fourier coefficients of a holomorphic modular form on $\h_{y_{\alpha}}$ of level $M'(\Z)$.  Hence, $A_{\xi^{\varphi}(ny_{\alpha})}[S]\neq 0$ implies $S\in V_{1,2}'(\Z)_{\geq 0}^{\vee}$. 
\end{corollary}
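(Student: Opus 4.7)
The plan is to extract the Fourier coefficients of the holomorphic modular form associated to $\xi^{\varphi}(ny_{\alpha})$ directly from the explicit formula for $\eta_S$ produced in the proof of Proposition \ref{prop-existence-holomorphic-FJ-coefficient}, and then to invoke the Koecher principle from Subsection \ref{subsec:FE-HMFS} to obtain the claimed support condition.

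First I would identify the adelic integral appearing in the definition \eqref{defn-of-A_xi(ny_alpha)} of $A_{\xi^{\varphi}(ny_{\alpha})}[S]$ with the non-archimedean factor $\int_{X(\A_f)} a_{[ny_{\alpha},S]}(\varphi,x_f)\,dx_f$ appearing in $\eta_S$. This is just the parametrization $\A_f \xrightarrow{\sim} X(\A_f)$, $s_f \mapsto \exp(s_f b_1\wedge b_{-2})$, together with a fixed choice of Haar measure.

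Next I would compute the $K_{\infty}$-invariant pairing
\begin{equation*}
c_{\ell} := \left\{\sum_{-\ell \leq v \leq \ell} i^v \frac{x^{\ell+v}y^{\ell-v}}{(\ell+v)!(\ell-v)!},\; (-ix+y)^{2\ell}\right\}_{K_{\infty}}.
\end{equation*}
Expanding $(-ix+y)^{2\ell}$ by the binomial theorem in the basis $\{x^{\ell-w}y^{\ell+w}\}_{-\ell\leq w\leq \ell}$ and applying the orthogonality relation $\{x^{\ell+v}y^{\ell-v},\, x^{\ell-w}y^{\ell+w}\}_{K_{\infty}} = (-1)^{\ell+v}\delta_{v,w}(\ell+v)!(\ell-v)!$ recorded after \eqref{eqn-structure-of-Heisenberg-constant-term}, the diagonal $v=w$ terms collapse to a geometric-like sum and one obtains $c_{\ell} = i^{\ell}\cdot 4^{\ell}$, which in particular is non-zero and depends only on $\ell$.

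Combining the previous two steps with the formula for $\eta_S$ displayed at the end of the proof of Proposition \ref{prop-existence-holomorphic-FJ-coefficient}, one arrives at the identity
\begin{equation*}
\eta_S \;=\; \frac{c_{\ell}\, e^{-2\sqrt{2}\pi n \alpha^{1/2}}}{2\sqrt{2}\, n\alpha^{1/2}} \cdot A_{\xi^{\varphi}(ny_{\alpha})}[S].
\end{equation*}
Since the prefactor is a non-zero constant independent of $S$, and since Proposition \ref{prop-existence-holomorphic-FJ-coefficient} shows that the $\eta_S$ are the Fourier coefficients of the holomorphic modular form on $\mathfrak{h}_{y_{\alpha}}'$ of weight $\ell$ and level $M'(\Z)$ attached to $\xi^{\varphi}(ny_{\alpha})$, the numbers $A_{\xi^{\varphi}(ny_{\alpha})}[S]$ are themselves a uniform rescaling of these Fourier coefficients. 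The vanishing statement $A_{\xi^{\varphi}(ny_{\alpha})}[S]\neq 0 \Rightarrow S \in V_{1,2}'(\Z)^{\vee}_{\geq 0}$ is then exactly the Koecher principle already encoded in the expansion \eqref{FE-HMF-on-M'}. The only real obstacle is bookkeeping: verifying that $c_{\ell}$ is genuinely non-zero, and tracking the complex conjugation in the definition of $A_{\xi^{\varphi}(ny_{\alpha})}[S]$ consistently with the conjugation that already appears in the formula for $\eta_S$.
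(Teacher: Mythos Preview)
Your proposal is correct and follows essentially the same approach as the paper, which simply remarks that the corollary is obtained by examining the explicit formula for $\eta_S$ from the proof of Proposition~\ref{prop-existence-holomorphic-FJ-coefficient}. The only minor bookkeeping slip is that the prefactor should be $\overline{c_\ell}$ rather than $c_\ell$ (the conjugation in $\eta_S$ covers the pairing as well), but since $|c_\ell|=4^\ell\neq 0$ and the constant is independent of $S$, this does not affect the argument.
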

\subsection{The Primitivity Theorem}
\label{subsec-the-primitivity-theorem}
In this subsection, we apply the results of Subsections \ref{subsec:Degenerate Fourier-Jacobi Coefficients of Modular Forms on G} and \ref{subsec:Non-Degenerate Fourier-Jacobi Coefficients of Modular Forms on G} to establish the main result of this section,  Theorem~\ref{thm-primitivity-intro-1}.  Before giving the proof of Theorem~\ref{thm-primitivity-intro-1} we require the following lemma.
\begin{lemma}
\label{lemma-non-vanishing-non-trivial-FJ-coefficient}
Assume $\ell\in \Z_{>0}$ and suppose $\varphi\in M_{\ell}(1)$.  Recall the orthogonal parabolic subgroup $R=M_RN_R$ defined in Subsection \ref{subsec:The Klingen Type Parabolic Q}. If $\varphi\equiv \varphi_{N_R}$ then $\varphi\equiv 0$. 
\end{lemma}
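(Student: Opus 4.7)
The hypothesis $\varphi\equiv\varphi_{N_R}$ says exactly that $\varphi$ is left-invariant under $N_R(\A)$. Because $Z\subseteq N_R$ (since $b_1\wedge b_2\in b_1\wedge V_{3,3}$), this in particular gives $\varphi=\varphi_Z$, so by Corollary~\ref{cor:FE of phiZ} we have the expansion
\[
\varphi=\varphi_{N_P}+\sum_{\substack{[T_1,T_2]\in V_{2,2}(\Q)^{\oplus 2}\\ [T_1,T_2]\neq 0,\ \succeq 0}}\varphi_{[T_1,T_2]}.
\]
The plan is in two stages: first use the remaining $N_R(\A)$-invariance to kill every non-trivial coefficient $\varphi_{[T_1,T_2]}$, reducing $\varphi$ to its Heisenberg constant term $\varphi_{N_P}$; then use Corollary~\ref{cor-FCs-constant-degenerate-Heisenberg-FCS-D4} together with \eqref{eqn-structure-of-Heisenberg-constant-term} to show $\varphi_{N_P}$ itself vanishes.

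For the first stage I would exploit that $\exp(b_1\wedge V_{2,2})\subseteq N_R\cap N_P$. Performing the change of variable $n\mapsto\exp(b_1\wedge v)\cdot n$ in the integral defining $\varphi_{[T_1,T_2]}(g)$, and using left-invariance of $\varphi$ under $\exp(b_1\wedge v)\in N_R(\A)$, should yield
\[
\varphi_{[T_1,T_2]}(g)=\psi(-(T_1,v))\,\varphi_{[T_1,T_2]}(g)\qquad(v\in V_{2,2}(\A)).
\]
Non-triviality of $\psi$ on $\A/\Q$ together with non-degeneracy of the form on $V_{2,2}$ then forces $\varphi_{[T_1,T_2]}\equiv 0$ whenever $T_1\neq 0$. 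To eliminate the remaining terms with $T_1=0$, I would introduce the rational element $w\in M_P(\Q)\subseteq G(\Q)$ lifting the isometry of $V$ that swaps $b_1\leftrightarrow b_2$ and $b_{-1}\leftrightarrow b_{-2}$ while fixing $V_{2,2}$. Since $\mathrm{Ad}(w)$ exchanges the two $V_{2,2}$-factors inside $\Lie(N_P^{\mathrm{ab}})$, one checks that $\varepsilon_{[T_1,T_2]}\circ\mathrm{Ad}(w)=\varepsilon_{[T_2,T_1]}$, from which the standard conjugation-and-change-of-variable argument produces $\varphi_{[T_1,T_2]}(wg)=\varphi_{[T_2,T_1]}(g)$. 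Specializing to $[T_1,T_2]=[0,T_2]$ with $T_2\neq 0$, the right-hand side is $\varphi_{[T_2,0]}(g)$, which vanishes by the first step; since $w$ is a fixed element of $G(\Q)$, this forces $\varphi_{[0,T_2]}\equiv 0$.

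Combining both steps, every non-trivial Fourier coefficient in the expansion of $\varphi_Z$ vanishes, so $\varphi=\varphi_{N_P}$. I would then feed this into Corollary~\ref{cor-FCs-constant-degenerate-Heisenberg-FCS-D4}: for every non-zero $C=(a,b,c)\in\Z^3$ the index $[ab_3+cb_{-3},-bb_{-4}]$ is non-zero, so the integrand expressing $a_{\varphi}(C)$ is identically zero, giving $a_{\varphi}(C)=0$ for all $C\neq 0$. Hence the classical modular form $f_{\Phi}$ attached to $\Phi$ via \eqref{eqn-FE-3-modular-forms-on-d4} reduces to the constant $a_{\varphi}(0)$; since $\Phi$ has positive weight $\ell$, this constant must also vanish, so $\Phi\equiv 0$. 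By the Pollack correspondence \eqref{eqn-structure-of-Heisenberg-constant-term}, $\Phi\equiv 0$ forces $\varphi_{N_P}\equiv 0$, and therefore $\varphi\equiv 0$.

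The main technical point is the character-pullback identity $\varepsilon_{[T_1,T_2]}\circ\mathrm{Ad}(w)=\varepsilon_{[T_2,T_1]}$ and the verification that $w$ lifts from $\SO(V)(\Q)$ to $\Spin(V)(\Q)=G(\Q)$; both are routine consequences of the fact that $\Spin(V)\to\SO(V)$ is a simply-connected double cover, together with the explicit description of $\Lie(N_P^{\mathrm{ab}})$ in Subsection~\ref{subsec:The Heisenber Type Parabolic P}. After these are in hand, every remaining step is a direct application of results already developed in Sections~\ref{Sec-Prelimns-on-Quat-Modular-Forms} and~\ref{chap:THE FOURIER-JACOBI EXPANSION OF MODULAR FORMS ON $G$}.
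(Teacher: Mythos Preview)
Your Fourier-analytic route differs from the paper's, which is a one-paragraph group-theoretic argument: from $\varphi=\varphi_{N_R}$ the restriction $\varphi|_{G(\R)}$ is left-invariant under the subgroup of $G(\R)$ generated by $N_R(\R)$ and $G(\Z)$; since $G(\Z)$ contains Weyl representatives and the Weyl group of the simply-laced system $D_4$ acts transitively on roots, conjugating $N_R(\R)$ by elements of $G(\Z)$ produces every root subgroup $U_\alpha(\R)$, and these generate $G(\R)$ because $G$ is simply connected. Hence $\varphi|_{G(\R)}$ is constant, and positive weight forces it to vanish.

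Your first stage---killing every non-trivial Heisenberg coefficient $\varphi_{[T_1,T_2]}$ via $N_R(\A)$-invariance together with the swap $w\in M_P(\Q)$---is correct, as is the deduction that the holomorphic form $\Phi$ on $M_P^{\mathrm{der}}$ vanishes. The gap is your last step. Equation \eqref{eqn-structure-of-Heisenberg-constant-term} only extracts the $y^{2\ell}$-component of the $\V_\ell$-valued function $\varphi_{N_P}$; Pollack's full structure theorem (quoted in the paper just before \eqref{eqn-extension-formula}) gives, on the Levi,
\[
\varphi_{N_P}(m)=\nu(m)^{\ell}|\nu(m)|\bigl(\Phi(m)[x^{2\ell}]+\beta[x^{\ell}y^{\ell}]+\Phi'(m)[y^{2\ell}]\bigr),
\]
so $\Phi\equiv 0$ by itself does not force $\varphi_{N_P}\equiv 0$: the constant $\beta$ and the companion form $\Phi'$ are unaccounted for, and Corollary~\ref{cor-FCs-constant-degenerate-Heisenberg-FCS-D4} says nothing about them. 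The cleanest repair is to stop after you have established $\varphi=\varphi_{N_P}$ and then run the paper's Weyl-conjugation argument with $N_P$ in place of $N_R$: left-invariance under $N_P(\R)$ and $G(\Z)$ again gives invariance under every $U_\alpha(\R)$, hence $\varphi|_{G(\R)}$ is constant and therefore zero.
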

\begin{proof}
Assume $\varphi\equiv \varphi_{N_R}$.  The weight $\ell$ is positive,  and hence it suffices to show that $\varphi$ is constant.  Moreover, since $G(\A)=G(\Q)G(\R)G(\widehat{\Z})$,  it is enough to prove that $\varphi\rvert_{G(\R)}$ is constant.  Let $X$ denote the subgroup of $G(\R)$ generated by $G(\Z)$ and $N_R(\R)$.  Then $\varphi(x)=\varphi(1)$ for all $x\in X$, and so we are reduced to proving that $X=G(\R)$.  \\
\indent As $G$ is  semi-simple and split,  $G$ is the special fiber of a finite Chevalley group $\underline{G}$ defined over $\Z$ \cite[Theorem 6]{MR466335}.  So, $G(\Z)$ contains representatives for each of the Weyl reflections in $G$.  Since the root system of $G$ is simply laced,   the Weyl group of $G$ acts transitively on the set of roots of $G$ \cite[Lemma 10.2.2(ii)]{MR1642713}. Hence, $X$ contain the root subgroup $U_{\alpha}(\R)$ for every root $\alpha$ of $G$.  Since $G$ is simply connected, $G(\R)$ is generated by the root subgroup $U_{\alpha}(\R)$ as $\alpha$ runs over the set of roots (see for example \cite[\S 9.4]{MR1642713}). Hence, $X=G(\R)$.
\end{proof}
We are ready to give the proof of our first main result.
\begin{proof}[Proof of Theorem~\ref{thm-primitivity-intro-1}]
Suppose for a contradiction that $\varphi \not\equiv 0$.  Then $\varphi\rvert_{G(\R)}\not\equiv 0$, and applying Lemma \ref{lemma-non-vanishing-non-trivial-FJ-coefficient} in tandem with \eqref{FJ-expansion-of-general-QMF}, there exists a non-zero vector $y\in V_{3,3}(\Z)$ such that $(y, y)\geq 0$ and $\mathcal{F}(\varphi;y)\rvert_{G(\R)}$ is not identically zero.  The proof partitions into the case when $y$ is non-isotropic, and the case when $y$ is isotropic.  \\
\indent First we suppose $y$ is non-isotropic.  Then there exists $\alpha\in 2\Z_{>0}$ and $n\in \Z_{\geq 1}$ such that $\mathcal{F}(\varphi;ny_{\alpha})\rvert_{G(\R)}\not\equiv 0$.  Applying \eqref{eqn-factorization-non-degenerate-FJ-coefficient} and \eqref{defn-of-A_xi(ny_alpha)}, there exists $S\in V_{1,2}'(\Q)$ such that 
$$
A_{\xi^{\varphi}(ny_{\alpha})}[S]\neq 0
$$ By Corollary \ref{cor-FC-of-HMF-identified} and Theorem \ref{thm-3-yamana},  we may assume $S$ is primitive.  Then \eqref{defn-of-A_xi(ny_alpha)} gives that
\begin{align}
\label{eqn-manipulation-of-a-lifetime}
\overline{A_{\xi^{\varphi}(ny_\alpha)}[S]}
&  =\int_{\A_f/\widehat{\Z}}a_{[ny_\alpha, S]}(\exp(sb_1\wedge b_{-2})) = \sum_{s\in \Q/\Z}\Lambda_{\varphi}[ny_\alpha, S+sny_{\alpha}].
\end{align}
If $s\in \Q_{[0,1)}$ and $\Lambda_{\varphi}[ny_\alpha, S+sny_{\alpha}]\neq 0$, then $S+sny_\alpha\in V_{2,2}(\Z)$.  Fix $n',m',r'\in \Z$ such that $\gcd(n',m',r')=1$ and $$S=-n'b_4-m'b_{-4}-\dfrac{r'}{\alpha}y_{\alpha}^{\vee}.$$Then $S+sny_\alpha\in V_{2,2}(\Z)$ implies $\dfrac{-r'}{\alpha}+sn\in \Z$, and thus $n\alpha s\in \Z$.  Hence, 
$$\overline{A_{\xi^{\varphi}(ny_\alpha)}[S]}=\displaystyle{\sum}_{s=0}^{n\alpha-1}\Lambda_{\varphi}\left[ny_{\alpha},-n'b_4-m'b_{-4}+\frac{s-r'}{\alpha}b_3+\dfrac{s+r'}{2}b_{-3}\right].$$  
Let $s\in \Z_{[0,n\alpha-1]}$ be such that $(s+r')/2$ and $(s-r')/\alpha$ are integral.  Then, given $d\in \Z_{\geq 1}$ such that $d$ divides  $n'$, $m'$,  $(s+r')/2$, and $(s-r')/\alpha$,  $d$ also divides $r'$.  Hence,  $d=1$ and \eqref{eqn-manipulation-of-a-lifetime} expresses $\overline{A_{\xi^{\varphi}(ny_{\alpha})}[S]}$ as a sum of primitive Fourier coefficients of $\varphi$,  which is a contradiction.  

\indent Next we consider the case when $y$ is isotropic. Hence, there exists an isotropic vector $y\in V_{3,3}(\Z)$ such that $\mathcal{F}(\varphi;y)\rvert_{G(\R)}\not\equiv 0$.  Then we may suppose $y=nb_2$ for some $n\in \Z\backslash \{0\}$. By Corollary \ref{corollary-Degenerate-QMF-FJs},  the constant term $\varphi_{N_P}$ satisfies $\varphi_{N_P}\rvert_{G(\R)}\not \equiv 0$.  Hence, there exists $C\in \Q^3$ such that the Fourier coefficient $(\varphi_{N_P})_{\exp(e_2\otimes E),C}$ of \eqref{defn-FC-Heisenber-D4} is not identically zero on $G(\R)$.  Hence,  we may may assume the coefficient $a_{\varphi}(C)$ of \eqref{eqn-FE-3-modular-forms-on-d4} is non-zero, and by Lemma \ref{primitivity-holomorphic-modular-forms-on-MP},  we may take $C=(a,b,c)\in \Z^3$ to be such that $\gcd(a,b,c)=1$.  \\
\indent Let $x\in E$ be such that $(x,C)_E\neq 0$,  then by Corollary \ref{cor-FCs-constant-degenerate-Heisenberg-FCS-D4},
\begin{equation}
\label{final-step-proof-main-thm}
a_{\varphi}(C)=B_C\cdot \sum_{s\in \Q/\Z}\Lambda_{\varphi}[ab_3+cb_{-3}+s(C, x)_Eb_{-4}, -bb_{-4}]. 
\end{equation}
However,  since $\mathrm{gcd}(a,b,c)=1$,  if $s\in \Q/\Z$, then by assumption,
$$\Lambda_{\varphi}[ab_3+cb_{-3}+s(C, x)_Eb_{-4}, -bb_{-4}]=0.
$$
So, \eqref{final-step-proof-main-thm} contradicts $a_{\varphi}(C)\neq 0$. Hence $\mathcal{F}(\varphi;y)\rvert_{G(\R)}\equiv 0$, which completes the proof. 
\end{proof}
\begin{definition}
\label{defn-slice-primitive}
We say that a pair $[T_1,T_2]\in V_{2,2}(\Z)^{\oplus 2}$ is slice primitive if 
$$
\Q\linspan\{T_1,T_2\}\cap V_{2,2}(\Z)=\Z\linspan\{T_1,T_2\}.
$$ 
\end{definition}  
\begin{corollary}
\label{slice-primitivity-corollary}
Let $\ell>0$ and suppose $\varphi$ is a weight $\ell$ cuspidal quaternionic modular form of level one.  Write the Fourier expansion of $\varphi_Z$ as
\begin{equation}
\varphi(g_{\infty})=\sum_{T_1,T_2\in V_{2,2}(\Z) \colon [T_1,T_2]\succ 0} \Lambda_{\varphi}[T_1,T_2] \mathcal{W}_{\ell}(g_{\infty}),
\end{equation} 
and assume $\Lambda_{\varphi}[T_1,T_2]=0$ for all slice primitive vectors $[T_1,T_2]\in V_{2,2}(\Z)^{\oplus 2}$. Then $\varphi\equiv 0$.  
\end{corollary}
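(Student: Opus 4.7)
My plan is to deduce Corollary~\ref{slice-primitivity-corollary} from Theorem~\ref{thm-primitivity-intro-1} by showing that the slice-primitivity vanishing hypothesis, together with cuspidality, forces every primitive Fourier coefficient $\Lambda_\varphi[B]$ of $\varphi$ to vanish. By Corollary~\ref{FE-CUSPIDAL-QMF}, it suffices to treat primitive $B=[T_1,T_2]$ with $B\succ 0$.

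The central step is the special case $T_1=y_\alpha$ for $\alpha\in 2\Z_{>0}$. Adapting the non-isotropic case of the proof of Theorem~\ref{thm-primitivity-intro-1} with $n=1$, Corollary~\ref{cor-FC-of-HMF-identified} yields
$$\overline{A_{\xi^\varphi(y_\alpha)}[S]}=\sum_k \Lambda_\varphi[y_\alpha,T_2(k)]$$
with $T_2(k)=-n'b_4-m'b_{-4}+\tfrac{k-r'}{\alpha}b_3+\tfrac{k+r'}{2}b_{-3}$ and $S=-n'b_4-m'b_{-4}-(r'/\alpha)y_\alpha^\vee$. The key lemma is a lattice identity: for primitive $S$ (i.e.\ $\gcd(n',m',r')=1$), the pair $[y_\alpha,T_2(k)]$ is slice primitive. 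Indeed, a general rational combination $py_\alpha+qT_2(k)\in V_{2,2}(\Z)$ forces $q=t/g$ with $g=\gcd(n',m')$ and $t\in\Z$; the two congruences coming from the $b_3$ and $b_{-3}$ coordinates combine, via the identity $\gcd(g,r')=\gcd(n',m',r')=1$, to force $g\mid t$, and then $q,p\in\Z$. Thus the slice-primitivity hypothesis kills the sum, so $A_{\xi^\varphi(y_\alpha)}[S]=0$ for every primitive $S$; Yamana's theorem (Theorem~\ref{thm-3-yamana}) then gives $\xi^\varphi(y_\alpha)\equiv 0$, whence $\Lambda_\varphi[y_\alpha,T_2]=0$ for every $T_2\in V_{2,2}(\Z)$ and every $\alpha\in 2\Z_{>0}$.

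To handle a general primitive $B=[T_1,T_2]\succ 0$, I would use the level-one equivariance (up to sign) of $B\mapsto\Lambda_\varphi[B]$ under $\GL(U)(\Z)\times \SO(V_{2,2})(\Z)\subseteq M_P(\Z)$. A Chinese-Remainder-Theorem argument produces coprime $(a,b)\in\Z^2$ for which $aT_1+bT_2$ is primitive in $V_{2,2}(\Z)$: for each prime $p$, the primitivity of $[T_1,T_2]$ implies that at least one of $\bar T_1,\bar T_2\pmod p$ is nonzero, so a suitable local choice of $(a,b)\pmod p$ yields a nonzero $a\bar T_1+b\bar T_2\pmod p$, and CRT combines these into a global $(a,b)$ with $\gcd(a,b)=1$. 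Extending to an element of $\GL_2(\Z)=\GL(U)(\Z)$ moves $B$ into the orbit of a pair $[T_1',T_2']$ with $T_1'$ primitive. Since $B\succ 0$ the 2-plane is positive definite, so $(T_1',T_1')=\alpha\in 2\Z_{>0}$ (evenness follows from $q\rvert_{V_{2,2}}=2\det$); by the Smith normal form, $\SL_2(\Z)\times\SL_2(\Z)\subseteq \SO(V_{2,2})(\Z)$ acts transitively on primitive vectors of each fixed positive norm, so a suitable element brings $T_1'$ to $y_\alpha$ and yields $\Lambda_\varphi[B]=\pm\Lambda_\varphi[y_\alpha,T_2'']=0$ by the preceding paragraph. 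Theorem~\ref{thm-primitivity-intro-1} then concludes $\varphi\equiv 0$. The most delicate point is the CRT construction of the required $(a,b)$, which is where the main effort lies.
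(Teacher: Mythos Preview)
Your argument is correct and follows the same route as the paper: reduce to Theorem~\ref{thm-primitivity-intro-1}, use $M_P^{\mathrm{der}}(\Z)$-equivariance to put a primitive $B\succ 0$ into the form $[y_\alpha,\,\ast\,]$, and then kill $\xi^\varphi(y_\alpha)$ via Theorem~\ref{thm-3-yamana}. The only differences are bookkeeping: the paper replaces your CRT/Smith-normal-form reduction by a direct citation to Bhargava's normal form, and it observes that for $n=1$ the sum in \eqref{eqn-manipulation-of-a-lifetime} has exactly one surviving term (the unique $s\in\{0,\dots,\alpha-1\}$ with $s\equiv r'\pmod\alpha$, which automatically makes $(s+r')/2$ integral since $\alpha$ is even), so your slice-primitivity lemma for the pairs $[y_\alpha,T_2(k)]$, while correct, is not actually needed.
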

\begin{proof}
Assume $\Lambda_{\varphi}[T_1,T_2]=0$ for all slice primitive vectors $[T_1,T_2]\in V_{2,2}(\Z)^{\oplus 2}$. By Theorem~\ref{thm-primitivity-intro-1}, it suffices to show that if $[T_1,T_2]\in V_{2,2}(\Z)^{\oplus 2}$ is primitive, then $\Lambda_{\varphi}[T_1,T_2]=0$.  \\
\indent Assume $[T_1,T_2]\succ 0$ is primitive.  Applying \eqref{identification-of-V_22}, we may write 
$$
[T_1,T_2]=\left[\begin{pmatrix} a &b \\ c &d \end{pmatrix} , \begin{pmatrix} e &f \\ g &h \end{pmatrix}\right] 
$$ where $a,\ldots , h\in \Z$ are jointly coprime.  Since $\varphi$ is of level $1$,  \cite[Appendix Ch. II]{MR2051392} implies that we may assume
\begin{equation}
\label{eqn-form-of-B}
[T_1,T_2]=\left[y_{\alpha}, -nb_4-mb_{-4}+rb_{-3}\right].
\end{equation}
Moreover, since $[T_1,T_2]\succ 0$,  Proposition \ref{Properties-of-beta} implies $\alpha>0$. \\ 
\indent Consider the Fourier expansion of the holomorphic modular form $\xi^{\varphi}(y_{\alpha})$ of Corollary \ref{cor-FC-of-HMF-identified}.  By \eqref{eqn-manipulation-of-a-lifetime},  if $n',m',r'\in \Z$ and $S=-n'b_4-m'b_{-4}-\dfrac{r'}{\alpha}y_{\alpha}^{\vee}$ then $\overline{A_{\xi^{\varphi}(y_{\alpha})}[S]}$ equals
\begin{equation}
\label{manipulation-of-a-lifetime-2}
\sum_{s=0}^{\alpha-1}\Lambda_{\varphi}\left[y_{\alpha},-n'b_4-m'b_{-4}+\frac{s-r'}{\alpha}b_3+\dfrac{s+r'}{2}b_{-3}\right] 
=\Lambda\left[y_{\alpha},-n'b_4-m'b_{-4}+r'b_{-3}\right].
\end{equation}
Clearly,  if $S$ is primitive, then $\left[y_{\alpha},-n'b_4-m'b_{-4}+r'b_{-3}\right]$ is slice primitive. Thus, $\overline{A_{\xi^{\varphi}(y_{\alpha})}[S]}=0$ for all primitive $S$, and Corollary \ref{cor-FC-of-HMF-identified}, together with Theorem \ref{thm-3-yamana}, implies $\xi^{\varphi}(y_{\alpha})\equiv 0$. Hence, by \eqref{manipulation-of-a-lifetime-2},  $\Lambda\left[y_{\alpha},-n'b_4-m'b_{-4}+r'b_{-3}\right]=0$ for all $n',m',r'\in \Z$. Thus, \eqref{eqn-form-of-B} implies $
\Lambda[T_1,T_2]=0
$ as required.
\end{proof}
As a corollary to Theorem~\ref{thm-primitivity-intro-1} and Corollary~\ref{slice-primitivity-corollary}, we obtain the following. 
\begin{corollary}
\label{cor-determination}
Let $\varphi_1,\varphi_2\in M_{\ell}(1)$ be level one quaternionic modular forms on $G$. 
\begin{compactenum}[(i)] 
\item If $\Lambda_{\varphi_1}[B]=\Lambda_{\varphi_2}[B]$ for all primitive $B\in V_{2,2}(\Z)^{\oplus 2}$. Then $\varphi_1=\varphi_2$. 
\item If $\varphi_1$ and $\varphi_2$ are cuspidal and $$
\Lambda_{\varphi_1}[B]=\Lambda_{\varphi_2}[B],
$$ for all slice primitive $B\in V_{2,2}(\Z)^{\oplus 2}$, then $\varphi_1=\varphi_2$. 
\end{compactenum}
\end{corollary}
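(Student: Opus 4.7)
The plan is to reduce both statements directly to the vanishing criteria already established. The key observation is that the Fourier coefficient map $\varphi \mapsto \Lambda_{\varphi}[B]$ is linear. Indeed, from the definition of $\Lambda_{\varphi}[B]$ via the Fourier expansion \eqref{FE-non-cuspidal-QMF-phiZ} of the constant term $\varphi_Z$, and the fact that the generalized Whittaker function $\mathcal{W}_{[2\pi T_1, 2\pi T_2]}$ is uniquely determined (up to scalar) by the conditions of Theorem~\ref{Thm 1.2.1 Aaron Paper}, we see immediately that
\[
\Lambda_{\varphi_1 - \varphi_2}[B] = \Lambda_{\varphi_1}[B] - \Lambda_{\varphi_2}[B]
\]
for every $B \in V_{2,2}(\Z)^{\oplus 2}$ with $B \succeq 0$.

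For part (i), I would set $\varphi := \varphi_1 - \varphi_2$, which lies in $M_\ell(1)$ by linearity of the modular form conditions (smoothness, moderate growth, $K_\infty$-equivariance, annihilation by $D_\ell$, $Z(\g)$-finiteness, and level one invariance). By hypothesis, $\Lambda_{\varphi}[B] = 0$ for every primitive $B \in V_{2,2}(\Z)^{\oplus 2}$. Applying Theorem~\ref{thm-primitivity-intro-1} yields $\varphi \equiv 0$, i.e.\ $\varphi_1 = \varphi_2$.

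For part (ii), the same argument goes through with $\varphi := \varphi_1 - \varphi_2 \in S_\ell(1)$, since the cuspidality condition is linear (the integrals defining $\varphi_{\mathcal{N}}$ are linear in $\varphi$). Since $\Lambda_{\varphi}[B] = 0$ for every slice primitive $B \in V_{2,2}(\Z)^{\oplus 2}$, Corollary~\ref{slice-primitivity-corollary} gives $\varphi \equiv 0$, as required.

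There is no real obstacle here: the entire content of the corollary is the linearity of the Fourier-coefficient functional, after which the two parts are immediate consequences of Theorem~\ref{thm-primitivity-intro-1} and Corollary~\ref{slice-primitivity-corollary} respectively. The only mild point to verify is that the subspaces $M_\ell(1)$ and $S_\ell(1)$ are closed under subtraction, which is clear from their definitions.
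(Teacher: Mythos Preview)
Your proposal is correct and is exactly the argument the paper has in mind: the corollary is stated immediately after Corollary~\ref{slice-primitivity-corollary} with the remark that it follows from Theorem~\ref{thm-primitivity-intro-1} and Corollary~\ref{slice-primitivity-corollary}, and no further proof is given. The linearity of $\varphi\mapsto\Lambda_\varphi[B]$ and the stability of $M_\ell(1)$ and $S_\ell(1)$ under subtraction are the only points to note, and you have addressed both.
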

\begin{remark}Yamana \cite{Yamana09} has shown that the proof of Zagier's Theorem is broadly applicable to holomorphic modular forms on classical groups.  Similarly, one can formulate Theorem~\ref{thm-into-cuspidality-criteria} for quaternionic modular forms more generally.  However, it is natural to consider Theorem~\ref{thm-primitivity-intro-1} first in the setting of the group $G$ on account of Theorem~\ref{Slice-Primitive-Lifts}, which has no known analogue outside of type $D_4$ (see Remark~\ref{rmk-intro-1}). Another reason to focus on type $D_4$ is because the arithmetic invariant theory of $\SL_2(\Z)^3$ acting on $\mathrm{M}_2(\Z)^{\oplus 2}$ is particularly rich. Indeed,  Bhargava \cite[\S 2.3, pg. 221]{MR2051392} defines a notion of projectivity for elements $B\in \mathrm{M}_2(\Z)^{\oplus 2}$.  Corollary~\ref{slice-primitivity-corollary} gives progress towards the following conjecture.
\end{remark}
\begin{conjecture}
\label{projectivity-conjecture} Let $\varphi$ be a cuspidal quaternionic modular form on $G$ of weight $\ell>0$ and level one such that $\Lambda_{\varphi}[B]=0$ for all projective $B\in \mathrm{M}_2(\Z)^{\oplus 2}$.  Then $\varphi\equiv 0$.
\end{conjecture}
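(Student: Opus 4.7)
The plan is to refine the argument of Corollary~\ref{slice-primitivity-corollary}, replacing the slice primitivity/Yamana input with a Saha-style refinement at the level of holomorphic modular forms on $M'$. Fix $\varphi$ as in the conjecture. Since projectivity is strictly stronger than slice primitivity, Corollary~\ref{slice-primitivity-corollary} cannot be invoked directly, but by that corollary it suffices to prove $\Lambda_{\varphi}[B]=0$ for every slice primitive $B\succ 0$. As in the proof of that corollary, the level-one hypothesis combined with \cite[Appendix Ch.~II]{MR2051392} allows us to replace $B$ with an $\SL_2(\Z)^3$-translate of normal form $[y_{\alpha},-n'b_4-m'b_{-4}+r'b_{-3}]$ for some $\alpha\in 2\Z_{>0}$. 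The identity \eqref{manipulation-of-a-lifetime-2} and Corollary~\ref{cor-FC-of-HMF-identified} then identify $\Lambda_{\varphi}[B]$ with $\overline{A_{\xi^{\varphi}(y_{\alpha})}[S]}$, where $\xi^{\varphi}(y_{\alpha})$ is the weight $\ell$ holomorphic modular form on $M'$ produced by Proposition~\ref{prop-existence-holomorphic-FJ-coefficient} and $S=-n'b_4-m'b_{-4}-(r'/\alpha)y_{\alpha}^{\vee}$. The conjecture thus becomes the assertion that $\xi^{\varphi}(y_{\alpha})\equiv 0$ for every admissible $\alpha\in 2\Z_{>0}$.

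Next I would convert the projectivity hypothesis into a condition on Fourier indices of $\xi^{\varphi}(y_{\alpha})$. A direct computation yields $\mathrm{disc}(T(B))=-\alpha(S,S)$, and Bhargava's dictionary \cite{MR2051392} identifies $\SL_2(\Z)^3$-orbits of projective pairs with proper ideal classes in quadratic orders. In the normal form above, this should certify that every slice primitive index $S$ with $-\alpha(S,S)$ a fundamental discriminant is represented by a projective $B$. Granting this identification, the projectivity hypothesis on $\varphi$ forces the vanishing of every Fourier coefficient $A_{\xi^{\varphi}(y_{\alpha})}[S]$ for which $-\alpha(S,S)$ is a fundamental discriminant.

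The final step would be to invoke a Saha-style theorem: a weight $\ell>0$ cuspidal holomorphic modular form on $M'$ of level $M'(\Z)$ whose Fourier coefficients at fundamental-discriminant indices all vanish is identically zero. When $\alpha=2$ this follows immediately by composing the identification \cite[\S 7.3]{JMNPR24} between modular forms on $M'$ and level one genus two Siegel modular forms with the main theorem of \cite{MR3004586}. For $\alpha>2$, however, $M'(\Z)$ is a proper paramodular-type subgroup of $\Sp_4(\Z)$ and no such clean statement appears in the literature. Cuspidality of $\xi^{\varphi}(y_{\alpha})$ must also be verified, but this should follow from the cuspidality of $\varphi$ via the explicit formulae of Subsection~\ref{subsec:Non-Degenerate Fourier-Jacobi Coefficients of Modular Forms on G} and the support property in Proposition~\ref{Properties-of-beta}.

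The main obstacle is precisely the $\alpha>2$ Saha-type theorem. Saha's argument in \cite{MR3004586} relies on a Poincar\'e series construction together with a local non-vanishing estimate for Bessel model functionals at every finite place, and both ingredients lean on the level-one Siegel newvector theory. Transplanting this machinery to the paramodular setting of $M'(\Z)$ requires genuinely new local analysis at primes dividing $\alpha$ and a suitable replacement for newvector theory in that context; this appears to be the real content of the conjecture. A subsidiary technical point is to rigorously verify the Bhargava dictionary used in the second paragraph, particularly that \emph{every} slice primitive index $S$ with $-\alpha(S,S)$ fundamental is realized by a projective $B$ rather than merely by a slice primitive one.
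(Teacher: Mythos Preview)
The statement you are addressing is labeled a \emph{conjecture} in the paper and is left unproven there; the surrounding remark says only that Corollary~\ref{slice-primitivity-corollary} gives progress towards it. So there is no proof in the paper to compare your proposal against.

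Your proposal is not a proof either, and you say so yourself: you correctly reduce the problem, via the same Fourier--Jacobi machinery as in Corollary~\ref{slice-primitivity-corollary}, to the vanishing of each holomorphic form $\xi^{\varphi}(y_{\alpha})$, and then identify the two missing ingredients as (a) a Saha-type determination theorem for cuspidal forms of level $M'(\Z)$ when $\alpha>2$, and (b) the precise verification that projective $B$ hit every slice primitive index $S$ with $-\alpha(S,S)$ fundamental. Both obstacles are genuine. The first is exactly the kind of open problem the paper gestures at; Saha's argument in \cite{MR3004586} does not transfer directly to paramodular-type level, and the generalization in \cite{MR4515287} concerns full-level Siegel forms of higher genus, not the $M'(\Z)$ level structure arising here. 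The second point also deserves care: Bhargava's projectivity condition requires all three associated binary quadratic forms to be primitive, and it is not automatic that a slice primitive $B$ in the normal form $[y_{\alpha},-n'b_4-m'b_{-4}+r'b_{-3}]$ with $-\alpha(S,S)$ fundamental is projective, since primitivity of the remaining two forms imposes further gcd conditions involving $\alpha$. So your outline is a fair sketch of what a proof would need, and your diagnosis of the bottleneck matches the paper's implicit assessment, but neither you nor the paper supplies the missing input.
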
  

\section{An Application to The Quaternionic Maass Spezialschar}
\label{sec-The Quaternionic Maass Spezialschar}
\subsection{Quaternionic Modular Forms on $\SO_8$}
\label{subsec: Quaternionic Modular Forms on SO_8}
Let $V$ denote the $8$-dimensional split quadratic space of Subsection \ref{subsec:The underlying quadratic space $V$}.
In this subsection, we review the definition of quaternionic modular forms on $\SO(V)$ following \cite{pollackCuspidal}. We also explain some notation concerning the Fourier coefficients of modular forms on $\SO(V)$. \\ 
\indent Write $\pi\colon G\to \SO(V)$ for the projection homomorphism defined in Subsection \ref{subsec-the-Lie-Algebra-of-G}. The representation $\V$ of Subsection \ref{subsec-compact-subgroups} occurs in the Lie algebra of the maximal compact subgroup $K_{\infty}\leq \Spin(V)(\R)$. Therefore, $\V$ descends along $\pi$ to give a representation of the identity component of the maximal compact subgroup $K_{\SO_8}\leq \SO(V)(\R)$, which is the image of $K_{\infty}$ under $\pi$. Similarly, the differential operator $D_{\ell}$ of Subsection \ref{subsec: Quaternionic Modular Forms on G} descends to give a $K_{\SO_8}^0$ invariant differential operator on smooth functions $\SO(V)(\R)\to \V_{\ell}$. Let $\so(V)=\Lie(\SO(V))\otimes_{\Q}\C$ and write $Z(\so(V))$ for the center of the universal enveloping algebra of $\so(V)$. 
\begin{definition}
    A weight $\ell>0$ quaternionic modular form on $\SO(V)$ of level one is a smooth moderator growth function $\varphi\colon G(\A)\to \V_{\ell}$ such that 
    \begin{compactenum}
        \item[(i)] If $g\in \SO(V)(\A)$ and $\gamma\in \SO(V)(\Q)$ then $\varphi(\gamma g)=\varphi(g)$, 
        \item[(ii)] If $g\in \SO(V)(\A)$ and $k\in \SO(V)(\widehat{\Z})$ then $\varphi(gk)=\varphi(g)$, 
        \item[(iii)] If $g\in \SO(V)(\A)$ and $k_{\infty}\in K_{\SO_8}^0$ then $\varphi(gk_{\infty})=k_{\infty}^{-1}\varphi(g)$, 
        \item[(iv)] $D_{\ell}\varphi\equiv 0$, and  
        \item[(v)] $\varphi$ is $Z(\so(V))$-finite. 
        \end{compactenum}
\end{definition}
Clearly, if $\varphi$ is a quaternionic modular form on $\SO(V)$ of level one, then the pull-back of $\varphi$ by $\pi$ is a level one quaternionic modular form $\Spin(V)$ in the sense of Subsection \ref{subsec: Quaternionic Modular Forms on G}.
\begin{lemma}
\label{lemma-unique-determination}
    Suppose $\varphi_1$ and $\varphi_2$ are quaternionic modular forms on $\SO(V)$ of level one. For $i=1,2$, let $\pi^{\ast}\varphi_i$ denote the pull-back of $\varphi_i$ to $\Spin(V)$. If $\pi^{\ast}\varphi_1=\pi^{\ast}\varphi_2$ then $\varphi_1=\varphi_2$. 
\end{lemma}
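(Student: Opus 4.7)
Set $\psi = \varphi_{1} - \varphi_{2}$. Then $\psi$ is again a level one quaternionic modular form on $\SO(V)$, and the hypothesis $\pi^{\ast}\varphi_{1} = \pi^{\ast}\varphi_{2}$ says exactly that $\psi$ vanishes on the subgroup $\pi(\Spin(V)(\A)) \subseteq \SO(V)(\A)$. The plan is to deduce $\psi \equiv 0$ by establishing the adelic decomposition
\begin{equation}
\label{proposal-decomp}
\SO(V)(\A) \; = \; \SO(V)(\Q) \cdot \pi(\Spin(V)(\A)) \cdot \SO(V)(\widehat{\Z}).
\end{equation}
Granting \eqref{proposal-decomp}, any $g \in \SO(V)(\A)$ factors as $g = \gamma\, \pi(h)\, k$ with $\gamma \in \SO(V)(\Q)$, $h \in \Spin(V)(\A)$, and $k \in \SO(V)(\widehat{\Z})$. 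Properties (i) and (ii) of the definition of a level one quaternionic modular form on $\SO(V)$ then give $\psi(g) = \psi(\pi(h) k) = \psi(\pi(h)) = 0$, as required.

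To prove \eqref{proposal-decomp} I would invoke the spinor norm. Taking adelic cohomology in the Kummer-type short exact sequence $1 \to \mu_{2} \to \Spin(V) \to \SO(V) \to 1$ produces a homomorphism $s \colon \SO(V)(\A) \to \A^{\times}/(\A^{\times})^{2}$ whose kernel is $\pi(\Spin(V)(\A))$. Hence \eqref{proposal-decomp} is equivalent to the containment $s(\SO(V)(\A)) \subseteq s(\SO(V)(\Q)) \cdot s(\SO(V)(\widehat{\Z}))$ in $\A^{\times}/(\A^{\times})^{2}$. By the idelic identity $\A^{\times} = \Q^{\times} \cdot \R^{\times}_{>0} \cdot \widehat{\Z}^{\times}$, and because $\R^{\times}_{>0}$ consists entirely of squares in $\A^{\times}$, one obtains
\begin{equation*}
\A^{\times}/(\A^{\times})^{2} \; = \; \bigl(\Q^{\times} \cdot \widehat{\Z}^{\times}\bigr) \bmod (\A^{\times})^{2}.
\end{equation*}
It therefore suffices to check that $s$ is surjective from $\SO(V)(\Q)$ onto $\Q^{\times}/(\Q^{\times})^{2}$ and from $\SO(V)(\Z_{p})$ onto $\Z_{p}^{\times}/(\Z_{p}^{\times})^{2}$ at every prime $p$. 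Both statements exploit the splitness of $V$: over $\Q$, rational vectors in $V$ of every prescribed norm exist, so pairs of reflections in such vectors produce elements of $\SO(V)(\Q)$ realizing every class of $\Q^{\times}/(\Q^{\times})^{2}$ under the spinor norm, and an analogous argument at odd $p$ uses reflections in integral vectors such as $b_{1} + c\, b_{-1}$ with $c \in \Z_{p}^{\times}$.

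The main obstacle I anticipate is the local verification at $p = 2$, where $\Z_{2}^{\times}/(\Z_{2}^{\times})^{2} \cong (\Z/2)^{2}$ has four classes and the selection of unit-norm integral vectors requires more care. However, the split hyperbolic planes in $V(\Z_{2})$ contain enough integral vectors to realize every class under the spinor norm via pairs of reflections, and once this is checked the decomposition \eqref{proposal-decomp} follows and the proof of Lemma~\ref{lemma-unique-determination} concludes as outlined above.
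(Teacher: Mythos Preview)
Your argument is correct and reaches the conclusion by a route genuinely different from the paper's. You aim directly for the decomposition $\SO(V)(\A) = \SO(V)(\Q)\cdot \pi(\Spin(V)(\A))\cdot \SO(V)(\widehat{\Z})$ via the spinor norm, reducing to local surjectivity onto $\Z_p^\times/(\Z_p^\times)^2$. Your worry about $p=2$ is in fact unfounded: the very same pair of reflections $\tau_{b_1+b_{-1}}\tau_{b_1+cb_{-1}}$ with $c\in\Z_2^\times$ lies in $\SO(V)(\Z_2)$ (one checks $\tau_{b_1+cb_{-1}}$ sends $b_1\mapsto -cb_{-1}$, $b_{-1}\mapsto -c^{-1}b_1$, and fixes the other basis vectors) and has spinor norm $c$, so every class of $\Z_2^\times/(\Z_2^\times)^2$ is hit.

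The paper takes a more top-down approach: it invokes the Hasse principle to obtain class number one for the genus of $V(\Z)$, hence $\SO(V)(\A)=\SO(V)(\Q)\cdot\SO(V)(\R)\cdot\SO(V)(\widehat{\Z})$, and then argues at the archimedean place that $\Gamma=\SO(V)(\Z)$ meets the non-identity component of $\SO(V)(\R)$, so a level-one form is determined by its restriction to $\SO(V)(\R)^0=\pi(\Spin(V)(\R))$. Your proof is more hands-on and entirely self-contained through explicit reflections; the paper's is shorter but relies on the class-number input. Both are perfectly valid.
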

\begin{proof}
    Let $V(\Z)=\Z\linspan\{b_{\pm i} \colon i=1,2,3,4\}$.The quotient $\SO(V)(\Q)\backslash \SO(V)(\A_f)/ \SO(V)(\widehat{\Z})$ classifies isomorphism classes of lattices in the genus of $V(\Z)$. Thus, by the Hasse principle
$$
|\SO(V)(\Q)\backslash \SO(V)(\A_f)/ \SO(V)(\widehat{\Z})|=1,
$$ and if $\Gamma=\SO(V)(\widehat{\Z})\cap \SO(V)(\Q)$, then $\SO(V)(\Q)\backslash \SO(V)(\A)/G(\widehat{\Z})=\Gamma\backslash \SO(V)(\R)$. Since $\Gamma$ contains elements in the non-identity component of $\SO(V)(\R)$, it follows that $\varphi_i$ is determined by its restriction to $\SO(V)(\R)^0$. As $\Spin(V)(\R)$ surjects onto $\SO(V)(\R)^0$, the statement follows. 
\end{proof}
Suppose $\varphi$ is a level one quaternionic modular form on $\SO(V)$ and $B\in V_{2,2}(\Z)^{\oplus 2}$. Define the Fourier coefficient $\Lambda_{\varphi}[B]$ via the equality $\Lambda_{\varphi}[B]=\Lambda_{\pi^{\ast}\varphi}[B]$. Then we obtain the following Corollary to Lemma \ref{lemma-unique-determination} and Corollary \ref{slice-primitivity-corollary}. 
\begin{corollary}
\label{cor-slice-primitive-SO_8}
Suppose $\varphi_1$ and $\varphi_2$ are cuspidal level one quaternionic modular forms on $\SO(V)$. Assume $\Lambda_{\varphi_1}[B]=\Lambda_{\varphi_2}[B]$ for all slice primitive $B\in V_{2,2}(\Z)^{\oplus 2}$. Then $\varphi_1=\varphi_2$.  

\end{corollary}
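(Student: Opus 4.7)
The plan is to reduce the statement to Corollary \ref{slice-primitivity-corollary} via the pull-back along $\pi\colon \Spin(V)\to \SO(V)$, and then conclude using Lemma \ref{lemma-unique-determination}. Concretely, I would set $\varphi := \varphi_1 - \varphi_2$, which by linearity is again a cuspidal level one quaternionic modular form on $\SO(V)$, and I would aim to show $\varphi\equiv 0$.

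First, I would pass to $\pi^{\ast}\varphi$ on $G = \Spin(V)$. By the definition of Fourier coefficients for modular forms on $\SO(V)$ given immediately before the corollary, one has $\Lambda_{\pi^{\ast}\varphi}[B] = \Lambda_{\varphi}[B] = \Lambda_{\varphi_1}[B]-\Lambda_{\varphi_2}[B]$ for all $B\in V_{2,2}(\Z)^{\oplus 2}$, so the hypothesis gives $\Lambda_{\pi^{\ast}\varphi}[B]=0$ for every slice primitive $B$. I would then verify that $\pi^{\ast}\varphi \in S_{\ell}(1)$; right $G(\widehat\Z)$-invariance and the Schmid equation transfer formally, while cuspidality follows because every $\Q$-rational parabolic $\mathcal{N}\leq G$ maps onto the unipotent radical of a $\Q$-rational parabolic of $\SO(V)$, so the constant term along $\mathcal{N}$ of $\pi^{\ast}\varphi$ is the pull-back of the corresponding constant term of $\varphi$, which vanishes.

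With $\pi^{\ast}\varphi$ identified as a cuspidal element of $M_{\ell}(1)$ whose slice primitive Fourier coefficients all vanish, Corollary \ref{slice-primitivity-corollary} applies and yields $\pi^{\ast}\varphi_1 = \pi^{\ast}\varphi_2$. Finally, Lemma \ref{lemma-unique-determination} promotes this equality of pull-backs back to an equality $\varphi_1=\varphi_2$ on $\SO(V)(\A)$, completing the argument.

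The step requiring the most care is the cuspidality of $\pi^{\ast}\varphi$ on $G$: one needs to check that testing $\varphi_{\mathcal{N}}\equiv 0$ for unipotent radicals in $\SO(V)$ is sufficient to conclude $(\pi^{\ast}\varphi)_{\mathcal{N}'}\equiv 0$ for unipotent radicals $\mathcal{N}'$ in $\Spin(V)$. This is essentially formal because $\pi$ is a central isogeny and therefore induces a bijection between $\Q$-rational parabolic subgroups of $G$ and of $\SO(V)$, restricting to isomorphisms on unipotent radicals. Once this compatibility is recorded, the rest of the argument is a direct citation of the two earlier results and involves no further computation.
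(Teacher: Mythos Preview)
Your proposal is correct and is precisely the argument the paper has in mind: the paper states this result as an immediate corollary to Lemma~\ref{lemma-unique-determination} and Corollary~\ref{slice-primitivity-corollary}, and your proof spells out exactly that reduction. The only additional detail you supply is the verification that cuspidality passes along the central isogeny $\pi$, which is indeed formal since $\pi$ restricts to an isomorphism on unipotent radicals.
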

\subsection{The Quaternionic Saito-Kurokawa Lifting} 
\label{subsec:The Quaternionic Saito-Kurokawa Lifting} 
In this section we define the quaternionic Saito-Kurokawa subspace $\mathrm{SK}_{\ell}$ of modular forms on $\SO_8$.  \ \indent 
Suppose $B=[T_1,T_2]\in V^{\oplus 2}$ and define a $2$-by-$2$ matrix with entries in $\Q$ via the formula
\begin{equation}
\label{Definition-S(T1,T2)}
T(B)=\frac{1}{2}\begin{pmatrix} (T_1,T_1) &(T_1,T_2) \\ (T_2,T_1) &(T_2,T_2)\end{pmatrix}. 
\end{equation}
    \begin{theorem}\emph{\cite[Theorem 4.1.1]{pollackCuspidal}}
    \label{pollack4.1.1}
    Suppose $\xi\colon \Sp_4(\Q)\backslash \Sp_4(\A)\to \C$ is the automorphic function associated to a level one, cuspidal,  holomorphic modular form of even weight $\ell\geq 16$. Write the classical Fourier expansion of $\xi$ in the form $F_{\xi}(Z)=\sum_{T>0}B_{\xi}[T]\exp(2\pi i \tr(TZ))$.   For all $B\in V_{2,2}(\Z)^{\oplus 2}$ satisfying $B\succ 0$,  define
\begin{equation}\label{fouriercoefficients}
		\Lambda_{\theta^{\ast}(F_{\xi})}(B)=\sum_{\tiny \hbox{$r\in \GL_2(\Z)\backslash \mathrm{M}_2(\Z)^{\det\neq 0}$ s.t. $B r^{-1}\in V_{2,2}(\Z)^{\oplus 2}$}}|\det(r)|^{\ell-1}\overline{B_{\xi}[^{t}r^{-1}T(B)r^{-1}]}.
\end{equation}
Here $B r^{-1}$ is computed as a matrix product where $B=[T_1,T_2]$ is a $1\times 2$ matrix with entries in $V_{2,2}(\Z)$. Then the numbers $\Lambda_{\theta^{\ast}(F_{\xi})}(B)$ are the Fourier coefficients of a non-zero, weight $\ell$, level one, cuspidal quaternionic modular form, $\theta^{\ast}(F_{\xi})$,  on $\SO_8$.
    \end{theorem}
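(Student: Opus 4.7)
The plan is to realize $\theta^*(F_\xi)$ as an explicit theta lift for the dual pair $(\Sp_4, \mathrm{O}(V))$, and then verify the Fourier expansion formula by a Rallis-style unfolding along the Heisenberg parabolic $P$ of $\Spin(V)$. To set this up I fix the additive character $\psi$, let $\omega_\psi$ denote the associated Weil representation of $\Sp_4(\A)\times \mathrm{O}(V)(\A)$ on $\mathcal{S}(V(\A)^{\oplus 2})$, and form the theta kernel $\Theta(g,h;\phi)=\sum_{x\in V(\Q)^{\oplus 2}}\omega_\psi(g,h)\phi(x)$ for a carefully chosen factorizable $\phi=\phi_\infty\otimes \phi_f$. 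At each finite place I take $\phi_p$ to be the characteristic function of $V(\Z_p)^{\oplus 2}$, which forces the lift to be spherical and hence of level one. At the archimedean place I need a $\mathbf{V}_\ell$-valued Schwartz function whose span under $K_\infty$ realizes, on the symplectic side, the minimal $K$-type of the weight-$\ell$ holomorphic discrete series of $\Sp_4(\R)$ and, on the orthogonal side, the minimal $K$-type of the quaternionic discrete series of $\mathrm{O}(V)(\R)$; such a $\phi_\infty$ is essentially unique up to scalar and is an explicit polynomial times a Gaussian. The lift is then
\[
\theta^*(F_\xi)(h)=\int_{\Sp_4(\Q)\backslash \Sp_4(\A)} F_\xi(g) \, \overline{\Theta(g,h;\phi)}\, dg,
\]
and the conditions $D_\ell \theta^*(F_\xi)=0$ and $K_\infty$-equivariance follow automatically from this matching at infinity.

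The core calculation is the unfolding of the Fourier coefficient at $B=[T_1,T_2]\succ 0$. After swapping the $N_P$-integral with the theta sum, the character $\varepsilon_B$ being trivial on the center $Z$ of $N_P$ kills all pairs $x=(x_1,x_2)\in V(\Q)^{\oplus 2}$ except those whose components lie in $V_{2,2}(\Q)$ and have Gram matrix $2T(B)$. Such $x$ are precisely of the form $(T_1,T_2)\cdot r^{-1}$ for $r$ running over representatives in $\GL_2(\Z)\backslash \mathrm{M}_2(\Z)^{\det\neq 0}$ subject to $Br^{-1}\in V_{2,2}(\Z)^{\oplus 2}$. The remaining $\Sp_4$-integral then isolates the Fourier coefficient $\overline{B_\xi[\,^tr^{-1}T(B)r^{-1}\,]}$ of $F_\xi$, while the factor $|\det r|^{\ell-1}$ arises from the weight-$\ell$ Siegel transformation law combined with the action of $r$ on the Schwartz function via $\omega_\psi$. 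Matching the archimedean factor with the explicit generalized Whittaker function of Theorem~\ref{Thm 1.2.1 Aaron Paper} via Corollary~\ref{cor:FE of phiZ} then recovers \eqref{fouriercoefficients} exactly.

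The main obstacles are non-vanishing and cuspidality. For non-vanishing, since $F_\xi$ is a nonzero cusp form of weight $\ell\geq 16$ there exists $T_0>0$ with $B_\xi[T_0]\neq 0$; choosing a slice-primitive $B$ with $T(B)=T_0$ forces the sum in \eqref{fouriercoefficients} to collapse to the single contribution $r=I$, yielding $\Lambda_{\theta^*(F_\xi)}(B)\neq 0$. Cuspidality is the harder step: by Corollary~\ref{FE-CUSPIDAL-QMF} it suffices to prove that the Fourier coefficients $\Lambda_{\theta^*(F_\xi)}(B)$ supported on non-positive-definite $B$ all vanish, which after unfolding expresses each such coefficient as the integral of $F_\xi$ against the constant term of the theta kernel along a Jacobi-type subgroup of $\Sp_4$. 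The standard Rallis tower argument shows that this constant term factors through a theta lift from $\Sp_4$ to a smaller orthogonal group; the cuspidality of $F_\xi$, together with the assumption $\ell\geq 16$ that places us in the stable (absolutely convergent) range, then forces these integrals to vanish. The weight bound $\ell\geq 16$ is also precisely what guarantees absolute convergence of the defining theta integral without regularization.
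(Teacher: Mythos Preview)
The paper does not give its own proof of this theorem: the statement is quoted verbatim from \cite[Theorem~4.1.1]{pollackCuspidal}, as the citation in the theorem header indicates, and no argument is supplied here.  So there is nothing in the present paper to compare your proposal against.

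Your sketch is, in broad outline, the strategy Pollack follows in \cite{pollackCuspidal}: one constructs $\theta^{*}(F_\xi)$ as a theta lift for the dual pair $(\Sp_4,\mathrm{O}(V))$, chooses the spherical Schwartz function at finite places and a Gaussian-times-polynomial at infinity matched to the quaternionic minimal $K$-type, unfolds the $[T_1,T_2]$-Fourier coefficient to a sum over $\GL_2(\Z)\backslash \mathrm{M}_2(\Z)^{\det\neq 0}$, and identifies the resulting archimedean integral with the generalized Whittaker function of Theorem~\ref{Thm 1.2.1 Aaron Paper}.  Two points in your sketch deserve caution.  First, the construction and explicit identification of the archimedean Schwartz function $\phi_\infty$ that simultaneously realizes the holomorphic minimal $K$-type on the $\Sp_4$ side and the quaternionic minimal $K$-type on the orthogonal side is nontrivial and is one of the main computations in \cite{pollackCuspidal}; asserting that ``such a $\phi_\infty$ is essentially unique up to scalar'' hides real work.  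Second, your cuspidality argument via the Rallis tower is stated somewhat loosely: one must show that the constant terms of $\theta^{*}(F_\xi)$ along \emph{each} maximal parabolic of $\SO(V)$ vanish, and while the tower argument handles the Siegel-type parabolics directly, the Heisenberg constant term requires the separate analysis that (in the present paper's framework) is encoded in Corollary~\ref{FE-CUSPIDAL-QMF}.  Your non-vanishing argument via a slice-primitive $B$ with $T(B)=T_0$ is correct but implicitly uses the existence of such a $B$, which is \cite[Lemma~5.7]{JMNPR24}.
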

    \begin{definition}
    Suppose $\ell\geq 16$ is even and let $S_{\ell}(\Sp_4(\Z))$ denote the space of cuspidal holomorphic Siegel modular forms of weight $\ell$,  genus $2$, and level $1$. Define the quaternionic Saito-Kurokawa subspace of weight $\ell$ as $\mathrm{SK}_{\ell}=\{\theta^{\ast}(F)\colon F\in S_{\ell}(\Sp_4(\Z))\}$.  
    \end{definition}
\subsection{The Quaternionic Maass Spezialschar}
The purpose of this subsection is to present a characterization of $\mathrm{SK}_{\ell}$ which is analogous to the characterization of the classical Saito-Kurokawa subspace given by \cite[Theorem 1(iii)]{ZaFrench}.
\begin{definition}
\label{defn-quaternionic-maass-spezialschar}
The weight $\ell$ quaternionic Maass Spezialschar $\mathrm{MS}_{\ell}$ is the subspace of level one, weight $\ell$ quaternionic modular forms $\varphi$ on $\SO(V)$ such that if $B_1, B_2\in V_{2,2}(\Z)^{\oplus 2}$ are slice primitive and $T(B_1)=T(B_2)$ (see \eqref{Definition-S(T1,T2)}), then $\Lambda_{\varphi}[B_1]=\Lambda_{\varphi}[B_2]$. 
\end{definition}
Definition \ref{defn-quaternionic-maass-spezialschar} should be compared to \cite[Definition 5.8]{JMNPR24}. The reader will note that the condition in Definition  \ref{defn-quaternionic-maass-spezialschar} is equivalent to condition (i) in  (loc. cit.).  The next Lemma shows that condition (ii) of (loc.cit) is redundant,  which explains its absence in Definition \ref{defn-quaternionic-maass-spezialschar}.
\begin{lemma}
\label{equivalence-of-definition} Suppose $\varphi\in \mathrm{MS}_{\ell}$. Given $B\in  V_{2,2}(\Z)^{\oplus 2}$,  choose a slice primitive $\breve{B}\in V_{2,2}(\Z)^{\oplus 2}$ such that $T(B)=T(\breve{B})$, such a choice of $\breve{B}$ exists by \cite[Lemma 5.7]{JMNPR24}. Define 
$$
\Lambda_{\varphi}^{\mathrm{prim}}[B]=\Lambda_{\varphi}[\breve{B}]. 
$$
Then, for all $B\in  V_{2,2}(\Z)^{\oplus 2} $, 
\begin{equation}
\label{eqn-fine-Maass-Relation}
\Lambda_{\varphi}[B]=\sum_{\tiny r\in\GL_2(\Z)\backslash \mathrm{M}_2(\Z)^{\det\neq 0} \colon Br^{-1}\in V_{2,2}(\Z)^{\oplus 2}}|\det(r)|^{\ell-1}\Lambda_{\varphi}^{\mathrm{prim}}[Br^{-1}]. 
\end{equation}
In other words,  Definition \ref{defn-quaternionic-maass-spezialschar} is equivalent to \cite[Definition 5.8]{JMNPR24}. 
\end{lemma}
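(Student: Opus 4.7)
The plan is to show $\varphi$ is a Saito-Kurokawa lift and then to read off \eqref{eqn-fine-Maass-Relation} from Pollack's explicit formula \eqref{fouriercoefficients}. I would proceed as follows.

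First, I note that \eqref{eqn-fine-Maass-Relation} holds trivially when $B$ is itself slice primitive. Indeed, by Definition \ref{defn-slice-primitive}, any $r \in \mathrm{M}_2(\Z)^{\det \neq 0}$ with $Br^{-1} \in V_{2,2}(\Z)^{\oplus 2}$ must satisfy $r \in \GL_2(\Z)$, so the sum on the right-hand side of \eqref{eqn-fine-Maass-Relation} collapses to the single term at $r = I$ and yields $\Lambda_\varphi[B] = \Lambda_\varphi^{\mathrm{prim}}[B]$ by construction.

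Next, I would reconstruct a genus-$2$ Siegel cusp form $F$ from the slice primitive Fourier data of $\varphi$. The hypothesis $\varphi \in \mathrm{MS}_\ell$ ensures that $\Lambda_\varphi[\breve{B}]$ depends only on $T(\breve{B})$ for slice primitive $\breve{B}$, so the assignment $a(T) := \overline{\Lambda_\varphi[\breve{B}]}$ for any slice primitive $\breve{B}$ with $T(\breve{B}) = T$ is unambiguous. The core task is to verify that $F(Z) := \sum_{T > 0} a(T) e^{2\pi i \tr(TZ)}$ is a cuspidal Siegel modular form of weight $\ell$ and level one (for the principal case of interest, $\ell \geq 16$ even and $\varphi$ cuspidal). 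The required $\Sp_4(\Z)$-modularity of the coefficients $a(T)$ is to be extracted from the automorphic symmetries of $\varphi$: the $M_P^{\mathrm{der}}(\Z) \simeq \SL_2(\Z)^3$-invariance of $\Lambda_\varphi$, together with the triality outer automorphisms of \cite[Theorem A.1]{JMNPR24}, act on slice primitive indices $[T_1, T_2]$ by $\GL_2(\Z)$ change of basis, inducing the action $T \mapsto {}^{t}g T g$ on $T(B)$; this accounts for the Klingen-Levi part of the symplectic modularity. Cuspidality of $F$ is then transferred from cuspidality of $\varphi$ via the support analysis in Corollary \ref{cor:FE of phiZ}.

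With $F$ in hand, Theorem \ref{pollack4.1.1} produces the Saito-Kurokawa lift $\theta^*(F) \in \mathrm{SK}_\ell$, whose Fourier coefficients obey \eqref{fouriercoefficients}. Evaluating \eqref{fouriercoefficients} at a slice primitive index $\breve{B}$ leaves only the term $r = I$, giving $\Lambda_{\theta^*(F)}[\breve{B}] = \overline{B_\xi[T(\breve{B})]} = a(T(\breve{B})) = \Lambda_\varphi[\breve{B}]$. Hence $\varphi$ and $\theta^*(F)$ share all their slice primitive Fourier coefficients, and Corollary \ref{cor-slice-primitive-SO_8} forces $\varphi = \theta^*(F)$. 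A straightforward matrix computation yields the identity $T(Br^{-1}) = {}^{t}r^{-1} T(B) r^{-1}$, so applying \eqref{fouriercoefficients} to $\varphi = \theta^*(F)$ and rewriting $\overline{B_\xi[T(Br^{-1})]} = \Lambda_\varphi^{\mathrm{prim}}[Br^{-1}]$ recovers \eqref{eqn-fine-Maass-Relation} term-by-term.

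The main obstacle is the middle step: showing that the naturally-defined numbers $a(T)$ are the Fourier coefficients of a bona fide Siegel modular form. Transporting the $\SO_8$-automorphic symmetries of $\varphi$ into $\Sp_4(\Z)$-modularity for $F$ requires careful bookkeeping with the embedding $\Sp_4 \hookrightarrow \SO_8$ from Subsection \ref{subsec-Relation-to-Modular-Forms-on-Sp4}, and with the triality-induced equivalences among slice primitive indices.
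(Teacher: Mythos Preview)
Your overall strategy matches the paper's proof exactly: construct a Siegel cusp form $F$ from the slice primitive data of $\varphi$, form $\varphi'=\theta^{\ast}(F)$, check that $\varphi$ and $\varphi'$ agree on slice primitive coefficients, and invoke Corollary~\ref{cor-slice-primitive-SO_8} to conclude $\varphi=\varphi'$, from which \eqref{eqn-fine-Maass-Relation} follows from \eqref{fouriercoefficients}.

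The difference lies in what you call ``the main obstacle.'' The paper does not attempt to build $F$ by hand from symmetry considerations; it simply cites \cite[Corollary~7.7]{JMNPR24}, which already produces the required Siegel form from $\varphi$. That result is, in the notation of this paper, precisely the statement that the non-degenerate Fourier--Jacobi coefficient $\xi^{\varphi}(y_{2})$ of Proposition~\ref{prop-existence-holomorphic-FJ-coefficient} is the automorphic function of a holomorphic modular form on $M'$, which via the isomorphism $\widetilde{\pi}\colon\Sp_4\xrightarrow{\sim}M'$ of Subsection~\ref{subsec-Relation-to-Modular-Forms-on-Sp4} becomes a Siegel modular form of level one. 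Holomorphy and full $\Sp_4(\Z)$-invariance come out of the archimedean integral computation (Lemma~\ref{lemma-main-Archimedean Integral}) and the explicit Whittaker formula~\eqref{K-Bessel-Magic}, not from discrete symmetry bookkeeping.

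Your proposed route to this obstacle---extracting modularity from $M_P^{\mathrm{der}}(\Z)\simeq\SL_2(\Z)^3$-invariance and triality---would, as you yourself note, only account for the Klingen--Levi part $\GL_2(\Z)$ acting by $T\mapsto {}^{t}gTg$. It gives no mechanism for the missing generator of $\Sp_4(\Z)$ (the element exchanging the Siegel unipotent radicals), and no reason for the resulting series to be holomorphic. So that sketch, taken literally, has a genuine gap. The fix is not more bookkeeping but rather the Fourier--Jacobi machinery already in the paper: replace your middle step by a citation of \cite[Corollary~7.7]{JMNPR24} (equivalently Proposition~\ref{prop-existence-holomorphic-FJ-coefficient} here with $\alpha=2$), and your argument becomes the paper's.
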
 
 \begin{proof} 
 Suppose $\varphi\in \mathrm{MS}_{\ell}$ and let $F_{\xi}$ be the holomorphic Siegel modular form that is associated to $\varphi$ by \cite[Corollary 7.7]{JMNPR24}. Define $\varphi'=\theta^{\ast}(F_{\xi})$.  By \cite[Lemma 5.10]{JMNPR24},  the Fourier coefficients $\Lambda_{\varphi'}[B]$ satisfy \eqref{eqn-fine-Maass-Relation}.  Hence, it suffice to prove that $\varphi=\varphi'$.  Applying Corollary \ref{cor-slice-primitive-SO_8}, it is enough to show $\Lambda_{\varphi}[B]=\Lambda_{\varphi'}[B]$ for all slice primitive $B\in V_{2,2}(\Z)^{\oplus 2}$.  \\
 \indent Suppose $$
 T=\begin{pmatrix} a &b/2 \\ b/2 & c\end{pmatrix}
 $$ with $a,b,c\in \Z$. On the one hand, \cite[Corollary 7.7]{JMNPR24} implies $
 \overline{B_{\xi}\left[T\right]}=\Lambda_{\varphi}[T_1,T_2]$ where $T_1 =b_3+bb_4+ab_{-3}$ and $T_2 = -c b_4 - b_{-4}$. Since $[T_1,T_2]$ is slice primitive and $\varphi\in \mathrm{MS}_{\ell}$, this implies that $\Lambda_{\varphi}[B]=\overline{B_{\xi}\left[T\right]}$ for all slice primitive $B\in V_{2,2}(\Z)^{\oplus 2}$ satisfying $T(B)=T$. \\
 \indent On the other hand,   when $B$ is slice primitive, the summation \eqref{fouriercoefficients} consists of a single term,  and  so the equality $\varphi'=\theta^{\ast}(F_{\xi})$ implies $\Lambda_{\varphi'}[B]=\overline{B_{\xi}[T]}$ for all slice primitive $B\in V_{2,2}(\Z)^{\oplus 2}$ satisfying $T(B)=T$. Since $T$ was arbitrary, we conclude that $\Lambda_{\varphi}[B]=\overline{B_{\xi}[T(B)]}=\Lambda_{\varphi'}[B]$ for all slice primitive $B\in \mathrm{M}_2(\Z)^{\oplus 2}$,  completing the proof. 
 \end{proof}
 \begin{proof}[Proof of Theorem~\ref{Slice-Primitive-Lifts}]
 In light of the equivalence between Definition \ref{defn-quaternionic-maass-spezialschar} and \cite[Definition 5.8]{JMNPR24},  Theorem~\ref{Slice-Primitive-Lifts} follows as corollary to \cite[Theorem 1.3]{JMNPR24}. 
 \end{proof}
 \begin{remark}
\label{rmk-intro-1}
Theorem \ref{Slice-Primitive-Lifts} seems to be particular to the case of quaternionic modular forms on $\SO_8$. Indeed, if $n\geq 2$, then \cite[Theorem 4.1.1]{pollackCuspidal} describes a more general situation, involving a class of quaternionic modular forms on $\SO(4,n+2)$ that are theta lifts from $\Sp_4$.  It appears that these theta lifts on $\SO(4,n+2)$ only admit a characterization in the style of Theorem \ref{Slice-Primitive-Lifts} in the special case when $n=2$.  Indeed, if $\chi$ is a non-degenerate character of $N_R$, then the stabilizer of $\chi$ in $M_R^{\mathrm{der}}$ is identified with $\Spin(2,3)$.  The proof of \cite[Theorem 1.3]{JMNPR24} makes essential use of the exceptional isomorphism $\Spin(2,3)\simeq \Sp_4$.
\end{remark}
 \section{The Hecke Bound Characterization of Cusp Form on $G$} 
 \label{sec-The Hecke Bound Characterization of Cusp Form on $G$}
The purpose of this section is to prove Theorem~\ref{thm-intro-Hecke-IFF-Cuspidal-Quaternionic} and Theorem~\ref{thm-into-cuspidality-criteria}.  We work in the general setting a group $G_J$, associated to cubic norm structure $J$.  We have that $G_J=G^{\mathrm{ad}}$ when $J=E$ is the algebra of diagonal $3\times 3$ matrices.  Hence, the results we obtain for $G_J$ will also apply to level one forms on the group $G$.  Throughout, any unexplained notation has the same meaning as in \cite{pollackQDS}.  Similarly, we refer the reader to (loc. cit. ) for the precise definitions of quaternionic modular forms on $G_J$.
 \subsection{Non-Vanishing of Rank $3$ Fourier Coefficients}
 \label{subsec-Non-Vanishing of Rank $3$ Fourier Coefficients}
 The purpose of this subsection is to explain the proof of the following proposition.  We refer the reader to \cite[Definition 4.3.2]{pollackLL} for the definition of \textit{ran}k as it pertains to elements of $W_E\simeq V_{2,2}^{\oplus 2}$.
\begin{prop}
\label{prop-non-vanishing-rank-3}
Suppose $\varphi\in M_{\ell}(1)$ is non-zero and non-cuspidal. Then there exists a rank $3$ primitive element $B\in V_{2,2}(\Z)^{\oplus 2}$ such that $\Lambda_{\varphi}[B]\neq 0$. 
\end{prop}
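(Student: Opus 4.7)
The plan is to exhibit the desired coefficient via the Heisenberg constant term $\varphi_{N_P}$, combining Corollary~\ref{cor-FCs-constant-degenerate-Heisenberg-FCS-D4} with Lemma~\ref{primitivity-holomorphic-modular-forms-on-MP}, thereby paralleling the isotropic case in the proof of Theorem~\ref{thm-primitivity-intro-1}.

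First I would reduce to the case $\varphi_{N_P}\not\equiv 0$. By Corollary~\ref{FE-CUSPIDAL-QMF}, non-cuspidality of $\varphi$ implies either $\varphi_{N_P}\not\equiv 0$, or the existence of a Fourier coefficient $\Lambda_\varphi[B_0]\neq 0$ with $B_0$ not satisfying $B_0\succ 0$. In the latter case, the triality analysis in the proof of Corollary~\ref{FE-CUSPIDAL-QMF} shows (after possibly applying a triality outer automorphism of $G$) that one may take $B_0=[0,T]$ with $T\in V_{2,2}(\Q)\setminus\{0\}$. Provided $\det T\neq 0$, the identification \eqref{identification-of-V_22} yields $T([0,T])=y^2\det T$, a rank one binary quadratic form, so $[0,T]$ is already rank $3$, and primitivity can be arranged by an $M_P(\Z)$-averaging argument; the subcase $\det T=0$ is handled by chaining through a further constant term. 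Hence we may assume $\varphi_{N_P}\not\equiv 0$.

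Assuming $\varphi_{N_P}\not\equiv 0$, let $\Phi$ be the non-zero holomorphic modular form on $M_P^{\mathrm{der}}\simeq\SL_2^3$ associated to $\varphi_{N_P}$ via \eqref{eqn-structure-of-Heisenberg-constant-term}. Lemma~\ref{primitivity-holomorphic-modular-forms-on-MP} then provides a triple $C=(a,b,c)\in \Z_{>0}^3$ with $\gcd(a,b,c)=1$ and $a_\varphi(C)\neq 0$. Applying Corollary~\ref{cor-FCs-constant-degenerate-Heisenberg-FCS-D4} and using that $\varphi$ has level one (so the finite-adelic coefficient $a_{[ab_3+cb_{-3},-bb_{-4}]}(\varphi,\cdot)$ is right $G(\widehat{\Z})$-invariant), the integral in~\eqref{eqn-cor-FCs-constant-degenerate-Heisenberg-FCS} unfolds to a non-zero finite sum
\[
a_\varphi(C)=\kappa\sum_{s}\Lambda_\varphi\bigl[ab_3 + cb_{-3} + s(C,\tilde{x})_E\, b_{-4},\; -b\,b_{-4}\bigr],
\]
for a non-zero constant $\kappa\in\C$, where $\tilde{x}\in E$ is chosen so that $(C,\tilde{x})_E\neq 0$ and $s$ ranges over a suitable finite set. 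Hence at least one summand $\Lambda_\varphi[B_s]$ is non-zero.

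Finally I verify that $B_s$ is primitive of rank $3$. Under the identification \eqref{identification-of-V_22} one has
\[
T_{1,s}=\begin{pmatrix} a & s(C,\tilde{x})_E \\ 0 & c\end{pmatrix},\qquad T_{2,s}=\begin{pmatrix} 0 & -b \\ 0 & 0\end{pmatrix},
\]
so a direct calculation yields $T(B_s)=\det(xT_{1,s}-yT_{2,s})=acx^2$, a non-zero rank one binary quadratic form because $a,c\in \Z_{>0}$; this identifies $B_s$ as rank $3$ in the sense of \cite[Definition 4.3.2]{pollackLL}. Primitivity of $B_s$ follows from $\gcd(a,b,c)=1$, since the greatest common divisor of the entries of $T_{1,s}$ and $T_{2,s}$ divides $\gcd(a,b,c,s(C,\tilde{x})_E)=1$. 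The principal obstacle is the initial reduction to $\varphi_{N_P}\not\equiv 0$: triality fixes $P$ and cannot directly convert a non-vanishing $N_R$-constant term into a non-vanishing $N_P$-constant term, so the case $\varphi_{N_P}\equiv 0$ requires the separate, parallel treatment sketched above.
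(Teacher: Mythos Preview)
Your argument from the point where $\varphi_{N_P}\not\equiv 0$ onward is essentially the paper's proof: apply Lemma~\ref{primitivity-holomorphic-modular-forms-on-MP} to the holomorphic form $\Phi$, then unfold the integral in Corollary~\ref{cor-FCs-constant-degenerate-Heisenberg-FCS-D4} to extract a nonzero primitive Fourier coefficient of the shape $[ab_3+cb_{-3}+\ast\,b_{-4},\,-bb_{-4}]$ with $\gcd(a,b,c)=1$ and $a,b,c>0$. Your rank~$3$ check via $T(B_s)=ac\,x^2$ is morally correct; the paper verifies rank~$3$ more directly by computing $B^\flat=(0,0,0,N_E(C))=(0,0,0,abc)\neq 0$ in the $W_E$ model, which matches \cite[Definition~4.3.2]{pollackLL} without needing to translate between the quadratic form $T(B)$ and the rank stratification.

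The genuine gap is exactly the one you flag at the end: the reduction to $\varphi_{N_P}\not\equiv 0$. Your proposed route through Corollary~\ref{FE-CUSPIDAL-QMF} and triality does not work as stated. Triality fixes $P$ and acts on the character lattice $W_E$, but it does not move an arbitrary degenerate index to the form $[0,T]$; and even granting that, the subcase $\det T=0$ (rank $\leq 2$) and the ``$M_P(\Z)$-averaging'' for primitivity are left as assertions. The paper bypasses all of this by invoking Theorem~\ref{thm-into-cuspidality-criteria}: if $\varphi_{N_J}\equiv 0$ then $\varphi$ is cuspidal. Since $\varphi$ is assumed non-cuspidal, $\varphi_{N_P}\not\equiv 0$ follows immediately, and the rest of your (and the paper's) argument goes through. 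The proposition is in fact stated and proved \emph{assuming} Theorem~\ref{thm-into-cuspidality-criteria}; that theorem is the missing ingredient in your reduction.
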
 
\begin{proof}[Proof of Proposition \ref{prop-non-vanishing-rank-3} assuming Theorem \ref{thm-into-cuspidality-criteria}] Assume $\varphi$ is non-cuspidal.  Then Theorem~\ref{thm-into-cuspidality-criteria} implies that $\varphi_{N_{J}}\neq 0$.  Hence, the semi-classical holomorphic modular form $\Phi$ of \eqref{eqn-structure-of-Heisenberg-constant-term} is non-zero. As in \eqref{eqn-FE-3-modular-forms-on-SL2}, write $a_{\varphi}(n_1,n_2,n_3)$ for the Fourier coefficients of
$
f_{\Phi}$,  the classical holomorphic modular form on $\h_{M_P^{\mathrm{der}}}$ corresponding to $\Phi$.  By Lemma \ref{primitivity-holomorphic-modular-forms-on-MP}, there exists a triple of coprime positive integers $C=(n_1,n_2,n_3)$ such that $a_{\varphi}(C)\neq 0$.  
Now by \eqref{final-step-proof-main-thm}, there exists $x\in E$ and $s\in \Q/\Z$ such that 
$$
\Lambda_{\varphi}[ab_3+cb_{-3}+s(C,x)b_{-4}, -bb_{-4}]\neq 0. 
$$
Since $\gcd(a,b,c)=1$, the element $B=[ab_3+cb_{-3}+s(C,x)b_{-4}, -bb_{-4}]$ is primitive. Moreover, in the coordinates given by the isomorphism $V_{2,2}(\Z)^{\oplus 2}\simeq W_E$,  $B=(0,0,C, s(C,x))$.  Hence $Q(B)=0$ and $B^{\flat}=(0,0,0,N_E(C))$ where the operation $B\mapsto B^{\flat}$ is described in \cite[\S 4.3 pg. 20]{pollackLL}. Therefore, since $a,b$, and $c$ are positive, $B^{\flat}\neq 0$ and so $B$ has rank $3$ as required. 
\end{proof}
\subsection{The Degenerate Coefficients of a Quaternionic Modular Form}
\label{subsec-The Degenerate Coefficients of a Quaternionic Modular Form}
In this subsection, we assume $J$ is a cubic norm structure such that trace pairing $(\cdot, \cdot)_J$ is positive definite.  Let $K_{J,\infty}$ be the maximal compact subgroup of $G_J(\R)$ from \cite[\S 5]{pollackQDS}.  The Heisenberg parabolic subgroup of $G_J$ \cite[\S 4.3.2]{pollackQDS} is denoted $P_J=N_{J}H_{J}$.  Here $H_J$ is the Levi subgroup of $P_J$ defined in \cite[\S 2.2]{pollackQDS} .  So $\Lie(N_{J})= e\otimes W_J+ \Q E_{13}$ and
$$
W_J\simeq \Lie(N_{J}^{\mathrm{ab}}), \qquad w\mapsto e\otimes w,
$$ where $V_2=\langle e,f\rangle$ is the standard representation of $\SL_2$. Write $\langle \cdot, \cdot \rangle_{W_J}$ for the usual symplectic form on $W_J$.  We refer the reader to \cite{pollackQDS} for any unexplained notation.  \\
\indent Given $w\in W_J$, define $\varepsilon_w\in \mathrm{Hom}(N_{J}(\Q)\backslash N_{J}(\A), \C^{\times})$ by
$$
\varepsilon_w(\exp(e\otimes w'))=\psi(\langle w,w'\rangle_{W_J}). 
$$ If $\varphi\colon G_J(\A) \to \mathbf{V}_{\ell}$ is an automorphic function, the Fourier coefficient $\varphi_w$ is defined as 
$$
\varphi_w\colon G_J(\A) \to \mathbf{V}_{\ell}, \quad \varphi_w(g)=\int_{[N_{J}]}\varphi(ng)\varepsilon_w(n)^{-1}\, dn. 
$$
Regarding the Fourier coefficients $\varphi_w$, we have the following.
\begin{lemma}
\label{lemma-left-invariance-by-X(A)}
   Let $w\in W_J$ be non-zero and $g=g_fg_{\infty}\in G_J(\A)$ be such that $g_{\infty}\in N_{J}(\R)H_J(\R)^{\pm}K_{J,\infty}^0$. Assume $m\in H_J(\A_f)\times H_J(\R)^{\pm}$ is such that there exists $m_{\Q}\in H_J(\Q)$, $m_{\infty}\in H_J(\R)^{\pm}$, and $m_f\in H_J(\A_f)$ satisfying (i) $m=m_{\Q}m_{\infty}m_f$, (ii) $m_{\Q}$ and $m_{\infty}$ stabilize $\varepsilon_{w}$, (iii) $m_{\infty}$ is unipotent, and (iv) $\varphi(m_fg)=\varphi(g)$.     
   Then $
   \varphi_{w}(mg)=\varphi_{w}(g)$.
\end{lemma}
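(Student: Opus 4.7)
The plan is a direct adelic computation starting from
\[
\varphi_{w}(mg) \;=\; \int_{[N_{J}]} \varphi(nmg)\,\varepsilon_{w}(n)^{-1}\,dn,
\]
in which I peel off the three factors $m_{\Q}$, $m_\infty$, $m_{f}$ of $m$ in succession, using hypotheses (i)--(iv) one at a time.

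First I would remove $m_{\Q}$. Writing $nm = m_{\Q}(m_{\Q}^{-1} n m_{\Q})m_\infty m_{f}$ and performing the substitution $n \mapsto m_{\Q} n m_{\Q}^{-1}$ yields a measure-preserving bijection of $[N_{J}]$: since $m_{\Q} \in H_{J}(\Q)$ normalizes $N_{J}(\Q)$ the quotient structure is preserved, and the product formula $\prod_{v}|\det\mathrm{Ad}(m_{\Q})|_{v} = 1$ preserves the Haar measure on $N_{J}(\A)$. Combining this with the left $G_{J}(\Q)$-invariance of $\varphi$ and the stabilization $\varepsilon_{w}(m_{\Q} n m_{\Q}^{-1}) = \varepsilon_{w}(n)$ coming from (ii), this step reduces the lemma to the equality $\varphi_{w}(m_\infty m_{f} g) = \varphi_{w}(g)$.

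For the next step I would exploit that $m_\infty$ and $m_{f}$ are supported at disjoint places (hence commute adelically) and both normalize $N_{J}$, so that $n m_\infty m_{f} = m_\infty m_{f}\cdot \tilde{n}$ where $\tilde{n} = (m_\infty m_{f})^{-1} n (m_\infty m_{f}) \in N_{J}(\A)$. Unipotency of $m_\infty$ (iii) gives $|\det\mathrm{Ad}(m_\infty)|_{\R} = 1$, and the relation $m_{\Q} m_\infty m_{f} = m$ combined with the product formula for $m_{\Q}$ forces the total Jacobian of this substitution on $N_{J}(\A)$ to be trivial. The character factor becomes $\varepsilon_{w}(n) = \varepsilon_{(m_\infty m_{f})^{-1}\cdot w}(\tilde{n})$; the archimedean part is absorbed by the stabilization of $\varepsilon_{w}$ by $m_\infty$ from (ii), and the finite-place part is handled by applying (iv). Indeed, the identity $\varphi(m_{f} g) = \varphi(g)$, together with the hypothesis $g_\infty \in N_{J}(\R) H_{J}(\R)^{\pm} K_{J,\infty}^{0}$ and the strong approximation / level-one structure underlying (iv), extends to the equality $\varphi(m_{f}\cdot n g) = \varphi(n g)$ for every $n \in N_{J}(\A)$ appearing in the integration, reducing the integrand to $\varphi(\tilde{n} g)\,\varepsilon_{w}(\tilde{n})^{-1}$ and thus to $\varphi_{w}(g)$.

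I expect the main obstacle to be making rigorous the descent of the non-rational conjugation $n \mapsto (m_\infty m_{f}) n (m_\infty m_{f})^{-1}$ from $N_{J}(\A)$ to the quotient $[N_{J}] = N_{J}(\Q)\backslash N_{J}(\A)$, since $m_\infty m_{f} \notin H_{J}(\Q)$ does not preserve the lattice $N_{J}(\Q)$. This discrepancy should be absorbed by combining $\varphi$'s $G_{J}(\Q)$-invariance with the relation $m_\infty m_{f} = m_{\Q}^{-1} m$, which lets one compare fundamental domains after a rational shift; tracking the Jacobians at each place and the behaviour of the character on the shifted lattice is the essential bookkeeping.
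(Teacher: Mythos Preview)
Your first step (removing $m_{\Q}$ by a rational change of variables on $[N_J]$, using left $G_J(\Q)$-invariance of $\varphi$ and the stabilization hypothesis (ii)) is correct and is also the paper's implicit first move.

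The second step, however, has a genuine gap that you flag but do not resolve. The substitution $n \mapsto (m_\infty m_f)^{-1} n (m_\infty m_f)$ is a bijection of $N_J(\A)$, but since $m_\infty m_f \notin H_J(\Q)$ it does not preserve the diagonally embedded lattice $N_J(\Q)$, and hence does not descend to $[N_J]$. Your proposed remedy---writing $m_\infty m_f = m_\Q^{-1} m$ and ``comparing fundamental domains after a rational shift''---is circular: $m_\Q$ has already been consumed in step~1, so there is no rational element left to absorb the lattice discrepancy. Likewise, your appeal to (iv) to extend the single identity $\varphi(m_f g)=\varphi(g)$ to $\varphi(m_f\, n g)=\varphi(n g)$ for all $n\in N_J(\A)$ is not justified by the hypotheses of the lemma as stated.

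The paper's argument takes a different route that bypasses these difficulties entirely. Rather than attempting a further change of variables in the integral, it invokes the multiplicity-one theorem for moderate-growth generalized Whittaker functionals \cite[Theorem~1.2.1]{pollackQDS} to obtain a factorization
\[
\varphi_w(h_f h_\infty)\;=\;a_w(h_f)\,\mathcal{W}_{J,2\pi w}(h_\infty)
\]
valid for $h_\infty\in N_J(\R)H_J(\R)^{\pm}K_{J,\infty}^0$. This cleanly separates the archimedean and finite contributions without any manipulation of the integral over $[N_J]$. The archimedean factor $\mathcal{W}_{J,2\pi w}(m_\infty g_\infty)$ is then shown to equal $\mathcal{W}_{J,2\pi w}(g_\infty)$ by inspecting the \emph{explicit formula} for $\mathcal{W}_{J,2\pi w}$: hypothesis~(ii) guarantees that the argument $\beta_w$ is unchanged, and hypothesis~(iii) guarantees $\nu(m_\infty)=1$ so that the prefactor $\nu(m)^\ell|\nu(m)|$ is unchanged. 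The finite part is handled by~(iv). Note that in your outline, hypothesis~(iii) is used only to make a Jacobian equal to~$1$; the paper's argument reveals that its essential role is to control the similitude factor in the explicit Whittaker formula---this is the missing idea in your approach.
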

\begin{proof}
Let $n_g\in N_{J}(\R)$, $m_g\in H_J(\R)^{\pm}$, and $k_g\in K_{J,\infty}^0$ be such that $g_{\infty}=n_gm_gk_g$.  By \cite[Theorem 1.2.1]{pollackQDS}, there exists a unique collection of locally constant functions
$$
\{a_w\colon G_J(\A_f)\to \C \colon w\in W_J\}
$$ 
 such that $\varphi_{w}(mg)=a_{w}(m_fg_f)\mathcal{W}_{J,2\pi w}(m_{\infty}g_{\infty})$.  Here $\mathcal{W}_{J,2\pi i w}$ is the generalized Whittaker function of (loc. cit.).  Let $\varepsilon_{w}^{\infty}$ be the archimedean component of $\varepsilon_{w}$. By the equivariance properties of $\mathcal{W}_{J,2\pi w}$,
\begin{equation}
\label{eqn-Lemma-left-invariance-by-X(A)}
\varphi_{w}(mg)=a_{w}(m_fg_f)\varepsilon_{w}^{\infty}(m_{\infty}n_g m_{\infty}^{-1})k_g^{-1}\cdot \mathcal{W}_{J,2\pi w}(m_{\infty}m_g).
\end{equation}
Inspecting the formula in (loc. cit.),  hypothesis (ii) and (iii) imply that $\mathcal{W}_{J,2\pi w}(m_{\infty}m_g)=\mathcal{W}_{J,2\pi w}(m_g)$. Moreover, (ii) implies that $\varepsilon_{w}^{\infty}(m_{\infty}n_gm_{\infty}^{-1})=\varepsilon_{w}^{\infty}(n_g)$, and so \eqref{eqn-Lemma-left-invariance-by-X(A)} simplifies to $\varphi_{w}(mg)=\varphi_{w}(m_fg)$. Applying hypothesis (iv) completes the proof. 
\end{proof}
The next proposition involves the $3$-step parabolic subgroup $Q\leq G_J$  associated to the element $h_Q=E_{11}+E_{22}-2E_{33}$.  Since $\mathrm{ad}(h_Q)$ acts on $\Lie(G_J)$ with eigenvalues $-3,-2,-1,0,1,2,3$,  $Q$ admits a Levi decomposition $Q=M_QN_Q$ where $M_Q$ is the zero eigenspace of $\mathrm{ad}(h_Q)$. The Lie algebra of the unipotent radical $N_Q$ decomposes into $\mathrm{ad}(h_Q)$ eigenspaces as 
 $$
 \Lie(N_Q)=\Lie(N_Q)^{[1]}\oplus \Lie(N_Q)^{[2]}\oplus \Lie(N_Q)^{[3]}.
 $$ 
Here $\Lie(N_Q)^{[1]}=(v_1\otimes J)\oplus (v_2\otimes J)$, $\Lie(N_Q)^{[2]}=\delta_3\otimes J^{\vee}$, and $\Lie(N_{Q})^{[3]}=\Q E_{13}\oplus \Q E_{23}$.  Moreover,   
 $$
 \Lie([N_Q,N_Q])=\Lie(N_Q)^{[1]}\oplus \Lie(N_Q)^{[2]}. 
 $$

Since $h_Q=E_{11}+E_{22}-2E_{33}$, $M_Q^{\mathrm{der}}$ is the principle $\SL_2$ in $G_J$ containing the root subgroup $\exp( \Q E_{12})$.  Let $n_{\alpha}\in M_Q^{\mathrm{der}}(\Z)$ denote a representative for the Weyl reflection in the hyperplane orthogonal to $\alpha$.  We view $n_{\alpha}\in G_J(\A)$ via the diagonal embedding $G(\Q)\hookrightarrow G(\A)$.  \\
\indent Given $C\in J^{\vee}\backslash\{0\}$, let $\eta_{C}\colon [N_Q]\to \C^{\times}$ be the character of $N_Q$ associated to the linear functional 
$
v_1\otimes X+v_2\otimes Y\mapsto (C,Y)_J,
$  and let
$$
S_C=\{\exp(v_2\otimes x) \mid \hbox{$x\in J$ such that $(x,C)_J=0$}\}.
$$
be the stabilizer of $w_C=(0,0,C,0)$ inside $\exp(v_2\otimes J)$.  
\begin{proposition}
     \label{prop-FCs-of-Heisenberg-constant-term}
     Suppose $\varphi$ is a weight $\ell$ modular form on $G_J$ and fix $C \in J^{\vee}$. Write $w_C=(0,0,C,0)\in W_J$ and let $\varphi_{N_{J}}$ denote the constant term of $\varphi$ along $N_{J}$.  Define
     $$
     \varphi_{N_{J},C}(g)=\int_{[\exp(v_2\otimes J)]}\varphi_{N_{J}}(ug)\eta_{C}^{-1}(u)du. 
     $$
     If $g\in G(\A)$ is such that $g_{\infty}\in N_{J}(\R)H_J(\R)^{\pm}K_{J,\infty}^0$ then
     \begin{equation}
     \label{eqn-proposition-FCs-of-Heisenberg-constant-term}
     \varphi_{N_{J},C}(g)=\int_{S_{C}(\A)\backslash \exp(v_2\otimes J)(\A)}\varphi_{w_C}(un_\alpha g)du.
     \end{equation}
     \end{proposition}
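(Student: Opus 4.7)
My plan is to adapt the unfolding strategy of Lemma~\ref{lemma-degenerate-FCs} and Lemma~\ref{lemma-non-degerate FCs}, using the Weyl element $n_\alpha$ as a Bruhat-cell representative that intertwines the character $\eta_C$ on $\exp(v_2 \otimes J)$ with the character $\varepsilon_{w_C}$ on $N_J$. I would start from the unfolded expression
\[
\varphi_{N_J,C}(g) \;=\; \int_{[\exp(v_2 \otimes J)]} \int_{[N_J]} \varphi(n u g)\, \eta_C^{-1}(u)\, dn\, du,
\]
and then use the automorphic invariance $\varphi(x) = \varphi(n_\alpha^{-1} x)$ together with the conjugation identity $n_\alpha^{-1} n u n_\alpha = (n_\alpha^{-1} n n_\alpha)(n_\alpha^{-1} u n_\alpha)$ to convert $u \in \exp(v_2 \otimes J)$ into an element of $\exp(v_1 \otimes J) \subseteq N_J$.

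The key geometric input is the Lie-algebraic identity $\mathrm{Ad}(n_\alpha)(v_2 \otimes J) = \pm\, v_1 \otimes J$, which holds because $n_\alpha$ is the simple reflection in the principal $\SL_2 = M_Q^{\mathrm{der}}$ and therefore swaps the weight vectors $v_1 = e_1$ and $v_2 = e_2$. Since $v_1 \otimes J$ embeds naturally into $\Lie(N_J^{\mathrm{ab}}) \simeq W_J$ as the $J$-summand of $W_J = \Q \oplus J \oplus J^\vee \oplus \Q$, this means that conjugation by $n_\alpha$ sends the character $v_2 \otimes Y \mapsto (C,Y)_J$ to the restriction of $\varepsilon_{w_C}$ to $\exp(v_1 \otimes J) \subseteq N_J$, where $w_C = (0,0,C,0)$. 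A change of variable $u \mapsto n_\alpha^{-1} u n_\alpha$ then converts the $\eta_C$-twisted integration over $\exp(v_2 \otimes J)$ into an $\varepsilon_{w_C}$-twisted integration along a subgroup of $N_J$; combined with the inner $[N_J]$-integration, this reassembles into $\varphi_{w_C}(u n_\alpha g)$. The residual integration of $u$ runs over $S_C(\A) \backslash \exp(v_2 \otimes J)(\A)$ because elements of $S_C = \ker(\eta_C)$ correspond, under the $n_\alpha$-conjugation, to the kernel of $\varepsilon_{w_C}|_{\exp(v_1 \otimes J)}$, and so are absorbed into the definition of $\varphi_{w_C}$.

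The main obstacle will be the careful tracking of integration domains and Haar measures during the change of variables, since $n_\alpha$ does not normalize $N_J$: conjugation by $n_\alpha$ moves certain root subgroups of $N_J$ outside $N_J$ while bringing others inside (for instance, in the $D_4$ realization, the center generator $E_{13}$ becomes $E_{23}$, while $e_2 \otimes E$ becomes $e_1 \otimes E$). One therefore has to verify that the extra root subgroups introduced by the $n_\alpha$-conjugation are exactly compensated by the simultaneous $\exp(v_2 \otimes J)$-integration and, after using Fubini and collapsing the ``unwanted'' directions against trivial Fourier modes, that the combined domain $[N_J] \times \bigl(S_C(\A) \backslash \exp(v_2 \otimes J)(\A)\bigr)$ matches the original double integration. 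Once this combinatorial bookkeeping is verified in Pollack's $\g_J$ model, the character-matching in the previous paragraph delivers the identity.
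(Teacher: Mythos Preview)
Your intuition that $n_\alpha$ intertwines $\eta_C$ with $\varepsilon_{w_C}$ is exactly right, but the proposal has a genuine gap in how you plan to execute the unfolding, and it differs substantially from the paper's route.

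The paper does \emph{not} start from $\varphi_{N_J,C}(g)$ and conjugate. Instead it introduces the larger three-step unipotent $N_Q$ (the unipotent radical attached to $h_Q=E_{11}+E_{22}-2E_{33}$, which is normalized by $n_\alpha\in M_Q^{\mathrm{der}}$) and computes the single auxiliary quantity $\varphi_{N_Q,\eta_C}(g)=\int_{[N_Q]}\varphi(cg)\eta_C^{-1}(c)\,dc$ in two ways. Factoring through the center $Z$ gives the left-hand side; factoring through the conjugate center $Z'=n_\alpha Z n_\alpha^{-1}$ gives the right-hand side. The point is that $N_Q$ contains both $Z$ and $Z'$, so both Heisenberg-type expansions are available for the \emph{same} integral. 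Your direct approach never passes through $N_Q$, so you have no common object linking the two sides; you would have to create the missing $\Q E_{23}$-direction by hand.

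More seriously, you never invoke Lemma~\ref{lemma-left-invariance-by-X(A)}, and this is where the argument actually uses that $\varphi$ is a quaternionic modular form and where the hypothesis $g_\infty\in N_J(\R)H_J(\R)^{\pm}K_{J,\infty}^0$ enters. In the paper's first computation, after character orthogonality forces $c=0$, one is left with extra terms $\sum_{d\neq 0}\int_{[\exp(v_2\otimes J)]}\varphi_{(0,0,0,d)}(xg)\eta_C^{-1}(x)\,dx$. These do \emph{not} vanish by Fourier-mode orthogonality; they vanish because Lemma~\ref{lemma-left-invariance-by-X(A)} (using the explicit form of $\mathcal{W}_{J,2\pi(0,0,0,d)}$) shows $x\mapsto\varphi_{(0,0,0,d)}(xg)$ is constant, after which integration against the nontrivial $\eta_C^{-1}$ gives zero. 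The same lemma is needed a second time on the right-hand side to pass from $S_C(\Q)\backslash\exp(v_2\otimes J)(\A)$ to $S_C(\A)\backslash\exp(v_2\otimes J)(\A)$. Your phrase ``collapsing the unwanted directions against trivial Fourier modes'' suggests you expect pure character orthogonality to do this work; for a generic automorphic form that step would fail and the identity would be false. You need to identify and use the archimedean invariance property of the rank-one Whittaker functions explicitly.
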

     \begin{proof}
 The result is proven by calculating
         $$
     \varphi_{N_Q,\eta_C}(g):=\int_{[N_Q]}\varphi(cg)\eta_C(c)^{-1}\,dc 
         $$ in two different ways. \\
         \indent To obtain the left hand side of \eqref{eqn-proposition-FCs-of-Heisenberg-constant-term}, we calculate $\varphi_{N_Q,\eta_C}$ by Fourier expanding $\varphi$ along the center $Z$. Let $\varphi_{[N_Q,N_Q]}$ be the constant term of $\varphi$ along $[N_Q,N_Q]$. Then $\varphi_{[N_Q,N_Q]}$ Fourier expands in characters of $[N_{J}^{\mathrm{ab}}]$ as 
         $
         \varphi_{[N_Q,N_Q]}(g)=\sum_{d\in \Q, c\in J^{\vee}}\varphi_{(0,0,c,d)}(g).
         $
         Hence,
         \begin{align}
         \label{eqn-first-manipulation-prop-degenerate-FJ-QMF}
             \varphi_{N_Q,\eta_C}(g)
             &=\int_{[\exp(v_2\otimes J)\backslash N_Q^{\mathrm{ab}}]}\int_{[\exp(v_2\otimes J)]}\varphi_{[N_Q,N_Q]}(cxg)\eta_C^{-1}(x)\,dx\,dc \notag \\
             &=\int_{[\exp(v_2\otimes J)]}\sum_{d\in \Q, c\in J^{\vee}}\varphi_{(0,0,c,d)}(xg)\eta_C^{-1}(x)\int_{[\exp(v_2\otimes J)\backslash N_Q^{\mathrm{ab}}]}\varepsilon_{(0,0,c,d)}(c)\,dc\,dx 
         \end{align}
         The quotient $\exp(v_2\otimes J)\backslash N_Q^{\mathrm{ab}}$ is identified with the subgroup of $N_Q^{\mathrm{ab}}$ whose Lie algebra is spanned by $v_1\otimes J$. So the inner integral in \eqref{eqn-first-manipulation-prop-degenerate-FJ-QMF} vanishes unless the summation index $c=0$. Since the subgroup $\exp(v_2\otimes J)$ stabilizes $\varepsilon_{(0,0,0,d)}$ for all $d\in \Q$, \eqref{eqn-first-manipulation-prop-degenerate-FJ-QMF} simplifies to 
         \begin{equation}\label{eqn-second-manipulation-prop-degenerate-FJ-QMF}
         \varphi_{N_Q,\eta_C}(g)=\int_{[\exp(v_2\otimes J)]}\varphi_{N_{J}}(xg)\eta_C^{-1}(x)dx+\int_{[\exp(v_2\otimes J)]}\sum_{d\neq 0}\varphi_{(0,0,0,d)}(xg)\eta_C^{-1}(x)dx
         \end{equation}
         Applying Lemma \ref{lemma-left-invariance-by-X(A)}, the function $x\mapsto \varphi_{(0,0,0,d)}(xg)$ is constant for all $d\neq 0$. Hence the second integral in \eqref{eqn-second-manipulation-prop-degenerate-FJ-QMF} vanishes and we obtain $\varphi_{N_Q,\eta_C}(g)=\varphi_{N_{J}, C}(g)$. \\ 
         \indent To finish the proof, we obtain the right hand side of \eqref{eqn-proposition-FCs-of-Heisenberg-constant-term} by calculating $\varphi_{N_Q,\eta_C}$ in a different way. This time we factor $\varphi_{[N_Q,N_Q]}$ across the central subgroup $Z'=n_{\alpha}Zn_{\alpha}^{-1}$ as
         \begin{equation}
         \label{eqn-Fourier-expansion-phi-along-[C,C]2}
         \varphi_{[N_Q,N_Q]}(g)=\sum_{c\in J^{\vee}, d\in \Q}\varphi_{(0,0,c,d)}(n_{\alpha}g).
         \end{equation}
         Then applying \eqref{eqn-Fourier-expansion-phi-along-[C,C]2}, we obtain
         \begin{align}
         \label{eqn-third-manipulation-prop-degenerate-FJ-QMF}
             \varphi_{N_Q,\eta_C}(g)
             &= \int_{[\exp(v_2\otimes J)]}\int_{[\exp(v_1\otimes J)]} \varphi_{[N_Q,N_Q]}(cxg)\eta_C^{-1}(c)\,dx\,dc \notag \\ \
             &= \int_{[\exp(v_1\otimes J)]}\sum_{d\in \Q, c\in J^{\vee}}\varphi_{(0,0,c,d)}(n_{\alpha}xg)\int_{[\exp(v_2\otimes J)]}\varepsilon_{(0,0,c,d)}(n_{\alpha}cn_{\alpha}^{-1})\eta_C^{-1}(c)\,dc\,dx.
         \end{align}
         The inner integral appearing in \eqref{eqn-third-manipulation-prop-degenerate-FJ-QMF} vanishes unless $c=C$. Hence 
         \begin{align}
         \label{eqn-fourth-manipulation-prop-degenerate-FJ-QMF}
         \varphi_{N_Q,\eta_C}(g)
         &=\int_{[\exp(v_2\otimes J)]}\sum_{d \in \Q} \varphi_{(0,0,C,d)}(xn_{\alpha}g)\,dx \notag  \\
         &=\int_{[\exp(v_2\otimes J)]}\sum_{u\in S_{C}(\Q)\backslash \exp(v_2\otimes J)(\Q)}\varphi_{w_C}(uxn_{\alpha}g)\,dx\notag \\
         &=\int_{S_{C}(\Q)\backslash \exp(v_2\otimes J)(\A)}\varphi_{
w_C}(xn_{\alpha}g)\,dx.
         \end{align}
        By Lemma \ref{lemma-left-invariance-by-X(A)}, the function $x\mapsto \varphi_{w_C}(xn_{\alpha}g)$ is left $S_{C}(\A)$ invariant. So, \eqref{eqn-fourth-manipulation-prop-degenerate-FJ-QMF} simplifies to 
       $
        \varphi_{N_Q,\eta_C}(g)=\int_{S_{C}(\A)\backslash \exp(v_2\otimes J)(\A)}\varphi_{w_C}(xn_{\alpha}g)dx
      $ and the proof is complete. 
     \end{proof}
    Recall \cite[Proposition 11.1.1]{pollackQDS}, which states that if  $m\in H_J(\A_f)\times H_J(\R)^{\pm}$, then 
     $$
     \varphi_{N_{J}}(m)=\nu(m)^{\ell}|\nu(m)|(\Phi(m)[x^{2\ell}]+\beta[x^{\ell}y^{\ell}]+\Phi'(m)[y^{2\ell}]).
     $$
         Here $\beta\in \C$ is constant and $\Phi$ is a holomorphic modular form on $\mathcal{H}_J^+$. \\
         \indent For $C\in J^{\vee}\backslash \{0\}$, write $C\geq 0$ if $(C,  1_J)_J$ and $(C^{\#}, 1_J)_J)\geq 0$. Then, $\Phi$ Fourier expands as 
         $$
         \Phi(m_fm_{\infty})=\Phi_{\exp(v_2\otimes J)}(m_fm_{\infty})+\sum_{C\in J^{\vee}\backslash\{0\}\colon C\geq 0}A_{C}(m_f)j(m_{\infty},i)^{-\ell}e^{2\pi i (m_{\infty}\cdot (i 1_J), C)_J}.
         $$
         Here  notation is as follows: 
         \begin{compactenum}
         \item $\Phi_{\exp(v_2\otimes J)}$ is the constant term of $\Phi$ along $\exp(v_2\otimes J)$, 
         \item $j(m_{\infty},i)$ is the automorphy factor of \cite[\S 2.3]{pollackQDS}, and
         \item 
         $
         \{A_C\colon H_J(\A_f)\to \C\}
         $
         is a family of locally constants functions.
         \end{compactenum}
         \indent 
          Let $u_0\in \V_{\ell}$ be such that if $\{\cdot , \cdot\}_{K_{J,\infty}^{0}}$ is the $K_{J,\infty}^0$ invariant pairing on $\V_{\ell}$, then 
         $$
         \left\{[x^{\ell-v}y^{\ell+v}],u_0\right\}_{K_{J,\infty}^{0}}
         =\begin{cases} 1, &\hbox{if $v=\ell$,} \\ 0,&\hbox{else.}\end{cases}
         $$So by definition, if $m=m_{f}m_{\infty}\in H_J(\A_f)H_J(\R)^{\pm}$,  
         \begin{equation}
         \label{eqn-extension-formula}
         \left\{\varphi_{N_{J},C}(m_{f}m_{\infty}),u_0\right\}_{K_{J,\infty}^{0}}=\nu(m)^{\ell}|\nu(m)|A_{C}(m_f)j(m_{\infty},i)^{-\ell}e^{2\pi i (m_{\infty}\cdot (i 1_J), C)_J}.
         \end{equation}
       Since  $\varphi_{w_C}(g_fg_{\infty})=a_{w_C}(g_f)\mathcal{W}_{J, 2\pi w_C}(g_{\infty})$, Proposition \ref{prop-FCs-of-Heisenberg-constant-term} implies that $\varphi_{N_{J},C}(g)$ is factorizable for all $g_f\in G_J(\A_f)$ and $g_{\infty}\in N_{J}(\R)H_J(\R)^{\pm}K_{J,\infty}^0$.  In particular,  the functions $\nu^{\ell}|\nu|A_C$ extend to functions on $G_J(\A_f)$ by the formula
         \begin{equation}
         \label{eqn-equality-of-finite-adelics}
         \nu^{\ell}|\nu|A_C(g_f)\doteq \int_{S_{C}(\A_f)\backslash \exp(v_2\otimes J)(\A_f)}a_{w_C}(un_\alpha g_f)du.
         \end{equation}
         Here $\doteq$ means that the left-hand side is a constant multiple of the right hand side.  
         \begin{lemma}
         Suppose $C\in J^{\vee}\backslash\{0\}$ is such that $C\geq 0$. Then the implied constant in \eqref{eqn-equality-of-finite-adelics} is non- zero and indepedent of $C$. 
         \end{lemma}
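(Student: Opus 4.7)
The plan is to re-do the derivation of \eqref{eqn-equality-of-finite-adelics} with the archimedean component $m_\infty\in H_J(\R)^\pm$ allowed to vary rather than fixed at the identity, thereby isolating the implied constant as an archimedean integral that can be analyzed via the explicit generalized Whittaker formula of \cite[Theorem 1.2.1]{pollackQDS}.

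First, I would combine \eqref{eqn-extension-formula} with Proposition \ref{prop-FCs-of-Heisenberg-constant-term}, split the adelic integral via $\varphi_{w_C}(g_fg_\infty)=a_{w_C}(g_f)\mathcal{W}_{J,2\pi w_C}(g_\infty)$, and pair against $u_0$. For $m_fm_\infty\in H_J(\A_f)\times H_J(\R)^\pm$ this yields
\begin{equation*}
\nu(m_fm_\infty)^\ell|\nu(m_fm_\infty)|A_C(m_f)j(m_\infty,i)^{-\ell}e^{2\pi i(m_\infty\cdot(i1_J),C)_J}=I_C(m_\infty)\int_{S_C(\A_f)\backslash\exp(v_2\otimes J)(\A_f)}a_{w_C}(u_fn_\alpha m_f)\,du_f,
\end{equation*}
where $I_C(m_\infty):=\bigl\{\int_{S_C(\R)\backslash\exp(v_2\otimes J)(\R)}\mathcal{W}_{J,2\pi w_C}(u_\infty n_\alpha m_\infty)\,du_\infty,\,u_0\bigr\}_{K^0_{J,\infty}}$. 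Matching $m_\infty$-eigencharacters on the two sides forces $I_C(m_\infty)=\mu_C\cdot\nu(m_\infty)^\ell|\nu(m_\infty)|j(m_\infty,i)^{-\ell}e^{2\pi i(m_\infty\cdot(i1_J),C)_J}$ for a scalar $\mu_C\in\C$, and specializing to $m_\infty=1$ identifies the implied constant in \eqref{eqn-equality-of-finite-adelics} with $\mu_C$, up to the universal factor $j(1,i)^{\ell}$.

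Second, I would prove $\mu_C$ is independent of $C$ by $H_J(\R)^+$-equivariance. For $h\in H_J(\R)^+$, the relation $\mathcal{W}_{J,2\pi w_{h\cdot C}}(g)=\mathcal{W}_{J,2\pi w_C}(h^{-1}g)$ together with the substitution $u_\infty\mapsto hu_\infty h^{-1}$ on $\exp(v_2\otimes J)(\R)$ (with an explicit Jacobian $J(h)$ arising from the $H_J$-action on $\Lie(\exp(v_2\otimes J))$) transforms $I_{h\cdot C}(hm_\infty)$ into a known scalar multiple of $I_C(m_\infty)$; inserting this into the eigencharacter formula above yields $\mu_{h\cdot C}=\mu_C$. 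Since the positive cone $\{C\in J^\vee:C\geq 0,\ C\neq 0\}$ decomposes as a finite union of $H_J(\R)^+$-orbits stratified by rank, universality reduces to matching $\mu_C$ across a finite set of rank representatives, and one obtains this matching directly from the Bessel-type formula for $\mathcal{W}_{J,2\pi w_C}$. Non-vanishing is verified at the base point $C=1_J$: the integrand of $I_{1_J}(1)$ is a positive $K$-Bessel kernel paired against the lowest-weight vector of the minimal $K^0_{J,\infty}$-type, hence integrates to a non-zero number.

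The main technical obstacle is the combined orbit-and-Jacobian bookkeeping: the equivariance substitution must be carried out explicitly enough to see that the Jacobian absorbs exactly the $\nu^\ell|\nu|$ factor coming from the eigencharacter, and the rank stratification has to be verified for each of the cubic norm structures $J$ of Theorem \ref{thm-into-cuspidality-criteria}. For $J=E=\G_a^3$, which is the case needed for Corollary \ref{cor-FCs-constant-degenerate-Heisenberg-FCS-D4}, $H_E$ is essentially a torus acting on $E^\vee$ by coordinate scaling, and the analysis reduces to an elementary check on the positive octant.
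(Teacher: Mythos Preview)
Your opening paragraph is right: the implied constant is the archimedean factor one obtains by splitting the adelic integral in Proposition~\ref{prop-FCs-of-Heisenberg-constant-term} and pairing against $u_0$. The paper does exactly this, but then proceeds by a direct computation rather than your equivariance argument. Using the parametrization $x(s)=\exp(v_2\otimes sC/(C,C)_J)$ of $S_C\backslash\exp(v_2\otimes J)$ and $n_\alpha\in K_{J,\infty}^0$, the constant is
\[
e^{2\pi(1_J,C)_J}\left\{\int_{\R}\mathcal{W}_{J,2\pi w_C}(x(s))\,ds,\; n_\alpha\cdot u_0\right\}_{K_{J,\infty}}.
\]
The key point is that $\langle w_C, x(s)\cdot r_0(i)\rangle_{W_J}=s+i(C,1_J)_J$, so the explicit Whittaker formula together with \cite[Lemma~A.4]{pollack2024} evaluates each $v$-summand of the $s$-integral as $\tfrac12 i^v e^{-2\pi(C,1_J)_J}$. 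The exponential cancels the prefactor and independence from $C$ is immediate. Non-vanishing is not argued by evaluating at $C=1_J$; instead the paper invokes the Klingen Eisenstein series of \cite[\S11]{pollackQDS}, which have $A_C\not\equiv0$, forcing the constant to be non-zero once it is known to be uniform in $C$.

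Your orbit argument has two real gaps. First, the relation $\mathcal{W}_{J,2\pi w_{h\cdot C}}(g)=\mathcal{W}_{J,2\pi w_C}(h^{-1}g)$ presupposes an action of $H_J(\R)^+$ on $J^\vee$ compatible with $C\mapsto w_C$, but $H_J$ acts on $W_J$ and does \emph{not} preserve the slice $\{(0,0,\ast,0)\}$; only the structure group of $J$ does, and for $J=\mathbb{G}_a^3$ that group is a two-dimensional torus (up to $S_3$), far too small to give finitely many orbits on the three-dimensional positive cone. Second, you concede that matching the constant across rank strata must be done ``directly from the Bessel-type formula''---but that computation \emph{is} the paper's entire proof, so the orbit reduction saves nothing. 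Finally, your positivity claim for $I_{1_J}(1)$ is not correct: after the action of $n_\alpha$ on $u_0$ the integrand is a signed combination of $K_v$-terms twisted by phases $(|z|/z)^v$, not a positive kernel.
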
 
                  \begin{proof} In \cite[\S 11]{pollackQDS}, the author constructs Klingen Eisenstein series on $G_J$ for which the Fourier coefficients $A_C$ are non-trivial.  Hence, the non-vanishing statement is a consequence of showing that the implied constant in  \eqref{eqn-equality-of-finite-adelics} is independent from $C$.  \\
                  \indent Assume  $C\geq 0$. Since the trace pairing $(\cdot, \cdot)_J$ on $J$ is positive definite, $S_C$ is contained in $\exp(v_2\otimes J)$ as subgroup of codimension $1$.  Hence, $\G_a\xrightarrow{\sim} S_C\backslash \exp(v_2\otimes J)$ via the map 
                  \begin{equation}
                  \label{eq-ref}
                  t\mapsto x(t):=\exp(v_2\otimes tC/(C,C)).
                  \end{equation}
                 If $R$ is a topological ring equipped with a measure,  then $S_C(R)\backslash   \exp(v_2\otimes J)(R)$ inherits a measure via \eqref{eq-ref}.  With this normalization,  the implied constant in \eqref{eqn-equality-of-finite-adelics} is equal to 
\begin{equation}
\label{manipulation 1}
e^{2\pi (1_J,C)_J}\int_{\R}\{\mathcal{W}_{J,2\pi w_C} (x(t)n_{\alpha}), u_0\}_{K_{J,\infty}}\,dt.
\end{equation}
Since $n_{\alpha}\in K_{J,\infty}^0$,  \eqref{manipulation 1} simplifies to 
$$
e^{2\pi (1_J,C)_J}\left\{\int_{\R}\mathcal{W}_{J,2\pi w_C} (x(s))\,ds , n_{\alpha}\cdot u_0\right\}_{K_{J,\infty}}
$$
Therefore,  it suffices to show that 
\begin{equation}
\label{eqn-indep-of-C}
e^{(C,1_J)_J}\cdot \int_{\R}\mathcal{W}_{J,2\pi w_C} (x(s))\,ds
\end{equation}
is independent of $C$.  If $r_0(i)=(1,-i1_J, -1_J, i)_J\in W_J(\C)$, then 
$$
\langle w_C, x(s)\cdot r_0(i)\rangle_{W_J},=s+i(C,1_J)_J.
$$
Hence,by  \cite[Theorem 1.2.1]{pollackQDS}, 
$$
\int_{\R}\mathcal{W}_{J,2\pi w_C}(x(s))\,ds=\frac{1}{2\pi}\sum_{-\ell\leq v \leq \ell} [x^{\ell-v}][y^{\ell+v}]\int_{\R} \left(\frac{|s+2\pi i( C,1_J)_J|}{ s+2\pi i( C,1_J)_J}\right)^vK_v(|s+2\pi i(C,1_J)_J|)\,ds. 
$$
Applying \cite[Lemma A.4]{pollack2024} and the fact that $(C,1_J)_J>0$, 
$$
\int_{\R} \left(\frac{|s+2\pi i(C,1_J)_J|}{s+2\pi i(C,1_J)_J}\right)^vK_v(|s+2\pi i( C,1_J)_J|)\,ds=\frac{1}{2} \cdot i^v\cdot e^{-2\pi(C,1_J)_J}
$$
Hence, \eqref{eqn-indep-of-C} is independent of $C$ as required. 
\end{proof}
         \subsection{The General Cuspidality Criterion}
\label{subsec-The General Cuspidality Criterion}
We continue to assume $J$ is a general cubic norm structure with a positive definite trace form. The proof of the following lemma is adapted from \cite[Lemma 13.6.]{pollack2024}.
     \begin{lemma}
     \label{lemma-auto-convergence-13.?}
         Suppose $C\in J^{\vee}$ is non-zero and define 
                  $$
         \overline{\varphi}_{C}\colon G_J(\A_f)\to \C,\qquad 
         \overline{\varphi}_{C}(g)=\int_{S_{C}(\A_f)\backslash \exp(v_2\otimes J)(\A_f)}a_{w_{C}}(ug)\,du.
         $$If $g\in G_J(\A_f)$, then
         $$
         a_{w_{C}}(g)=\int_{\A_f}\overline{\varphi}_{C}(\exp(\alpha E_{12})g)\,d\alpha. 
         $$
     \end{lemma}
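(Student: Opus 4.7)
The plan is to prove the identity by Fourier inversion on $\A_f$, exploiting the commutator $[E_{12}, v_2\otimes x] = v_1\otimes x$ in $\g_J$ to produce the required Fourier-inversion character.

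First I would expand the right-hand side by inserting the definition of $\overline{\varphi}_C$ and parameterizing the quotient $S_C\backslash \exp(v_2\otimes J)\cong \G_a$ via $t\mapsto u(t) := \exp(v_2\otimes tC/(C,C))$ as in Proposition~\ref{prop-FCs-of-Heisenberg-constant-term}:
\begin{equation*}
\int_{\A_f} \overline{\varphi}_C(\exp(\alpha E_{12})\, g)\, d\alpha \;=\; \int_{\A_f}\int_{\A_f} a_{w_C}(u(t)\, \exp(\alpha E_{12})\, g)\, dt\, d\alpha.
\end{equation*}

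Next I would use the Baker--Campbell--Hausdorff formula together with the identity $[E_{12}, v_2\otimes x] = v_1\otimes x$ to rewrite
\begin{equation*}
u(t)\, \exp(\alpha E_{12}) \;=\; \exp(-\alpha\, v_1\otimes tC/(C,C))\cdot \exp(\alpha E_{12})\cdot u(t)\cdot z(\alpha, t),
\end{equation*}
where both $\exp(v_1\otimes J)$ and $\exp(\alpha E_{12})$ lie inside $N_J$ (corresponding to the $J$-part and a scalar-part of $e\otimes W_J$ under the Heisenberg grading of $\g_J$), and $z(\alpha, t)\in \exp(\delta_3\otimes J^\vee)\subset N_J$ is the second-order BCH correction. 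A direct computation with the symplectic form $\langle\cdot,\cdot\rangle_{W_J}$ shows that $\varepsilon_{w_C}$ is trivial on $\exp(\alpha E_{12})$ and on $\exp(\delta_3\otimes J^\vee)$, while $\varepsilon_{w_C}(\exp(-\alpha\, v_1\otimes tC/(C,C))) = \psi(-\alpha t)$. Using the left-equivariance $a_{w_C}(nh) = \varepsilon_{w_C}(n) a_{w_C}(h)$ for $n\in N_J(\A_f)$, and iteratively absorbing $\exp(\alpha E_{12})$ and $z(\alpha, t)$ (each absorption commuting them through $u(t)$ produces only further $N_J$-factors on which $\varepsilon_{w_C}$ is trivial), the integrand collapses to $\psi(-\alpha t)\, a_{w_C}(u(t)\, g)$.

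Finally I would swap the order of integration and integrate first in $\alpha$: by self-duality of $\A_f$ under $\psi$, the integral $\int_{\A_f}\psi(-\alpha t)\, d\alpha$ produces the delta distribution at $t = 0$, collapsing the remaining $t$-integral to $a_{w_C}(u(0)\, g) = a_{w_C}(g)$. The hardest part of the argument will be carrying out the BCH commutation precisely and tracking the cascade of lower-order corrections in $\sl_3$ and $\delta_3\otimes J^\vee$ that arise from commuting $z(\alpha, t)$ through $u(t)$, and confirming that each such correction contributes only through characters of $\varepsilon_{w_C}$ that are trivial. Absolute convergence and interchange of integrals are not a concern on $\A_f$, since the finite level of $\varphi$ forces $a_{w_C}$ to be supported on a compact open subset of $G_J(\A_f)$ up to finitely many cosets, so the iterated integrals reduce to finite sums.
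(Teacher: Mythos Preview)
Your proposal is correct in outline and follows the same Fourier-inversion strategy as the paper: parametrize $S_C\backslash\exp(v_2\otimes J)$ by $\G_a$, extract a character $\psi(\pm\alpha t)$ from the commutation of $\exp(\alpha E_{12})$ with $u(t)$, and then invert. However, your BCH route is more roundabout than necessary. The key structural fact you are not exploiting is that $\exp(v_2\otimes J)$ lies in the \emph{Levi} $H_J$ (it is $e_2\otimes J$, which has $h_P$-degree $0$), so it normalizes $N_J$. The paper therefore simply writes
\[
a_{w_C}\bigl(u(s)\exp(\alpha E_{12})g\bigr)
=\varepsilon^f_{w_C\cdot u(s)}\bigl(\exp(\alpha E_{12})\bigr)\,a_{w_C}\bigl(u(s)g\bigr)
=\psi^f\bigl(\alpha s\bigr)\,a_{w_C}\bigl(u(s)g\bigr),
\]
obtaining the phase in one step by computing the Levi action on $w_C\in W_J$, with no BCH cascade, no $z(\alpha,t)$, and no iterative absorption of corrections. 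Your decomposition $u(t)\exp(\alpha E_{12})=\exp(-\alpha t\,v_1\otimes x)\exp(\alpha E_{12})u(t)z$ is an awkward factoring of the same conjugation identity $u(t)\exp(\alpha E_{12})=\bigl(u(t)\exp(\alpha E_{12})u(t)^{-1}\bigr)u(t)$; the extra factors you introduce are artifacts of splitting $\exp(\mathrm{Ad}(u(t))\alpha E_{12})$ into pieces rather than evaluating $\varepsilon_{w_C}$ on it directly.

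On the analytic side, your claim that the iterated integrals reduce to finite sums is the right idea but too casual: $a_{w_C}$ is certainly not compactly supported on $G_J(\A_f)$. What the paper actually uses is smoothness of $g'\mapsto a_{w_C}(g'g)$ to produce an integer $M_g$ with $a_{w_C}(g'ug)=a_{w_C}(g'g)$ for $u\in\exp(M_g\widehat{\Z}E_{12})$; this forces $\overline{\varphi}_C(\exp(\alpha E_{12})g)$ to be supported in a compact set $V_{C,g}\subset\A_f$ in $\alpha$, after which the $\alpha$-integral picks out $s\in M_g\Z$ and the $s$-integral collapses. You should spell this out rather than appeal to ``compact support up to finitely many cosets''.
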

     \begin{proof}
        Using the measure defined by \eqref{eq-ref}, 
         $
\overline{\varphi}_{C}(g)=\int_{\A_f}a_{w_{C}}(\exp(v_2\otimes sx)g)ds
         $
         where $x=C/(C,C)$.  Then, if $\varepsilon^f_{w_{C}}$ is the finite adelic part of the character $\varepsilon_{w_{C}}$, $\alpha\in \A_f$, and $g\in G_J(\A_f)$, 
         \begin{align*}
             \overline{\varphi}_{C}(\exp(\alpha E_{12})g)
             &= 
             \int_{\A_f}a_{w_{C}}(\exp(sv_2\otimes x)\exp(\alpha E_{12})g)\,ds \\ 
             &= \int_{\A_f}\varepsilon^f_{w_{C}\cdot \exp(sv_2\otimes x)}(\exp(\alpha E_{12}))a_{w_{C}}(\exp(sv_2\otimes x)g)\,ds \\ 
             &=\int_{\A_f}\psi^f((C, \alpha s x)_J)a_{w_{C}}(\exp(sv_2\otimes x)g)ds. 
         \end{align*}
         Now fix $g_f\in G_J(\A_f)$. Since $g'\mapsto a_{w_C}(g'g_f)$ is smooth on $G_J(\A_f)$, there exists $M_g\in \Z$ such that for $u\in \exp(M_g\widehat{\Z}E_{12})$, and $g'\in G_J(\A_f)$,  $a_{w_C}(g'ug_f)=a_{w_C}(g'g_f)$. Then 
         $$
         \overline{\varphi}_{C}(\exp(\alpha E_{12})g)
         = 
         \int_{\A_f/M_g\widehat{\Z}}\psi^f((C, \alpha sx)_J)a_{w_{C}}(\exp(sv_2\otimes x)g)\int_{M_g \widehat{\Z}}\psi^f((C, \alpha s'x)_J)ds'ds. 
         $$
        So $\overline{\varphi}_{C}(\exp(\alpha E_{12})g)\neq 0$ implies $\alpha \in V_{C, g}:= \{\alpha'\in \A_f \colon \psi^f((C, \alpha' M_g\widehat{\Z}x))=1\}$. Therefore, 
        \begin{align*}
            \int_{\A_f}\overline{\varphi}_{C}(\exp(\alpha E_{12})g)\,d\alpha
            &= 
            \int_{V_{C, g}}\int_{\A_f}\psi((C, \alpha s x)_{J})a_{w_{C}}(\exp(sv_2\otimes x)g)\,ds\,d\alpha \\
            &= \int_{\A_f}a_{w_{C}}(\exp(sv_2\otimes x)g)\int_{V_{C, g}}\psi((C, \alpha sx)_J)\, d\alpha \,ds \\ 
            &= \sum_{s\in \Q/ M_g\Z}a_{w_{C}}(\exp(sv_2\otimes x)g)\int_{V_{C, g}}\psi((C, \alpha sx)_J)\, d\alpha.
        \end{align*}
        To complete the proof, it suffices to show that if $s\in \Q$ satisfies $\int_{V_{C,g}}\psi((C, \alpha s x)_J)\,d\alpha \neq 0$, then $s\in M_g\Z$.  Let $\beta \in \Q$ be such that $\psi^f(a)=\psi_{\mathrm{std}}^f(\beta a)$ for all $a\in \A_f$. Here $\psi_{\mathrm{std}}^f$ is the standard additive character of $\A_f$ with $\ker(\psi_{\mathrm{std}}^f)=\widehat{\Z}$. So, if $\psi((C, \alpha s x)_J)=1$ for all $\alpha\in V_{C, g}$, then $\beta (C, \alpha s x)_J\in \widehat{\Z}$ for all $\alpha \in V_{C,g}$. Since $1/(\beta(C, M_gx)_J)\in V_{C, g}$,  $s\in M_g \Z$ as required.
     \end{proof}
     Before completing the proof of Theorem~\ref{thm-into-cuspidality-criteria} we require one additional lemma. 
\begin{lemma}
\label{orbit-lemma}
Suppose $J=\mathbb{G}_a^3$ or $J=H_3(C)$ with $C$ a composition algebra over $\Q$.  If $w\in W_J(\Q)$ is non-zero and $\mathrm{rank}(w)<4$ then there exists $m\in H_J(\Q)$ and $C\in J^{\vee}$ such that $\mathrm{rank}(w)=\mathrm{rank}(C)$, $\nu(m)=1$, and $m\cdot w=w_C$. 
\end{lemma}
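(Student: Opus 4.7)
The plan is to combine the rank stratification of $W_J$ from \cite[\S 4.3]{pollackLL} with an explicit orbit analysis of the prehomogeneous vector space $(H_J, W_J)$ in the two cases of $J$. Rank is an $H_J$-invariant characterized by the vanishing of the quartic $Q$ and of the flat operation $w\mapsto w^{\flat}$, so the condition $\mathrm{rank}(w)<4$ is automatically preserved under any $m\in H_J(\Q)$. Moreover the rank of $w_C=(0,0,C,0)$ in $W_J$ agrees with the rank of $C$ in $J^{\vee}$, so once a transformation $m\cdot w=w_C$ is produced, the numerical matching in the conclusion is free.

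First I would reduce $w=(a,b,c,d)$ to the form $(0,b',c',0)$ using root subgroups inside $H_J^1=\ker(\nu)$. The principal $\SL_2\subseteq H_J^{\mathrm{der}}$ attached to the Heisenberg root contains a Weyl element $n_{\alpha}\in H_J^1(\Z)$ swapping the two extremal coordinates of $w$, and contains unipotent elements that shift $a$ and $d$ in a controlled way along the grading of $W_J$. Using the hypothesis $Q(w)=0$ (equivalent to $\mathrm{rank}(w)<4$), one obtains enough freedom to clear the extremal coordinates: the key point is that restricted to $\{a=0\}$, $Q$ becomes a polynomial whose dependence on $d$ is linear in the relevant unipotent parameter, so the vanishing of $Q$ lets us zero out $d$ as well.

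With $w=(0,b',c',0)$, I would then apply the structure group of $J$ inside $H_J^1$ to further reduce to $w_C$. This step splits according to $J$. For $J=\mathbb{G}_a^3$, we have $W_J\simeq V_2^{\otimes 3}$ and $H_J^{\mathrm{der}}\simeq \SL_2^3$, and the required rational orbit result is Bhargava's analysis of $2\times 2\times 2$ cubes \cite{MR2051392}: every rank-$r$ degenerate cube over $\Q$ is $\SL_2^3(\Q)$-equivalent to a single-slice cube of rank $r$, which is exactly a $w_C$. For $J=H_3(C)$ the analogous statement for Freudenthal triple systems at ranks $1,2,3$ is available from the structure theory used in \cite{pollackQDS} together with Krutelevich's analysis of ranks in Freudenthal triples. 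Finally, to enforce $\nu(m)=1$, I would multiply the element produced above by a suitable element of the central $\G_m\subseteq H_J$; this rescales $w_C$ to $w_{tC}$ for some $t\in\Q^{\times}$, and since rescaling preserves rank, the conclusion holds after absorbing the scalar into $C$.

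The main obstacle I anticipate is the rationality issue in the exceptional-type case ($J=H_3(C)$): over $\overline{\Q}$ the rank-$r$ orbit is unique for each $r$, but over $\Q$ there are potentially several orbits distinguished by Galois-cohomological invariants, and one must verify that the representative $w_C$ can always be taken rational. This is handled by exhibiting a large (typically parabolic or solvable) subgroup of the stabilizer of $w_C$ in $H_J^1$, whose vanishing $H^1$ forces a single rational orbit within each $\overline{\Q}$-orbit.
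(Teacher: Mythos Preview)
Your second step does not work as written. The structure group of $J$, viewed inside $H_J^1$, acts on $W_J=\Q\oplus J\oplus J^{\vee}\oplus\Q$ preserving each summand: an element $g$ sends $(0,b',c',0)$ to $(0,\,g\cdot b',\,(g^{*})^{-1}c',\,0)$. It therefore cannot move a vector with $b'\neq 0$ to $w_C=(0,0,C,0)$. To mix the $J$-- and $J^{\vee}$--components you need the unipotent elements $n(x)$, $n^{\vee}(y)$ of \cite[\S 2.2]{pollackQDS}, not the structure group; but once those are allowed you are carrying out the full $H_J^1$--orbit analysis and the intermediate form $(0,b',c',0)$ buys nothing. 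Your first step is also underspecified: the principal $\SL_2$ alone does not reduce an arbitrary rank~$<4$ element to $(0,b',c',0)$, and the sentence ``$Q$ becomes linear in the relevant unipotent parameter'' implicitly requires the action of $n(x)$ on the $d$--coordinate, $d\mapsto d+(c,x)+(b,x^{\#})+aN(x)$, rather than the $\SL_2$ unipotent. (As a side remark, $w_C$ is not a single--slice cube in the paper's coordinates, so the appeal to Bhargava does not land on the right target either.)

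The paper's argument sidesteps all of this. It observes that the proof of \cite[Lemma~10.0.2]{pollackQDS} is valid over~$\Q$, giving $m\in H_J(\Q)$ with $\nu(m)=1$ and $m\cdot w=(0,0,C,d)$ directly. Two further explicit rational unipotent moves suffice: if $C=0$ apply $n^{\vee}(y)$ for some $y$ with $y^{\#}\neq 0$ to make $C\neq 0$; then if $d\neq 0$ apply $n(x)$ with $(C,x)\neq 0$, scaled appropriately, to kill $d$. The equality $\mathrm{rank}(w_C)=\mathrm{rank}(C)$ is then read off from $w_C^{\flat}=(0,0,0,N_J(C))$ together with \cite[Lemma~4.3.4]{pollackLL}. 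Because every step is an explicit $\Q$--rational unipotent translation or a citation to a lemma already proved over~$\Q$, no Galois--cohomological analysis is needed; your anticipated obstacle simply does not arise.
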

\begin{proof}
Applying \cite[Lemma 10.0.2]{pollackQDS}, there exists an element $m\in H_J(\R)$ such that $\nu(m)=1$ and $m\cdot w=(0,0,c,d)$ for some $C\in J^{\vee}$ and $d\in \Q$.  In fact, the proof of (loc. cit.) is valid when the field $\R$ is replaced by $\Q$. Hence,  we obtain $m\in H_J(\Q)$ such that $\nu(m)=1$ and $m\cdot w=(0,0,C,d)$.  \\
\indent  Since $J$ contains a non-zero element $y$ such that $y^{\#}\neq 0$, we may act on $m\cdot w$ through the element $n^{\vee}(y)$ of \cite[\S 2.2]{pollackQDS} to ensure that $C\neq 0$. Assuming $C\neq 0$ and $d\neq 0$,  we may then arrange for the case when $d=0$ by selecting $x\in J$ appropriately such that $(C,x)\neq 0$, and acting on $(0,0,C,d)$ by the element $n(x)$ of (loc. cit.). In this way we construct $m\in H_J(\Q)$ such that $\nu(m)=1$ and 
$$
m\cdot w=w_C. 
$$
It remains to prove that $\mathrm{rank}(w_C)=\mathrm{rank}(C)$. This is immediate in the case when $\mathrm{rank}(w_C)=3$ since in this case $w_C^{\flat}=(0,0,0,N_J(C))$, and so $N_J(C)\neq 0$.  For the case when $\mathrm{rank}(w_C)=2$,  $N_J(C)= 0$ since $w_C^{\flat}=0$, and one can apply \cite[Lemma 4.3.4]{pollackLL} to conclude that $C^{\#}\neq 0$.  Finally,  when $\mathrm{rank}(w_C)=1$, $\mathrm{rank}(C)=1$ as a consequence of (loc. cit.). 
\end{proof}
\begin{proof}[Proof of Theorem~\ref{thm-into-cuspidality-criteria}] Suppose $\varphi_{N_{J}}\equiv 0$.  Then applying \eqref{eqn-extension-formula},  if $C\in J^{\vee}$ then $\nu^{\ell}|\nu|A_C(g_f)=0$ for all $g_f\in G(\A_f)$. Therefore,  by \eqref{eqn-equality-of-finite-adelics},  $\overline{\varphi}_C(g_f)=0$ for all $C\in J^{\vee}$ and $g_f\in G(\A_f)$.  Hence, by the result of Lemma \ref{lemma-auto-convergence-13.?}, $a_{w_C}(g_f)=0$ for all $C\in J^{\vee}$ and $g_f\in G(\A_f)$.  \\
\indent We conclude that $a_{w}(g_f)=0$ whenever $w\in W_J$ is in the same $H_J(\Q)$ orbit of an element of the form $w_C$ for $C\in J^{\vee}$.  Hence, by the result of Lemma \ref{orbit-lemma}, $a_w(g_f)=0$ for all $g_f\in G(\A_f)$ and non-zero $w\in W_J$ satisfying $\mathrm{rank}(w)<4$.  In case of $J=\mathbb{G}_a^3$, one may now complete the proof by applying Corollary \ref{FE-CUSPIDAL-QMF}.  \\ 
\indent For the case when $J=H_3(C)$ it remains so that if $\varphi_{N_{J},w}=0$ for all $w\in W_J$ satisfying $\mathrm{rank}(w)<4$, then $\varphi$ cuspidal.  This is achieved by a similar argument to the one given in Corollary \ref{FE-CUSPIDAL-QMF}.  Namely,  let $U$ denote the intersection of the unipotent radicals of the maximal parabolic subgroups containing a fixed minimal parabolic subgroup $P_0$.  In this case $G_J$ has a rational root system of type $F_4$, and the description of the $\Lie(U)$ furnished by \cite[Remark 9.4.7]{pollackSiegelWeil} implies that the constant term of $\varphi$ along $U$ takes the form 
$$
\varphi_{U}(g)=\sum_{\tiny w=(\ast, \left(\begin{smallmatrix} 0 &0 &0 \\ 0 &\ast & \ast  \\ 0 &\ast &\ast \end{smallmatrix}\right), \left(\begin{smallmatrix} \ast &0 &0 \\ 0 &0 & 0  \\ 0 &0 &0 \end{smallmatrix}\right),0)\in W_J}\varphi_{N_{J},w}(g). 
$$
Since any element $w\in W_J$ of the form  $w=(\ast, \left(\begin{smallmatrix} 0 &0 &0 \\ 0 &\ast & \ast  \\ 0 &\ast &\ast \end{smallmatrix}\right), \left(\begin{smallmatrix} \ast &0 &0 \\ 0 &0 & 0  \\ 0 &0 &0 \end{smallmatrix}\right),0)$ satisfies $\mathrm{rank}(w)<4$,  $\varphi_U\equiv 0$, which implies that $\varphi$ is cuspidal. 
\end{proof}
\subsection{The Hecke Bound Implies Cuspidality}
We recall that if $\alpha\in 2\Z_{\geq 1}$, then
$$
V_{2,2}=\Q y_{\alpha}+V_{1,2}'
$$
where $V_{1,2}'$ denote the orthogonal complement of $y_{\alpha}$ in $V_{2,2}$. 
Before we establish the main result of this section, we require one preparatory lemma. 
\begin{lemma} 
\label{lemma-orbit-rank3}
\begin{compactenum}
\item[(i)] Suppose $n, \alpha\in \Z$ and $T\in V_{2,2}$. Write $S$ for the orthogonal  projection of $T$ onto the subspace $V_{1,2}'$. If $B=[ny_{\alpha}, T]$ then 
$$
Q(B)=n^2\alpha (S,S). 
$$
\item[(ii)]Assume $B\in V_{2,2}(\Z)^{\oplus 2}$ is a primitive element satisfying $\mathrm{rank}(B)=3$ and $B\succeq 0$. Then there exists $\alpha\in 2\Z_{\geq 1}$, and $n,m,r\in \Z$ such that $B$ is in the same $M_P^{\mathrm{der}}(\Z)$-orbit as 
$$
[y_{\alpha}, -nb_4-mb_{-4}+rb_{-3}]
$$
\end{compactenum}
\begin{proof}
Statement (i) is a direct computation using  the formula 
$$
Q([T_1,T_2])=(T_1,T_1)(T_2,T_2)-(T_1,T_2)^2.
$$
For statement (ii), we apply the primitivity of $B$ and \cite[Appendix Ch. II]{MR2051392} to reduce to the case 
$$
B=[y_{\alpha}, -nb_4-mb_{-4}+rb_{-3}]
$$
where $\alpha, n,m, r\in \Z$.
It remains to show that we may assume $\alpha >0$.  \\
\indent Since $B\succeq 0$, Proposition \ref{Properties-of-beta}(i) implies $\alpha\geq 0$.  Hence, without loss of generality, we may assume $\alpha=0$.  Then via the isomorphism \eqref{eqn-character-lattice isom}, $B$ maps to an element of the form 
$$
w=(r,b,0,1)
$$
where $b=(n,0,m)\in E$. Since $\mathrm{rank}(B)=3$,  
$$
w^{\flat}=(-r^2,rb, 2b^{\#},r)\neq 0.
$$ Therefore, either $r\neq 0$ or $b^{\#}\neq 0$.  Since $Q(B)=r^2+4N_E(b)=0$,  $r\neq 0$ and $b^{\#}=0$ gives a contradiction.  Hence, we may assume $b^{\#}\neq 0$,  in which case $T_2=-nb_4-mb_{-4}+rb_{-3}$ satisfies $(T_2,T_2)\neq 0$.  Therefore, since $B\succeq 0$, Proposition \ref{Properties-of-beta} implies $(T_2,T_2)>0$, which implies $mn>0$.  Therefore, since $\alpha=0$, there exists $k\in \Z$ such that $\gcd(k,m,n)=1$ and 
$$
(T_2+ky_{\alpha}, T_2+ky_{\alpha})>0.
$$
Hence,  $B$ is in the $M_P^{\mathrm{der}}(\Z)$ orbit of an element $[y_{\alpha}, T_2']$ where $T_2'$ is a primitive element in $V_{2,2}(\Z)$ of positive norm.  Hence, there exists $h\in \SO(V_{2,2})(\Z)$ and $\beta\in 2\Z_{\geq 1}$ such that $h\cdot T_2'=y_{\beta}$. Therefore, $B$ is in the $M_P^{\mathrm{der}}(\Z)$ orbit of the element $[y_{\beta}, -h\cdot y_{\alpha}]$,  as required. \\
\end{proof}
\end{lemma}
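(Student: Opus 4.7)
The plan is to handle (i) by direct computation and (ii) by a three-step reduction under the action of $M_P^{\mathrm{der}}(\Z) = \SL_2(\Z)\times\Spin(V_{2,2})(\Z)$ on $V_{2,2}(\Z)^{\oplus 2}$. For (i), I would decompose $T = cy_\alpha + S$ with $c\in \Q$ and $S\in V_{1,2}'$; using $(y_\alpha, y_\alpha) = \alpha$, $(y_\alpha, T) = c\alpha$, and $(T,T) = c^2\alpha + (S,S)$, the defining formula $Q([T_1,T_2]) = (T_1,T_1)(T_2,T_2) - (T_1,T_2)^2$ produces $Q([ny_\alpha, T]) = n^2\alpha(S,S)$ once the $c^2\alpha^2$ cross-terms cancel.

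For (ii), I would proceed in three steps. First, apply Bhargava's reduction theory for pairs of integral $2{\times}2$ matrices from \cite[Appendix Ch.\ II]{MR2051392}; the identification $V_{2,2}\simeq \mathrm{M}_2$ of \eqref{identification-of-V_22} expresses $\Spin(V_{2,2})(\Z)$ as $\SL_2(\Z)\times \SL_2(\Z)$ acting on $\mathrm{M}_2$ by left and right multiplication, and the primitive element $B$ can be placed in the normal form $[y_\alpha, -nb_4 - mb_{-4} + rb_{-3}]$ with $\alpha, n, m, r\in \Z$ while preserving primitivity and positive semidefiniteness. Second, since $B\succeq 0$, Proposition \ref{Properties-of-beta}(i) rules out $\alpha<0$: if $\alpha<0$ then the real span of $\{T_1,T_2\}$ contains the negative vector $y_\alpha$, so it is either a negative definite line, a negative definite two-plane, or an indefinite two-plane, each of which produces some $r\in M_P(\R)^0$ with $\beta_{[T_1,T_2]}(r) = 0$. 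Third, I would promote $\alpha\geq 0$ to $\alpha\in 2\Z_{\geq 1}$.

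The main obstacle is this third step, namely the case $\alpha = 0$, where $T_1 = b_3$ is isotropic. Part (i) forces $Q(B) = 0$; in the $W_E$-coordinates of \eqref{eqn-character-lattice isom}, $B$ corresponds to $w = (r, b, 0, 1)$ with $b = \diag(n, 0, m)$, so the condition $Q(B) = r^2 + 4N_E(b) = 0$ together with $N_E(b) = 0$ forces $r = 0$. The rank-$3$ hypothesis $w^\flat \neq 0$ of \cite[\S 4.3]{pollackLL} then forces $b^\# \neq 0$, i.e.\ $mn \neq 0$, and positive semidefiniteness applied to the line $\R\linspan\{T_2\}$ upgrades this to $mn > 0$. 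Finally, the pair-$\SL_2$ transvection $[T_1, T_2]\mapsto [T_1, T_2 + kT_1]$ shifts the $r$-coordinate while preserving primitivity, and the transitivity of $\SO(V_{2,2})(\Z)$ on primitive vectors of each fixed positive norm in $V_{2,2}(\Z)$ (an Eichler/Kneser-type statement for the split signature $(2,2)$ lattice) allows me to rotate the shifted $T_2$ into $y_\beta$ with $\beta\in 2\Z_{\geq 1}$; a Weyl swap in the first $\SL_2$ factor of $M_P^{\mathrm{der}}(\Z)$ then delivers the required normal form.
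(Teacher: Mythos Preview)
Your proposal is correct and follows essentially the same route as the paper's proof: Bhargava reduction to the normal form, Proposition~\ref{Properties-of-beta}(i) to exclude $\alpha<0$, and in the $\alpha=0$ case the combination of $Q(B)=0$ with $w^\flat\neq 0$ to force $mn>0$, followed by a transvection in the pair-$\SL_2$ and $\SO(V_{2,2})(\Z)$-transitivity on primitive vectors to swap to a $y_\beta$ with $\beta>0$. The only cosmetic difference is that you first deduce $r=0$ (from $N_E(b)=0$) before invoking $w^\flat\neq 0$, whereas the paper argues directly that $b^\#\neq 0$ without pinning down $r$; note also that the transvection $T_2\mapsto T_2+kT_1=T_2+kb_3$ adds a $b_3$-component rather than ``shifting the $r$-coordinate,'' but this does not affect the argument.
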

We are now ready to establish the main result of this section. 
\begin{theorem}
\label{main-theorem-cuspidal-IF-Hecke}
Suppose $\ell\geq 5$ and let $\varphi\in M_{\ell}(1)$ be a weight $\ell$, level one, quaternionic modular form on $G=\Spin(V)$.  Assume that if $B\in V_{2,2}(\Z)^{\oplus 2}$ is primitive and satisfies $B\succ 0$, then 
$$
\Lambda_{\varphi}[B]\ll_{\varphi} Q(B)^{\frac{\ell+1}{2}}. 
$$
Then $\varphi$ is cuspidal. 
\end{theorem}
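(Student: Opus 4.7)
The plan is to argue by contradiction: if $\varphi$ fails to be cuspidal, Proposition~\ref{prop-non-vanishing-rank-3} supplies a primitive rank-$3$ Fourier coefficient that is nonzero, and I will contradict this by showing that the Hecke-bound hypothesis forces the holomorphic modular form $\xi^\varphi(y_\alpha)$ on $M'$ to be cuspidal, which in turn kills every Fourier coefficient $\Lambda_\varphi[y_\alpha,T]$ whose projection onto $V_{1,2}'$ is isotropic.

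Assume $\varphi$ is not cuspidal. By Proposition~\ref{prop-non-vanishing-rank-3} there exists a primitive rank-$3$ element $B \in V_{2,2}(\Z)^{\oplus 2}$ with $\Lambda_\varphi[B] \neq 0$. Applying Lemma~\ref{lemma-orbit-rank3}(ii) and the $M_P^{\mathrm{der}}(\Z)$-equivariance of Fourier coefficients for level-one forms, I may arrange that $B = [y_\alpha, T_0]$ for some $\alpha \in 2\Z_{\geq 1}$ and $T_0 \in V_{2,2}(\Z)$, and $\Lambda_\varphi[y_\alpha,T_0] \neq 0$. Let $S \in V_{1,2}'$ be the orthogonal projection of $T_0$; then Lemma~\ref{lemma-orbit-rank3}(i) gives $\alpha(S,S) = Q(B) = 0$, so $(S,S) = 0$.

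Next, I show that $\xi^\varphi(y_\alpha)$ (from Proposition~\ref{prop-existence-holomorphic-FJ-coefficient}) is cuspidal. For $S' \in V_{1,2}'(\Z)^\vee_{\geq 0}$ with $(S',S') > 0$, the manipulation in \eqref{eqn-manipulation-of-a-lifetime} with $n=1$, together with the parity-and-divisibility analysis appearing in the proof of Corollary~\ref{slice-primitivity-corollary}, collapses the sum over $s \in \{0,\dots,\alpha-1\}$ to a single term
\[
\overline{A_{\xi^\varphi(y_\alpha)}[S']} = \Lambda_\varphi[y_\alpha, T_{S'}],
\]
where $T_{S'} \in V_{2,2}(\Z)$ is the unique representative whose projection onto $V_{1,2}'$ is $S'$. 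The crucial observation is that $[y_\alpha, T_{S'}]$ is automatically primitive, since $y_\alpha$ has $b_3$-coefficient $1$. Combined with $Q([y_\alpha, T_{S'}]) = \alpha(S',S') > 0$ from Lemma~\ref{lemma-orbit-rank3}(i), the Hecke-bound hypothesis yields
\[
|A_{\xi^\varphi(y_\alpha)}[S']| \ll_{\varphi} (\alpha(S',S'))^{(\ell+1)/2} \ll_{\varphi,\alpha} (S',S')^{(\ell+1)/2}.
\]
Since $\ell \geq 5$, Theorem~\ref{thm-hecke-bound-IFF-cuspidal-holomorphic} applies and $\xi^\varphi(y_\alpha)$ is cuspidal. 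The cuspidality of $\xi^\varphi(y_\alpha)$ then forces $A_{\xi^\varphi(y_\alpha)}[S] = 0$ for every nonzero isotropic $S \in V_{1,2}'(\Z)^\vee$, as observed at the end of the proof of Theorem~\ref{thm-hecke-bound-IFF-cuspidal-holomorphic}. Specializing to the isotropic $S$ from the first paragraph and applying the single-term identity once more gives $\Lambda_\varphi[y_\alpha, T_0] = 0$, contradicting its non-vanishing.

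The main obstacle is the single-term identity $\overline{A_{\xi^\varphi(y_\alpha)}[S']} = \Lambda_\varphi[y_\alpha, T_{S'}]$ in the generality required (for \emph{all} $S'$, not merely primitive $S'$), coupled with the automatic primitivity of $[y_\alpha, T_{S'}]$. Together these features are what allow the hypothesis---stated only for primitive $B$---to bound every Fourier coefficient of $\xi^\varphi(y_\alpha)$, thereby reducing the cuspidality question for $\varphi$ to Theorem~\ref{thm-hecke-bound-IFF-cuspidal-holomorphic} via Proposition~\ref{prop-non-vanishing-rank-3}.
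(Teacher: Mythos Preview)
Your argument is correct and follows the same route as the paper: reduce via Proposition~\ref{prop-non-vanishing-rank-3} and Lemma~\ref{lemma-orbit-rank3} to showing $\xi^\varphi(y_\alpha)$ is cuspidal, transfer the Hecke bound to its Fourier coefficients through the single-term identity \eqref{manipulation-of-a-lifetime-2}, and invoke Theorem~\ref{thm-hecke-bound-IFF-cuspidal-holomorphic}. You even make explicit the automatic primitivity of $[y_\alpha,T_{S'}]$ (from the $b_3$-coefficient $1$ in $y_\alpha$), a point the paper uses tacitly when applying the hypothesis to $B'$.
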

\begin{proof}
By the result of Proposition \ref{prop-non-vanishing-rank-3}, it suffices to show that if $B\in V_{2,2}(\Z)^{\oplus 2}$ is primitive and $\mathrm{rank}(B)=3$, then $\Lambda_{\varphi}[B]=0$.  Hence, we assume $B\succeq 0$ is primitive of rank $3$.  Then by Lemma \ref{lemma-orbit-rank3}(ii), there exists $\alpha\in \Z_{\geq 1}$, and $n,m,r\in \Z$ such that 
$$
\Lambda_{\varphi}[B]=\Lambda_{\varphi}[y_{\alpha}, -nb_4-mb_{-4}+rb_{-3}]. 
$$  
By \eqref{manipulation-of-a-lifetime-2}, if $S=-nb_4-mb_{-4}-ry_{\alpha}^{\vee}/\alpha$, 
$$
\Lambda_{\varphi}[B]=\overline{A_{\xi^{\varphi}(y_{\alpha})}[S]}.
$$ 
Since $S$ is the orthogonal projection of $T=-nb_4-mb_{-4}+rb_{-3}$ onto the $V_{1,2}'$,  Lemma \ref{lemma-orbit-rank3}(i) implies $(S,S)=0$.  Hence,  to show $\Lambda_{\varphi}[B]=0$, it suffices to prove that $\xi^{\varphi}(y_{\alpha})$ is cuspidal. \\
\indent With a view to to applying Theorem \ref{thm-hecke-bound-IFF-cuspidal-holomorphic}, assume $S'=-n'b_4-m'b_{-4}-r'y_{\alpha}^{\vee}/\alpha\in V_{1,2}'(\Z)_{\geq 0}^{\vee}$ satisfies $(S',S')>0$.  Then by \eqref{manipulation-of-a-lifetime-2}, 
$$
\overline{A_{\xi^{\varphi}(y_{\alpha})}[S']}=\Lambda_{\varphi}[B'],
$$
where $B'=[y_{\alpha}, -n'b_4-m'b_{-4}+r'b_{-3}]$.  Without loss of generality we may assume $B'\succeq 0$. Lemma~\ref{lemma-orbit-rank3}(i) implies $Q(B')=\alpha (S',S')>0$.  Therefore, $B'\succ 0$, and so by assumption 
$$
\overline{A_{\xi^{\varphi}(y_{\alpha})}[S']}=\Lambda_{\varphi}[B']\ll_{\varphi}Q(B')^{\frac{\ell+1}{2}}\ll_{\alpha}(S',S')^{\frac{\ell+1}{2}}. 
$$
Hence, $\xi^{\varphi}(y_{\alpha})$ satisfies the hypothesis of Theorem \ref{thm-hecke-bound-IFF-cuspidal-holomorphic}, and is thus cuspidal.  
\end{proof}
\begin{proof}[Proof of Theorem~\ref{thm-intro-Hecke-IFF-Cuspidal-Quaternionic}] This is a direct consequence of Theorem~\ref{main-theorem-cuspidal-IF-Hecke} and Proposition~\ref{hecke-bound-quaternionic-modular-forms}.
\end{proof}
\printbibliography[title=Bibliography]
\end{document}